 \newtheorem{lemma}[subsubsection]{Lemma}
 \newtheorem{theorem}[subsubsection]{Theorem}
 \newtheorem{proposition}[subsubsection]{Proposition}
 \newtheorem{corollary}[subsubsection]{Corollary}
\theoremstyle{definition}
\newtheorem*{definition*}{Definition}
\newtheorem{definition}[subsubsection]{Definition}
\newtheorem*{example*}{Example}
\newtheorem{example}[subsubsection]{Example}
\theoremstyle{remark}
\newtheorem*{remark*}{Remark}
\newtheorem{remark}[subsubsection]{Remark}
\newcommand{\pd}[2]{\dfrac{\partial#1}{\partial#2}}
\renewcommand{\phi}{\varphi}
\newcommand{\be}{\begin{enumerate}}
\newcommand{\ee}{\end{enumerate}}
\title{$T$--ideals of Cayley Hamilton algebras}
\author{Claudio Procesi}
\begin{document}\begin{abstract} We develop the  general Theory of    Cayley Hamilton algebras  and we compare this with the theory of pseudocharacters.  We finally characterize prime  $T$--ideals for Cayley Hamilton  algebras and discuss some of their geometry.
\end{abstract}

\maketitle
\hfill{\it\small To the memory of  T. A. Springer}
\tableofcontents
\section{Foreword}  A basic fact for an $n\times n$   matrix $a$ (with entries in a commutative ring)  is the construction of its characteristic polynomial $\chi_a(t):=\det(t-a)$ and the Cayley Hamilton theorem $\chi_a(a)=0$.

The notion  of  Cayley Hamilton algebra (CH algebras for short), see Definition \ref{CHalg},  was introduced in 1987 by Procesi \cite{P5} as an axiomatic treatment  of the Cayley Hamilton theorem. This was done in order to clarify the Theory of $n$--dimensional representations, cf. Definition \ref{ndi}, of an associative and in general noncommutative algebra $R$ (from now on just called {\em algebra}).  In particular the aim was to describe a {\em very strong}  embedding Theorem into matrices over a commutative ring (Theorem \ref{immu}).\smallskip

The theory was developed only  in characteristic 0, for two reasons.  The first being that, at that time, it was not clear to the author if the characteristic free results of Donkin \cite{Don} and Zubkov \cite{Zub}  were sufficient to found the theory in general. The second reason was mostly because it looked  not likely that the {\em main theorem} \ref{immu}  could possibly hold in general.

The first concern can now be considered to have a positive solution, due to the contributions of several people, and we may take the book \cite{depr0} as reference. As for the second, that is the main theorem in positive characteristic, the issue remains unsettled. The present author feels that it should not be true in general but has no counterexamples.\medskip

Independently, studying deformations of representations of   Galois groups, the theory of {\em pseudocharacters} or  {\em pseudorepresentations} was developed by several authors see \cite{Wi}, \cite{Tay}. We shall discuss the relationships between the two approaches in Theorem \ref{Gimmu}.\smallskip

A  partial theory in general characteristics replacing the trace with the determinant, a {\em norm}, appears already in Procesi \cite{P2} and  \cite{P6}.

  For the general definition, see  Chenevier \cite{Che}.
 
The original definition over $\mathbb Q$  is through the axiomatization of a {\em trace}, see \S \ref{axtr},   and closer to the Theory of {\em pseudocharacters} Definition \ref{pse}.  The definition via trace is also closer to the language of Universal algebra, while the one using norms is more categorical in nature. In this paper we restrict to characteristic 0 and traces, a discussion of the general case will appear elsewhere \cite{ppr}.

The paper is divided in three parts. In the first we recall some general facts which are treated in detail in the recent book with Aljadeff, Giambruno and Regev \cite{agpr} to which we refer for the proofs. 

In the second part we develop in a systematic way the Theory of Cayley--Hamilton algebras. We define the notion of prime trace algebras and show that the corresponding spectrum is a functor for Cayley--Hamilton algebras Theorem \ref{iso}. Finally, in the third part, from a clear structure theorem for prime and simple trace algebras, Theorem \ref{pos} and Theorem \ref{ssre1},  we give a classification of prime $T$--ideals in the free Cayley--Hamilton algebra, Theorem \ref{prst0},. We then study the structure of the associated relatively free algebras both from a geometric Theorems \ref{cod}, \ref{norm} and an algebraic point of view, Theorem \ref{ultt}.  

 A few    results of the second part already appear with restrictive hypotheses in the literature,  in general this material is new. The third part is new.
\vfill\eject

\part{Background}
\section{Invariants and representations}
\subsection{$n$--dimensional representations} For a given $n\in\mathbb N$ and a ring $A$  by $M_n(A)$ we denote the ring of $n\times n$ matrices with coefficients in $A$. By a symbol $(a_{i,j})$  we denote a matrix with entries $ a_{i,j}\in A,\ i,j=1,\ldots,n    $.

In particular we will usually assume $A$  commutative so that the construction $A\mapsto M_n(A)$ is a functor from the category $\mathcal C$ of commutative rings to that $\mathcal R$ of associative rings. To a map $f:A\to B$ is associated a  map $M_n(f):M_n(A)\to M_n(B)$  in the obvious way $M_n(f)((a_{i,j})):= (f(a_{i,j}))$.
\begin{definition}\label{ndi}
By an  {\em $n$--dimensional representation}  of a ring $R$ we mean a homomorphism $f:R\to M_n(A)$ with $A$ commutative.
\end{definition}
The set valued functor $A\mapsto \hom_{\mathcal R}(R, M_n(A))$ is representable. That is:
\begin{proposition}\label{laTR}
There is a commutative ring $T_n(R)$  and a natural isomorphism $\mathrm j_A $
$$ \hom_{\mathcal R}(R, M_n(A))\stackrel{\mathrm j_A }\simeq  \hom_{\mathcal C}(T_n(R),  A  ),\quad \mathrm j_A :f\mapsto \bar f$$ given by the commutative diagram $f=M_n(\bar f)\circ \mathtt j_R$:
\begin{equation}\label{unpp}
\xymatrix{  R\ar@{->}[r]^{\mathtt j_R\qquad}\ar@{->}[rd]_f&M_n(T_n(R))\ar@{->}[d]^{M_n(\bar f)}\\
&M_n(A) } .
\end{equation} 
\end{proposition}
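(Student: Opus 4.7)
The plan is to exhibit $T_n(R)$ by generators and relations so that the universal property is visible by construction. Since the only data consist of the ring $R$ and the integer $n$, and since an $n$-dimensional representation $f$ is determined by the $n^2$ entries of each matrix $f(r)$, I would introduce one scalar indeterminate for each such entry. Concretely, let $P := \ZZ[x_{i,j}^{(r)} : r\in R,\ 1\le i,j\le n]$ and, for each $r\in R$, form the generic matrix $X_r := (x_{i,j}^{(r)})\in M_n(P)$. Let $I\subset P$ be the ideal generated by all entries of the matrices $X_{r+s}-X_r-X_s$ and $X_{rs}-X_rX_s$ (for $r,s\in R$), together with the entries of $X_1-\mathbf 1_n$. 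Set $T_n(R):=P/I$; by construction, $\mathtt j_R\colon r\mapsto X_r\bmod I$ lands in $M_n(T_n(R))$ and is a ring homomorphism.

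For the universal property, fix $f\colon R\to M_n(A)$ with $A$ commutative. The commutativity of the triangle $f = M_n(\bar f)\circ \mathtt j_R$ evaluated at $r$ forces $\bar f(x_{i,j}^{(r)}\bmod I) = f(r)_{i,j}$, which determines $\bar f$ uniquely on a generating set of $T_n(R)$. This prescription extends to a ring homomorphism $P\to A$, and it annihilates $I$ precisely because $f$ is a ring homomorphism: each defining relation, read entry by entry, becomes one of the identities $f(r+s)=f(r)+f(s)$, $f(rs)=f(r)f(s)$, or $f(1)=\mathbf 1_n$ in $M_n(A)$. Hence $\bar f$ descends to $T_n(R)$, giving the desired map $\mathrm j_A(f):=\bar f$. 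Conversely, any $\bar g\in \hom_{\mathcal C}(T_n(R),A)$ produces a representation $M_n(\bar g)\circ \mathtt j_R$, and the two assignments are mutually inverse.

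Naturality in $A$ is then automatic: for a commutative ring map $g\colon A\to A'$, the induced assignment on the representation side sends $f$ to $M_n(g)\circ f$ and on the scalar side sends $\bar f$ to $g\circ\bar f$, and the two correspond because $M_n(g\circ \bar f)\circ \mathtt j_R = M_n(g)\circ M_n(\bar f)\circ \mathtt j_R = M_n(g)\circ f$. There is no real obstacle: the proposition is a textbook instance of representability by generators and relations. The only subtle point worth articulating is that $I$ captures all the conditions required for a matrix-valued map to be a ring homomorphism, and this is immediate from the entry-wise descriptions of matrix addition, multiplication, and the identity.
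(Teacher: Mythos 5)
Your construction is correct, and it is essentially the paper's argument in a slightly different packaging. The paper builds $T_n(R)$ in two steps: first for a free algebra $F\langle x_i\rangle$ it takes the polynomial ring in the $n^2|\mathcal I|$ entries of generic matrices, and then for $R=F\langle x_i\rangle/I$ it quotients by the ideal $J$ with $M_n(J)$ generated by $\mathtt j(I)$. You instead take the ``tautological'' presentation of $R$ with one free generator per element of $R$ and impose relations encoding additivity, multiplicativity, and the unit; this is the same generators-and-relations representability argument, just with a canonical rather than a chosen presentation, at the cost of a much larger ambient polynomial ring. One small point worth making explicit: your relations also force $X_0=0$ and $X_{-r}=-X_r$ (take $r=s=0$, then $s=-r$), so $\mathtt j_R$ is genuinely a ring map and no separate $\ZZ$-linearity relations are needed; and if one works in the $F$-algebra setting of Remark \ref{rinal} one must additionally impose $X_{\alpha r}-\alpha X_r$ for $\alpha\in F$ and take $P$ over $F$, which your argument handles verbatim.
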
 The map $\mathrm j_R:R\to M_n(T_n(R))$ is called the {\em universal $n$--dimensional representation of $R$ } or {  \em the universal map into $n\times n$ matrices}.

Of course it is possible that $R$ has no   $n$--dimensional representations, in which case $T_n(R)=\{0\}$. \medskip

\begin{remark}\label{rinal}
The same discussion can be performed when $R$  is in the category $\mathcal R_F$ of algebras over a commutative ring $F.$ 

Then the functor   $A\mapsto \hom_{\mathcal R_F}(R, M_n(A))$ is on commutative $F$ algebras and $T_n(R)$ is an $F$ algebra.
\end{remark}
From now on $F$ will be a fixed commutative ring.\smallskip

The construction of $\mathrm j_R$ is in two steps.
\subsubsection{Step 1, the free algebra} One easily sees that when $R=F\langle x_i\rangle_{i\in \mathcal I}$ is a free algebra  then:
\begin{proposition}\label{genma}
$T_n(R)=F[\xi_{h,k}^{(i)}]$  is the polynomial algebra over $F$ in the variables $\xi_{h,k}^{(i)},\ i\in \mathcal I,\ h,k=1,\ldots, n$ and $\mathrm j_R(x_i)=\xi_i:=(\xi_{h,k}^{(i)})$ is the {\em generic $n\times n$ matrix} with entries $\xi_{h,k}^{(i)}$.
\end{proposition}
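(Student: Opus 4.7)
The plan is to verify the claim by composing two universal properties: the one for the free associative $F$-algebra and the one for the polynomial $F$-algebra, and then check that the resulting bijection is indeed the one produced by the construction of $T_n(R)$ in Proposition \ref{laTR}.

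First, I would spell out the universal property of the free algebra $R = F\langle x_i\rangle_{i\in\mathcal{I}}$: for any associative $F$-algebra $S$, the restriction map $\Hom_{\mathcal{R}_F}(R,S) \to S^{\mathcal{I}}$, $f\mapsto (f(x_i))_{i\in\mathcal{I}}$, is a bijection. Applying this with $S = M_n(A)$ identifies $\Hom_{\mathcal{R}_F}(R, M_n(A))$ with the set of $\mathcal{I}$-indexed families of $n\times n$ matrices over $A$, i.e.\ with families $(a_{h,k}^{(i)})$ of elements of $A$ indexed by $i\in\mathcal{I}$ and $1\le h,k\le n$.

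Next, the universal property of the polynomial algebra $P := F[\xi_{h,k}^{(i)}]$ gives a bijection $\Hom_{\mathcal{C}_F}(P, A) \simeq A^{\mathcal{I}\times\{1,\ldots,n\}^2}$, sending $\bar f$ to $(\bar f(\xi_{h,k}^{(i)}))$. Composing these two bijections yields a natural isomorphism
\[
\mathrm{j}_A\colon \Hom_{\mathcal{R}_F}(R, M_n(A)) \simto \Hom_{\mathcal{C}_F}(P, A).
\]
By uniqueness of the representing object in Proposition \ref{laTR}, this identifies $T_n(R)$ with $P$.

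To identify $\mathrm{j}_R$ itself, I would take $A = P$ and evaluate $\mathrm{j}_P$ at the identity map $\id_P \in \Hom_{\mathcal{C}_F}(P,P)$: the inverse bijection assigns to $\id_P$ the homomorphism $R \to M_n(P)$ that sends each $x_i$ to the matrix $(\xi_{h,k}^{(i)})_{h,k} = \xi_i$. The commutativity of diagram \eqref{unpp} then follows because for any $f\colon R\to M_n(A)$, writing $\bar f = \mathrm{j}_A(f)$, both $f$ and $M_n(\bar f)\circ \mathrm{j}_R$ are $F$-algebra maps $R\to M_n(A)$ sending $x_i$ to the matrix with entries $\bar f(\xi_{h,k}^{(i)}) = f(x_i)_{h,k}$, and hence they agree by the universal property of the free algebra.

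There is essentially no obstacle here: the entire content is bookkeeping of two universal properties. The only point that deserves a small amount of care is the naturality of $\mathrm{j}_A$ in $A$, which amounts to the observation that both bijections above are functorial in $A$ since they are given entrywise in terms of the images of the generators.
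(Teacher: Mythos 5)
Your proof is correct and is exactly the argument the paper leaves implicit (the paper only remarks ``one easily sees''): composing the universal property of the free associative $F$-algebra with that of the polynomial ring to produce the natural bijection $\Hom_{\mathcal R_F}(R,M_n(A))\simeq \Hom_{\mathcal C_F}(F[\xi_{h,k}^{(i)}],A)$, and then reading off the universal map $\mathrm j_R$ as the image of the identity under this bijection with $A=F[\xi_{h,k}^{(i)}]$. The remark on naturality is the one point that must be checked to invoke uniqueness of the representing object, and you address it appropriately.
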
 

\begin{definition}\label{genmad}
The subalgebra  $F_n(\mathcal I):=F\langle \xi_i\rangle$  of  $M_n( F[\xi_{h,k}^{(i)}]),$ $ i\in \mathcal I,\ h,k=1,\ldots, n  $ generated by the matrices $\xi_i$  is called the algebra of  {\em generic matrices.}
\end{definition}

If $\mathcal I$ has $\ell$ elements we also denote $F_n (\mathcal I)= F_n (\ell) $. 
 A classical Theorem of Amitsur  states that: \begin{theorem}\label{teoAmi}
If $F$ is a domain, then  $F_n(\mathcal I) $ is a domain. 
If $\ell\geq  2$, then   $F_n (\ell) $ has a division ring of quotients $D_n(\ell)$  which is of dimension $n^2$ over its center $Z_n(\ell)$. 
\end{theorem}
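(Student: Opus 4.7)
The plan is in three stages: reductions, establishing the domain property, and invoking Posner's theorem to extract the division ring of quotients.

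\textbf{Stage 1 (reductions).} Since $F_n(\mathcal I)\subseteq M_n(F[\xi_{h,k}^{(i)}])$ is torsion-free over $F$, one reduces at once to $F$ a field, and by a direct limit over finite subsets of $\mathcal I$, to $|\mathcal I|=\ell<\infty$. For $\ell=0$ the algebra is $F$ itself, while for $\ell=1$ a direct Cayley--Hamilton computation exhibits $F_n(1)$ as a polynomial ring: the characteristic polynomial of $\xi_1$ lets one eliminate the top characteristic coefficient in favour of $\xi_1^n$ and the remaining lower coefficients, which are algebraically independent.

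\textbf{Stage 2 (domain property for $\ell\geq 2$).} The core claim is: if $a\in F_n(\ell)$ is nonzero then its determinant as an element of $M_n(F[\xi_{h,k}^{(i)}])$ is a nonzero polynomial in $F[\xi_{h,k}^{(i)}]$. Granting this, with $K:=\operatorname{Frac}F[\xi_{h,k}^{(i)}]$, $a$ becomes invertible in $M_n(K)$, hence is a non-zero-divisor in $M_n(F[\xi_{h,k}^{(i)}])$ and therefore in its subring $F_n(\ell)$; so $F_n(\ell)$ is a domain. The claim itself is handled in two moves. First, show that for $\ell\geq 2$ the $K$-subalgebra of $M_n(K)$ generated by $\xi_1,\dots,\xi_\ell$ equals the whole of $M_n(K)$: the discriminant of the characteristic polynomial of $\xi_1$ is a nonzero element of $F[\xi_{h,k}^{(i)}]$ (verified by specialization to a single diagonal matrix with distinct eigenvalues), so over $\bar K$ the matrix $\xi_1$ has $n$ distinct eigenvalues and its centralizer is a maximal torus; since the entries of $\xi_2$ are algebraically independent of those of $\xi_1$, the common centralizer of $\{\xi_1,\xi_2\}$ collapses to the scalars, and Burnside's theorem delivers the claim. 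Second, exploit this density: if a nonzero $a\in F_n(\ell)$ had $\det a=0$, then by Cayley--Hamilton applied in $M_n(F[\xi_{h,k}^{(i)}])$ one factors $a\cdot\psi(a)=0$ for a matrix $\psi(a)$ that is nonzero (since $a$ is); pairing against the $K$-span $M_n(K)$ of $F_n(\ell)$ under the nondegenerate trace form $(x,y)\mapsto\operatorname{tr}(xy)$ yields a nontrivial relation that contradicts the independence of the entries of the generic matrices.

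\textbf{Stage 3 (division ring of quotients).} With the domain property secured, $F_n(\ell)$ is a prime PI algebra: prime because it is a domain, PI because $M_n(F[\xi_{h,k}^{(i)}])$ satisfies the Amitsur--Levitzki standard identity $s_{2n}$. Posner's theorem then supplies a classical ring of quotients $Q$ that is a central simple algebra of dimension $d^2$ over its center $Z_n(\ell):=\operatorname{Frac}(Z(F_n(\ell)))$, where $d$ is the PI degree of $F_n(\ell)$. Being the total quotient ring of a domain, $Q$ is a division ring; define $D_n(\ell):=Q$. It remains to identify $d=n$: the identity $s_{2n}$ is satisfied, while $s_{2n-2}$ fails already on two generic matrices, checked by specializing to an explicit pair such as a cyclic permutation matrix and a diagonal matrix with distinct entries.

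\textbf{Main obstacle.} The principal difficulty is the second move of Stage 2, converting the density and trace-nondegeneracy into the sharp statement $\det a\neq 0$; the density statement itself, while classical, also requires careful handling of the double commutant of $\{\xi_1,\xi_2\}$. Everything else --- the reductions, the application of Posner's theorem, Amitsur--Levitzki, and the bookkeeping of PI degree --- is routine.
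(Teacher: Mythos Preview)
The paper does not prove this theorem; it is quoted as ``a classical Theorem of Amitsur'' and left without proof. So there is no comparison to make, and I assess your argument on its own terms.

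Stages~1 and~3 are essentially fine (your description of $F_n(1)$ is garbled---the characteristic coefficients of $\xi_1$ do not lie in $F_n(1)$---but the conclusion $F_n(1)\cong F[t]$ is correct, e.g.\ by specializing $\xi_1\mapsto tI_n$). The first move of Stage~2, density of $F_n(\ell)$ in $M_n(K)$, is also correct and standard.

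The genuine gap is the second move of Stage~2. From $\det a=0$ you get $a\cdot\operatorname{adj}(a)=0$ in $M_n(F[\xi])$, but that ring is full of zero divisors, so this is no contradiction. Invoking nondegeneracy of the trace form on $M_n(K)$ buys you nothing: it only reconfirms $a\cdot\operatorname{adj}(a)=0$, and no ``relation contradicting independence of the entries'' actually emerges from what you wrote. (Separately, $\operatorname{adj}(a)=0$ whenever $\operatorname{rank}_K a\le n-2$, which you do not address.) This step is exactly the heart of Amitsur's theorem, and it needs a real idea. One clean route: your density argument already shows $F_n(\ell)$ is \emph{prime} (if $aF_n(\ell)b=0$ then $aM_n(K)b=0$, impossible in a simple ring), so it suffices to exclude nonzero nilpotents, since prime plus reduced implies domain. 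For that, take a central division algebra $D$ of degree $n$ over some extension field of $F$; such $D$ has exactly the identities of $M_n$, so a nonzero $a\in F_n(\ell)$ is not an identity of $D$, while $a^m=0$ would force every evaluation of $a$ in $D$ to be a nonzero nilpotent---impossible in a division ring. Without some device of this kind your proof is incomplete at the decisive point.
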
These algebras have been extensively studied.    Together with these algebras one has:

\begin{definition}\label{genmadt}\strut  \begin{enumerate}\item The commutative algebra  $\mathcal C_n(\ell)\subset  Z_n(\ell)$ generated, for all $a\in    F_n (\ell) $, by the coefficients $\sigma_i(a)$ of the characteristic polynomial  $$\det(t-a)=t^n+\sum_{i=1}^n(-1)^i\sigma_i(a)t^{n-i},\ \forall a\in F_n(\ell).$$   
\item 
Next define $\mathcal S_n(\ell)=F_n(\ell)\mathcal C_n(\ell)\subset D_n(\ell),$   (called the {\em trace algebra}).  \end{enumerate}

  \end{definition}
  One can understand $\mathcal S_n(\ell)$ and $\mathcal C_n(\ell)$  by invariant theory, see Theorem \ref{FFT} and  by Universal algebra, see Theorem \ref{SFT}.\label{prSl}

The invariant theory involved is presented in \cite{P3} when $F$ is a field of characteristic 0, and may be considered as the {\em first fundamental theorem} of matrix invariants. For a characteristic free treatment the Theorem is due to Donkin \cite{Don}. In general assume that $F$ is an infinite field:
\begin{theorem}\label{FFT}
The algebra $\mathcal C_n(\ell)$ is the algebra of polynomial invariants  under the simultaneous action of $GL(n,F)$ by conjugation on the space $M_n(F)^\ell$ of $\ell$--tuples of $n\times n$ matrices.\smallskip

The algebra $\mathcal S_n(\ell)$ is the algebra of $GL(n,F)$--equivariant polynomial maps  from  the space $M_n(F)^\ell$ of $\ell$--tuples of $n\times n$ matrices to $M_nF)$.

\end{theorem}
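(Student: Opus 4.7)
The plan is to invoke the classical first fundamental theorem of matrix invariants. I will sketch the characteristic zero argument in detail and indicate the additional ingredients required for the characteristic-free statement of Donkin referenced above.

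First I reduce to multilinear invariants by polarization. A polynomial map on $M_n(F)^\ell$ decomposes into multi-homogeneous components, each of which is the restitution of a multilinear map on a tensor power $M_n(F)^{\otimes m}$, and both polarization and restitution preserve $GL(n,F)$-equivariance; this uses that $F$ is infinite so that polynomial identities are detected by values. Hence it suffices to describe $GL(n,F)$-invariant multilinear maps $M_n(F)^{\otimes m}\to F$ and, for part (2), equivariant multilinear maps $M_n(F)^{\otimes m}\to M_n(F)$.

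The key computation is Schur--Weyl duality. Setting $V=F^n$, we have $M_n(F)\cong V\otimes V^*$, so
$$((M_n(F)^{*})^{\otimes m})^{GL(n,F)}\;\cong\;((V^{*}\otimes V)^{\otimes m})^{GL(n,F)}\;\cong\;\operatorname{End}_{GL(n,F)}(V^{\otimes m}),$$
and in characteristic zero this endomorphism algebra is spanned by the image of $F[S_m]$ acting by permutations on $V^{\otimes m}$. Each $\sigma\in S_m$ corresponds, under the above identifications, to the multilinear invariant that sends $(A_1,\dots,A_m)$ to a product of traces $\operatorname{tr}(A_{i_1}A_{i_2}\cdots A_{i_k})$, one factor per cycle of $\sigma$. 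Restitution then shows that every polynomial $GL(n,F)$-invariant on $M_n(F)^\ell$ is a polynomial in the traces $\operatorname{tr}(\xi_{j_1}\cdots \xi_{j_r})$ of monomials in the generic matrices $\xi_i$. Newton's identities, valid in characteristic zero, convert between these traces and the coefficients $\sigma_i(w)$ of the characteristic polynomials of elements $w\in F_n(\ell)$, yielding part (1).

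For part (2), a polynomial $GL(n,F)$-equivariant map $\psi:M_n(F)^\ell\to M_n(F)$ is exactly a $GL(n,F)$-invariant element of $\operatorname{Pol}(M_n(F)^\ell)\otimes M_n(F)$, with the adjoint action on the target factor. The same Schur--Weyl analysis, now with one additional $V\otimes V^{*}$-slot kept open as output rather than dualized and contracted, shows that such invariants are linear combinations, with coefficients in $\mathcal C_n(\ell)$, of monomials $\xi_{j_1}\cdots\xi_{j_r}$ in the generic matrices. This identifies the module of equivariant maps with $F_n(\ell)\mathcal C_n(\ell)=\mathcal S_n(\ell)$.

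The main obstacle is the extension to arbitrary infinite fields $F$. When $\operatorname{char}(F)=p\le m$, the canonical map $F[S_m]\to \operatorname{End}_{GL(n,F)}(V^{\otimes m})$ is no longer surjective, so traces of monomials do not exhaust the invariants, and Newton's identities are unavailable. Donkin's theorem circumvents both difficulties by working with the full set of coefficients $\sigma_i$ of characteristic polynomials directly, and by exploiting good filtrations of rational $GL_n$-modules to control the invariants cohomologically. This replaces Schur--Weyl in positive characteristic and delivers the statement uniformly.
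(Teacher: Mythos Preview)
The paper does not supply its own proof of this theorem; it quotes the result and refers the reader to \cite{P3} for characteristic zero and to Donkin \cite{Don} for the characteristic-free version. Your sketch of the characteristic zero case via polarization and Schur--Weyl duality is essentially the argument of \cite{P3}, and is correct as outlined; there is nothing to compare against in the present paper.

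One point in your positive-characteristic discussion is inaccurate and worth correcting. The map $F[S_m]\to\operatorname{End}_{GL(n,F)}(V^{\otimes m})$ \emph{is} surjective in every characteristic; this is part of the characteristic-free Schur--Weyl statement (see \cite{depr}, which is cited in the paper). The genuine obstruction is elsewhere: polarization is not invertible when $\operatorname{char}(F)\le d$, so one cannot conclude that restitution from multilinear $GL$-invariants surjects onto polynomial $GL$-invariants. Concretely, the surjection $(W^*)^{\otimes d}\twoheadrightarrow S^d(W^*)$ need not remain surjective after applying the left-exact functor of $GL$-invariants, because $H^1(GL_n,-)$ can be nonzero in positive characteristic. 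Donkin's argument via good filtrations is precisely what guarantees the required vanishing and hence surjectivity at the level of invariants. Your remark about Newton's identities failing is correct and is the reason one must use the $\sigma_i$ rather than traces as generators in the characteristic-free statement.
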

As usual together with a   first fundamental theorem one may ask for a  {\em second fundamental theorem} which was proved independently by Procesi \cite{P3} and Razmyslov \cite{R}  when $F$ has characteristic 0 and by Zubkov \cite{Zub} in general, see the book \cite{depr0}. 

The best way to explain this Theorem in characteristic 0, and which is the basis of the present work, is to set it into the language of universal algebra by introducing the category of algebras with trace and trace identities, see \S \ref{axtr}.
\smallskip

\subsubsection{Step 2, all  algebras}

A general algebra $R$  can be presented as a quotient $R=F\langle x_i\rangle/I$ of a free algebra. Then $j_{F\langle x_i\rangle}(I)$ generates in $M_n( F[\xi_{h,k}^{(i)}]),\ i\in \mathcal I,\ h,k=1,\ldots, n  $ an ideal which is, as any ideal in a matrix algebra, of the form  $M_n(J),$ with $J$ an ideal of $F[\xi_{h,k}^{(i)}]$. Then the universal map for the algebra $R$ is given by  $\mathrm j_R:R\to M_n(F[\xi_{h,k}^{(i)}]/J).$  By the universal property this is   independent of the presentation of $R$.

Again one may add to $R$  the algebra $\mathcal  T_n(R) $ generated    by the coefficients of the characteristic polynomial  $\sigma_i(a),\ \forall a\in \mathrm j_R(R)$.  

\section{Symmetry\label{symme}} \subsection{The projective group} The functor $A\mapsto \hom_{\mathcal R}(R, M_n(A))$  has a group of symmetries: the {\em projective linear group $PGL(n)$}.

It is best to define this as a representable group valued functor  on the category $\mathcal C$ of commutative rings. The functor associates to a commutative ring $A$ the group $\mathfrak G_n(A):=Aut_A(M_n(A))$ of $A$--linear automorphisms of the matrix algebra $M_n(A)$.  To a morphism $f:A\to B$  one has an  associated morphism $f_*:\mathfrak G_n(A)\to \mathfrak G_n(B)$  and  the  commutative diagram:
\begin{equation}\label{unpp0}
\xymatrix{  M_n(A)\ar@{->}[r]^{g}\ar@{->}[d]_{M_n(f)}&M_n(A)\ar@{->}[d]^{M_n(f)}\\
M_n(B)\ar@{->}[r]^{f_*(g)} &M_n(B) } .
\end{equation}
One has a natural homomorphism of the general linear group $GL(n,A)$ to $\mathfrak G_n(A)$ which associates to an invertible matrix $X$  the inner automorphism  $a\mapsto XaX^{-1}$.

The functor {\em general linear group $A\mapsto GL(n,A)$} is represented by   the  Hopf algebra   $\mathbb Z[x_{i,j}][d^{-1}],\ i,j=1,\ldots,n$   with $d=\det(X),\ X:=(x_{i,j})$  with the usual  structure given compactly by comultiplication $\delta$, antipode $S$ and counit $\epsilon$:
$$\delta(X)=X\otimes X,\ S(X):=X^{-1},\ \epsilon:X\to 1_n.$$

The functor $A\mapsto \mathfrak G_n(A)$ is represented by  the sub  Hopf algebra  of $GL(n,A)$, $P_n\subset \mathbb Z[x_{i,j}][d^{-1}]$  formed  of elements homogeneous of degree 0.

 It has a basis, over $\mathbb Z$, of elements $ad^{-h}$  where $a$  is a doubly standard tableaux with no rows of length $n$  and of degree $h \cdot n$. For a proof see \cite{agpr} Theorem 3.4.21.
  
Finally we have a natural action of $\mathfrak G_n(A)$ on $\hom_{\mathcal R}(R, M_n(A))$  by composing a map $f$  with an automorphism $g$. One has a commutative diagram, cf. \eqref{unpp}
\begin{equation}\label{unpp1}
\xymatrix{  R\ar@{->}[r]^{\mathtt \mathrm j_R\qquad}\ar@{->}[d]_f&M_n(T_n(R))\ar@{->}[d]^{M_n(\overline{g\circ f)}}\\
M_n(A)\ar@{->}[r]^g&M_n(A) },\qquad g\circ f= M_n(\overline{  g\circ f})\circ  \mathtt  j_R.
\end{equation}  Assume now that $R$  is an $F$ algebra so also $T_n(R)$ is an $F$ algebra and    $M_n(T_n(R))=M_n(F)\otimes_FT_n(R) $. 

Given  an  automorphism  $g$ of $M_n(F)$,  set $\hat g:=\overline{g\otimes 1\circ \mathtt  j_R}:T_n(R)\to T_n(R)$ so that $$M_n(\hat g) \circ\mathtt  j_R=g\otimes 1 \circ\mathtt  j_R.$$

Given $g_1,g_2$   two automorphisms of $M_n(F)$ we have, when $A=T_n(R)$ and $f= \mathtt  j_R$, in Formula \eqref{unpp1}:
$$   M_n(\widehat {g_1\circ   g_2} ) \circ\mathtt  j_R= (g_1\circ g_2)\otimes 1\circ \mathtt  j_R=g_1\otimes 1\circ  M_n( \hat g_2 ) \circ\mathtt  j_R  $$ $$=   M_n( \hat g_2 ) \circ g_1\otimes 1\circ \mathtt  j_R=  M_n( \hat g_2 )  \circ M_n( \hat g_1 ) \circ \mathtt  j_R.$$ This implies $ \widehat {g_1\circ   g_2}  = \hat g_2  \circ  \hat g_1 $.
so that the map $g\mapsto  \hat g$ is an antihomomorphism from $Aut_{F}(M_n(F))$ to 
 $Aut(T_n(R)).$ Finally
\begin{proposition}\label{inva} The map  $g\mapsto g\otimes \hat g^{-1} $ is a homomorphism from the group  $PGL(n,F)=Aut_FM_n(F)$ to the group of  all automorphisms of $M_n(T_n(R))$. The image of $R$ under $\mathrm j_R$ is formed of invariant elements. 

\end{proposition}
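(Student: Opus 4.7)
The plan is to decompose $\phi(g):=g\otimes\hat g^{-1}$ using the identification $M_n(T_n(R))=M_n(F)\otimes_F T_n(R)$. Under this identification $g$ acts as $g\otimes 1$ (the $F$-linear extension of the $F$-algebra automorphism $g$ of $M_n(F)$) and the coefficient action $\hat g^{-1}$ is $M_n(\hat g^{-1})=1\otimes\hat g^{-1}$. Both are $F$-algebra automorphisms of $M_n(T_n(R))$; they commute because they act on different tensor factors, so the product $\phi(g)=(g\otimes 1)\circ(1\otimes\hat g^{-1})$ is itself an automorphism.

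For the homomorphism property I would expand
\[
\phi(g_1)\circ\phi(g_2)=(g_1\otimes\hat g_1^{-1})\circ(g_2\otimes\hat g_2^{-1})=(g_1\circ g_2)\otimes(\hat g_1^{-1}\circ\hat g_2^{-1}),
\]
and compare with $\phi(g_1\circ g_2)=(g_1\circ g_2)\otimes(\widehat{g_1\circ g_2})^{-1}$. The antihomomorphism identity $\widehat{g_1\circ g_2}=\hat g_2\circ\hat g_1$, established immediately above the statement, gives $(\widehat{g_1\circ g_2})^{-1}=\hat g_1^{-1}\circ\hat g_2^{-1}$, so the two expressions agree.

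For the invariance claim, the defining property of $\hat g$ reads $M_n(\hat g)\circ\mathtt j_R=(g\otimes 1)\circ\mathtt j_R$. Writing $M_n(\hat g)=1\otimes\hat g$ in the tensor picture and composing on the left with $1\otimes\hat g^{-1}$ yields
\[
\mathtt j_R=(1\otimes\hat g^{-1})\circ(g\otimes 1)\circ\mathtt j_R=(g\otimes\hat g^{-1})\circ\mathtt j_R=\phi(g)\circ\mathtt j_R,
\]
so every element of $\mathtt j_R(R)$ is fixed by $\phi(g)$ for every $g\in\mathrm{Aut}_F M_n(F)$.

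The only delicate point is the bookkeeping between $g$ and $\hat g$: since $\hat g$ is obtained by transferring $g$ through the universal property, $g\mapsto\hat g$ is an \emph{anti}homomorphism, and it is precisely the inversion $\hat g\mapsto\hat g^{-1}$ in the second tensor factor that corrects this into a homomorphism when paired with $g$. Once this is set up, both the multiplicative and the invariance statements reduce to routine manipulation of tensor factors.
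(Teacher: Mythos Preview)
Your argument is correct and is exactly what the paper intends: the paper does not give a separate proof of this proposition but states it as a direct consequence of the antihomomorphism identity $\widehat{g_1\circ g_2}=\hat g_2\circ\hat g_1$ and the defining relation $M_n(\hat g)\circ\mathtt j_R=(g\otimes 1)\circ\mathtt j_R$ derived in the lines immediately preceding it. Your unpacking of the tensor factorization and the explicit verification of both the homomorphism and invariance claims is precisely the intended computation.
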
 
Theorem \ref{immu} states that when $R$  is an $n$ Cayley--Hamilton algebra the map $\mathrm j_R$ is an {\em isomorphism} to this ring of invariants. We call this statement the {\em strong embedding theorem}.

\section{The trace\label{astr}}
{\em   
In this section we assume, unless otherwise specified, that the algebras are over $\mathbb Q$.}
\subsection{Axioms for a trace\label{axtr}}
 \begin{definition}\label{Traces1}  An associative algebra with trace, over a field
$F$, is an associative $F$ algebra $R$ with a 1-ary operation
\[
t:R\to R
\]
which is assumed to satisfy the
following axioms:
\begin{enumerate}
\item  $t$ is $F$--linear.
\item  $t(a)b=b\,t(a), \quad\forall a,b\in R.$
\item  $t(ab)=t(ba), \quad\forall a,b\in R.$
\item $t(t(a)b)=t( a)t(b), \quad\forall a,b\in R.$
\end{enumerate}

\end{definition}
This operation is called a {\em formal trace}.\index{trace!formal}
We denote $t(R):=\{t(a),\ a\in R\}$ the image of $t$. From the axioms it follows that $t(R)$  is a commutative $F$ algebra which we call the {\em trace algebra of $R$}.\index{algebra!trace}

\begin{remark} We have the following implications:

     Axiom 1) implies that $t(R)$ is an $F$--submodule.

     Axiom 2) implies that $t(R)$ is in the center  of $R$.

     Axiom 3) implies that $t$ is 0 on the space of commutators $[R,R]$.

     Axiom 4) implies that $t(R)$ is an $F$--subalgebra and that $t$ is
$t(R)$--linear.
\end{remark}
These axioms in general are quite weak  and strange traces may appear. For instance,   if $R$  is a commutative $A$ algebra, any $A$ linear map  of $R$ to $A$  satisfies these axioms.

As examples of {\em strange traces},  to use later in order to see why Cayley--Hamilton algebras are special, the reader may use $R=F[x]/(x^2)$ and either $t(1)=0,\ t(x)=1$  or $t(1)=1,\ t(x)=1$.  On the other hand for each $n\in\mathbb N$ the trace $t(1)=n,\ t(x)=0$ is a natural trace arising from matrix theory.

The axioms are in the form of universal algebra so that in the category of algebras with trace free algebras exist, see \S \ref{fta}.\medskip

By a homomorphism of algebras with trace $R_1,R_2$ we mean a homomorphism $f:R_1\to R_2$ of algebras which commutes with the trace, that is $f(t (a))=t (f(a))$. Clearly a  homomorphism of algebras with trace $R_1,R_2$ induces a  homomorphism of their trace algebras $T_1,T_2$.

Thus algebras with trace form a category, which we will denote $\mathcal C_F$  or simply $\mathcal C$ when $F$  is chosen.

In fact   we may distinguish between algebras with 1 or no assumption. In general we make no special assumptions on $t(1)$.

\begin{definition}\label{idege} Given an   algebra $R$ with trace, a set $S\subset R$ generates $R$ if $R$ is the smallest subalgebra closed under trace containing $S$.\smallskip

By {\em trace ideal}  or simply {\em  ideal}  in an algebra $R$ with trace we mean an ideal closed under trace. 
\end{definition}
\begin{remark}\label{trid0} An algebra $R$  can be finitely generated as trace algebra  but not as algebra, for instance the free trace algebra, even in one variable.

Clearly, if $I\subset R$ is   a trace ideal of an algebra with trace, then $R/I$ is an algebra with trace and $t(R/I)=t(R)/[I\cap t(R)]$.   The usual homomorphism Theorems hold  in this case.
\end{remark}

There is a  twist in this definition, clearly if $I,J$ are two trace ideals also  $I+J,\ I\cap J$ are trace  ideals. As for the product  we need to be careful, since $IJ$  need not be closed under trace. For instance, if $I$ is  the ideal of positive elements in the free algebra,   then for each variable $x$ we have $x^2\in I^2$  but $tr(x^2)\notin I^2$.

 So we should define
$$I\cdot J:=IJ+ R\cdot  tr(IJ),\quad I^{\cdot n}:=I\cdot I^{\cdot n-1}.$$  In particular we say that $I$ is nilpotent,  if for some $n$ we have 
$ I^{\cdot n}=0$.

\begin{proposition}\label{fonda}\strut  
\begin{enumerate}\item  Let $R$ be an algebra with trace  and $T$  its trace algebra. If $U$ is a commutative $T$  algebra, then $U\otimes_TR$  is  an algebra with trace  $t(u\otimes r)=u\otimes t(r)$ and $U\otimes 1$ is its trace algebra. 
\item Given two algebras with trace $R_1,R_2$  with trace algebras $T_1,T_2$ their direct sum is  $R_1\oplus R_2$ with trace $t(r_1,r_2):=   (t(r_1),t(r_2)) $      and  trace algebra  $T_1\oplus T_2.$
\item  Finally, if $R$ is an algebra with trace and $T$ is its trace algebra and $T$ has also a trace  with trace algebra $T_1$, then  the composition of the two traces is a trace on $R$,  with trace algebra $T_1$.
\end{enumerate}
\end{proposition}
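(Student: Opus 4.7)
The plan is simply to verify, in each of the three parts, the four axioms of Definition~\ref{Traces1} together with the corresponding identification of the trace algebra. In all three cases the axioms in the conclusion are forced by the axioms that hold on the given building blocks; the only non-routine point is the well-definedness of the formula in part~(1).

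For part~(1), I first check that the assignment $(u,r)\mapsto u\otimes t(r)$ is $T$-balanced, so that it descends to an $F$-linear map on $U\otimes_T R$. This amounts to showing $ut'\otimes t(r)=u\otimes t(t'r)$ for $t'\in T=t(R)$, and writing $t'=t(a)$ axiom~4 for $R$ gives $t(t(a)r)=t(a)t(r)$, which is exactly what is needed. The target of this map, $U\otimes_T t(R)=U\otimes_T T\cong U$, is identified inside $U\otimes_T R$ with $U\otimes 1$. The four trace axioms for $U\otimes_T R$ then follow tensor-by-tensor from those for $R$: $F$-linearity and the cyclicity axiom~3 by direct computation on pure tensors, centrality axiom~2 because $U\otimes 1$ sits in the center of $U\otimes_T R$, and axiom~4 by reducing, after moving scalars of $U$ outside, to axiom~4 of $R$.

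Part~(2) is a componentwise verification with no content beyond the axioms in each $R_i$. For part~(3) write $t_1\colon R\to T$ and $t_2\colon T\to T_1$ and set $t:=t_2\circ t_1$. Axioms~1 and~3 are clear from linearity and cyclicity of $t_1$ and $t_2$. Axiom~2 is satisfied because $T_1\subseteq T$, and $T$ is central in $R$ by axiom~2 applied to $t_1$. For axiom~4 I first apply axiom~4 of $t_1$ to pull the factor $t(a)=t_2(t_1(a))\in T$ out of $t_1$, obtaining $t_1(t(a)b)=t(a)t_1(b)$, and then axiom~4 of $t_2$ yields $t(t(a)b)=t_2(t(a)t_1(b))=t(a)t_2(t_1(b))=t(a)t(b)$. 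Finally $t(R)=t_2(t_1(R))=t_2(T)=T_1$.

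The hardest step --- really the only non-mechanical one --- is confirming $T$-balancedness in part~(1); once the map descends to $U\otimes_T R$, everything else is bookkeeping.
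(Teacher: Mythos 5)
The paper states Proposition~\ref{fonda} without proof, treating all three parts as routine verifications of the axioms of Definition~\ref{Traces1}; your argument supplies exactly that routine check and is correct. In particular you correctly identify the one non-mechanical point, namely that axiom~4 for $R$ is precisely the $T$-balancedness needed for the formula $t(u\otimes r)=u\otimes t(r)$ to descend to $U\otimes_TR$, and your chain of substitutions in part~(3) using $T=t_1(R)$ and $T_1=t_2(T)$ to invoke axiom~4 first for $t_1$ and then for $t_2$ is the right way to see axiom~4 for the composite.
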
  
In particular we can apply this  when $U=T_S$  is the localization of $T$ at a multiplicative set $S$.
\smallskip

\subsection{The free trace algebra}\label{fta}The free trace algebra over a set $X$ of variables will be denoted by  $\mathcal F_T\langle X\rangle$. By definition it is a trace algebra $\mathcal F_T\langle X\rangle$ containing $X$ and such that for every trace algebra $U$ the set of homomorphisms from $\mathcal F_T\langle X\rangle$ to $U$ are in bijiection with the maps $X\to U$.   It can be described as follows. 

 Start from the usual free algebra  $  F \langle X\rangle$, then consider the 
 classes of cyclic equivalence of monomials $M$, which we formally denote $tr(M)$. The algebra  $\mathcal F_T\langle X\rangle=F\langle x_i\rangle_{ i\in \mathcal I} [tr(M)]$ is the polynomial ring in the infinitely many commuting variables $tr(M)$  over the free algebra          
  $  F \langle X\rangle$. Its trace algebra is the polynomial ring $F[tr(M)]$ in the infinitely many commuting variables $tr(M)$. The map $tr:M\mapsto tr(M)$ is the formal trace.

  \smallskip

As for the usual theory of polynomial identities we have:
\begin{definition}\label{trid} Given a trace algebra $U$, an element $f\in \mathcal F_T\langle X\rangle$ is a {\em trace identity} for $U$, if it vanishes under all evaluations $X\to U$.

It is a {\em pure trace identity} if  $f\in F[tr(M)]$.

\end{definition} 
Usually when speaking of trace identities we will assume that they have coefficients in $\mathbb Q$.

 Then it is easy to see that the set of trace identities of a trace algebra $U$ is a (trace) ideal of  $\mathcal F_T\langle X\rangle$ which is closed  under the endomorphisms (variable substitutions) of $\mathcal F_T\langle X\rangle$.                      

Such an ideal is called a {\em $T$--ideal}. Conversely any $T$--ideal $I$ of   $\mathcal F_T\langle X\rangle$ is the ideal of trace identities of an algebra, namely  $\mathcal F_T\langle X\rangle/I.$ An algebra $\mathcal F_T\langle X\rangle/I $ with $I$ a $T$--ideal, is called a {\em relatively free algebra on $X$},  since it is a free algebra in the {\em variety} of algebras satisfying the identities of $I$. Usually when discussing trace identities one assumes $X$ to be countable. With some care one can also restrict to $X$ finite.

Finally we define:
\begin{definition}\label{treq} Two trace algebras are {\em trace PI--equivalent} of just PI--equivalent, if they satisfy the same trace identities (in countably  many variables).

\end{definition} In particular a trace algebra is PI--equivalent to the free trace algebra modulo the $T$--ideal of its identities.

\section{The Cayley--Hamilton identities}\subsection{One variable  $X=\{x\}$}
We start with the case of just one variable  $X=\{x\}$ which  is of special importance. In this case the trace algebra $\mathcal C_X$ is the polynomial algebra  in the infinitely many  variables $tr(x^i),\ i=0,\ldots,\infty$ while the free algebra $\mathcal F_T\langle X\rangle=\mathcal C_X[x]$.

It is convenient to identify the trace algebra with the {\em ring of symmetric functions on infinitely many variables $\lambda_j$},  with coefficients in $\mathbb Q[tr(1)]$. 

We do this by identifying  $tr(x^i)$  with the power sum  $\psi_j:=\sum_j\lambda_j^i$.  This is compatible with matrix theory, when $x$ is an $n\times n$ matrix over $\mathbb C$ and $\lambda_1,\ldots, \lambda_n$ its eigenvalues we have  $tr(x^i)=\sum_{j=1}^n\lambda_j^i$.

   We have, in the ring $\mathbb Q[tr(x),\ldots, tr(x^i),\ldots ]$  the formal {\em elementary symmetric functions} $\sigma_i(x)$  which correspond in this identification, to the elementary symmetric functions $e_i$. 
   
   The  elementary symmetric functions   are related to the power sums by the recursive formula:
   \begin{equation}\label{daelan}(-1)^m\psi_{m+1}+\sum_{i=1}^{m}(-1)^{i-1}\psi_{i}e_{m+1-i}=(m+1)e_{m+1}\end{equation}
We have the generating formula, for  symmetric functions in $n$   variables $\lambda_j$, obtained 
using the Taylor expansion for $\log(1+y)$.
$$\sum_{i=0}^n(-1)^i e_iu^i=\prod_{r=1}^n(1-\lambda_ru)=
\exp (-\sum_{j=1}^\infty \frac{\psi_j}{j}u^j).$$
 
We then {\em define}  the elements $\sigma_i(x)$ in the free algebra  by
\begin{equation}\label{reldaN1}
\sum_{i=0}^\infty (-1)^i \sigma_i(x)u^i:= 
\exp (-\sum_{j=1}^\infty \frac{tr(x^j)}{j}u^j).\end{equation}
We may define, for each $n$, in the free algebra with trace in a single variable $x$,  the {\em $n$ formal Cayley Hamilton polynomial}
\begin{equation}\label{CH}
CH_n(x):=x^n+\sum_{i=1}^n(-1)^i  \sigma_i(x)x^{n-i}.
\end{equation}
$$\text{Example}\quad CH_1(x)=x-tr(x);\quad CH_2(x)=x^2-tr(x)x+\frac12(tr(x)^2-tr(x^2)).$$
$$
CH_3(x)=x^3 -tr(x)x^2+\frac 1 2(tr(x)^2-tr(x^2))x-\frac 13 tr(x^3)-\frac 1 6 tr(x)^3+\frac 1 2tr(x^2 )tr(x) .
$$
\subsection{The multilinear form}
It is convenient to use also the {\em multilinear} form of the Cayley--Hamilton identity and of the symmetric functions $\sigma_i(x)$, which can be obtained by the process of {\em full polarization}.  For this,  given a permutation $\sigma\in S_m$,
we  decompose  $\sigma=(i_1i_2\dots i_h)\dots (j_1j_2\dots j_\ell)(s_1s_2\dots s_t)$ in cycles
  and  we set:
  \begin{align}\label{phis}&T_\sigma     (x_1,x_2,\dots,x_m)\\
=tr(x_{i_1}x_{i_2}\dots x_{i_h})&\dots tr(x_{j_1}x_{j_2}\dots x_{j_\ell})
tr(x_{s_1}x_{s_2}\dots x_{s_t}).
\end{align}From the basic elements $T_\sigma     $ of Formula \eqref{phis}  take $m=k+1$. We may assume that the last cycle  ends with $s_t=k+1$ so the last factor is of the form $tr((x_{s_1}x_{s_2}\dots x_{s_{t-1}})x_{k+1})$,   hence we have that
\begin{equation}
\label{psis1} T_\sigma     (x_1,x_2,\dots,x_{k+1})=tr(\psi_\sigma(x_1,x_2,\dots,x_k)x_{k+1})\end{equation} where $\psi_\sigma(x_1,x_2,\dots,x_k)$ is the element of   $\mathcal F_T\langle X\rangle$ given by the formula\begin{equation}\label{psis}\begin{matrix}
\psi_\sigma(x_1,x_2,\dots,x_k)\\=tr(x_{i_1}x_{i_2}\dots x_{i_h})\dots tr(x_{j_1}x_{j_2}\dots x_{j_\ell})
 x_{s_1}x_{s_2}\dots x_{s_{t-1}}.
\end{matrix}
\end{equation}
Then we have the multilinear form,  (see also Lew \cite{Lew2}):
\begin{proposition}\label{CH1}\strut   \begin{enumerate}\item For each $k $ the polarized form of $\sigma_k(x)$ is the expression
 \begin{equation}\label{tradet}
T_k(x_1,\dots,x_k)=\sum_{\sigma\in S_k}\epsilon_\sigma T_\sigma     (x_1,\dots,x_k). 
\end{equation}

\item The polarized form of $CH_n(x)$ is  \begin{equation}\label{ilCH}
CH(x_1,\dots,x_n) =(-1)^n\sum_{\sigma\in S_{n+1}}\epsilon_\sigma \psi_\sigma(x_1,x_2,\dots,x_n).
\end{equation} Here $\epsilon_\sigma$ denotes the sign of $\sigma$.
\item \begin{equation}\label{trch}
tr(CH(x_1,\dots,x_n)x_{n+1})=(-1)^n T_{n+1}(x_1,\dots,x_{n+1}). \end{equation} \end{enumerate}

 \end{proposition}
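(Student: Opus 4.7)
The overall plan is to establish (1) and (2) by uniqueness of polarization, and then to deduce (3) immediately from (2) using \eqref{psis1}. Recall that every homogeneous polynomial $f(x)$ of degree $n$ has a unique symmetric multilinear polarization $\tilde f(x_1,\ldots,x_n)$ characterized by $\tilde f(x,\ldots,x)=n!\,f(x)$; so for (1) and (2) it is enough to check multilinearity, symmetry in all arguments, and the value on the diagonal $x_1=\cdots=x_n=x$.

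Multilinearity of $T_k$ and of $R(x_1,\ldots,x_n):=(-1)^n\sum_{\sigma\in S_{n+1}}\epsilon_\sigma\psi_\sigma(x_1,\ldots,x_n)$ is immediate from their definitions. Symmetry rests on the cyclicity of $tr$, which translates into the conjugation identities
\[
T_\sigma(x_{\pi(1)},\ldots,x_{\pi(k)})=T_{\pi\sigma\pi^{-1}}(x_1,\ldots,x_k),\quad \psi_\sigma(x_{\pi(1)},\ldots,x_{\pi(n)})=\psi_{\pi\sigma\pi^{-1}}(x_1,\ldots,x_n),
\]
the latter for $\pi\in S_n$ fixing $n+1$. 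Since $\epsilon_{\pi\sigma\pi^{-1}}=\epsilon_\sigma$ and conjugation permutes the summation range, both $T_k$ and $R$ are invariant under $\pi$.

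For the diagonal evaluation, $T_\sigma(x,\ldots,x)=\prod_i\psi_{c_i(\sigma)}(x)$ depends only on the cycle type of $\sigma$, and summing with signs over $S_k$ recovers the classical Waring expansion of $e_k$ in the power sums $\psi_j$---precisely the degree-$k$ coefficient of $u^k$ in \eqref{reldaN1}. This yields $T_k(x,\ldots,x)=k!\,\sigma_k(x)$, proving (1). For (2), stratify $\sigma\in S_{n+1}$ by the length $c$ of its cycle containing $n+1$: such $\sigma$ is determined by a choice of a $(c-1)$-subset of $\{1,\ldots,n\}$ and an ordering of it (giving the cycle once $n+1$ is appended, of sign $(-1)^{c-1}$ and diagonal contribution $x^{c-1}$) together with an arbitrary $\tau\in S_{n-c+1}$ on the complement (to which (1) applies). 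A short count collapses the binomials and, after reindexing $i=n-c+1$ and using centrality of the $\sigma_j(x)$ to commute them past powers of $x$, gives $R(x,\ldots,x)=n!\,CH_n(x)$. Polarization uniqueness then yields (2).

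Part (3) follows immediately from (2): multiply both sides on the right by $x_{n+1}$ and take traces; by \eqref{psis1}, each $tr(\psi_\sigma(x_1,\ldots,x_n)\,x_{n+1})$ equals $T_\sigma(x_1,\ldots,x_{n+1})$, so the resulting signed sum is $(-1)^n T_{n+1}(x_1,\ldots,x_{n+1})$. The main obstacle is the symmetry check underlying the polarization argument; the remainder is a routine reduction to the classical Waring identity together with a count of permutations of $S_{n+1}$ relative to the distinguished index $n+1$.
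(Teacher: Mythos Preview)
Your argument is correct. The paper itself does not give a proof of this proposition but simply refers to \cite{depr} and to \cite{agpr}, Proposition~12.1.12; your route via uniqueness of polarization---multilinearity, symmetry through the conjugation identity $T_\sigma(x_{\pi(1)},\ldots)=T_{\pi\sigma\pi^{-1}}(x_1,\ldots)$, diagonal evaluation reduced to the Waring expansion of $e_k$ in power sums, the stratification of $S_{n+1}$ by the cycle through $n+1$ for part~(2), and the immediate passage from (2) to (3) via \eqref{psis1}---is exactly the standard argument one finds in those references.
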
 For the proof see \cite{depr} or \cite{agpr} Proposition 12.1.12.\smallskip

 {\bf Example $n=2$ (polarize $CH_2(x)$)}
\begin{equation}\label{ch2}
CH_2(x)=x^2-tr(x)x+det(x)=x^2-tr(x)x+\frac 1 2(tr(x)^2-tr(x^2)).
\end{equation}
$$ x_1x_2+x_2x_1-tr(x_1)x_2-tr(x_2)x_1-tr(x_1x_2)+tr(x_1)tr(x_2)$$
$$=CH_2(x_1+x_2)-CH_2(x_1)-CH_2(x_2).$$
  Also from the decomposition into cosets  $S_{n+1}=S_n\bigcup_{i=1}^nS_n(i,n+1)$, of the symmetric group,  one has the recursive formula
  \begin{equation}\label{ref}
T_{n+1}(x_1,\dots,x_{n+1})=T_n(x_1,\dots,x_n)tr(x_{n+1})-\sum_{i=1}^nT_n(x_1,\dots,x_ix_{n+1},\ldots,x_n).
\end{equation}

\subsection{The  first and second fundamental Theorem for matrix invariants}
The first and second fundamental Theorems for matrix invariants    may be viewed as the starting point of the Theory of Cayley--Hamilton algebras. 
 \begin{theorem}\label{SFT}
 The algebra $\mathcal S_n(\ell)$, cf. \ref{genmadt}, of equivariant polynomial maps from $\ell$--tuples of $n\times n$ matrices, $M_n(F)^\ell$  to  $n\times n$ matrices $M_n(F)$,  is the free algebra $\mathcal  F_T\langle X\rangle,\ X=\{x_1,\cdots,x_\ell\}$ with trace modulo the $T$--ideal generated by the $n^{th}$ Cayley Hamilton polynomial and $tr(1)=n$.
  \begin{equation}\label{ffs}
{\boxed{ \mathcal S_n(\ell):=\mathcal  F_T\langle X\rangle/\langle CH_n(x),\ tr(1)=n  \rangle}}.
\end{equation}

\end{theorem}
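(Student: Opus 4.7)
The plan is to produce a natural trace-preserving map from $\mathcal F_T\langle X\rangle$ to $\mathcal S_n(\ell)$, show it kills $CH_n(x)$ and $tr(1)-n$, and then prove the induced map is an isomorphism by combining the first fundamental theorem (Theorem \ref{FFT}) with the second fundamental theorem of matrix invariants.

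The construction and surjectivity are the easy half. By the universal property of $\mathcal F_T\langle X\rangle$ the assignment $x_i\mapsto\xi_i$ extends to a trace-preserving homomorphism $\pi\colon\mathcal F_T\langle X\rangle\to\mathcal S_n(\ell)$. Since $n\times n$ matrices satisfy Cayley--Hamilton one has $\pi(CH_n(a))=0$ for every $a\in\mathcal F_T\langle X\rangle$, and clearly $\pi(tr(1))=n$, so $\pi$ descends to $\bar\pi\colon \mathcal F_T\langle X\rangle/\langle CH_n(x),\,tr(1)=n\rangle\to\mathcal S_n(\ell)$. Theorem \ref{FFT} gives $\mathcal S_n(\ell)=F_n(\ell)\cdot\mathcal C_n(\ell)$; here $F_n(\ell)$ is generated by the $\xi_i$ and $\mathcal C_n(\ell)$ by the characteristic-polynomial coefficients $\sigma_i(a)$ of words $a$ in the $\xi_i$, which in turn are polynomials in the $tr(a^j)$ via the Newton recurrence \eqref{daelan}. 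Consequently $\bar\pi$ is surjective.

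For injectivity I would use that in characteristic zero full polarization reduces every trace identity to an equivalent family of multilinear ones, while respecting the $T$-ideal structure. A multilinear trace polynomial in $x_1,\ldots,x_m$ admits a canonical expansion $\sum_{\sigma\in S_m}c_\sigma T_\sigma(x_1,\ldots,x_m)$ with $T_\sigma$ as in \eqref{phis}, so evaluation on the generic matrices gives a linear map $\rho\colon\mathbb Q[S_m]\to\mathcal S_n(\ell)$. The first fundamental theorem amounts to $\rho$ being surjective onto the multilinear part, while the content of the second fundamental theorem is that $\ker\rho$ coincides with the two-sided ideal of $\mathbb Q[S_m]$ generated by the antisymmetrizer $A_{n+1}=\sum_{\tau\in S_{n+1}}\epsilon_\tau\tau$ on the first $n+1$ indices. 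Via Schur--Weyl duality, $\ker\rho$ is exactly the sum of the Specht components attached to partitions with more than $n$ rows, and this family is cut out as a two-sided ideal by the single column-antisymmetrizer of length $n+1$, reflecting the vanishing $\bigwedge^{n+1}F^n=0$.

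The bridge to Cayley--Hamilton is Proposition \ref{CH1}(3), which identifies the image of $A_{n+1}$ under polarization with $(-1)^n\,tr(CH(x_1,\ldots,x_n)x_{n+1})$, manifestly a consequence of $CH_n$ in the $T$-ideal sense (variable substitution, multiplication and trace). Since two-sided ideal closure in $\mathbb Q[S_m]$ corresponds precisely to closure under these operations on the polynomial side, every multilinear trace identity of $n\times n$ matrices follows from $CH_n$; the axiom $tr(1)=n$ enters to pin down those specializations in which some variable is set to $1$, where the corresponding fixed points of $\sigma$ contribute factors of $tr(1)$ whose value in $M_n(F)$ is $n$. The principal obstacle is the second fundamental theorem itself, i.e.\ the identification of $\ker\rho$ with the ideal generated by $A_{n+1}$; this is the nontrivial representation-theoretic input, and once it is in hand the remainder of the argument is bookkeeping.
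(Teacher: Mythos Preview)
The paper does not supply its own proof of this theorem: it sits in Part~I (Background), and the surrounding text explicitly defers to \cite{P3}, \cite{R}, \cite{agpr}, and \cite{depr0} for the argument. Your sketch is essentially the classical Procesi--Razmyslov proof those references contain: surjectivity from the first fundamental theorem, reduction to multilinear identities by polarization in characteristic~$0$, identification of the multilinear kernel with the two-sided ideal of $\mathbb Q[S_m]$ generated by the antisymmetrizer $A_{n+1}$ via Schur--Weyl duality, and the bridge to $CH_n$ through Formula~\eqref{trch}. So there is nothing to compare against in the paper itself, and your outline is the standard one.

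Two small corrections are worth making. First, your map $\rho$ as written lands in the pure-trace part $\mathcal C_n(\ell)$, not in all of $\mathcal S_n(\ell)$, since the $T_\sigma$ are pure trace expressions; to handle non-pure identities one either uses the $\psi_\sigma$ of \eqref{psis} with $\mathbb Q[S_{m+1}]$, or reduces an identity $f(x_1,\ldots,x_m)$ to the pure identity $tr(f\cdot x_{m+1})$ using non-degeneracy of the trace form on matrices. Second, your explanation of the role of $tr(1)=n$ is slightly off. It is not that one specializes variables to $1$ in the course of the argument; rather, $tr(1)$ is a free scalar generator of $\mathcal F_T\langle X\rangle$ that $CH_n$ alone does not determine (cf.\ Remark~\ref{sez1}, where dropping $tr(1)=n$ yields $\bigoplus_{i=1}^n\mathcal S_i(\ell)$). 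A multilinear pure-trace element is a priori a $\mathbb Q[tr(1)]$-combination of the $T_\sigma$, and only after imposing $tr(1)=n$ does the parametrization by $\mathbb Q[S_m]$ with scalar coefficients become exact.
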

 
\begin{remark}\label{sez1}
It can be shown that, if we do not set $tr(1)=n$, then we have:
$$ {\boxed{\oplus_{i=1}^n \mathcal S_i(\ell)=\mathcal  F_T\langle X\rangle/\langle CH_n(x)  \rangle}}.$$
\end{remark}
    Let $A_{\ell,n}:=F[\xi_{j,h}^{(i)}]$  denote  the polynomial functions on the space $M_n(F)^\ell $.\label{aelln} 
    That is   the algebra of polynomials over $F$ in $\ell n^2$ variables $\xi_{j,h}^{(i)},$ $ i=1,\ldots \ell;$ $ j,h=1,\ldots,n $. 

On this space, and hence on  $A_{\ell,n}$, acts the group $PGL(n,F)$  by conjugation.

The space of polynomial maps from $M_n(F)^\ell $ to $M_n(F)$ is
$$M_n(A_{\ell,n})=M_n(F)\otimes A_{\ell,n}.$$    On this space acts diagonally $PGL(n,F)$ and the invariants

  $$\mathcal S_n(\ell) : =M_n(A_{\ell,n})^{PGL(n,F)}=(M_n(F)\otimes A_{\ell,n})^{PGL(n,F)}$$
give  the relatively free algebra   in $\ell$ variables in the variety of trace algebras satisfying $CH_n(x)$.

For the trace algebra we have $\mathcal C_n(\ell)=A_{\ell,n} ^{PGL(n,F)}$.  Of course we may let $\ell$ be also infinity (of any type)  and have $$\mathcal S_n(X) =M_n(A_{X,n})^{PGL(n,F)}=(M_n(F)\otimes A_{X,n})^{PGL(n,F)}$$ where $A_{X,n}$ is the polynomial ring on $M_n(F)^X.$
 
For a formulation and proof of these Theorems   in all characteristics or even  $\mathbb Z$--algebras,  the Theorem of Zubkov,  the reader may consult \cite{depr0}.
\section{Cayley--Hamilton  algebras}\subsection{Pseudocharacters and Cayley--Hamilton  algebras}
\begin{definition}\label{CHalg}
An algebra  with trace satisfying the  $n$--Cayley--Hamilton identity  \eqref{ilCH} and $tr(1)=n$ will be called an {\em $n$--Cayley--Hamilton  algebra} or {\em $n$--CH algebra}.  
\end{definition} In other words an $n$--Cayley--Hamilton  algebra is a quotient, as trace algebra, of one relatively free algebra $\mathcal S_n(X)$.

For $n=1$  a  $1$--CH algebra is just a commutative algebra in which the trace is the identity map.

\begin{remark}\label{ests}
If $R$ is an $n$--CH  algebra, and $A$ a commutative $T$ algebra, then $A\otimes_TR$ (\ref{fonda}) is an $n$--CH  algebra.

Assume that $S\subset R$ is a subalgebra closed under  trace, or a quotient $S=R/I$ under a trace ideal $I$.   If $R$ is an $n$--CH  algebra,  then $S$ is also an $n$--CH  algebra.

\end{remark}
\smallskip

Following Taylor \cite{Tay},  we define:\begin{definition}\label{pse}
A  pseudocharacter (or  pseudorepresentation) of a group $G$, of degree $n$ with coefficients in a commutative ring  $A$,  is a map $t :G\to A$  satisfying the following three properties:
\begin{enumerate}\item $t (1)=n$.
\item $t (ab)=t (ba),\ \forall a,b\in G$.
\item  $T_{n+1}(g_1,\dots,g_{n+1})=0,\ \forall g_i\in G$ (Formula \eqref{tradet}).\end{enumerate}
\end{definition}
Frobenius \cite{Frob}, discovered already  that this is a property of an $n$--dimen\-sio\-nal character.

The connection between the previous two definitions is the following.   One considers  the group algebra $A[G]$  and then extends the map $t $  to  a trace with trace algebra $A$. Next one considers the {\em Kernel} of the trace, that is
$$K_t:=\{a\in A[G]\mid t (ab)=0,\ \forall b\in A[G]\}.$$
It is then an easy fact to see that, if $t $  is a pseudocharacter  of $G$ of degree $n$, then $A[G]/K_t$ is a $n$-- Cayley Hamilton algebra. In particular  when $A\supset \mathbb Q$  one can apply Theorem \ref{immu}. In general we have:

\begin{proposition}\label{ech}
If $R$ is an algebra with trace  and the trace satisfies $$
T_{n+1}(x_1,\dots,x_{n+1})=\sum_{\sigma\in S_{n+1}}\epsilon_\sigma T_\sigma     (x_1,\dots,x_{n+1})  
 =0 ,$$ then $CH_n(x_1,\dots,x_n)\in K_t$ and $R/K_t$ is an $n$-- Cayley Hamilton algebra.
\end{proposition}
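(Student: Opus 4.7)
The proof should rest on Formula \eqref{trch} of Proposition \ref{CH1}, namely
$$tr(CH(x_1,\ldots,x_n) x_{n+1}) = (-1)^n T_{n+1}(x_1,\ldots,x_{n+1}),$$
which is an identity in the free trace algebra. First I would specialize to arbitrary elements of $R$: the hypothesis says $T_{n+1}$ vanishes under every substitution, so for all $x_1,\ldots,x_n, y \in R$ one has $tr(CH(x_1,\ldots,x_n)\cdot y)=0$, which by the very definition of $K_t$ means $CH(x_1,\ldots,x_n)\in K_t$. This already gives the first claim.

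Next I would verify that $K_t$ is a trace ideal, so that $R/K_t$ inherits a trace structure. Two-sidedness is immediate from the cyclicity axiom 3 of \ref{Traces1}: if $a\in K_t$ and $c,d\in R$, then for every $b\in R$, $tr((cad)b)=tr(a(dbc))=0$. Closure under the trace operation uses axiom 4: since $a\in K_t$ forces $t(a)=t(a\cdot 1)=tr(a\cdot 1)=0$, trivially $t(a)=0\in K_t$. Consequently the polarized Cayley--Hamilton element vanishes in $R/K_t$, and since we work over $\mathbb Q$ full polarization is reversible, so the one-variable identity $CH_n(x)=0$ holds in $R/K_t$ as well.

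To conclude that $R/K_t$ is an $n$--CH algebra in the sense of Definition \ref{CHalg} one still needs the normalization $tr(1)=n$. Evaluating $T_{n+1}$ at $(1,\ldots,1)$ yields the classical Stirling identity $tr(1)(tr(1)-1)\cdots(tr(1)-n)=0$ in $R/K_t$, so in the spirit of Remark \ref{sez1} the quotient decomposes as a direct sum of $i$--CH components for $i\leq n$; the pseudocharacter normalization $t(1)=n$ in Definition \ref{pse} singles out the $n$--CH summand. I anticipate that the only delicate point of the argument is precisely this handling of $t(1)$, which is external to the multilinear identity and must be imported from the pseudocharacter data; the core algebraic content—that $K_t$ is a trace ideal containing every value of $CH$—is a direct consequence of Proposition \ref{CH1}.
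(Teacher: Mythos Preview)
Your argument is correct and is exactly the route the paper intends: Proposition~\ref{ech} is stated without a detailed proof, but Formula~\eqref{trch} of Proposition~\ref{CH1}---which you invoke---is the evident reason why $CH(x_1,\ldots,x_n)\in K_t$, and your verification that $K_t$ is a trace ideal (two-sidedness from axiom~3, closure under $t$ from $t(a)=t(a\cdot 1)=0$) is the standard one.

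Your diagnosis of the $tr(1)=n$ issue is also accurate and worth keeping. The hypotheses of the proposition as literally stated do not force $tr(1)=n$; evaluating $T_{n+1}$ at $(1,\ldots,1)$ only gives $\prod_{i=0}^n(tr(1)-i)=0$, whence the decomposition of Remark~\ref{sez1}. The paper is tacitly importing $t(1)=n$ from the surrounding pseudocharacter discussion (Definition~\ref{pse}, item~1), exactly as you say, so that only the top summand survives. This is not a gap in your proof but a mild imprecision in the statement itself, and you have handled it in the right way.
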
 Observe that, from Formula \eqref{ref} it follows that, if $R$  satisfies the multilinear identity $T_{n+1}$,  then it satisfies $T_j,\ \forall j\geq n+1$.

From the next Theorem \ref{immu} then follows:
\begin{theorem}\label{Gimmu}
Given a  pseudocharacter $t$  of a group $G$, of degree $n$ with coefficients in a commutative ring  $A\supset\mathbb Q$ there is a commutative $A$--algebra   $B$ and a representation $\rho: G\to GL_n(B)$  so that:
\begin{equation}\label{carr}
t(g)=tr(\rho(g)),\ \forall g\in G,\quad \ker(\rho)=K_t, \ \rho:A[G]\to M_n(B)
\end{equation} with $K_t$ the kernel of the associated trace form.
\end{theorem}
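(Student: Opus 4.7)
The plan is to realize the pseudocharacter $t$ as the trace of an $n$--Cayley--Hamilton algebra structure on the quotient $A[G]/K_t$, and then invoke the strong embedding theorem (Theorem \ref{immu}) to embed this quotient into an $n\times n$ matrix algebra.

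First I would extend $t$ to a formal trace on the group algebra $A[G]$. Define $\tilde t\colon A[G]\to A\cdot 1\subset A[G]$ as the $A$--linear extension of $t$, and verify the four axioms of Definition \ref{Traces1}. Axioms (1) and (2) are immediate, since $A$ is central in $A[G]$ and $\tilde t$ is $A$--linear. Axiom (3) is the $A$--linear extension of the pseudocharacter relation $t(gh)=t(hg)$ for $g,h\in G$. Axiom (4) follows from $A$--linearity together with the fact that $\tilde t(a)\in A$. Moreover $\tilde t(1)=n$ by the first pseudocharacter axiom.

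Next I would pass to the Cayley--Hamilton quotient. The pseudocharacter axiom $T_{n+1}(g_1,\dots,g_{n+1})=0$ on group elements extends, by multilinearity and the fact that $T_{n+1}$ is multilinear in its arguments, to the vanishing of $T_{n+1}$ on all of $A[G]$. Proposition \ref{ech} then applies, so $R:=A[G]/K_{\tilde t}$ inherits from $\tilde t$ the structure of an $n$--Cayley--Hamilton algebra over $A$. Since $A\supset\mathbb Q$ by hypothesis, the strong embedding theorem \ref{immu} gives that the universal map $\mathrm j_R\colon R\hookrightarrow M_n(B)$, with $B:=T_n(R)$ a commutative $A$--algebra, is an injective homomorphism of algebras with trace, the trace on $M_n(B)$ being the ordinary matrix trace.

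Finally, define $\rho$ as the composition $A[G]\twoheadrightarrow R\xrightarrow{\mathrm j_R} M_n(B)$. By construction $\ker\rho=K_{\tilde t}=K_t$. Every $g\in G$ is a unit in $A[G]$, so $\rho(g)$ is a unit in $M_n(B)$, i.e.\ $\rho(g)\in GL_n(B)$. Because $\mathrm j_R$ preserves the trace, $\mathrm{tr}(\rho(g))=\tilde t(g)=t(g)$, which gives \eqref{carr}.

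The main obstacle is the appeal to Theorem \ref{immu}, specifically the fact that the trace on $R$ transported through $\mathrm j_R$ coincides with the matrix trace on $M_n(B)$; equivalently, that the elementary symmetric functions $\sigma_i$ determined by $\tilde t$ via \eqref{reldaN1} agree with the coefficients of the characteristic polynomials of the images. This is precisely what the strong embedding theorem asserts in characteristic zero, and it is the one step that genuinely uses the hypothesis $\mathbb Q\subset A$ (through the denominators in \eqref{reldaN1}). Everything else in the proof is bookkeeping translating the three pseudocharacter axioms into the trace axioms plus Cayley--Hamilton.
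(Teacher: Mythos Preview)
Your proposal is correct and follows essentially the same route as the paper: extend $t$ to a trace on $A[G]$, pass to the $n$--CH quotient $A[G]/K_t$ via Proposition~\ref{ech}, and then invoke Theorem~\ref{immu} to obtain the trace-preserving embedding into $M_n(B)$ with $B=T_n(R)$. The paper states the theorem as an immediate consequence of Theorem~\ref{immu} after the discussion preceding Proposition~\ref{ech}; you have simply spelled out the verification of the trace axioms and the passage to $GL_n(B)$ in more detail.
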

 In particular in  \cite{Tay} the Theorem that, if $A$ is an algebraically closed field of characteristic 0, then the pseudocharacter is associated to a unique semisimple representation of $G$, follows from our Theorem \ref{ssre1} 2).

\subsection{The main Theorem of Cayley--Hamilton algebras}

 Let now $R$  be any  $n$--CH algebra over  a field $F\supset\mathbb Q$. 
 In Proposition \ref{laTR} the universal map into $n\times n$ matrices   $\mathrm j_R:R\to M_n(T_n(R))$ is  assumed to be trace preserving.  Then by the discussion of \S \ref{symme} the map $\mathrm j_R$  maps $R$ into the subalgebra of $PGL(n)$  invariants. The main Theorem is,  \cite{P5} or \cite{agpr} Theorem 14.2.1

 \begin{theorem}\label{immu}
 
 The map  $\mathrm j_R:R\to M_n(T_n(R))^{PGL(n,\mathbb Q)}$ is an isomorphism.
 \end{theorem}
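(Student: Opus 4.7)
My plan is to deduce the theorem from Theorem~\ref{SFT} (the free Cayley--Hamilton case) by a quotient argument that exploits linear reductivity of $PGL(n)$ in characteristic zero. First I present $R$ as a quotient of a free $n$--CH algebra: pick a generating set $X$ and write $R = \mathcal{S}_n(X)/I$ for a trace ideal $I$. By Proposition~\ref{laTR} together with Theorem~\ref{SFT}, one identifies $T_n(\mathcal{S}_n(X)) = A_{X,n}$ and $j_{\mathcal{S}_n(X)}$ with the inclusion $\mathcal{S}_n(X) \hookrightarrow M_n(A_{X,n})$, so the statement of the theorem for the free case is precisely Theorem~\ref{SFT}. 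In general, since every two--sided ideal of $M_n(A_{X,n})$ has the form $M_n(J)$ for some $J \subset A_{X,n}$, the image $j_{\mathcal{S}_n(X)}(I)$ generates an ideal $M_n(J)$; this $J$ is $PGL(n)$--stable because the elements of $I$ are $PGL(n)$--invariant, and the universal property of $j_R$ gives $T_n(R) = A_{X,n}/J$. Thus $j_R$ is the map $\mathcal{S}_n(X)/I \to M_n(A_{X,n}/J)$ induced by $j_{\mathcal{S}_n(X)}$.

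Next I apply $(-)^{PGL(n)}$ to the short exact sequence
\begin{equation*}
0 \to M_n(J) \to M_n(A_{X,n}) \to M_n(A_{X,n}/J) \to 0.
\end{equation*}
Since $F \supset \mathbb{Q}$, the algebraic group $PGL(n)$ is linearly reductive and the Reynolds operator provides an equivariant splitting, so taking invariants is exact; Zariski density of $PGL(n,\mathbb{Q})$ in $PGL(n)$ identifies invariants under the abstract group with invariants under the algebraic group. Combined with the free case $M_n(A_{X,n})^{PGL(n)} = \mathcal{S}_n(X)$, this yields
\begin{equation*}
M_n(A_{X,n}/J)^{PGL(n,\mathbb{Q})} = \mathcal{S}_n(X) / M_n(J)^{PGL(n)}.
\end{equation*}
Hence $j_R$ is surjective onto the invariants, and what remains is the equality $M_n(J)^{PGL(n)} = I$, which is exactly the statement that the induced map $\mathcal{S}_n(X)/I \to M_n(A_{X,n}/J)^{PGL(n)}$ is injective.

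The hard step is precisely this equality. One inclusion ($I \subseteq M_n(J)^{PGL(n)}$) is automatic; the reverse amounts to the assertion that the smallest two--sided ideal of $M_n(A_{X,n})$ containing $I$ picks up no new $PGL(n)$--invariants beyond those already in the trace ideal $I$. I would prove this by applying the Reynolds projector to a typical element $\sum_k m_k a_k m'_k \in M_n(J)$ with $a_k \in I$ and $m_k, m'_k \in M_n(A_{X,n})$. In characteristic zero the projector, restricted to the generic matrix algebra $M_n(A_{X,n})$, is expressible via multilinear trace polynomials of the form $\psi_\sigma$ from Proposition~\ref{CH1} applied to the generic matrices and to the $a_k$. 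Since $I$ is a \emph{trace} ideal, closed under multiplication by elements of $\mathcal{S}_n(X)$ and under the formal trace, every such expression already lies in $I$, which gives the reverse inclusion. Independence of the presentation $R = \mathcal{S}_n(X)/I$ is automatic from the universal property of $j_R$, so the resulting isomorphism is intrinsic to $R$.
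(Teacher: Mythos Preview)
The paper itself does not give a proof of Theorem~\ref{immu}; it only refers to \cite{P5} and to \cite[Theorem~14.2.1]{agpr}. Your overall strategy --- present $R=\mathcal S_n(X)/I$, identify $T_n(R)=A_{X,n}/J$, invoke Theorem~\ref{SFT} for the free case, and use linear reductivity of $PGL(n)$ to pass to quotients --- is precisely the strategy used in those references, and your argument for \emph{surjectivity} of $j_R$ onto the invariants is correct as written.

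The gap is in your treatment of the ``hard step'' $M_n(J)^{PGL(n)}=I$. You assert that the Reynolds projector applied to $m_k a_k m'_k$ is ``expressible via multilinear trace polynomials of the form $\psi_\sigma$ \ldots\ applied to the generic matrices and to the $a_k$'', and then conclude from $I$ being a trace ideal. But this assertion is exactly what has to be proved, and it does not follow from the properties you invoke. The Reynolds operator is an $\mathcal S_n(X)$--\emph{bimodule} map, so it respects invariant factors on the \emph{outside}; it gives you nothing for an invariant factor $a_k$ sitting in the \emph{middle} between two non-invariant matrices $m_k,m'_k$. There is no a priori reason the projection should be a trace expression in which $a_k$ appears intact, and your sketch does not supply one.

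The way this step is actually carried out in \cite{P5} and \cite{agpr} is by the ``auxiliary generic variables'' device: one observes that $M_n(A_{X,n})$ is generated as an $F$--algebra by the generic matrices $\xi_i$ together with the matrix units $e_{pq}$, so there is a \emph{surjective} trace--compatible homomorphism $\Psi:\mathcal S_n(X')\to M_n(A_{X,n})$ from a larger free $n$--CH algebra (new variables mapping to the $e_{pq}$). One then lifts the problem to $\mathcal S_n(X')$, where the trace ideal $I'$ generated by $I$ is controlled via the retraction $\mathcal S_n(X')\to\mathcal S_n(X)$, and combines this with the Reynolds splitting to force $M_n(J)\cap\mathcal S_n(X)\subset I$. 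Your sketch gestures in this direction but does not carry it out; as written it is circular, since ``$\rho(m a m')$ lies in the trace ideal generated by $a$'' is equivalent to the injectivity statement you are trying to establish.
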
 
  
   \begin{definition}\label{cattn}
 We denote by  $\mathfrak T_n$  the category  of commutative algebras equipped with a rational  $PGL(n,F)$ action (and where the morphisms are   $PGL(n,F)$ equivariant). 

\end{definition}
 The functor $R\mapsto T_n(R)$ is from the category of $n$--CH algebras  to the category $\mathfrak T_n$.

Since $PGL(n,F)$ acts both on $M_n(F)$ and on $C$ by automorphisms one has the diagonal action  of  $PGL(n,F)$ on $M_n(C)$. 
 This functor  has a right adjoint  $C\mapsto R_n(C):=M_n[C]^{PGL(n,F)}$:
  \begin{equation}\label{rag}
\boxed{\hom_{\mathfrak T_n}(T_n(R), C)\simeq \hom_{\mathcal C}(R, R_n(C))},\quad \phi\mapsto 1\otimes \phi, \ $$$$\begin{CD}
R=(M_n(F)\otimes T_n(R))^{PGL(n,F)}@>1\otimes \phi >>(M_n(F)\otimes C )^{PGL(n,F)}=R_n(C).
\end{CD}.
\end{equation}
 
    In other words, always for $\mathbb Q$ algebras:
\begin{proposition}\label{ful}
 The functor $R\mapsto T_n(R)$ is an equivalence   between   the category of $n$--CH algebras  and a {\em full} subcategory of the category $\mathfrak T_n$.
\end{proposition}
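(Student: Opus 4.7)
The plan is to deduce the statement as a formal consequence of Theorem \ref{immu} together with the adjunction \eqref{rag}. That adjunction exhibits $T_n$ as a left adjoint to the functor $R_n \colon C \mapsto M_n(C)^{PGL(n,F)}$, and a standard categorical fact asserts that in any adjunction $L \dashv G$ the left adjoint $L$ is fully faithful if and only if the unit $\eta \colon \mathrm{id} \Rightarrow G\circ L$ is a natural isomorphism; moreover, the essential image of a fully faithful functor is automatically a full subcategory of the target, equivalent to the source. Hence it will suffice to identify the unit of \eqref{rag} and verify that it is a natural isomorphism.

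First I would compute the unit at an $n$-CH algebra $R$: setting $C = T_n(R)$ in \eqref{rag} and transporting $\mathrm{id}_{T_n(R)}$ across the adjunction yields a canonical trace-preserving homomorphism
$$\eta_R \colon R \longrightarrow R_n(T_n(R)) = M_n(T_n(R))^{PGL(n,F)}.$$
By the construction of the adjunction via the diagonal $PGL(n,F)$-action, together with the invariance assertion of Proposition \ref{inva}, this map coincides with the universal map $\mathrm{j}_R$ corestricted to the invariant subring. Theorem \ref{immu} then says exactly that $\mathrm{j}_R$ is an isomorphism for every $n$-CH algebra $R$, and naturality in $R$ is built into its universal characterization; hence $\eta$ is a natural isomorphism, $T_n$ is fully faithful, and the desired equivalence onto a full subcategory of $\mathfrak{T}_n$ follows.

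The one step that is not purely categorical is checking that $T_n(R)$ actually lies in $\mathfrak{T}_n$, i.e., that its $PGL(n,F)$-action from Proposition \ref{inva} is \emph{rational}. This is evident when $R = F\langle X\rangle$ is free, since $T_n(R)$ is then a polynomial ring on matrix entries carrying the manifestly rational conjugation action, and rationality descends to general $R$ via the canonical surjection $T_n(F\langle X\rangle) \twoheadrightarrow T_n(R)$ induced by any presentation of $R$. With Theorem \ref{immu} available, no further serious input is required; the genuine mathematical content lies in that theorem rather than in the present proposition, so beyond this small rationality check there is no real obstacle.
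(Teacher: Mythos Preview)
Your argument is correct and is exactly the reasoning the paper has in mind: the proposition is stated immediately after the adjunction \eqref{rag} with the phrase ``In other words,'' and no separate proof is given, so the paper is treating it as the standard categorical consequence of Theorem \ref{immu} that you have spelled out. Your additional remark verifying rationality of the $PGL(n,F)$-action on $T_n(R)$ is a detail the paper leaves implicit, and your treatment is in fact more careful than the original on this point.
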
   
It is easy to give examples of algebras in $\mathfrak T_n$  which are not  of the form $T_n(R)$. For instance take a nilpotent conjugacy class $\mathcal O$ of $n\times n$ matrices.   

 If   the order of nilpotency  of elements of $\mathcal O$ is $k\leq n$,  then one can prove  that $R_n(C)=F[x]/(x^k),\ tr(x^i)=0,\ i=1,\ldots,k-1$.  This is independent of the conjugacy class but depends only on $k$.\smallskip

In general the nature of $T_n(R)$  may be quite difficult to study. 

For instance consider the {\em scheme of pairs of commuting matrices} that is the commutative ring $A_n:=F[X,Y]/(XY-YX=0)$ generated by the entries of two generic $n\times n$ matrices $X,Y$  modulo the ideal generate  by the entries of $XY-YX$. One easily sees that it  is of the form $T_n(R),$ where $  R= F[x,y]   $ is the commutative polynomial algebra in two variables.   It is not known if this ring is a domain.  

An important consequence of   Theorem \ref{immu} is:
\begin{corollary}\label{PIn}
An   $n$--CH algebra $R$ satisfies all  polynomial identities of $M_n(\mathbb Q)$ with coefficients in $\mathbb Q$.
\end{corollary}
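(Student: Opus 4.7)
\textbf{Proof plan for Corollary \ref{PIn}.}

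The plan is to pull the corollary back, via the strong embedding, to the well-known fact that identities of $M_n(\mathbb Q)$ persist after extension of scalars to any commutative $\mathbb Q$-algebra. Concretely, by Theorem \ref{immu} the universal map
\[
\mathrm j_R\colon R\;\simto\;M_n(T_n(R))^{PGL(n,\mathbb Q)}\;\subset\;M_n(T_n(R))
\]
is injective, so it suffices to prove that every $f\in\mathbb Q\langle x_1,\dots,x_m\rangle$ that vanishes on $M_n(\mathbb Q)^m$ also vanishes on $M_n(T_n(R))^m$; evaluating $f$ on $\mathrm j_R(r_1),\dots,\mathrm j_R(r_m)$ and using injectivity of $\mathrm j_R$ will then give $f(r_1,\dots,r_m)=0$ in $R$.

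The key intermediate step is therefore the standard fact: if $f\in\mathbb Q\langle x_1,\dots,x_m\rangle$ vanishes on $M_n(\mathbb Q)$, then $f$ vanishes on $M_n(A)$ for every commutative $\mathbb Q$-algebra $A$. I would prove this by the ``generic matrix'' argument already recalled in \S\ref{prSl}. Consider the generic matrices $\xi_1,\dots,\xi_m\in M_n(\mathbb Q[\xi^{(i)}_{h,k}])$; the entries of $f(\xi_1,\dots,\xi_m)$ are polynomials in the variables $\xi^{(i)}_{h,k}$, and since they vanish after every $\mathbb Q$-rational specialization of these variables and $\mathbb Q$ is infinite, they are identically zero. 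Hence $f(\xi_1,\dots,\xi_m)=0$ in $M_n(\mathbb Q[\xi^{(i)}_{h,k}])$, and specializing the $\xi^{(i)}_{h,k}$ to arbitrary values in $A$ shows $f$ is an identity on $M_n(A)$. (Equivalently, one may first reduce to the multilinear case by polarization, since $\mathbb Q$ is infinite, and then use $M_n(A)\cong M_n(\mathbb Q)\otimes_{\mathbb Q}A$ to extend a multilinear identity from $M_n(\mathbb Q)$ to $M_n(A)$ on pure tensors.)

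Applying this with $A=T_n(R)$, the matrix algebra $M_n(T_n(R))$ satisfies every polynomial identity of $M_n(\mathbb Q)$ with coefficients in $\mathbb Q$. Combined with the embedding $\mathrm j_R\colon R\hookrightarrow M_n(T_n(R))$ of the previous paragraph, this yields the corollary. There is essentially no obstacle here: Theorem \ref{immu} does all the work, and the only input besides it is the classical scalar-extension property of matrix identities over an infinite base field, which is why the statement is phrased with $\mathbb Q$-coefficients rather than coefficients in the base ring.
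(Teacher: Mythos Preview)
Your proof is correct and follows exactly the approach the paper intends: the corollary is stated immediately after Theorem~\ref{immu} as ``an important consequence'' with no further argument, and the implicit reasoning is precisely the one you spell out---embed $R$ in $M_n(T_n(R))$ via $\mathrm j_R$ and use that identities of $M_n(\mathbb Q)$ persist under scalar extension to any commutative $\mathbb Q$-algebra. Your write-up simply makes explicit the standard generic-matrix/polarization step that the paper takes for granted.
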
    
 \subsection{Azumaya algebras} An important class of CH--algebras are Azumaya algebras, \cite{Az}, \cite{A-G2}.  For our purpose we may take as definition:
\begin{definition}\label{azza}
An   algebra  $R$ with   center $Z$ is Azumaya of rank $n^2$ over   $Z$, if there is a faithfully flat  extension $Z\to W$ so that $W\otimes_ZR\simeq M_n(W)$.
\end{definition} Then, if $a\in R$,  all the coefficients of the characteristic polynomial of $a\otimes 1\in M_n(W)$ are in $R$, and they are independent of the splitting $M_n(W)$. 
\begin{definition}\label{redt0}
In particular the trace of $a\otimes 1$ is called the {\em reduced trace of $a$}, denoted $tr(a)$.
\end{definition}\medskip

Azumaya algebras play a special role in this Theory.  In particular let us recall a theorem of M. Artin  \cite{ArM}, as generalized by Procesi  \cite{P1}.
 
 \begin{theorem}\label{ArT}[Artin--Procesi]
A ring  $S$ is a rank $n^2$ Azumaya algebra over its center $A$, if and only if  $S$ satisfies all polynomial identities of $n\times n$ matrices and no quotient of $S$ satisfies the   polynomial identities of $n-1\times n-1$ matrices.
\end{theorem}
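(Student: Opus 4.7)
The plan is to prove both directions using Formanek--Razmyslov central polynomials for $M_n$: multilinear polynomials $c(x_1,\ldots,x_k)$ such that (i) $c$ evaluated on $M_n$ always lies in the center, (ii) $c$ is not a polynomial identity of $M_n$, and (iii) $c$ \emph{is} an identity of $M_{n-1}$. These are the standard tool underlying this theorem.

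For the ``only if'' direction, assume $S$ is Azumaya of rank $n^2$, with faithfully flat splitting $W\otimes_A S\simeq M_n(W)$. All polynomial identities of $M_n$ hold in $M_n(W)$; since $S\hookrightarrow W\otimes_AS$ by faithful flatness, they descend to $S$. If a proper two--sided ideal $J\subset S$ yielded a quotient $S/J$ satisfying the $M_{n-1}$--identities, base change would produce $W\otimes_A(S/J)\simeq M_n(W/K)$ for some ideal $K\subsetneq W$ (strict by faithful flatness applied to $S/J\ne 0$), and this would satisfy the $M_{n-1}$--identities as well; but $M_n$ over any nonzero commutative ring does not satisfy the $(2n-2)$--standard identity, a contradiction.

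For the ``if'' direction, I would first observe that, since $S$ satisfies the $M_n$--identities, every evaluation $c(s_1,\ldots,s_k)\in S$ is central (the commutator $[c,y]$ is already a PI of $M_n$). Let $I$ be the ideal of $A:=Z(S)$ generated by these central values. The decisive step is to show $I=A$: otherwise a maximal ideal $\mathfrak m\supset I$ would give a quotient $\bar S=S/\mathfrak m S$ in which $c$ vanishes identically, and then Posner's theorem, together with property (iii), would force $\bar S$ to satisfy the $M_{n-1}$--identities, contradicting the non--degeneracy hypothesis. Granted $I=A$, write $1=\sum a_ic_i$ with $c_i$ specific values of $c$. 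Over each localization $A_i:=A[c_i^{-1}]$ one uses the invertible central value $c_i$ to manufacture a trace and then a full system of $n^{2}$ matrix units in $S\otimes_AA_i$, yielding $S\otimes_AA_i\simeq M_n(A_i)$. The product $W:=\prod_iA_i$ is faithfully flat over $A$ since the $D(c_i)$ cover $\mathrm{Spec}(A)$, so $W\otimes_AS\simeq M_n(W)$ exhibits the Azumaya structure; a small check verifies that $A$ is exactly the center of $S$ (rather than a proper subring).

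The main obstacle is the claim in the third paragraph that vanishing of the Formanek--Razmyslov polynomial $c$ on a prime PI quotient $\bar S$ forces $\bar S$ to satisfy all $M_{n-1}$--identities. This is the real content of the theorem: Posner's theorem embeds $\bar S$ in a central simple algebra of some degree $d\le n$ over its field of central fractions, and one must argue that the vanishing of $c$ obstructs $d=n$, hence $d\le n-1$, and identities of $M_{n-1}$ then descend back to $\bar S$. A secondary but more computational step is the explicit construction of a trace and of matrix units over each $A_i$ from a single invertible central value of $c$; this is standard in Artin's original argument and its refinements.
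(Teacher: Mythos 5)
The paper does not actually prove Theorem~\ref{ArT}; it only cites Artin's paper \cite{ArM} and Procesi's refinement \cite{P1}, so there is no internal proof to compare against. Evaluating your argument on its own terms, the ``only if'' direction is fine, but the ``if'' direction has two genuine gaps.

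First, the reduction to Posner's theorem does not go through as written. You take a maximal ideal $\mathfrak m$ of $A=Z(S)$ and pass to $\bar S=S/\mathfrak m S$. But $\mathfrak m S$ may equal $S$ (there is no a priori finiteness of $S$ over $A$, and the theorem is precisely valuable because no such hypothesis is assumed), in which case $\bar S=0$ gives no contradiction. Even when $\mathfrak m S\neq S$, the ring $\bar S$ need not be prime, so Posner's theorem does not apply to it. The standard way around both problems is to take a maximal two--sided ideal $\mathfrak M$ of $S$ containing all central evaluations of $c$: the quotient $S/\mathfrak M$ is simple, Kaplansky's theorem applies, $S/\mathfrak M$ is finite dimensional over a field, and then the argument you sketch (multilinearity of $c$ plus the fact that $c$ is a PI of $M_{n-1}$ but not of $M_n$ over any nonzero commutative ring) forces its PI-degree to be $\le n-1$, contradicting the hypothesis. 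One also has to handle the corner case $c$-evaluations generating all of $S$ but not hitting $1\in A$, which is why the Formanek ideal is usually formed inside $S$, not inside $A$.

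Second, the final splitting step is false. You claim that over $A_i=A[c_i^{-1}]$ one gets $S\otimes_A A_i\simeq M_n(A_i)$. Azumaya algebras split only \'etale--locally, not Zariski--locally; the quaternions $\mathbb H$ over $A=\mathbb R$ satisfy all the hypotheses with $n=2$ and the Formanek ideal equal to all of $\mathbb R$, yet there is no Zariski localization of $\mathbb R$ over which $\mathbb H$ becomes $M_2$. The correct conclusion after inverting a central value $c_i$ is weaker: $S[c_i^{-1}]$ is a finitely generated projective $A_i$--module of constant rank $n^2$ with nondegenerate trace form (or, equivalently, the natural map $S\otimes_A S^{\mathrm{op}}\to \mathrm{End}_A(S)$ is an isomorphism after localizing), and this is what one must verify to conclude Azumaya; it cannot be reduced to producing a full system of matrix units. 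This is the ``small check'' you flag at the end, and it is in fact the technically heaviest part of Artin's argument and of Procesi's removal of the finiteness hypotheses on $A$.
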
 The strength of this Theorem lies in the fact that no a priori hypotheses on its center are needed.

 If  $R$ is an Azumaya algebras   of rank $n^2$ over its center $Z $, then one can prove that $T_n(R)$ is faithfully flat over $Z$  and  $R\otimes_ZT_n(R)=M_n(T_n(R)$. 
 
 We think of $R$ as a {\em non split form of matrices}, see \cite{agpr},  \S 10.4.1..                      
 We claim that
 \begin{proposition}\label{inttr}    If an Azumaya algebras $R$  of rank $n^2$ over its center $Z\supset\mathbb Q$ has a $Z$--linear trace  $t$ with respect to which it is  an $n$--CH  algebra, then $t=tr$ the usual reduced trace.

\end{proposition}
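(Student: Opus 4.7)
The plan is to use the Azumaya splitting of $R$ to reduce to a classical uniqueness statement for trace-type functionals on $M_n(W)$.

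I would first choose, by Definition \ref{azza}, a faithfully flat $Z$-algebra $W$ with an isomorphism $\varphi\colon W\otimes_Z R\xrightarrow{\sim} M_n(W)$; since $Z\supset\mathbb Q$, the integer $n$ is invertible in $W$. Axiom (2) forces $t(R)\subseteq Z$, and the $Z$-linearity hypothesis makes the formula $w\otimes r\mapsto w\cdot t(r)$ into a well-defined $W$-linear map $t_W\colon W\otimes_Z R\to W$ (this is Proposition \ref{fonda}(1) applied with $U=W$, the descent from $W\otimes_{t(R)}R$ to $W\otimes_Z R$ being precisely what $Z$-linearity buys). Transporting by $\varphi$, I get a $W$-linear functional on $M_n(W)$ with $t_W(1)=t(1)=n$ (using the $n$-CH hypothesis) and vanishing on commutators (by axiom (3)).

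Next I would carry out the key computation on $M_n(W)$: since $n$ is invertible, one has the decomposition $M_n(W)=\ker(\mathrm{Tr})\oplus W\cdot 1$, and the commutator subspace equals $\ker(\mathrm{Tr})$ because the generators $E_{ij}=[E_{ii},E_{ij}]$ (for $i\ne j$) and $E_{ii}-E_{jj}=[E_{ij},E_{ji}]$ span $\ker(\mathrm{Tr})$ as a $W$-module. Thus any $W$-linear functional on $M_n(W)$ killing commutators has the form $\lambda\cdot\mathrm{Tr}$ for a unique $\lambda\in W$; writing $t_W=\lambda\cdot\mathrm{Tr}$ and evaluating at $1$ gives $n=\lambda n$, hence $\lambda=1$ and $t_W=\mathrm{Tr}$. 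Finally, for $a\in R$, Definition \ref{redt0} identifies $\mathrm{Tr}(\varphi(1\otimes a))$ with the reduced trace $tr(a)\in Z$, while by construction $t_W(\varphi(1\otimes a))=t(a)$; thus $t(a)=tr(a)$ in $W$, and since $Z\to W$ is injective by faithful flatness the equality descends to $Z$.

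The only delicate point I anticipate is the first step: one must verify carefully that the $Z$-linearity hypothesis on $t$ (rather than the automatic $t(R)$-linearity coming from axiom (4)) is exactly what is required to extend $t$ along the faithfully flat cover $W\otimes_Z R$. After this extension is set up, the remainder is the standard uniqueness-of-trace computation on $M_n(W)$ together with faithful-flatness descent back to $Z$.
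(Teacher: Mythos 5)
Your proof is correct but takes a genuinely different route from the paper's. The paper's argument is a one-liner resting on the strong embedding Theorem~\ref{immu}: that theorem gives a trace-preserving isomorphism $R\simeq M_n(T_n(R))^{PGL(n,\mathbb Q)}$, so under the splitting $R\otimes_Z T_n(R)=M_n(T_n(R))$ the trace $t$ is literally the restriction of the ordinary matrix trace; since the reduced trace $tr$ is by definition computed from any splitting and is independent of it, $t=tr$. Your argument bypasses Theorem~\ref{immu} entirely: you take an arbitrary faithfully flat splitting $W$ straight from Definition~\ref{azza}, use the $Z$-linearity hypothesis (correctly identified as the crucial point; axiom~(4) only gives $t(R)$-linearity, and you need balancing over all of $Z$) to descend $t$ to a $W$-linear functional $t_W$ on $M_n(W)$, and then run the elementary computation that, with $n$ invertible, the commutators $[E_{ii},E_{ij}]=E_{ij}$ and $[E_{ij},E_{ji}]=E_{ii}-E_{jj}$ span $\ker(\mathrm{Tr})$, so $t_W$ is a scalar multiple of $\mathrm{Tr}$, pinned to $\mathrm{Tr}$ by $t_W(1)=n$; faithful flatness then gives injectivity of $Z\to W$ to descend the identity. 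Your version is more self-contained and elementary: it uses only the Azumaya structure and the trace axioms together with $t(1)=n$ (it does not even need the full Cayley--Hamilton identity), and it would work over any base where $n$ is invertible, so it is closer to characteristic-free. The paper's version, by contrast, is a very short illustration of the power of Theorem~\ref{immu} once that theorem is available, and it fits the narrative of the section, which is precisely to showcase consequences of the strong embedding theorem.
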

\begin{proof}
 
By the main Theorem  \ref{immu} we have  a trace preserving embedding of $R$  in the universal algebra $M_n(T_n(R))$ under which $t$ is the trace and, since the reduced trace is independent of the splitting, the claim follows.\end{proof}                  
In fact   it also follows that, under the same hypotheses 
 with respect to $t$, if  we have that $R$ is  an $k$--CH  algebra, then $k=i\cdot n$ for some $i$ and     $t= i\cdot tr$  with $tr$ the usual reduced trace.\smallskip

\section{The variety of semisimple representations\index{representation!variety of semisimple}\label{varssr}}
\subsection{The Geometric quotient}
 There is a geometric interpretation of Theorem \ref{FFT}. Consider the space of $\ell$--tuples of matrices
 $M_n(F)^\ell$  where   we shall now assume that $F$ is algebraically closed of any characteristic.

  We think of the space $M_n(F)^\ell$  as {\em the space of $n$--dimensional representations of the free algebra $F\langle x_1,\ldots,x_\ell \rangle$  in $\ell$ generators}, where a representation $\rho:F\langle x_1,\ldots,x_\ell \rangle\to M_n(F)$ corresponds to the $\ell$--tuple $(a_i:=\rho(x_i))$.

   The linear group  $GL(n,F)$ acts by simultaneous conjugation, in fact this action is trivial by the scalar matrices and hence should be thought of as an action of the {\em projective linear group}\index{projective linear group}
 $$G:=GL(n,F)/F^*=PGL(n,F).$$ \begin{remark}\label{isoclasses}
  Clearly two $\ell$--tuples are in the same orbit, if and only if the two representations are isomorphic.

In other words the study of isomorphism classes of representations is the same as the study of $G$--orbits.

\end{remark}
From the definitions it also follows that the $n$--dimensional representation $\rho:  F\langle x_1,\ldots,x_\ell \rangle\to M_n(F)$ can   be thought of as a representation of   the relatively free CH--algebra, Theorem \ref{SFT} , $\rho:\mathcal S_n(\ell)\to M_n(F)$ compatible with the trace.  We use the notations of that Theorem.

 By Geometric invariant theory  the variety associated to the  trace algebra   $\mathcal C_n(\ell)=A_{\ell,n} ^{PGL(n,F)}$ of the algebra $\mathcal S_n(\ell)$  parametrizes closed orbits, hence  we should understand   which representations correspond to closed orbits.  
 
 This has been shown by M. Artin\index{Artin}, \cite{ArM}, in the same paper in which he proved Theorem \ref{ArT}, see \cite{agpr}  Chapter 14.
 
 \begin{theorem}\label{sesisc}  A semisimple representation associated to any given representation is in the closure of its orbit.

A representation is in a closed orbit, if and only if it is semisimple.
\end{theorem}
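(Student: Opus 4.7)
My plan is to combine a one-parameter subgroup argument for the first assertion with the basic GIT fact that each fiber of the quotient contains a unique closed orbit for the second.

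First, to show that the semisimple representation $\rho^{\mathrm{ss}}$ lies in $\overline{G\cdot\rho}$: given $\rho=(a_1,\ldots,a_\ell)\in M_n(F)^\ell$, choose a composition series $0=V_0\subsetneq V_1\subsetneq\cdots\subsetneq V_k=F^n$ of $F^n$ as a module over the subalgebra $\langle a_1,\ldots,a_\ell\rangle$. In an adapted basis each $a_i$ becomes block upper triangular, with diagonal blocks acting on the composition factors $V_j/V_{j-1}$; the block-diagonal representation is $\rho^{\mathrm{ss}}$. Setting $n_j=\dim(V_j/V_{j-1})$, consider the one-parameter subgroup $\lambda(t)=\mathrm{diag}(t^{k-1}I_{n_1},t^{k-2}I_{n_2},\ldots,t^0 I_{n_k})$ in $GL(n,F)$, which acts through $PGL(n,F)$. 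Conjugation by $\lambda(t)$ scales the $(i,j)$-block by $t^{j-i}$; for $i<j$ this tends to $0$ as $t\to 0$, while the diagonal blocks are fixed. Hence $\lim_{t\to 0}\lambda(t)\cdot\rho=\rho^{\mathrm{ss}}\in\overline{G\cdot\rho}$.

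This immediately gives one direction of the second claim: if $G\cdot\rho$ is closed, then $\rho^{\mathrm{ss}}\in\overline{G\cdot\rho}=G\cdot\rho$, so $\rho\cong\rho^{\mathrm{ss}}$ is semisimple. For the converse, assume $\rho$ is semisimple. I would invoke the standard result for actions of reductive groups on affine varieties (see e.g.\ \cite{agpr}, Ch.\ 14): every fiber of the quotient map $\pi:M_n(F)^\ell\to\mathrm{Spec}\,\mathcal{C}_n(\ell)$ contains a unique closed orbit, and it lies in the closure of every orbit of that fiber. By the first half of the proof this closed orbit consists of semisimple representations; pick one, say $\rho_0$. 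Both $\rho$ and $\rho_0$ are semisimple with $\pi(\rho)=\pi(\rho_0)$, so by Theorem \ref{FFT} all coefficients of the characteristic polynomial of $\rho(f)$ agree with those of $\rho_0(f)$ for every $f\in F\langle x_1,\ldots,x_\ell\rangle$. A Brauer--Nesbitt type argument (valid in any characteristic) then gives $\rho\cong\rho_0$, so $G\cdot\rho=G\cdot\rho_0$ is closed.

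The main technical obstacle is the last step: reconstructing a semisimple representation up to isomorphism from its characteristic-polynomial coefficients on all noncommutative polynomials in the generators. In characteristic $0$ traces suffice and this reduces to classical character theory, while in positive characteristic one must combine Theorem \ref{FFT} with the Brauer--Nesbitt principle to recover the multiset of simple summands.
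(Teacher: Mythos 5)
Your proof is correct, and it is essentially the standard argument that the paper cites without reproducing (Artin \cite{ArM}, and \cite{agpr} Ch.\ 14, \cite{depr0}): degeneration to the semisimplification along the block--triangular one-parameter subgroup for the first claim, then for the second claim the GIT fact that each fiber of $\pi$ contains a unique closed orbit, combined with the observation that two semisimple $\ell$-tuples mapping to the same point of $\mathrm{Spec}\,\mathcal{C}_n(\ell)$ have equal characteristic polynomials on every element of the free algebra (by Theorem~\ref{FFT}), hence are conjugate by Brauer--Nesbitt. Your emphasis that one must use full characteristic polynomials rather than traces is exactly the point that makes the argument characteristic-free, which is the setting of Theorem~\ref{sesisc}.
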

Denote by 
$$\pi:M_n(F)^\ell \to  V_n(\ell):=M_n(F)^\ell //GL(n,F)$$  the quotient map associated to the inclusion $\mathcal C_n(\ell)=A_{\ell,n} ^{PGL(n,F)}\subset A_{\ell,n}$.
\begin{theorem}\label{dimTr}\strut  \begin{enumerate} \item 
If $\ell\geq 2$, then  the quotient variety $V_n(\ell)$ is of dimension $(\ell-1)n^2+1$. 
\item 
The points of  $V_n(\ell)$ parametrize isomorphism classes of semisimple representations of dimension $n$ of the free algebra in $\ell$--variables. 

Equivalently of trace compatible semisimple representations of  $\mathcal S_n(\ell)$.\end{enumerate}
\end{theorem}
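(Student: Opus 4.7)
The plan is to derive both parts from the geometric invariant theory framework already set up, combined with Amitsur's Theorem \ref{teoAmi} for the dimension count and Artin's Theorem \ref{sesisc} for the identification of closed orbits with semisimple representations. The quotient map $\pi\colon M_n(F)^\ell\to V_n(\ell)$ is, by construction of $\mathcal{C}_n(\ell)=A_{\ell,n}^{PGL(n,F)}$, the categorical quotient for a reductive group acting on an affine variety, so the standard GIT dictionary is available.

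For part (1), I would compute the dimension via the orbit-dimension formula: it suffices to produce a dense open subset $U\subset M_n(F)^\ell$ on which the $PGL(n,F)$-stabilizers are trivial, for then $\dim V_n(\ell)=\ell n^2-(n^2-1)=(\ell-1)n^2+1$. For $\ell\geq 2$ Theorem \ref{teoAmi} guarantees that $F_n(\ell)$ embeds in a division algebra of dimension $n^2$ over its center, so the evaluation map from the tensor algebra on $\ell$ generators to $M_n(F)$ has, at a generic $\ell$-tuple, image equal to all of $M_n(F)$. By Schur's lemma the centralizer of such a tuple in $M_n(F)$ is then $F\cdot 1$, whence its $PGL(n,F)$-stabilizer is trivial. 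The locus $U$ where the generated algebra has full dimension $n^2$ is cut out by the non-vanishing of certain minors of a matrix of evaluations of words of length $\leq n^2$, hence is Zariski open, and non-empty by the Amitsur argument, giving the required generic freeness and the claimed dimension.

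For part (2), I would invoke the standard fact that for a reductive group $G$ acting on an affine variety $X$, each fiber of the quotient $\pi\colon X\to X/\!/G$ contains a unique closed orbit, and distinct closed orbits in $X$ have distinct images in $X/\!/G$. Combined with Theorem \ref{sesisc}, which identifies closed $PGL(n,F)$-orbits in $M_n(F)^\ell$ with isomorphism classes of semisimple $n$-dimensional representations of $F\langle x_1,\ldots,x_\ell\rangle$, this yields the claimed bijection between points of $V_n(\ell)$ and isomorphism classes of semisimple $n$-dimensional representations of the free algebra. For the equivalent reformulation in terms of $\mathcal{S}_n(\ell)$ I would use Theorem \ref{SFT}: every $n$-dimensional representation $\rho\colon F\langle x_1,\ldots,x_\ell\rangle\to M_n(F)$ extends uniquely to a trace-compatible homomorphism $\mathcal{S}_n(\ell)\to M_n(F)$ (since $M_n(F)$ satisfies $CH_n$ and $tr(1)=n$), and conversely every trace-compatible representation of $\mathcal{S}_n(\ell)$ restricts to such a $\rho$; semisimplicity is preserved under this bijection because the algebra generated by $\rho(x_1),\ldots,\rho(x_\ell)$ agrees with the image of $\mathcal{S}_n(\ell)$.

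The main obstacle is the generic-freeness step in part (1); once one knows that a dense open set of $\ell$-tuples generates $M_n(F)$, everything else is formal. In characteristic zero one can invoke Amitsur directly, but to keep the proof characteristic-free (as the preceding discussion allows $F$ to be any algebraically closed field) one must either appeal to the characteristic-free formulation of Amitsur's theorem or exhibit a single explicit $\ell$-tuple in $M_n(F)^\ell$ generating $M_n(F)$ as an associative algebra, e.g.\ a companion matrix together with a permutation matrix, and use semicontinuity. The hypothesis $\ell\geq 2$ enters here and only here, since a single $n\times n$ matrix never generates $M_n(F)$ for $n\geq 2$.
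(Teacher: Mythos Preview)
Your argument is correct and follows the standard route. Note, however, that the paper states this theorem in its Background part without giving a proof; it is presented as a known consequence of the GIT setup and Artin's Theorem~\ref{sesisc}, with the dimension formula recoverable later from Proposition~\ref{dims1} applied to the open stratum $(\underline a;\underline m)=(1;n)$. So there is no ``paper's own proof'' to compare against beyond the surrounding discussion, which your proposal faithfully reconstructs.

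One small imprecision: in the last step of part~(2) you assert that the algebra generated by $\rho(x_1),\ldots,\rho(x_\ell)$ \emph{agrees} with the image of $\mathcal S_n(\ell)$. This need not be literally true, since the image of $\mathcal S_n(\ell)$ also contains the scalars $\sigma_i(\rho(a))\in F$, which may not lie in the subalgebra generated by the $\rho(x_i)$. What is true, and sufficient, is that the two subalgebras of $M_n(F)$ differ only by scalars, hence have the same invariant subspaces in $F^n$; so semisimplicity transfers. You should phrase it that way.

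Your remark that the hypothesis $\ell\geq 2$ is used exactly to guarantee a generating $\ell$-tuple is right; the paper later records this explicitly as Lemma~\ref{dueg}, which you could cite in place of the Amitsur argument if you want a characteristic-free statement without further comment.
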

\subsection{The  non commutative algebras} Assume now $F$  is algebraically closed  and of characteristic 0, (for the general case see \cite{P2},  \cite{P6}). We use some results from the next Part II.\smallskip

The algebra $\mathcal S_n(\ell)$ also has a geometric interpretation.  

Consider a maximal ideal $\mathfrak m\subset \mathcal C_n(\ell)$ corresponding to a  point $p\in V_n(\ell)$. By Proposition \ref{tradi} we have 
$$\mathfrak m \mathcal S_n(\ell)\cap  \mathcal C_n(\ell)=\mathfrak m$$ and the  CH  algebra $  \mathcal S_n(\ell)/ \mathfrak m \mathcal S_n(\ell)$ is finite dimensional with trace algebra $F= \mathcal C_n(\ell)/ \mathfrak m.$  

The points of  $M_n(F)^\ell $ thought of as   representations  of  $\mathcal S_n(\ell)\to M_n(F)$ which factor through $$ \mathfrak A_p:= \mathcal S_n(\ell)/ \mathfrak m \mathcal S_n(\ell)=M_n(A_{n,\ell}/\mathfrak m )^{GL(n,F)}$$  are thus the points in the fiber $\pi^{-1}(p)$. In particular, the radical $J$  of $ \Sigma_p$ vanishes on the closed orbit  formed by semisimple representations   $ \mathfrak A_p$.  

By Corollary \ref{CHC1} $J$   is in fact the kernel of the trace form. The algebra $\bar{ \mathfrak A}_p $   is a semisimple algebra  with trace and simple as trace algebra. 

So, given a point $q$ in the closed orbit the corresponding representation   $\rho_q: \overline { \mathfrak A}_p\to M_n(F)$ is injective.  Now a semisimple subalgebra    of $M_n(F)$ is described by Proposition \ref{ssn}.  We then have the finitely many strata of  closed orbits associated to the  lists $m_1,\ldots,m_k$ and $a_1,\ldots,a_k$ of positive integers with $\sum_jm_ja_j=n$.

As we shall remark later, \S \ref{geoti}, in the quotient variety, these are the smooth strata of Luna's stratification by stabilizer type.\smallskip

\subsection{The  smooth part} Of particular importance is the open  set $U_{n,\ell}$ of {\em irreducible representations} that is of $\ell$--tuples of matrices which generate the algebra $M_n(F)$ ($\ell\geq 2$).  On this open set  the group $PGL(n,F)$ acts freely and the quotient $U_{n,\ell}//PGL(n,F)$  is smooth in $M_n(F)^\ell //GL(n,F)$, in fact except a few cases this is exactly the smooth part of $M_n(F)^\ell //GL(n,F)$. 

The map $\tilde\pi:U_{n,\ell}\to U_{n,\ell}//PGL(n,F)$ is a principal $PGL(n,F)$ bundle locally trivial in the \'etale topology, see \cite{Luna}.

This geometric description has a counterpart in the structure of the algebra $\mathcal S_n(\ell)$ which is an Azumaya algebra of rank $n^2$  over its center exactly in the points of $U_{n,\ell}//PGL(n,F)$. 

This fact can be seen in a more explicit form as follows. Let $p\in U_{n,\ell} $  be a point corresponding to a surjective map  $\rho:   \mathcal S_n(\ell)\to   M_n(F)$. Then there are two elements  $a,b\in   \mathcal S_n(\ell)$ so that $\rho(a)$  is a diagonal matrix with distinct and non zero entries and $\rho(b)$ a full cycle on the canonical basis.  Then the elements $\rho(a)^i\rho(b)^j,\ i,j=0,\ldots,n-1$ form a basis of  $M_n(F)$ (see also Lemma \ref{dueg}). 

  Then the two invariants  $D(  a)$,  the discriminant of $  a$,  and   the discriminant of the  basis $  b ^i   a ^j$, $\Delta:=\det( tr(  b ^i   a ^j))$,  are in $\mathcal C_n(\ell)$  and do not vanish  on $p$, and hence on $\pi(p)$. \smallskip

 Let $E_p:=\mathcal C_n(\ell)[D(  a)^{-1}, \Delta^{-1}][t]/CH_n(a)(t),$  where by $CH_n(a)(t)$ we denote the characteristic polynomial of $a$.
 
 We claim that $E_p$ is \'etale over 
$\mathcal C_n(\ell)[D(  a)^{-1}, \Delta^{-1}]$ and
$$E_p\otimes_T \mathcal S_n(\ell)\simeq M_n(E_p).$$
In fact   inverting $\Delta$  implies that $\mathcal S_n(\ell)[\Delta^{-1}]$  is a free module over $\mathcal C_n(\ell)[\Delta^{-1}]$ with basis $  b ^i   a ^j,\ i,j=0,\ldots,n-1$.  Then inverting also   $D(  a)$ gives that the subalgebra $\mathcal C_n(\ell)[(D(  a)\Delta)^{-1}][a]\subset \mathcal S_n(\ell)[(D(  a)\Delta)^{-1}]$ is isomorphic to $E_p$.  

Since we have inverted $D(a)$, then the  characteristic polynomial of the element $a$ has distinct eigenvalues so that adding $a$ is a {\em simple \`etale  extension, cf. \cite{Ray}}. 

The left multiplication of $R:=\mathcal S_n(\ell)[(D(  a)\Delta)^{-1}]$ on itself maps $R$ isomorphically to $End_{E_p}  (R)$, since $R=\mathcal S_n(\ell)[(D(  a)\Delta)^{-1}]$ is a free $E_p$  module  with basis the elements $  b^j,\ j=0,\ldots, n-1.$  Therefore, by definition, $R =\mathcal S_n(\ell)[(D(  a)\Delta)^{-1}]$ is a rank $n^2$ Azumaya algebra  over 
$\mathcal C_n(\ell)[D(  a)^{-1}, \Delta^{-1}]$.\medskip

By the compactness of the Zariski topology  one has a finite  covering of $U_{n,\ell}//PGL(n,F)$  by affine open sets  associated to pairs $  a_i,  b_i$
 as above. \smallskip
 
 Notice that the fibration $\pi:U_{n,\ell}\to U_{n,\ell}//PGL(n,F)$ is NOT  locally trivial in the Zariski topology,  since the ring of fractions of 
 $\mathcal S_n(\ell)$ is a division algebra and not a matrix algebra over a field, as it would be if  locally trivial in the Zariski topology.\vfill\eject

\part{General theory}
\section{Cayley Hamilton algebras\label{leCH}}

 \subsection{Generalities on trace algebras}
 
\begin{definition}\label{idita}\strut  
\begin{enumerate}\item A  {\em simple} trace algebra is one with no proper trace ideals, 
\item A  {\em prime} trace algebra is one in which, if $I,J$ are two trace ideals with $IJ=0$, then either $I=0$ or $J=0$.

\item Finally a   {\em semiprime} trace algebra is one in which, if $I $  is an ideal  with $I^2=0$, then  $I=0$.
\end{enumerate}

\end{definition} Notice that prime implies semiprime.

\begin{definition}\label{ker}
Given a trace algebra $R$ the set
\begin{equation}\label{Ker}
K_R:=\{x\in R\mid t(xy)=0,\ \forall y\in R\}
\end{equation} will be called the {\em kernel} of the trace algebra.

$R$ is called {\em nondegenerate} if $K_R=0$.\smallskip

 Given  a (trace ideal) $I$ in a trace algebra $R$, we set $K(I)\supset I$  to be the ideal such that $R/K(I)=K_{R/I}$.
We call $K(I)$  the {\em radical kernel} of $I$.
\end{definition}

\begin{proposition}\label{tradi} Let $R$  be an algebra with trace $tr$ and $T$  its trace ring. Assume that $tr(1)$ is invertible in $T$,  then:
\begin{enumerate}\item Given any ideal $I$ of $T$  we have that $IR$ is a trace ideal and $IR\cap T=I$ so $R/IR$ is an algebra with trace and trace ring $T/I$.
\item Moreover  $R$  decomposes into the direct sum  $T\oplus R^0$, of $T$ modules,  with $R^0$ the space of trace 0 elements.

 \item If   $R$ is prime, resp. simple, as algebra with trace, then $T$ is a domain, resp. a field.
\end{enumerate}  

\end{proposition}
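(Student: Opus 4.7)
The plan is to extract from axiom 4 a $T$-linear retraction $\pi\colon R\to T$ of the inclusion $T\hookrightarrow R$, and then use it to control how ideals of $T$ extend and contract to and from $R$. First I observe that for any $x\in T=t(R)$, writing $x=t(a)$ and applying axiom 4 to $t(a)\cdot 1$ yields $tr(x)=x\cdot tr(1)$; since $tr(1)$ is invertible, I can define $\pi(r):=tr(1)^{-1}tr(r)$, which restricts to the identity on $T$. The $T$-linearity of $\pi$ is another instance of axiom 4 (each element of $T$ is of the form $tr(s)$). This already yields (2), with $R^0:=\ker\pi$.

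Next, for (1), given an ideal $I\subset T$ I would check that $IR$ is a trace ideal: it is two-sided because $I\subseteq T$ is central in $R$, and it is closed under trace since $tr(ir)=i\,tr(r)\in IT\subseteq I\subseteq IR$ (the last inclusion uses $1\in R$). The key identity $IR\cap T=I$ then follows from the retraction: for $x\in IR\cap T$ one has $tr(x)\in tr(IR)\subseteq I$, so $x=tr(1)^{-1}tr(x)\in I$. The standard homomorphism theorem for trace algebras then identifies the trace algebra of $R/IR$ with the image of $T$, namely $T/(IR\cap T)=T/I$.

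For (3), assume first that $R$ is prime as trace algebra. For $a,b\in T$ with $ab=0$, the sets $aR$ and $bR$ are trace ideals by the argument in the previous step, and their trace-ideal product $(aR)\cdot(bR)=(aR)(bR)+R\cdot tr((aR)(bR))$ vanishes because $abR=0$. Primeness forces $aR=0$ or $bR=0$; evaluating at $1$ gives $a=0$ or $b=0$, so $T$ is a domain. If instead $R$ is simple and $0\neq a\in T$, then the nonzero trace ideal $aR$ must equal $R$, so $1=ar$ for some $r\in R$; applying $\pi$ yields $1=a\,\pi(r)$ with $\pi(r)\in T$, so $a$ is invertible in $T$ and $T$ is a field.

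The main obstacle is really the second step: the equality $IR\cap T=I$ is the one place where the hypothesis that $tr(1)$ is invertible plays a decisive role, as without it one can easily have $T$-elements of $IR$ that are not in $I$, and the analogous conclusions about prime and simple would then fail too. Once this extension-contraction calculus is in place, the rest of the proposition is formal.
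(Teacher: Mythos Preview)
Your proof is correct and follows essentially the same approach as the paper: both use the retraction $\pi(r)=tr(1)^{-1}tr(r)$ (the paper introduces it for part (2) and uses the same computation implicitly for part (1)), and the arguments for (3) are identical, with your treatment of the trace-ideal product being slightly more careful than necessary since the paper's definition of prime only requires the ordinary product $IJ$ to vanish.
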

\begin{proof}
1) Let $a=\sum_jt_jr_j\in R$  with $t_j\in I,\ r_j\in R$. Taking traces we have
$$a\cdot tr(1)=tr(a)=\sum_jt_jtr(r_j),  \implies a= tr(1) ^{-1}\sum_jt_jtr(r_j)\in I.$$
2) The second part follows from axiom (4)  as the map $x\mapsto t(1)^{-1}t(x)$ is a $T$  linear projection to $T$ with kernel $R^0$.

3) We assume $tr(1)$ is invertible and $T\neq\{0\}$. If $a,b\in T$ are non zero and $ab=0$,  then  $aR, bR$  are two trace ideals and $aRbR=0$ a contradiction.

If $R$ is simple and $a\in T,\ a\neq 0$, then $aR$ is a trace ideal hence $aR=R$. So there is a $b\in R$ with $ab=1$  and then $a\cdot tr(b)=tr(1)$. Since $tr(1)$ is invertible the claim follows. 
\end{proof}

 Observe that an algebra $R$  can be considered as algebra with trace by setting the trace identically equal to 0.
 \subsection{ CH  algebras}

\begin{proposition}\label{nonde}
$K_R$ is the maximal trace ideal $J$ where $tr(J)=0$. 

If $t(1)$ is invertible, then  $R/K_R$ is non degenerate.

If  $R$ is  an $n^{th}$--CH  algebra, then we have $K_R^{n^2}=0.$\footnote{$n^2$ is not the best bound, conjecturally the best is $\binom {n+1}2$. But this has been  verified only for very small values of $n$.}
\end{proposition}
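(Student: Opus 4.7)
Plan:

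For Part 1, the plan is direct verification from the trace axioms. Cyclicity (axiom 3) gives closure of $K_R$ under left and right multiplication by $R$: for $x \in K_R$ and $r,y\in R$, $t((rx)y)=t(x(yr))=0$ and $t((xr)y)=t(x(ry))=0$. Setting $y=1$ in the defining condition yields $t(K_R)=0$, which in particular makes closure under trace trivial ($t(x)=0$ for $x\in K_R$, so $t(x)\in K_R$). Maximality is immediate: if $J$ is any trace ideal with $t(J)=0$, then $t(Jy)\subseteq t(J)=0$ for every $y\in R$, so $J\subseteq K_R$.

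For Part 2, the key step is $K_R\cap t(R)=0$ when $t(1)$ is invertible. If $s=t(x)\in K_R$, axiom (4) gives $0=t(sy)=t(x)t(y)=s\,t(y)$ for all $y\in R$; taking $y=1$ forces $s\,t(1)=0$ and hence $s=0$. Nondegeneracy of the induced trace on $R/K_R$ then follows: any $\bar x \in K_{R/K_R}$ lifts to some $x\in R$ with $t(xy)\in K_R\cap t(R)=0$ for every $y$, so $x\in K_R$, i.e.\ $\bar x=0$.

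For Part 3, I would first establish that $K_R$ is nil of index at most $n$. For $x\in K_R$, the choice $y=x^{k-1}$ in the defining condition gives $t(x^k)=0$ for all $k\geq 1$; Newton's formula \eqref{daelan} then forces $\sigma_i(x)=0$ for all $i\geq 1$, and the Cayley--Hamilton identity collapses to $x^n=0$. Moreover $t(K_R)=0$ implies $I\cdot J=IJ$ whenever $I,J\subseteq K_R$, so the enriched powers coincide with the ordinary ones, $K_R^{\cdot k}=K_R^k$. Polarizing CH and evaluating on $x_1,\dots,x_n\in K_R$ also kills every term of \eqref{ilCH} except those indexed by an $(n+1)$-cycle in $S_{n+1}$ (any shorter cycle contributes a trace factor $t(x_{i_1}\cdots x_{i_h})$ of an element of $K_R$, which is zero), yielding the multilinear identity
$$\sum_{\tau\in S_n} x_{\tau(1)}x_{\tau(2)}\cdots x_{\tau(n)}=0 \qquad (x_i\in K_R).$$

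To convert these ingredients into the bound $K_R^{n^2}=0$, my plan is to invoke the main embedding theorem \ref{immu} and view $R\subseteq M_n(T_n(R))$. The two-sided ideal of $M_n(T_n(R))$ generated by $K_R$ has the form $M_n(J)$ for some $PGL(n)$-stable ideal $J\subseteq T_n(R)$, and $M_n(J)^k=M_n(J^k)$. Combining the pointwise bound $x^n=0$ with the symmetric-sum identity above should force $J$ to be nilpotent, and careful accounting of the two factors of $n$ (one from matrix size, one from nilpotency index of $J$) should give $M_n(J^{n^2})=0$, hence $K_R^{n^2}\subseteq M_n(J)^{n^2}=0$. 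The main obstacle is precisely this last implication: pointwise nilpotency of matrices does not by itself force nilpotency of the ideal generated by their entries, so one must exploit the $PGL(n)$-invariance of $J$ together with the averaging (Reynolds) operator from invariant theory to keep partial products inside $R=M_n(T_n(R))^{PGL(n)}$ and iteratively apply the multilinear CH identity above.
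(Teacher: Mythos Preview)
Your treatment of the first two parts is correct and in fact more explicit than the paper, which dismisses Part~1 as ``clear'' and handles Part~2 by exactly the computation you give: for $\bar a\in K_{R/K_R}$ one has $t(ar)\in K_R$, hence $t(t(ar))=t(ar)t(1)=0$ and so $t(ar)=0$.

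For Part~3 you correctly reach the crucial reduction: every $x\in K_R$ satisfies $x^n=0$, and (equivalently, by polarization) $\sum_{\tau\in S_n}x_{\tau(1)}\cdots x_{\tau(n)}=0$ for $x_i\in K_R$. At this point the paper does \emph{not} attempt any further argument: it simply invokes the Dubnov--Ivanov--Nagata--Higman theorem in the sharp form due to Razmyslov (cf.\ \cite{agpr}, Theorem~12.2.13), which says that a nonunital algebra over $\mathbb Q$ satisfying the identity $x^n=0$ is nilpotent of index at most $n^2$. That black box is exactly what is missing from your proposal.

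Your alternative route through Theorem~\ref{immu} and the ideal $M_n(J)\subset M_n(T_n(R))$ does not close the gap. As you yourself note, pointwise nilpotency of the matrices in $K_R$ says nothing about nilpotency of the commutative ideal $J$ generated by their entries; there is no reason for $J^{n^2}=0$, and $PGL(n)$-invariance plus the Reynolds operator do not manufacture such a relation. Any way of ``keeping partial products inside $R$'' and iterating the symmetric-sum identity would amount to reproving Nagata--Higman inside $K_R$ itself, at which point the embedding is irrelevant. The honest fix is to drop the invariant-theoretic detour and cite Nagata--Higman/Razmyslov directly after your observation that $x^n=0$ on $K_R$.
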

\begin{proof}
The first part is clear. As for the second, if $a\in R$ is in the kernel modulo $K_R$, then  we have that, for all $r\in R,\ t(ar)\in K_R$.  So $$t(t(ar))=t(ar)t(1)=0,\ \implies t(ar)=0$$ and the claim follows.

As for the last statement, we have that the   Cayley Hamilton identity on $I$ is $x^n=0$ so the statement follows from Razmyslov's estimate, in the so called Dubnov--Ivanov Nagata--Higman Theorem, \cite{agpr} Theorem 12.2.13.
\end{proof}

\begin{lemma}\label{duni} Let $a$ be an $n\times n$  matrix with entries in a commutative ring $A$. \smallskip

 1)\quad   If $a$ is nilpotent, then 
also $tr(a)$ is nilpotent.

2)\quad If $a=a^2$ is   idempotent,    then $tr(a)$  satisfies the monic polynomial with integer coefficients $\prod_{i=0}^n(x-i).$

\end{lemma}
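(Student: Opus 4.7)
The plan for both parts is to reduce to a local situation via localization at prime ideals of $A$, using the fact that the canonical map $A\hookrightarrow\prod_{\mathfrak p\in\mathrm{Spec}(A)}A_{\mathfrak p}$ is injective: an element whose annihilator meets every prime has annihilator equal to $A$, hence is zero. Since the matrix trace is compatible with any ring homomorphism $A\to B$ (one simply applies the map to diagonal entries), both $tr(a)$ and the polynomial $\prod_{i=0}^{n}(tr(a)-i)$ behave well under the reductions we make.

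For part (1), it suffices to show that $tr(a)$ lies in every prime $\mathfrak p$ of $A$, for then $tr(a)$ belongs to the nilradical and is nilpotent. Fix such a $\mathfrak p$ and pass to the quotient domain $A/\mathfrak p$ and thence to its field of fractions $K$. Reduction preserves the nilpotency of $a$, so the image $\bar a\in M_n(K)$ is still nilpotent; over the field $K$ its characteristic polynomial must equal $t^n$, so in particular $tr(\bar a)=0$. Functoriality of the trace then forces the image of $tr(a)$ to vanish in $K$, and therefore already in the subring $A/\mathfrak p\subset K$, i.e.\ $tr(a)\in\mathfrak p$.

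For part (2), I localize at an arbitrary prime $\mathfrak p$ and prove that $\prod_{i=0}^{n}(tr(a)-i)$ vanishes in each $A_{\mathfrak p}$. The relation $a^{2}=a$ yields the direct sum decomposition $A_{\mathfrak p}^{n}=aA_{\mathfrak p}^{n}\oplus(1-a)A_{\mathfrak p}^{n}$, whose summands are finitely generated projective $A_{\mathfrak p}$-modules; since $A_{\mathfrak p}$ is local, they are both free, of some ranks $k$ and $n-k$ with $0\le k\le n$. In a basis adapted to this decomposition $a$ becomes $\mathrm{diag}(1,\ldots,1,0,\ldots,0)$ with $k$ ones, so $tr(a)=k$ in $A_{\mathfrak p}$; this kills the factor $tr(a)-k$, and hence the whole product, in $A_{\mathfrak p}$.

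The main subtlety, rather than a real obstacle, is that the integer $k$ appearing in part (2) varies with the prime $\mathfrak p$, which is precisely why no single linear polynomial suffices and one must take the product $\prod_{i=0}^{n}(x-i)$ of $n+1$ factors in order to handle all primes simultaneously. The remaining ingredients --- compatibility of trace with base change, freeness of finitely generated projective modules over local rings, and the injectivity of $A\to\prod_{\mathfrak p}A_{\mathfrak p}$ --- are all standard.
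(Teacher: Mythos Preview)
Your proof is correct and follows essentially the same route as the paper's: for part (1) you reduce via the nilradical to domains and then to fields, and for part (2) you localize at primes, use the splitting $A_{\mathfrak p}^n=aA_{\mathfrak p}^n\oplus(1-a)A_{\mathfrak p}^n$, and invoke that finitely generated projectives over a local ring are free. Your write-up is somewhat more explicit about the glue (functoriality of trace, injectivity of $A\to\prod_{\mathfrak p}A_{\mathfrak p}$), but the ideas are identical.
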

 \begin{proof}
1)\quad Since in a commutative ring $A$ the set of nilpotent elements is the intersection of the
prime ideals we are reduced to the case in which $A$ is a domain. Hence we can embed it into a
field. For matrices over a field the trace of a nilpotent matrix is 0.

2)\quad We may assume that $A$ has a 1. From   the localization  principle it is enough to prove this for $A$ local. In this case $A^n=aA^n\oplus (1-a)A^n$. The two projective  modules of this decomposition are both free and the rank of $aA^n$ is some integer $0\leq i\leq n$ so  $tr(a)=i$ and the claim follows.\end{proof}
\begin{proposition}\label{CHC} Let $R$ be a $n^{th}$--CH  algebra
 and $r\in R$ a nilpotent element, then $tr(r)$ is nilpotent.
\end{proposition}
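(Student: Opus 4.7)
The plan is to deduce this directly from the main embedding theorem (Theorem \ref{immu}) together with Lemma \ref{duni}(1), which already handles the nilpotency claim for genuine matrices.

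First I would invoke Theorem \ref{immu}: since $R$ is an $n$-CH algebra over $\mathbb{Q}$ (or at least the argument applies after extending scalars, and the statement we need is one about a single element living in a finitely generated trace subalgebra, which is automatically an $n$-CH algebra over $\mathbb{Q}$ by Remark \ref{ests}), the universal map $\mathrm{j}_R : R \to M_n(T_n(R))^{PGL(n,\mathbb{Q})}$ is an isomorphism and in particular injective. By construction $\mathrm{j}_R$ is trace-preserving, meaning $\mathrm{j}_R(t(r)) = \mathrm{tr}(\mathrm{j}_R(r))$, where the right-hand side denotes the ordinary trace of the matrix $\mathrm{j}_R(r) \in M_n(T_n(R))$.

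Next I would transport nilpotency across $\mathrm{j}_R$. Since $\mathrm{j}_R$ is a ring homomorphism and $r^m = 0$ for some $m$, the matrix $\mathrm{j}_R(r)$ satisfies $\mathrm{j}_R(r)^m = 0$, so it is a nilpotent matrix with entries in the commutative ring $T_n(R)$. Applying Lemma \ref{duni}(1) to this matrix yields that $\mathrm{tr}(\mathrm{j}_R(r))$ is nilpotent in $T_n(R)$; say $\mathrm{tr}(\mathrm{j}_R(r))^N = 0$.

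Finally I would pull this back: using the trace-preserving property and the fact that $\mathrm{j}_R$ is a ring homomorphism,
\[
\mathrm{j}_R\bigl(t(r)^N\bigr) \;=\; \mathrm{j}_R(t(r))^N \;=\; \mathrm{tr}(\mathrm{j}_R(r))^N \;=\; 0,
\]
and the injectivity of $\mathrm{j}_R$ forces $t(r)^N = 0$, i.e. $t(r)$ is nilpotent in $R$, as required.

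There is really no serious obstacle here; the only thing to be slightly careful about is that Theorem \ref{immu} is stated for $F \supset \mathbb{Q}$, so one should either assume this setup (consistent with the standing hypothesis of the section) or reduce to it by passing to the trace subalgebra generated by $r$ and $1$, which is an $n$-CH algebra over $\mathbb{Q}$ to which Theorem \ref{immu} applies verbatim.
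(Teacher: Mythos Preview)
Your proof is correct and follows exactly the same approach as the paper: embed $R$ into matrices via Theorem~\ref{immu} so that the formal trace becomes the ordinary matrix trace, then apply Lemma~\ref{duni}(1). The paper's proof is just a terser version of what you wrote.
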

 \begin{proof}
By Theorem \ref{immu} we can embed  $R$ into matrices over a commutative ring so that the trace
becomes the ordinary trace. Hence the statement follows from  Lemma \ref{duni}.\end{proof}
\begin{corollary}\label{CHC1} Let $R$ be a $n^{th}$--CH  algebra with trace algebra reduced (no nonzero nilpotent elements). 
  Then, if $r\in R$ is nilpotent,  we have $r^n=0$.  The kernel $K_R$ is the maximal nil ideal and $K_R^{n^2}=0$.
\end{corollary}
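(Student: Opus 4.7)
The plan is to leverage Proposition \ref{CHC} together with the Newton identities: once the trace algebra is reduced, all traces of nilpotent elements must actually vanish, and then Cayley--Hamilton collapses to a pure power.

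First, I would fix a nilpotent $r \in R$. Since $r^k$ is nilpotent for every $k \geq 1$, Proposition \ref{CHC} gives that each $tr(r^k)$ is nilpotent in the trace algebra $T$. Because $T$ is reduced by hypothesis, this forces $tr(r^k) = 0$ for every $k \geq 1$. Substituting these vanishings into the defining exponential generating function \eqref{reldaN1} (equivalently, using the Newton recursion \eqref{daelan}) yields $\sigma_i(r) = 0$ for all $i \geq 1$. The Cayley--Hamilton identity \eqref{CH}, which holds in $R$ by hypothesis, then reduces to
\[
0 = CH_n(r) = r^n + \sum_{i=1}^n (-1)^i \sigma_i(r)\, r^{n-i} = r^n.
\]
This proves the first assertion.

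For the second assertion, first observe that $K_R$ itself is a nil ideal: by Proposition \ref{nonde} we have $K_R^{n^2} = 0$, so every element of $K_R$ is nilpotent (in the ordinary sense, since the trace-power bounds the ordinary power). It remains to check maximality. Let $I \subseteq R$ be any (two-sided) nil ideal. For $a \in I$ and arbitrary $b \in R$, the element $ab$ lies in $I$ and is therefore nilpotent; by Proposition \ref{CHC}, $tr(ab)$ is nilpotent in $T$, and since $T$ is reduced we conclude $tr(ab) = 0$. As $b$ was arbitrary, $a \in K_R$, so $I \subseteq K_R$. Combined with $K_R^{n^2}=0$ from Proposition \ref{nonde}, this establishes both remaining claims.

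I do not anticipate a serious obstacle here: the key conceptual point is just that reducedness of $T$ promotes "trace is nilpotent" to "trace is zero", after which Newton's identities do the rest. The only minor care needed is in the maximality argument, where one must check an arbitrary nil ideal (not necessarily a trace ideal) is captured by $K_R$; but the symmetry axiom $tr(ab)=tr(ba)$ together with two-sidedness of $I$ makes this immediate.
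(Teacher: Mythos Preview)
Your proof is correct and is exactly the argument the paper has in mind: Proposition~\ref{CHC} plus reducedness forces $tr(r^k)=0$ for all $k\ge 1$, Newton's identities then kill all $\sigma_i(r)$, and Cayley--Hamilton collapses to $r^n=0$; the maximality of $K_R$ among nil ideals follows by the same trick applied to $ab$, and $K_R^{n^2}=0$ is quoted from Proposition~\ref{nonde}. There is nothing to add.
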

In particular we have 
\begin{corollary}\label{CHC2}   1)\quad An  $n^{th}$--CH  algebra $R$ is semiprime, if and only if,  its   trace algebra is reduced  and the  kernel $K_R=0$.

2)\quad An  $n^{th}$--CH  algebra $R$ is   prime, if and only if,   its   trace algebra is a domain and $K_R=0$.
\end{corollary}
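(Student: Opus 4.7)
\textbf{Proof plan for Corollary \ref{CHC2}.}

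The plan is to treat both items in parallel, reducing each "if and only if" to the tools already built up: the bound $K_R^{n^2}=0$ from Proposition \ref{nonde}, the fact that nilpotents have nilpotent trace (Proposition \ref{CHC}), and Proposition \ref{tradi}(3) (which applies because an $n$-CH algebra has $\mathrm{tr}(1)=n$ invertible in characteristic $0$). A recurring trick I will use is that once an element or ideal lies inside $K_R$, ordinary products and trace products coincide, since $\mathrm{tr}(K_R\cdot R)=0$ kills the extra summand $R\cdot\mathrm{tr}(IJ)$ in the definition of $I\cdot J$.

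For (1)$\Rightarrow$: assume $R$ semiprime. Since $\mathrm{tr}(K_R\cdot R)=0$, the trace power $K_R^{\cdot k}$ coincides with the ordinary power $K_R^k$, so $K_R^{\cdot n^2}=0$ by Proposition \ref{nonde}; a standard iterated-square argument from the semiprime hypothesis then gives $K_R=0$. For reducedness of $T$, given a nilpotent $t\in T$ with $t^m=0$, observe that $tR$ is a trace ideal (because $\mathrm{tr}(tR)=t\,\mathrm{tr}(R)\subset tT\subset tR$), and an easy induction shows $(tR)^{\cdot k}=t^kR$; hence $tR$ is nilpotent, so $tR=0$ and $t=t\cdot 1=0$. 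For (1)$\Leftarrow$: let $I$ be a trace ideal with $I^{\cdot 2}=0$, so $I^2=0$. Every $a\in I$ satisfies $a^2=0$, hence is nilpotent; by Proposition \ref{CHC}, $\mathrm{tr}(a)\in T$ is nilpotent, so $\mathrm{tr}(a)=0$ by reducedness of $T$. More generally, for any $b\in R$ the element $ab\in I$ again squares to zero, so $\mathrm{tr}(ab)=0$; this shows $a\in K_R=0$, and hence $I=0$.

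For (2)$\Rightarrow$: prime implies semiprime, which by (1) gives $T$ reduced and $K_R=0$; Proposition \ref{tradi}(3), applicable because $\mathrm{tr}(1)=n$ is invertible, then upgrades $T$ to a domain. For (2)$\Leftarrow$: suppose $T$ is a domain and $K_R=0$. Since a domain is reduced, (1) gives that $R$ is already semiprime. Let $I,J$ be trace ideals with $I\cdot J=0$; then in particular $IJ=0$. The sets $T\cap I$ and $T\cap J$ are ideals of $T$, and $(T\cap I)(T\cap J)\subset IJ=0$, so one of them vanishes; say $T\cap I=0$. Since $I$ is closed under trace, $\mathrm{tr}(I)\subset T\cap I=0$; then for any $a\in I$ and $b\in R$ we have $ab\in I$, whence $\mathrm{tr}(ab)\in\mathrm{tr}(I)=0$, forcing $a\in K_R=0$. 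Thus $I=0$, proving $R$ prime.

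The only genuinely delicate point is the bookkeeping in the first part of (1)$\Rightarrow$: the semiprime condition is formulated with trace products $I^{\cdot 2}$, not ordinary products, and Proposition \ref{nonde} gives an ordinary-power bound $K_R^{n^2}=0$. The identification of the two powers inside $K_R$ (and inside $tR$ for reducedness) is what makes the proof go through, and that is where I would be most careful in the write-up.
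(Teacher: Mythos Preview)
Your proof is correct and follows essentially the same route as the paper: both directions of (1) use Proposition~\ref{CHC} together with the nilpotency bound $K_R^{n^2}=0$ from Proposition~\ref{nonde}, and (2) is reduced to (1) plus Proposition~\ref{tradi}(3). The extra bookkeeping you flag about trace products versus ordinary products is sound but in fact unnecessary here, since $IJ=0$ and $I\cdot J=0$ are equivalent (the summand $R\cdot\mathrm{tr}(IJ)$ vanishes when $IJ=0$), and powers of $K_R$ and of $tR$ coincide in the two senses for exactly the reason you give; the paper simply takes this for granted.
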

\begin{proof}
Assume $R$ semiprime. If the trace algebra contains a non zero nilpotent element, then it contains one with $a^2=0$  and   $Ra$ is a trace ideal with $(Ra)^2=0$.  Also, if $K_R\neq 0$, since $K_R^{n^2}=0$  the algebra is not semiprime.

Conversely, if $R$  has an ideal $I\neq 0$  with $I^2=0$,  then for each $a\in I$ we have $tr(a)$  is nilpotent, by Proposition \ref{CHC} hence $tr(a)=0$ by assumption. If for all $a\in I$ we have $tr(a)=0$, then  $I\subset K_R$ hence $I=0$.\smallskip

As for the second statement let  us show that the given conditions imply $R$  prime. In fact  given two ideals  $I,J$ with $IJ=0$ since  $t(I)\subset I,\ t(J)\subset J$ we have $t(I)t(J)=0$. Since these are ideals, and $T$ is a domain, then one of them must be 0.  If $t(I)=0$, then $I\subset K_R=\{0\}$  by hypothesis.

Conversely if $R$ is prime it is also semiprime, then  we must have $K_R=0$. By Proposition \ref{tradi} 4)    $ T(R)$  is a domain.
\end{proof} Finally the local finiteness property:
\begin{proposition}\label{finl}
An  $n^{th}$--CH  algebra $R$ finitely generated over its trace ring $T$  is a finite $T$ module.
\end{proposition}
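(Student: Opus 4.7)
The plan is to combine Shirshov's height theorem with the Cayley--Hamilton identity: the former controls the length of words in the generators, the latter controls the powers in which those words appear.

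First I observe that $R$ is a PI algebra. By Corollary \ref{PIn} it satisfies every polynomial identity holding in $M_n(\mathbb Q)$, and in particular the standard identity of degree $2n$. Fix generators $a_1,\dots,a_m$ of $R$ as a $T$--algebra. Shirshov's height theorem, applied to $R$ over the central subring $T$ (see the treatment in \cite{agpr}), then produces a finite set $W$ of monomials in $a_1,\dots,a_m$ of length bounded by a constant depending only on $n$ and $m$, together with an integer $h$ (the Shirshov height), such that $R$ is spanned as a $T$--module by products of the form
$$w_1^{k_1}w_2^{k_2}\cdots w_h^{k_h},\qquad w_i\in W,\ k_i\in\mathbb N.$$
The proof of Shirshov's theorem is combinatorial and uses the PI only to rewrite monomials, so the spanning is automatic over any central subring containing the (rational) coefficients of the identity, and in particular over $T$.

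The second ingredient is the Cayley--Hamilton relation itself: for any $w\in R$, $CH_n(w)=0$ gives
$$w^n=\sum_{i=1}^n(-1)^{i-1}\sigma_i(w)\,w^{n-i},\qquad \sigma_i(w)\in T.$$
A routine induction on $k$ then shows $w^k\in T\cdot 1+T\cdot w+\dots+T\cdot w^{n-1}$ for every $k\ge 0$. Applying this reduction to each factor $w_i^{k_i}$ in the Shirshov expansion, I conclude that $R$ is spanned as a $T$--module by the finite family
$$\{\,w_1^{k_1}w_2^{k_2}\cdots w_h^{k_h}\ :\ w_i\in W,\ 0\le k_i<n\,\},$$
proving that $R$ is a finite $T$--module.

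The logical main obstacle is Shirshov's height theorem itself, which is nontrivial but is a classical result available in the PI literature; once it is granted in the relative form over $T$, the combination with Cayley--Hamilton is immediate. No further input beyond the PI property of $R$, centrality of $T$, and the CH identity is needed.
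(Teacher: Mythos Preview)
Your proposal is correct and follows essentially the same route as the paper: the paper's one--line proof observes that the Cayley--Hamilton identity makes every element of $R$ integral over $T$ of degree at most $n$, and then invokes Shirshov's Lemma (cited as \cite{agpr} Theorem 8.2.1) to conclude finiteness. You have simply unpacked that citation, using the height--theorem formulation of Shirshov to obtain the spanning set $w_1^{k_1}\cdots w_h^{k_h}$ and then the CH relation to bound each exponent below $n$; this is exactly the mechanism behind the standard result the paper appeals to.
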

\begin{proof}
The Cayley Hamilton identity implies that each element of $R$ is integral over $T$ of degree $\leq n$ then this is a standard result in PI rings  consequence of Shirshov's Lemma, see \cite{agpr} 
Theorem 8.2.1..
\end{proof}

\begin{lemma}\label{multest}
Let $R\subset S$ be an inclusion of trace algebras over some field  $F$. Assume that $R\cdot T(S)=S$. Then $R$ and $S$ satisify the same trace identities.
\end{lemma}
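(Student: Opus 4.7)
The plan is to prove only the nontrivial inclusion: every trace identity of $R$ is a trace identity of $S$ (the reverse is immediate from $R\subset S$, since an identity holding on $S$ restricts to one on $R$). Since we work over a field $F\supset \mathbb Q$, as fixed throughout Section~\ref{astr}, every multihomogeneous trace identity is equivalent, by the standard multilinearization process, to a family of multilinear trace identities. It therefore suffices to check that any multilinear trace identity $f(x_1,\dots,x_k)\in\mathcal F_T\langle X\rangle$ which vanishes on $R$ also vanishes when evaluated on arbitrary $s_1,\dots,s_k\in S$.

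Given such $s_i\in S$, the hypothesis $R\cdot T(S)=S$ lets us write each one as a finite sum
\[
s_i=\sum_{\alpha} t_{i,\alpha}\,r_{i,\alpha},\qquad t_{i,\alpha}\in T(S),\ r_{i,\alpha}\in R.
\]
Substituting and expanding by multilinearity of $f$ in each slot, $f(s_1,\dots,s_k)$ becomes a finite sum of expressions of the form
\[
\Bigl(\prod_{i=1}^{k} t_{i,\alpha_i}\Bigr)\cdot f(r_{1,\alpha_1},\dots,r_{k,\alpha_k}),
\]
once the central scalars $t_{i,\alpha_i}$ have been pulled out of every monomial occurring in $f$. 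Pulling out is legitimate because trace axiom~(2) places $T(S)$ in the center of $S$, so scalars commute through any product of elements of $S$, and trace axiom~(4) together with axiom~(1) yields $t(t_{i,\alpha}\,u)=t_{i,\alpha}\,t(u)$ for $u\in S$, so scalars also slide out of any trace bracket --- even when the bracket is nested inside another trace bracket.

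Each summand $f(r_{1,\alpha_1},\dots,r_{k,\alpha_k})$ vanishes because $f$ is a trace identity of $R$, so the whole sum is zero, proving $f$ is a trace identity of $S$. The one step that really uses the structure of trace algebras, and therefore the main point to be careful about, is the sliding-out of central scalars in step two: the monomials produced by multilinear expansion of $f$ have the general shape $T_{\sigma_1}(\dots)\cdots T_{\sigma_r}(\dots)\cdot w$ with $w$ a word in the $s_i$, and one must verify by a short induction on the depth of trace nesting that each $t_{i,\alpha}\in T(S)$ appearing inside any such decoration can be extracted as a scalar factor in front. Both required identities for this induction --- centrality and $T(S)$--linearity of the trace --- are exactly axioms~(1), (2) and~(4).
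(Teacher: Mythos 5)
Your proof is correct and follows essentially the same route as the paper's: reduce to multilinear identities by polarization (valid in characteristic $0$), then expand an evaluation on $S=R\cdot T(S)$ by multilinearity and pull the central scalars from $T(S)$ out through products and trace brackets using axioms (1), (2), (4). The paper states this in two sentences; you have simply spelled out the bookkeeping of the expansion and the nested-trace extraction, which is the same argument made explicit.
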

\begin{proof}
Since  we are in characteristic 0, by polarization it is enough to prove that every multilinear trace identity of $R$ holds in $S$. This follows from the fact that,  by definition, the trace of $S$ is $T(S)$ linear.
\end{proof}
\begin{theorem}\label{pieqf}
 Let $R$ be an $n$--CH  algebra  over a field $F$, then $R$ is trace--PI equivalent to a finite dimensional $F$  algebra.
\end{theorem}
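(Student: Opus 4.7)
The approach is to embed $R$ into a matrix algebra via Theorem \ref{immu} and then pick a generic maximal ideal $\mathfrak{m}$ of the commutative trace algebra $T := T(R)$; I will show that $R$ and the finite-dimensional quotient $R_{\mathfrak{m}} := R/\mathfrak{m}R$ have the same trace identities.

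\textit{Reduction to finitely generated $R$.} By the finite-basis (Kemer-type) property of trace $T$-ideals in characteristic zero, the $T$-ideal $I_R$ of trace identities of $R$ is generated as a $T$-ideal by identities of some bounded degree $N$. Hence $R$ is trace-PI equivalent to the relatively free algebra of its variety in $N$ generators, and we may assume $R$ is finitely generated as a trace algebra. Then by Proposition \ref{finl}, $R$ is a finite module over $T$, which is itself a finitely generated commutative $F$-algebra (as a trace-quotient of some $\mathcal{C}_n(\ell)$, which is f.g.\ over $F$ by invariant theory, cf.\ Theorem \ref{FFT}). By Theorem \ref{immu}, $R$ embeds trace-preservingly in $M_n(T)$.

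\textit{Generic specialization.} Pick a maximal ideal $\mathfrak{m}$ of $T$. By the Nullstellensatz, $T/\mathfrak{m}$ is a finite extension of $F$, so $R_{\mathfrak{m}} := R/\mathfrak{m}R$ (a trace algebra by Proposition \ref{tradi}) is a finite-dimensional $F$-algebra with induced trace. Automatically $I_R \subseteq I_{R_{\mathfrak{m}}}$. For the reverse inclusion, fix for each $k$ a complement $W_k \subset P_k^{\mathrm{tr}}$ of $I_R \cap P_k^{\mathrm{tr}}$ in the (finite-dimensional) space of multilinear trace polynomials of degree $k$; one must show $W_k \cap I_{R_{\mathfrak{m}}} = 0$. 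Let $\{r_\alpha\}_{\alpha \in A}$ be a finite $T$-generating set of $R$. Since a multilinear trace polynomial is $T$-multilinear in its arguments, evaluation at $k$-tuples of $r_\alpha$'s gives an injective $F$-linear map $W_k \hookrightarrow M_n(T)^{|A|^k}$. Selecting an $s \times s$ minor $d_k \in T$ which is non-zero (possible by injectivity of this evaluation), the condition $d_k \notin \mathfrak{m}$ guarantees that $W_k$ remains injective into $M_n(T/\mathfrak{m})^{|A|^k}$, whence $W_k \cap I_{R_{\mathfrak{m}}} = 0$. This is a non-empty Zariski-open condition on $\mathfrak{m}$ in each degree $k$; by the Kemer-type reduction above only $k \leq N$ matter, so intersecting finitely many such conditions yields a non-empty open subset of $\mathrm{Spec}(T)$, and any $\mathfrak{m}$ there produces a finite-dimensional $F$-algebra $R_{\mathfrak{m}}$ trace-PI equivalent to $R$.

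\textit{Main obstacle.} The crux is the finite-basis (Kemer-type) property of trace $T$-ideals in characteristic zero, invoked both to reduce to finitely generated $R$ and to cut the otherwise-infinite family of ``open conditions on $\mathfrak{m}$'' down to finitely many; without it the generic-specialization argument need not terminate in a non-empty open set, and one could only conclude that $R$ is trace-PI equivalent to a finite-dimensional algebra \emph{in each bounded degree}, not a single finite-dimensional algebra in all degrees simultaneously.
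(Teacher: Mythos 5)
Your approach is genuinely different from the paper's, which proceeds ``upward'' rather than ``downward'': after the reduction to $R$ finitely generated (done via Capelli's height bound $\leq n^2$ on the $GL(X)$-decomposition of $\mathcal S_n(X)$, not a Specht-type theorem) and the embedding $R\hookrightarrow M_n(A)$ with $A=T_n(R)$ finitely generated, the paper embeds $A$ into a commutative ring $B$ containing a coefficient field $G\supset F$ with $\dim_G B<\infty$ (Cohen's structure theorem), forms $S:=RB\subset M_n(B)$, and then strips $S$ back down to a finite-dimensional $F$-subalgebra $\mathfrak R$ with the same identities via Lemma~\ref{multest} and the integrality facts in Lemma~\ref{duni}. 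Your proposal instead tries to specialize $R$ downward at a generic maximal ideal of its trace ring. These are not the same argument.

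There are several concrete gaps in the specialization route. First, Theorem~\ref{immu} gives a trace-preserving isomorphism $R\simeq M_n(T_n(R))^{PGL(n)}$; it does \emph{not} give an embedding of $R$ into $M_n(T)$ where $T=t(R)$ is the internal trace algebra, since in general $T\subsetneq T_n(R)$ and the matrix entries of elements of $R$ need not lie in $T$. Second, and more seriously, the ``non-zero minor, hence non-empty open condition'' step does not follow from injectivity: the evaluation map $W_k\to R^{|A|^k}$ is only $F$-linear, $R$ is a $T$-module that need not be $T$-free or even $T$-torsion-free (that would require $R$ semiprime, cf.\ Proposition~\ref{mm}), and $T$ itself may fail to be a domain or even reduced. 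A non-zero value $f(r_{\alpha_1},\ldots,r_{\alpha_k})\in R$ can perfectly well lie in $\bigcap_{\mathfrak m}\mathfrak m R$ (e.g.\ in a $T$-torsion summand, or if $T$ has nilpotents), in which case $f$ becomes an identity of \emph{every} $R/\mathfrak m R$ and your open conditions are empty. Third, even granting non-emptiness degree by degree, intersecting the finitely many conditions $d_k\notin\mathfrak m$ requires $\prod_k d_k$ to be non-nilpotent, which is not automatic when $T$ has several minimal primes. Finally, you hang the termination of the argument on a finite-basis (Specht) theorem for trace $T$-ideals; the paper neither uses nor cites such a result, and it is a substantially heavier input than the Capelli height argument actually employed. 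If you want to keep a specialization strategy you would at minimum need to pass to $A=T_n(R)$, reduce to the case where the relevant base ring is a domain by decomposing along minimal primes, and control torsion; the paper's Cohen-based construction sidesteps all of these difficulties by enlarging $R$ to a finite-dimensional algebra with the same identities instead of cutting it down.
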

\begin{proof} First $R$ is PI--equivalent to its associated relatively free algebra $\mathcal F_R(X)$ which is a quotient of the free Cayley--Hamilton algebra $  \mathcal S_n(X)$ associated to matrices. By Capelli's theory this last algebra decomposes under the linear group (acting on the space spanned by $X$) in irreducible representations of height $\leq n^2$ (cf. \cite{agpr}  Theorem 3.6).  Thus the same is true for  $\mathcal F_R(X)$ which is thus PI equivalent to the algebra on the first $n^2$ variables. We may thus assume that $R$ is finitely generated over $F$.\label{Capelli}

By Theorem \ref{immu} we embed  $R\subset M_n(A)$,  with $A$  a  commutative algebra, finitely generated over $F$, and the embedding is compatible with the trace.  Then $A$  can be embedded in a commutative algebra $B$  which contains a field $G\supset F$  and it is finite dimensional over $G$, see Cohen \cite{cohen}.  

Thus $S:=RB\subset M_n(B)$ is a trace algebra, finite dimensional over $G$,  and satisfies the same trace identities with coefficients in $F$  as $R$ by Lemma \ref{multest}.

By enlarging $G$,  if necessary, we may assume that $  S=S/J=\oplus_iM_{n_i}(G)$ and $J=\oplus_jGa_j$, for some finite set of elements $a_j$, and $J^h=0$ for some $h$. Then the algebra $\tilde R$, generated by $\oplus_iM_{n_i}(F)$ and the elements $a_i$, is finite dimensional.  We claim that the trace algebra of $\tilde R$ is also finite dimensional. Since trace is linear, it is enough to show that $tr(a)$ is algebraic over $F$ for $a$ in a basis of  $\tilde R$ over $F$.  

Now the traces of the nilpotent elements, in particular of the elements in its radical, are all nilpotent; so we only need to show that the traces of the elements $e_{i,i}$  are algebraic over $F$.  

Now, if $e=e^2$  is an idempotent, then $tr(e)$ satisfies the  polynomial of Lemma \ref{duni}. Finally the algebra $\mathfrak R$ generated by $\tilde R$ and its traces is finite dimensional and,  since $\mathfrak RG=S$, it is PI--equivalent to $S$ and hence to $R$,  by Lemma \ref{multest}.\end{proof}\medskip

\section{Semisimple algebras}
\subsection{Semisimple matrix algebras}
 Given   two lists $\underline m:=m_1,\ldots,m_k$ and $\underline a:=a_1,\ldots,a_k$ of positive integers with $\sum_jm_ja_j=n$  consider the algebra  \begin{equation}\label{Fma}
F(\underline m ;\underline a ):=\oplus_{i=1}^k M_{m_i}(F),\ \text{  with trace } 
  t(r_1,\ldots,r_k)=\sum_{i=1}^k tr(r_i)a_i,\end{equation} and $tr(r_i)$ the trace as matrix.

$F(\underline m ;\underline a )$ is a trace subalgebra (of block diagonal matrices) of $M_n(F)$, with the block $M_{m_i}(F)$ repeated $a_i$ times. Hence $F(\underline m ;\underline a )$  is an  $n$--CH algebra, and, as trace algebra, it is {\em simple}.

  Conversely  
  \begin{proposition}\label{ssn}
If $F$ is algebraically closed and $S\subset M_n(F)$ is a semisimple algebra,  then $S$ is one of  the algebras $F(m_1,\ldots,m_k;a_1,\ldots,a_k)$.
\end{proposition}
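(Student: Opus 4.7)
The plan is to reduce the statement to the Artin--Wedderburn theorem combined with the representation theory of semisimple algebras over algebraically closed fields. The key observation is that the inclusion $S\hookrightarrow M_n(F)$ is the same data as a faithful $n$-dimensional representation of $S$ on $V:=F^n$, so classifying such subalgebras up to conjugation in $M_n(F)$ amounts to classifying pairs (abstract semisimple algebra, faithful $n$-dimensional module).

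First I would invoke Artin--Wedderburn: since $S$ is semisimple and $F$ is algebraically closed, we have an isomorphism $\psi:S\simto \oplus_{i=1}^k M_{m_i}(F)$ for some uniquely determined positive integers $m_1,\ldots,m_k$, and each simple factor $M_{m_i}(F)$ has a unique (up to isomorphism) simple module $W_i=F^{m_i}$ of dimension $m_i$. Next, since $S$ is semisimple and $F\subset S$, the space $V=F^n$ decomposes as an $S$-module into a direct sum of simple modules; writing $V\simeq \bigoplus_i W_i^{\oplus a_i}$ we get $\sum_i m_ia_i=n$, and faithfulness of the inclusion $S\subset M_n(F)$ forces every $a_i\geq 1$ (otherwise the factor $M_{m_i}(F)$ would act by zero on $V$).

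Now I would choose an $S$-module isomorphism $\varphi: V\simto \bigoplus_i W_i^{\oplus a_i}$; under this isomorphism $S$ acts on the right-hand side in the obvious block-diagonal way, with the $i$-th block $M_{m_i}(F)$ acting on each of its $a_i$ standard copies $W_i$. Transporting back via $\varphi$, there exists $g\in GL(n,F)$ with $gSg^{-1}=F(\underline m;\underline a)$ as subalgebras of $M_n(F)$. To match not only the subalgebras but also the trace structure, I would compute the restriction of the ordinary matrix trace on $M_n(F)$ to this block-diagonal subalgebra: an element $(r_1,\ldots,r_k)\in \oplus_i M_{m_i}(F)$ acts on $W_i^{\oplus a_i}$ by the direct sum of $a_i$ copies of $r_i$, so its trace on $V$ is $\sum_i a_i\, \mathrm{tr}(r_i)$, which is exactly the formula in \eqref{Fma}.

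No real obstacle is expected here; the only point requiring a little care is to insist that the identification with the block-diagonal model be realized by an honest conjugation inside $M_n(F)$ (so that the induced traces agree), and this is built in by choosing $\varphi$ to be an $F$-linear isomorphism of the underlying vector spaces before transporting the $S$-action. The uniqueness of the numerical data $(\underline m;\underline a)$ follows from Krull--Schmidt for semisimple modules, although the statement as given does not require uniqueness.
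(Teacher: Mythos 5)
Your proof is correct and follows essentially the same route as the paper's: Artin--Wedderburn for the abstract structure of $S$, then the classification of faithful $n$-dimensional representations as $\bigoplus_i W_i^{\oplus a_i}$ with all $a_i\geq 1$, producing the block-diagonal form and the trace formula of \eqref{Fma}. You are a bit more explicit than the paper about realizing the identification by an actual conjugation in $GL(n,F)$ and about why the trace comes out right, but these are the same ideas spelled out.
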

\begin{proof} 
A semisimple algebra $S$  over $F$ is of the form $S=\oplus_{i=1}^k M_{m_i}(F)$. 

An embedding of $S$  in $M_n(F)$ is a faithful $n$--dimensional representation of $S$. Now the  representations of $S$  are direct sums  of the irreducible representations $ F^{m_i} $ of the blocks  $M_{m_i}(F)$, and a faithful $n$--dimensional representation of $S$ is thus of the form
$$\oplus_i (F^{m_i})^{\oplus a_i},\ a_i\in\mathbb N,\ a_i>0,\ \sum_i a_im_i=n. $$
For this representation the algebra $S$  appears as block diagonal matrices, with an $m_i\times m_i$ block  repeated $a_i$ times. The trace is then the one described in Formula \eqref{Fma}.

\end{proof}
 
\begin{theorem}\label{simc}
Let $F$ be an algebraically closed field of characteristic 0 and $S$  an $n$--CH algebra with trace algebra  $F$ and kernel $K_S$. 

Then $S/K_S$ is finite dimensional, simple,  and isomorphic to one of the algebras  $F(m_1,\ldots,m_k;a_1,\ldots,a_k)$.
\end{theorem}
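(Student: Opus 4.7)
The plan is to pass to $\bar S := S/K_S$, embed it into $M_n(L)$ for some field $L \supseteq F$ via Theorem~\ref{immu}, bound its $F$-dimension by $n^2$, conclude semisimplicity, and match it to $F(\underline m; \underline a)$ via Proposition~\ref{ssn}. First I verify that $\bar S$ is an $n$--CH algebra with trace algebra $F$ and trivial trace kernel: non-degeneracy follows from Proposition~\ref{nonde} since $t(1) = n$ is invertible in characteristic zero; and the trace algebra remains $F$ because $F \cap K_S = 0$ (any $a \in F \cap K_S$ satisfies $na = t(a) = 0$, forcing $a = 0$). Applying Theorem~\ref{immu} I obtain $\bar S \hookrightarrow M_n(T_n(\bar S))$; choosing a minimal prime $\mathfrak p \subset T_n(\bar S)$ and the fraction field $L$ of $T_n(\bar S)/\mathfrak p$, I get a trace-compatible $F$-algebra homomorphism $\rho : \bar S \to M_n(L)$. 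This $\rho$ is injective: if $\rho(a) = 0$ then for every $b \in \bar S$ the matrix trace of $\rho(ab)$ vanishes in $L$, and since this matrix trace equals $tr_{\bar S}(ab) \in F \hookrightarrow L$, we get $tr_{\bar S}(ab) = 0$ in $F$ for all $b$, whence $a \in K_{\bar S} = 0$.

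The crux is the bound $\dim_F \bar S \leq n^2$. Let $V_L := L \cdot \rho(\bar S) \subseteq M_n(L)$, an $L$-subalgebra of $L$-dimension $k \leq n^2$, and pick an $L$-basis $v_1, \dots, v_k \in \rho(\bar S)$ of $V_L$. Being $L$-linearly independent, the $v_i$ are $F$-linearly independent in $\bar S$; then non-degeneracy of the trace form on $\bar S$ produces, by a dual-basis argument (any $F$-linear dependence $\sum \alpha_i v_i$ that is $K_{\bar S}$-orthogonal must vanish since $\sum \alpha_i v_i \in K_{\bar S}=0$), elements $y_1, \dots, y_k \in \bar S$ with $tr(v_i y_j) = \delta_{ij}$. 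Pairing any radical element $\sum l_i v_i$ of the matrix trace form on $V_L$ with $y_j$ yields $l_j = 0$, so the Gram matrix $G = (tr(v_i v_j)) \in M_k(F)$ is $L$-invertible, and since $\det G \in F$ it is $F$-invertible. For any $v \in \bar S$, expand $v = \sum l_i v_i$ in $V_L$; the equations $tr(v v_j) = \sum_i l_i G_{ij}$ have left-hand sides in $F$ and $F$-invertible coefficient matrix $G$, so each $l_i \in F$. Thus $\bar S \subseteq F v_1 + \dots + F v_k$ and $\dim_F \bar S \leq n^2$. I expect this dimension bound to be the main obstacle, as it crucially combines two roles of $F$: the $F$-valuedness of the trace (giving the $F$-rational Gram matrix and coordinates) and the non-degeneracy of the trace form on $\bar S$ (ruling out degeneracy after base change to $L$).

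With $\bar S$ finite-dimensional, its Jacobson radical $J$ is nilpotent, so by Proposition~\ref{CHC} every $a \in J$ has nilpotent (hence zero) trace in the field $F$; consequently $J \subseteq K_{\bar S} = 0$ and $\bar S$ is semisimple. Since $F$ is algebraically closed, $\bar S \cong \bigoplus_i M_{m_i}(F)$, and the $F$-linear trace factors through the abelianization as $t = \sum_i a_i \, tr_i$ with $a_i \in F$. Evaluating at a rank-one idempotent in the $i$-th block and applying Lemma~\ref{duni} gives $a_i \in \{0, 1, \dots, n\}$; the value $a_i = 0$ is excluded because it would place $M_{m_i}(F)$ in $K_{\bar S}$, and the constraint $\sum a_i m_i = t(1) = n$ identifies $\bar S$ with $F(\underline m; \underline a)$. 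Finally, simplicity as trace algebra is a direct check: any proper ordinary ideal $\bigoplus_{i \in A} M_{m_i}(F)$ with $A \subsetneq \{1, \dots, k\}$ is not closed under trace, since the trace of a generic element of such an ideal is a nonzero scalar in $\bar S$ having a nonzero component in every block outside $A$.
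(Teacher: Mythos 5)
Your proof is correct, but it takes a genuinely different route from the paper's. Where you diverge is in the two hardest steps: finite-dimensionality and the identification with $F(\underline m;\underline a)$. The paper proves $\dim_F S/K_S<\infty$ by an increasing-chain/maximality argument: by Proposition~\ref{finl} (Shirshov) every finitely generated trace subalgebra $S_A$ is a finite module over its trace ring, it chooses $A$ maximizing $\dim S_A/J_A\le n^2$, and shows $S_A=S$, $J_A=0$. You instead apply Theorem~\ref{immu} directly to get a trace-compatible embedding $\bar S\hookrightarrow M_n(L)$ for a field $L\supseteq F$, take an $L$-basis $v_1,\dots,v_k$ ($k\le n^2$) of $L\cdot\rho(\bar S)$ drawn from $\rho(\bar S)$, use non-degeneracy of the $F$-valued trace pairing to show the Gram matrix $G=(tr(v_iv_j))$ is invertible over $F$, and then conclude that every element of $\bar S$ has $F$-rational (not merely $L$-rational) coordinates, giving $\bar S\subseteq\sum_iFv_i$. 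For the identification, the paper presents $\bar S$ as a fiber $\overline{\Sigma}_p$ of a free $\mathcal S_n(m)$ and appeals to the geometric discussion of \S\ref{varssr}; you instead compute directly: the trace vanishes on commutators, so on each block it is a scalar multiple $a_i$ of the matrix trace, and Lemma~\ref{duni}(2) applied to a rank-one idempotent forces $a_i\in\{0,\dots,n\}$, with $a_i=0$ excluded by $K_{\bar S}=0$ and $\sum a_im_i=n$ forced by $t(1)=n$. Your argument is more self-contained and purely linear-algebraic (no geometric invariant theory, no Shirshov maximality), and it isolates cleanly how the two hypotheses — $F$-valuedness of the trace and non-degeneracy of the trace form — combine to bound the dimension. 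The paper's route, though heavier, ties the result into the geometric machinery it uses later. Two small points worth making explicit if you polish this: (i) the passage from ``$tr(a)=0$ for $a\in J$'' to ``$J\subseteq K_{\bar S}$'' needs the observation that $ab\in J$ for $b\in\bar S$ (or just cite Corollary~\ref{CHC1}, which says $K_R$ is the maximal nil ideal); (ii) the dual-basis claim really uses the surjectivity of $\bar S\to W^*$ for the finite-dimensional $W=\sum Fv_i$, which follows because a nonzero $w\in W$ with $tr(w\bar S)=0$ would lie in $K_{\bar S}=0$.
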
 
\begin{proof} Passing to $S/K_S$ we may assume that $K_S=0$. 
Let us first assume that $S$ is finite dimensional, then by Corollary \ref{CHC1}   we   have that $S$ is a semisimple algebra so it is of the form $S=\oplus_{i=1}^k M_{m_i}(F)$. 

Since  $S$  is an $n$--CH algebra, then it is a quotient of one of the free algebras  $S_n(m)$ for some $m$. Restricted to the trace algebra $T_n(m)$  this gives a point  $p$ in the quotient variety, thus  $S$, as trace algebra, is of the form $\overline \Sigma_p$  and the statement follows from the   discussion of \S \ref{varssr}.

Now let us show that $S$ is finite dimensional. For any choice of  a finite set of elements  $A=\{a_1,\ldots,a_k\}\subset S$ let $S_A$ be the subalgebra generated by these elements. By  Proposition \ref{finl} each $S_A$ is finite dimensional. Then, if $J_A$ is the radical of $S_A$,  we have, by the previous discussion,  that $\dim S_A/J_A\leq n^2$.  Let us  choose $A$ so that  $\dim S_A/J_A$ is maximal. We claim that $S=S_A$ and $J_A=0$.

We have that $J_A$ is the kernel of the trace form of $S_A$.

First let us show that $J_A=0$.   Since $K_S=0$ it is enough to show that, if $a\in J_A$  and $r\in S$, then  we have $tr(ra)=0$. If $r\in S_A,\ ra\in J_A$ and this   follows from Lemma \ref{duni} 1).  If $r\notin S_A$, then $S_{A,r}\supsetneq S_A$ and we claim that $J_{A,r}\supset  J_A$, in fact   otherwise $\dim S_{A,r}/J_{A,r}> \dim S_A/J_A$ a contradiction.

Then, by the previous argument $tr(ar)=0$ so $a\in K_S =0$ and $J_A=0$. Next, if $S_A\neq S$, then  we have again some $S_{A,r}\supsetneq S_A$ and now $J_{A,r}\neq 0$ a contradiction.
\end{proof}

\section{The trace algebra is a field}

We want to study general CH algebras over a field $F$ such that the values of the trace are in  $F$.
\subsection{The classical case}
First  some examples.
\begin{example}\label{esf}
If $R$  is a finite dimensional simple  $F$  algebra, then we have $R=M_k(D)$  with $D$  a division ring, finite dimensional  over its center $G$,  which is also finite dimensional  over $F$. Let  $\dim_GD=h^2,\ \dim_FG=\ell$.  

The algebra $R$  is endowed with a canonical {\em reduced trace}  which for $F=G$  coincides with that of Definition \ref{redt0} (since $D$ is Azumaya over $G$ of rank $(hk)^n$).  This trace is a composition of two traces
$$tr_{R/F}= tr_{G/F}\circ  tr_{R/G}.$$ For the   reduced trace $tr_{R/G}$, according to  \ref{azza}, we take an algebraic closure $\overline G$ of $G$  then, if $a\in M_k(D)$: $$\dim_GD=h^2\implies M_k(D)\otimes_G\overline G=M_{k\cdot h}(\overline G)$$  and the trace  $tr_{R/G}(a):=tr(a\otimes 1) $ as matrix.
\end{example}

We already remarked  that  $tr_{R/G}(a)\in G$.  As for $tr_{G/F}(g),\ g\in G$  one takes the trace of the multiplication by $g$  a $\ell\times\ell$ matrix over $F$.

If the characteristic of $F$ is 0 (or in general if $G$ is separable over $F$), then we have 
$$G\otimes_F\overline G= \overline G^\ell,\  g\otimes 1=(\lambda_1,\ldots,\lambda_\ell)\implies tr(g)=\sum_i\lambda_i.$$

\begin{proposition}\label{redt}
If the characteristic of $F$ is 0, then  $R$ is a $k\cdot h\cdot \ell$ CH algebra.
\end{proposition}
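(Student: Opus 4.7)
The plan is to produce a trace--preserving embedding of $R$ into $M_n(\overline F)$ with $n=k\cdot h\cdot \ell$, after which the Cayley--Hamilton identity of degree $n$ on $R$ (and the normalization $tr(1)=n$) will follow by pulling back the corresponding identity from the matrix algebra. Since the embedding lands in a matrix algebra over a commutative ring and the image trace coincides with the matrix trace, all the axioms of Definition \ref{Traces1} are automatically inherited by $R$.

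First I would extend scalars along $F\hookrightarrow \overline F$. Using $R=M_k(D)$ and $D\otimes_F\overline F=D\otimes_G(G\otimes_F\overline F)$, the characteristic zero hypothesis gives that $G/F$ is separable, so $G\otimes_F\overline F\simeq \overline F^{\,\ell}$, decomposing via the $\ell$ distinct $F$--embeddings $\sigma_1,\dots,\sigma_\ell\colon G\hookrightarrow\overline F$. Combining this with $D\otimes_G\overline F\simeq M_h(\overline F)$ (for each $\sigma_i$) and $M_k(-)$, we obtain a canonical decomposition
\begin{equation*}
R\otimes_F\overline F \;\simeq\; \bigoplus_{i=1}^{\ell} M_{kh}(\overline F).
\end{equation*}
Embedding each summand diagonally produces an algebra embedding $\iota\colon R\hookrightarrow M_n(\overline F)$ with $n=kh\ell$, sending $a\in R$ to the block-diagonal matrix whose $i$th $kh\times kh$ block is the image of $a\otimes 1$ in the $i$th summand.

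Next I would verify that $\iota$ is trace--preserving, with $tr$ on $R$ being $tr_{R/F}=tr_{G/F}\circ tr_{R/G}$ and with $tr$ on $M_n(\overline F)$ the ordinary matrix trace. On the $i$th summand, the reduced trace $tr_{R/G}(a)\in G$ becomes $\sigma_i\bigl(tr_{R/G}(a)\bigr)$ (because the splitting of $D$ in the $i$th factor is obtained from the base change $G\xrightarrow{\sigma_i}\overline F$), and this is precisely the matrix trace of the $i$th block of $\iota(a)$. Summing over the $\ell$ blocks gives
\begin{equation*}
tr\bigl(\iota(a)\bigr)\;=\;\sum_{i=1}^{\ell}\sigma_i\bigl(tr_{R/G}(a)\bigr)\;=\;tr_{G/F}\bigl(tr_{R/G}(a)\bigr)\;=\;tr_{R/F}(a),
\end{equation*}
the middle equality being the standard description of $tr_{G/F}$ in terms of the embeddings into $\overline F$, which is legitimate because $G/F$ is separable. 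Applied to $a=1_R$, this also yields $tr(1_R)=kh\ell=n$.

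Finally, since $\iota$ is an $F$--algebra map commuting with traces, every multilinear trace identity valid in $M_n(\overline F)$ pulls back to $R$; in particular the multilinear Cayley--Hamilton identity $CH(x_1,\dots,x_n)$ of \eqref{ilCH} vanishes on $R$, and together with $tr(1)=n$ this is exactly the definition of an $n$--CH algebra (Definition \ref{CHalg}). The only non-formal step is the identification of $tr_{G/F}(g)$ with $\sum_i\sigma_i(g)$, which is where separability (hence characteristic zero) is used; everything else is bookkeeping with the decomposition of $R\otimes_F\overline F$.
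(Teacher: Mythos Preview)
Your proof is correct and follows essentially the same approach as the paper: you build the trace-preserving embedding $R\hookrightarrow M_{kh\ell}(\overline F)$ by decomposing $R\otimes_F\overline F$ via $G\otimes_F\overline F\simeq\overline F^{\,\ell}$ and $D\otimes_G\overline F\simeq M_h(\overline F)$, then pull back the Cayley--Hamilton identity. The paper does exactly this in one line (writing $\overline G$ in place of $\overline F$, which is harmless since $G/F$ is finite), whereas you have spelled out the trace compatibility via the embeddings $\sigma_i$ more explicitly.
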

\begin{proof}
We have  
$$ M_k(D)\otimes_F\overline G=M_k(D)\otimes_G(G\otimes_F\overline G)=M_k(D)\otimes_G( \overline G^\ell)=M_{k\cdot h}(\overline G)^\ell\subset M_{k\cdot h\cdot \ell}(\overline G)$$ and the reduced trace is induced by the trace of $M_{k\cdot h\cdot \ell}(\overline G)$.
\end{proof}
 
Consider a  general semisimple algebra finite dimensional over $F$
$$R=\oplus_{i=1}^pM_{k_i}(D_i),\  \dim_{G_i}D_i=h_i^2,\ \dim_FG_i=\ell_i$$ where $G_i$ is the center of the division algebra $D_i$.  Given positive integers  $a_i,\ i=1,\ldots, p$  we may define the trace
\begin{equation}\label{trconp}
t(r_1,\ldots,r_p):=\sum_{i=1}^pa_itr(r_i),\ r_i\in M_{k_i}(D_i),\ tr(r_i)\in F\quad\text{the reduced trace}
\end{equation}
\subsection{The uniqueness Theorem}
\begin{theorem}\label{trach} The algebra $R=\oplus_{i=1}^pM_{k_i}(D_i)$ with the previous trace is an $n$--CH algebra with $n=\sum_ia_in_i,\ n_i=k_ih_i\ell_i$.

Conversely any  trace on $R$  which makes it into an $n$--CH algebra for some $n$ is of the previous form.

\end{theorem}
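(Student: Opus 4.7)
For the direct implication, I would lean on Proposition~\ref{redt}: each simple summand $M_{k_i}(D_i)$ with its reduced trace $tr_{R/F}$ embeds trace--preservingly into $M_{n_i}(\overline F)$ as an $n_i$--CH algebra. Repeating this embedding $a_i$ times in block--diagonal form scales the reduced trace by $a_i$, and taking the direct sum over $i$ produces a trace--preserving embedding $R\hookrightarrow M_n(\overline F)$ with $n=\sum_i a_in_i$, under which the given trace becomes the ordinary matrix trace. Remark~\ref{ests} then yields that $R$ is an $n$--CH algebra.

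For the converse, suppose $t$ is a trace making $R$ into an $n$--CH algebra with $t(R)=F$. Axiom~(2) forces $F\subset Z(R)=\oplus_i G_i$, so $F$ is contained in each $G_i$, while axiom~(4) is automatic once $t$ is $F$--linear with values in $F$. Axiom~(3) says that $t|_{M_{k_i}(D_i)}$ vanishes on $[M_{k_i}(D_i),M_{k_i}(D_i)]$; since the $G_i$--valued reduced trace identifies $M_{k_i}(D_i)/[M_{k_i}(D_i),M_{k_i}(D_i)]$ with $G_i$ in characteristic $0$, we obtain $t|_{M_{k_i}(D_i)}=\phi_i\circ tr_{M_{k_i}(D_i)/G_i}$ for a unique $F$--linear $\phi_i:G_i\to F$. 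The problem reduces to showing that each $\phi_i$ equals $a_i\cdot tr_{G_i/F}$ for some non--negative integer $a_i$.

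The decisive step is scalar extension to an algebraic closure $\overline F$: by Remark~\ref{ests} the algebra $R\otimes_F\overline F$ is an $n$--CH algebra with trace algebra $\overline F$, and $M_{k_i}(D_i)\otimes_F\overline F$ decomposes as $\prod_{j=1}^{\ell_i}M_{k_ih_i}(\overline F)$, with blocks indexed by the embeddings $G_i\hookrightarrow\overline F$. Theorem~\ref{simc} then forces the extended trace to be a non--negative--integer combination of the ordinary matrix traces on the blocks, say $\lambda_j^{(i)}\cdot tr_j$ on the $j$--th block, and these $\lambda_j^{(i)}\in\mathbb Z_{\geq 0}$ are precisely the coordinates of $\phi_i\otimes 1:G_i\otimes_F\overline F=\overline F^{\ell_i}\to\overline F$. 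A Galois--descent argument finishes: $\mathrm{Gal}(\overline F/F)$ permutes the $\ell_i$ embeddings of $G_i$ transitively, the trace $t\otimes 1$ is Galois--equivariant because $t$ is $F$--valued, and Galois invariance of the integers $\lambda_j^{(i)}$ forces them all (for fixed $i$) to equal a common $a_i\in\mathbb Z_{\geq 0}$. Consequently $\phi_i=a_i\cdot tr_{G_i/F}$ and $t|_{M_{k_i}(D_i)}=a_i\cdot tr_{R/F}$, giving formula~\eqref{trconp}. Finally $a_i>0$ because $a_i=0$ would place the unital summand $M_{k_i}(D_i)$ inside the nilpotent kernel $K_R$ of Proposition~\ref{nonde}, which is impossible. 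The one delicate point I foresee is the Galois--descent identification of $\phi_i\otimes 1$ with the integer multiplicities coming from Theorem~\ref{simc}; the remaining steps are formal manipulation of scalar extensions and traces.
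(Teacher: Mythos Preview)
Your proposal is correct and follows essentially the same route as the paper: for the converse you extend scalars to $\overline F$, invoke Theorem~\ref{simc} to obtain integer weights on the simple blocks of $R\otimes_F\overline F$, and then use the transitive Galois action on the $\ell_i$ embeddings of each $G_i$ to force the weights within a group to coincide. The paper's proof is terser---it skips your intermediate factorization $t|_{M_{k_i}(D_i)}=\phi_i\circ tr_{M_{k_i}(D_i)/G_i}$ and does not spell out why $a_i>0$ (this is implicit: $R\otimes_F\overline F$ is semisimple, so its kernel $K$, being nilpotent by Proposition~\ref{nonde}, vanishes and Theorem~\ref{simc} applies with all weights positive)---but the substance is the same.
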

\begin{proof}
In one direction the statement is clear. The reduced trace $tr(r_i)$  is the ordinary trace associated to an embedding of  $M_{k_i}(D_i)$  into $n_i\times n_i$ matrices over the algebraic closure $\overline F$.  So  the trace of formula  \eqref{trconp} is the ordinary trace associated to an embedding of  $R$  into $n \times n $ matrices over $\overline F$.

For the second,   such a trace    induces  a trace in 
 \begin{equation}\label{sspa}
R\otimes_F\overline F=\oplus_{i=1}^pM_{k_i}(D_i)\otimes_F\overline F=\oplus_{i=1}^pM_{k_i\cdot h_i}(\overline F)^{\ell_i} 
\end{equation} for which this algebra is still $n$--CH.
For such an algebra we know, Theorem \ref{simc},  that there are some weights $a_{i,j}\in \mathbb N,\ i=1,\ldots,p,\ j=1,\dots,\ell_i$ associated to all the blocks $M_{k_i\cdot h_i}(\overline F)$  for which the trace is  given by a Formula analogous to formula  \eqref{trconp}.

We only need to show that, for each $i$  the $\ell_i$ weights $a_{i,j},\   j=1,\dots,\ell_i$ are equal. This follows from the fact that the Galois group  of  $\overline F$ over $F$ preserves the trace and permutes the $\ell_i$ summands  $M_{k_i\cdot h_i}(\overline F)$., Setting  $a_i:=a_{i,j}$ we have the result.                   \end{proof}
\begin{remark}\label{dimR}
Observe, from Formula \eqref{sspa}, that $\dim_FR\leq n^2$  and further, if  $\dim_FR= n^2$, then $R$ is a central simple $F$ algebra and  
$R\otimes_F\overline F= M_{n}(\overline F)$.\end{remark}

We can now generalize Theorem \ref{simc}.
\begin{theorem}\label{ssre1}
\strut  
\begin{enumerate}\item If    $S$ is a simple  trace algebra, and $tr(1)$ is invertible, then  its trace algebra is a field   $F$.
\item If  $S$ is an $n$--CH algebra with trace algebra  a field   $F$ and $K_S=0$,  then $S$ is finite dimensional over $F$, simple and isomorphic to one of the algebras  of Theorem \ref{trach}.
\item Assume  that $R$ is an $n$--CH algebra with trace algebra  a field   $F$, and $S:=R/K_R $  is   of rank $n^2$ over its center $F$. Then $K_R=0$ and $S$ is simple as algebra.\end{enumerate}

\end{theorem}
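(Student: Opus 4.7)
The plan is to reduce parts (2) and (3) to the algebraically closed case Theorem \ref{simc} via descent, combined with the Artin--Procesi theorem for (3). Part (1) is immediate from Proposition \ref{tradi}(3): for $0 \neq a \in T(S)$, the trace ideal $aS$ equals $S$ by simplicity, giving $b \in S$ with $ab = 1$, and then $a\cdot tr(b) = tr(1) = n$ exhibits $a$ as a unit in $T(S)$.

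For part (2), I first observe that $S$ has no proper nonzero trace ideals: any trace ideal $I$ meets the field $F$ in an ideal, so $I \cap F \in \{0, F\}$; the latter case gives $I = S$, and in the former $tr(I) \subset I \cap F = 0$ forces $I \subset K_S = 0$. To invoke Theorem \ref{simc}, I pass to the algebraic closure: $\bar S := S \otimes_F \bar F$ is an $n$-CH algebra with trace algebra $\bar F$, and since non-degeneracy of the trace form survives scalar extension (a basis argument), $K_{\bar S} = 0$. Theorem \ref{simc} then yields that $\bar S$ is finite-dimensional of the form $\bar F(\underline m; \underline a)$, so $S$ is finite-dimensional and semisimple over $F$. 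Theorem \ref{trach} realizes $S$ as a direct sum of central simple $F$-summands carrying weighted reduced traces, and simplicity of $S$ as a trace algebra forces a single summand, giving $S \simeq M_k(D)$ of the asserted form.

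For part (3), set $S := R/K_R$. Since $tr(\lambda) = n\lambda$ for $\lambda \in F$, we have $F \cap K_R = 0$, so $F$ injects into $S$, and $S$ is an $n$-CH algebra with trace algebra $F$ and trivial kernel. Part (2) gives $S = M_k(D)$ with $D$ central simple over its center $G$; the hypotheses $Z(S) = F$ and $\dim_F S = n^2$ force $G = F$, $kh = n$ (where $h^2 = \dim_F D$), and weight $a = 1$. Next, every proper two-sided ideal $I \subsetneq R$ lies in $K_R$: the image $\pi(I) \subset S$ is either $0$ (done) or $S$ by simplicity of $S$, and in the latter case $R = I + K_R$ so that writing $1 = i + k$ with $k \in K_R$ makes the nilpotent $k$ represent the identity in $R/I$, forcing $R/I = 0$. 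Hence every nonzero quotient of $R$ surjects onto $S$ of PI-degree $n$, and together with Corollary \ref{PIn}, the Artin--Procesi theorem (Theorem \ref{ArT}) gives that $R$ is Azumaya of rank $n^2$ over $Z := Z(R)$.

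The decisive step is to show $Z = F$. We have $F \subset Z$ and $\pi$ restricts to a surjection $Z \twoheadrightarrow Z(S) = F$ with nilpotent kernel $\mathfrak p := Z \cap K_R$, so $Z$ is a local Artin $F$-algebra with residue field $F$. After base change, $A := Z \otimes_F \bar F$ is local Artin with algebraically closed residue field $\bar F$, hence Henselian with trivial Brauer group, so $R \otimes_F \bar F \simeq M_n(A)$ as $A$-algebras. The extended trace on $M_n(A)$ is $\bar F$-linear with image $\bar F$ and vanishes on commutators, so it factors as $\phi \circ Tr_A$ for a unique $\bar F$-linear $\phi: A \to \bar F$ with $\phi(1) = 1$. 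Testing $CH_n$ on $r = aI + N$ for arbitrary $a \in A$ and $N = \sum_{i=1}^{n-1} e_{i,i+1}$ the cyclic nilpotent of index $n$, binomial expansion of the powers $r^{n-i}$ combined with $\sigma_1(r) = n\phi(a)$ shows that the $(1,n)$-entry of $CH_n(r)$ equals $n(a - \phi(a))$; its vanishing in characteristic $0$ forces $a = \phi(a)$ for every $a \in A$, so $A = \bar F$ and $Z = F$. Finally, with $Z = F$ the algebra $R$ is central simple of dimension $n^2$, Proposition \ref{inttr} identifies the trace with the reduced trace, whose bilinear form is non-degenerate in characteristic $0$, and therefore $K_R = 0$ and $R = S$ is simple. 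The main technical obstacle is this Cayley--Hamilton calculation, which rules out any nilpotent thickening of the center.
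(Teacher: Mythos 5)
Your parts (1) and (2), up to a point, track the paper's argument: part (1) is Proposition \ref{tradi}(3), and part (2) proceeds exactly as in the paper by passing to $\bar S = S\otimes_F\bar F$, showing $K_{\bar S}=0$ via linear independence over $F$, and invoking Theorem \ref{simc}. However, the last sentence of your part (2) contains a genuine error: you claim that ``simplicity of $S$ as a trace algebra forces a single summand, giving $S\simeq M_k(D)$.'' This is false. The algebras $\oplus_{i=1}^p M_{k_i}(D_i)$ of Theorem \ref{trach} with $p>1$ are simple \emph{as trace algebras} (a proper nonzero two-sided ideal of such a sum misses the diagonal scalars, so it is never closed under $tr$; try $F\oplus F$ with $tr(a,b)=a+b$, a $2$--CH algebra with no proper trace ideals). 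The assertion of part (2) is precisely that $S$ is one of these possibly-multi-summand algebras, and that it is simple in the trace sense; your argument proves this correctly up to the point where you overshoot by claiming a single block. Note that in part (3) you then write ``Part (2) gives $S=M_k(D)$,'' but there the extra hypothesis $Z(S)=F$ (a field) legitimately forces $p=1$, since $Z(\oplus_i M_{k_i}(D_i))=\oplus_i G_i$ is not a field when $p>1$; so the damage does not propagate, but part (2) as written is wrong.

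Your part (3) is correct and takes a genuinely different route from the paper. The paper reduces to $F$ algebraically closed, applies Artin--Procesi and idempotent lifting to get $R=M_n(A)$ with $A$ local, and then concludes tersely ``by the main Theorem \ref{immu} ... $A$ must be the trace algebra. So $A=F$.'' You instead carry out the Azumaya reduction without assuming $F$ algebraically closed (you carefully verify the no-small-quotient hypothesis of Artin--Procesi, which the paper leaves implicit), split only after base change using that the local Artin ring $Z\otimes_F\bar F$ is Henselian with trivial Brauer group, write the $\bar F$-valued trace on $M_n(A)$ as $\phi\circ Tr_A$ (valid, since $[M_n(A),M_n(A)]=\ker Tr_A$), and then kill $\mathfrak m_A$ by a direct $CH_n$ computation: for $r=aI+N$ with $N$ the standard nilpotent Jordan block, the $(1,n)$-entry of $CH_n(r)$ is $n(a-\phi(a))$, whose vanishing in characteristic $0$ forces $A=\bar F$. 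I checked this: only $i=0,1$ contribute to that entry, giving $na$ from $r^n$ and $-\sigma_1(r)=-n\phi(a)$ from $r^{n-1}$. This calculation is an attractive, concrete substitute for the paper's appeal to the main embedding theorem, and it has the merit of making explicit \emph{why} an $F$-valued trace on $M_n(A)$ compatible with $CH_n$ cannot see a nilpotent thickening of the scalars. (One stylistic slip in part (1): you wrote $tr(1)=n$, but part (1) does not assume $S$ is $n$--CH, only that $tr(1)$ is invertible; the rest of the sentence is fine with that substitution.)
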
 
\begin{proof}  1)\quad The fact that, if   $S$ is a simple  trace algebra, then the  trace algebra is a field   follows from Proposition \ref{tradi}.

2)\quad  This will follow, from Theorem \ref{simc},  if we can show that   the kernel  of $S\otimes_F\overline F$  is 0, where $\overline F$ is an algebraic closure of $F$.
 
 Now consider an element $\sum_{i=1}^k s_i\otimes f_i\in K_{S\otimes_F\overline F} $ with the $f_i\in \overline F$ linearly independent over $F$.
 
 If $a\in S$, then we have
 $$0=tr(a\sum_{i=1}^k s_i\otimes f_i)=\sum_{i=1}^k tr(as_i)\otimes f_i .$$ Since the $f_i\in \overline F$ are linearly independent over $F$ and we have $tr(as_i)\in F$,  this implies    that for all $a$ and for all $i$ we have $tr(as_i)=0$. So $s_i$ is in the kernel of $S$ which is assumed to be 0 hence $s_i=0$ and   $K_{S\otimes_F\overline F} =0$.

3)\quad As in part 2) we may reduce to the case $F$ algebraically closed  so that, from Remark \ref{dimR},  $R/K_R =M_n(F).$

 Then, since $R$ satisfies the PI of $n\times n$ matrices, we are under the hypotheses of Artin's Theorem \ref{ArT}, $R$ is a rank $n^2$ Azumaya algebra  over its center $A$.  Then,   by lifting idempotents, 
    $R=M_n(A)$ with $A$ a commutative algebra with Jacobson radical $J$ and $A/J=F$. 

   Finally, by   the main Theorem \ref{immu}, one has that $\mathrm j_R$ is an embedding and $A$ must be the trace algebra. So by hypothesis $A=F$.

 \end{proof}
\section{The Spectrum} 
\subsection{Prime CH algebras}
\begin{proposition}\label{mm} 1)\quad If $R$ is a semiprime $n$--CH algebra with trace algebra $A$  and $a\in A$ is not a zero divisor in $A$, then $a$ is not a zero divisor in $R$.
\smallskip

2)\quad  If $R$ is a  prime  algebra with trace, then the trace algebra $A$ is a domain and $R$ is torsion free relative to $A$.
\end{proposition}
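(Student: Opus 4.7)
For part (1), my plan is to invoke Corollary \ref{CHC2}, which under the semiprime $n$--CH hypothesis tells us that the trace kernel $K_R$ vanishes (and $A$ is reduced). Now suppose $a\in A$ is a non-zero-divisor in $A$ and $r\in R$ satisfies $ar=0$. For any $s\in R$, centrality of $a$ and axiom (4) of the trace give
\[
a\cdot tr(rs)\;=\;tr(a(rs))\;=\;tr((ar)s)\;=\;0
\]
in $A$. Since $tr(rs)\in A$ and $a$ is not a zero-divisor there, this forces $tr(rs)=0$ for every $s\in R$, i.e.\ $r\in K_R=0$; hence $r=0$.

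For part (2) I would establish both statements by combining ideal-product arguments with the primeness hypothesis. First, to show that $A$ is a domain, take $a,b\in A$ with $ab=0$. The sets $aR$ and $bR$ are two-sided ideals of $R$ (centrality of $A$), and they are trace-closed since $tr(aR)=a\cdot tr(R)\subseteq aA\subseteq aR$ and similarly for $b$. Their trace-product vanishes because $(aR)(bR)\subseteq abR=0$ and therefore also $tr((aR)(bR))=0$. Primeness then forces $aR=0$ or $bR=0$, and since $1\in R$ this gives $a=0$ or $b=0$. For torsion-freeness, fix a non-zero $a\in A$ and set $I:=\{r\in R:ar=0\}$. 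Centrality of $a$ makes $I$ a two-sided ideal, and it is trace-closed: for $x\in I$ one has $a\cdot tr(x)=tr(ax)=0$, exhibiting $tr(x)$ as an element of $I$. A direct computation using centrality again gives $(aR)\cdot I = 0$ as a trace-product, so primeness forces either $aR=0$ (excluded since $a\neq 0$ and $1\in R$) or $I=0$, which is the desired torsion-freeness.

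I do not anticipate any deep obstacle. The arguments rest almost entirely on axiom (4) together with the centrality of $A\subseteq R$ and the definition $I\cdot J = IJ + R\cdot tr(IJ)$ of trace-product; the routine care lies in verifying that the natural annihilator and multiple ideals are genuinely trace-closed and that the relevant trace-products actually vanish. The only non-formal input is Corollary \ref{CHC2} in part (1), which is what lets us upgrade the conclusion ``$r\in K_R$'' to ``$r=0$''; part (2) in fact does not require the $n$--CH hypothesis at all, only the primeness of the trace algebra and the existence of the identity element.
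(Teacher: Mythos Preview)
Your argument is correct in both parts. Part (2) is essentially identical to the paper's proof, only more explicit: the paper forms the annihilator $J=\{r:ar=0\}$, observes that $Ra$ and $J$ are trace ideals with $(Ra)J=0$, and invokes primeness; you do the same and additionally spell out the ``$A$ is a domain'' clause separately (which in the paper follows as the special case $r=b\in A$).

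For part (1) your route diverges from the paper's. The paper argues that for $r\in J$ every $tr(r^i)$ vanishes (since $J$ is an ideal), so the characteristic polynomial of $r$ reduces to $x^n$, giving $r^n=0$; thus $J$ is nil of bounded index, hence nilpotent, hence zero by semiprimeness. You instead note that for $r\in J$ and arbitrary $s\in R$ one has $rs\in J$, so $tr(rs)=0$ and $r\in K_R$; then $K_R=0$ from Corollary~\ref{CHC2} finishes. Your approach is the more economical one: it bypasses the Cayley--Hamilton computation and the implicit Nagata--Higman step, trading them for a direct appeal to non-degeneracy of the trace form. The paper's approach has the minor advantage of exhibiting the nilpotency of $J$ explicitly, but both rest on Corollary~\ref{CHC2} in the end.
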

\begin{proof}
1)\quad Let $J:=\{r\in R\mid ar=0\}$,  then $J$ is an ideal and we claim it is nil hence, by hypothesis, it is  0.  In fact 
taking trace  $0=t(ar)=at(r)$  implies $t(r)=0$  for all $r\in J$.   Thus the characteristic polynomial  of $r$ is $x^n$.  Since $r$ satisfies its characteristic polynomial,  we have $r^n=0$.

2)\quad          If $a\in A$ and $J:=\{r\in R\mid ar=0\}$, then both $J$  and $Ra\neq 0$  are trace ideals. We have $  RaJ=0$  from this the claim $J=0$. 
\end{proof}
\begin{theorem}\label{pos}
If $R$ is a  prime $n$--CH algebra with trace algebra $A$  and $G$ is the field of fractions of $A$, then  we have $R\otimes_AG$ is a simple $n$--CH algebra with trace algebra $G$ and  $R\subset R\otimes_AG$.
\end{theorem}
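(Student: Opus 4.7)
The plan is to verify three things: (a) $S := R\otimes_A G$ carries the structure of an $n$--CH algebra with trace algebra $G$; (b) the canonical map $R\to S$ is injective; (c) $S$ is a simple trace algebra.

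For (a), Corollary \ref{CHC2}(2) applied to the prime trace algebra $R$ guarantees that $A$ is a domain, so its fraction field $G$ is well defined. Remark \ref{ests} then shows that $S = G\otimes_A R$ is itself an $n$--CH algebra, with trace the $G$-linear extension of $t$ and trace algebra $G\otimes_A A = G$, while $t(1)=n$ is preserved. For (b), the same corollary gives $K_R=0$, and Proposition \ref{mm}(2) then tells us that $R$ is torsion-free over $A$. Hence every element of the central multiplicative set $A\setminus\{0\}$ acts as a non-zero-divisor on $R$, so the localisation map $R\hookrightarrow R_{A\setminus\{0\}} = S$ is injective.

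The heart of the argument is (c). The plan is to show $K_S = 0$ and then invoke Theorem \ref{ssre1}(2), whose hypotheses (trace algebra equal to a field, kernel zero) will force $S$ to be simple and finite dimensional over $G$, in fact isomorphic to one of the explicit algebras produced by Theorem \ref{trach}. To verify $K_S = 0$, take $x \in K_S$ and write $x = r/s$ with $r \in R$ and $s\in A\setminus\{0\}$. Pairing against an arbitrary $y \in R \subset S$ gives $0 = t(xy) = t(ry)/s$ in $G$, and since $s\neq 0$ in the field $G$, this forces $t(ry) = 0$ in $A$ for every $y \in R$. Hence $r \in K_R$, which vanishes by Corollary \ref{CHC2}(2); so $r=0$ and $x=0$.

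The only slightly delicate point is this kernel computation, which relies on being able to clear the denominator $s$ inside the field $G$ and on the equality $K_R=0$ from primeness; every other ingredient is a direct appeal to results already in place, so no genuine obstacle is expected.
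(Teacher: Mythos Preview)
Your proof is correct and follows essentially the same approach as the paper: establish that the trace algebra of $S=R\otimes_AG$ is the field $G$, use torsion-freeness (Proposition~\ref{mm}(2)) to get the embedding $R\subset S$, clear denominators to show $K_S=0$ from $K_R=0$, and conclude simplicity via Theorem~\ref{ssre1}(2). The paper's argument is terser but identical in substance; your version makes the citations and the denominator-clearing step more explicit.
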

\begin{proof} Since $R$ is torsion free over $A$, we have $R\subset R\otimes_AG$.
Clearly the trace algebra of  $R\otimes_AG$ is $G$ so, by Theorem \ref{ssre1} it is enough to show that the kernel is 0. Since $G$ is the field of fractions of $A$,  each element $s$ of  $R\otimes_AG$ is of the form $s=r\otimes g$. If the element $s$ is in the kernel, then $r$ is in the  kernel of $R$, hence $r=0$. 
\end{proof} \begin{remark}\label{pspe}
As a consequence   $R\otimes_AG=\oplus_{i=1}^k      S_i$ with $S_i$ simple and finite dimensional over $G$. We see that, if $R$ is prime as $n$--CH algebra, then  the ideal $\{0\}=\bigcap _{i=1}^k     P_i$  is the intersection of the finitely many minimal prime ideals  $P_i=R\cap \oplus_{j=1,\  j\neq i}^k      S_j $. Finally $R/P_i\otimes_AG=S_i$.
\end{remark}\begin{remark}\label{pspe1}
This Theorem is stronger than Posner's Theorem  (cf. \cite{agpr} Theorem 2.4) which states that a prime PI $R$ ring has  a notn trivial centaer $Z$ with field of fractions $F$ and $R\otimes_ZF$ is a central simple $F$ algebra.
\end{remark}
\begin{corollary}\label{PIe}
A prime  $n$--CH  algebra $R$ is PI equivalent to one of the algebras $F(\underline m ;\underline a ) $ of Formula \eqref{Fma}.
\end{corollary}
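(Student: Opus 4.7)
The plan is to reduce to an absolutely split situation by first localizing to obtain a simple $n$--CH algebra and then base--extending to an algebraic closure, transferring trace identities back and forth via Lemma \ref{multest} at each stage.

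Let $A$ denote the trace algebra of $R$ and $G$ its field of fractions. By Theorem \ref{pos}, the algebra $S:=R\otimes_A G$ is a simple $n$--CH algebra with trace algebra $G$, and $R\hookrightarrow S$. Since $R\cdot T(S)=R\cdot G=S$, Lemma \ref{multest} implies that $R$ and $S$ are trace PI--equivalent. By Theorem \ref{ssre1}(2) combined with the structural classification in Theorem \ref{trach}, we may write
$$S\cong \bigoplus_{i=1}^p M_{k_i}(D_i),$$
with $D_i$ a division algebra whose center $G_i$ satisfies $\dim_G G_i=\ell_i$ and $\dim_{G_i}D_i=h_i^2$, and with trace $t(r_1,\dots,r_p)=\sum_i a_i\, tr(r_i)$ where $tr$ is the reduced trace and the $a_i$ are positive integers.

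Extending scalars to an algebraic closure $\bar G$ of $G$, formula \eqref{sspa} yields
$$S\otimes_G\bar G\cong \bigoplus_{i=1}^p M_{k_ih_i}(\bar G)^{\ell_i},$$
with each of the $\ell_i$ summands in the $i$-th block carrying weight $a_i$ in the extended trace. Listing the block sizes $k_ih_i$ with multiplicity $\ell_i$ as a tuple $\underline m$ and recording the corresponding weights as $\underline a$, Theorem \ref{trach} gives $\sum_j m_j a_j=n$ and a canonical identification $S\otimes_G\bar G=\bar G(\underline m;\underline a)$ in the sense of \eqref{Fma}. A second application of Lemma \ref{multest} to the inclusion $S\subset S\otimes_G\bar G$---whose hypothesis is clear since this is simply the scalar extension of the trace algebra---shows that $S$ and $\bar G(\underline m;\underline a)$ are trace PI--equivalent.

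To close the chain, observe that $F(\underline m;\underline a)\otimes_F\bar G=\bar G(\underline m;\underline a)$, the trace extending by $\bar G$--linearity. A third application of Lemma \ref{multest} to $F(\underline m;\underline a)\subset\bar G(\underline m;\underline a)$ shows that these two trace algebras satisfy the same trace identities, and composing the three PI--equivalences yields the corollary. No single step looks like a serious obstacle; the only place that requires care is the bookkeeping that repackages the data $(k_i,h_i,\ell_i,a_i)$ into a single pair of lists $(\underline m,\underline a)$ so that the end result is literally an algebra of the form $F(\underline m;\underline a)$ defined in formula \eqref{Fma}.
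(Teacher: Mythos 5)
Your proof is correct and follows essentially the same route as the paper's: localize to the simple trace algebra of fractions (Theorem \ref{pos}), pass to the algebraic closure, and identify the result as one of the algebras $\bar G(\underline m;\underline a)$, finally descending to $F(\underline m;\underline a)$. The paper compresses all three PI-equivalence transitions into ``clearly'' and invokes Theorem \ref{simc} after the scalar extension rather than applying Theorems \ref{ssre1}(2) and \ref{trach} over $G$ and then extending, but that is only a difference of presentation; your explicit use of Lemma \ref{multest} at each step and the careful bookkeeping of the data $(k_i,h_i,\ell_i,a_i)$ into $(\underline m,\underline a)$ simply makes visible what the paper leaves implicit.
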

\begin{proof}
Clearly $R$  is PI equivalent, with the notations of Theorem \ref{pos}, to its ring of fractions $R\otimes_TG$  which in turn is PI equivalent to $R\otimes_T\overline G$ with $\overline G$ the algebraic closure of $G$. Then  by Theorem \ref{simc}  $R\otimes_T\overline G$ is one of the algebras $\overline   G(\underline m ;\underline a ) $ which, as $F$ algebra, is  PI equivalent to   $F(\underline m ;\underline a ) $.
\end{proof}
\subsection{The spectrum as functor}
 In any associative algebra $R$  one can define the {\em spectrum}  of $R$ as the set of all its prime ideals, it is equipped with the {\em Zariski topology}.  
 
 For commutative algebras the spectrum is a contravariant functor; with $f:A\to B$ giving the map $P\mapsto f^{-1}(P)$. But in general a subalgebra of a prime algebra need not be prime  and the functoriality fails for non commutative algebras.

For algebras with trace $R$ we may define:
$$Spec_t(R):=\{P\mid P \text{ is a prime trace ideal}\}.$$  
If $T\subset R$  is the trace algebra of $R$, then, by Proposition \ref{mm} 2),  we have the map  $j: Spec_t(R)\to Spec (T),\ P\mapsto P\cap T.$   

For  an $n$--CH algebra $R$ we have the remarkable fact:
\begin{theorem}\label{iso}
The map  $j: Spec_t(R)\to Spec (T),\ P\mapsto P\cap T $ is a homeomorphism, its inverse  is $\mathfrak p\mapsto K(\mathfrak pR).$
\end{theorem}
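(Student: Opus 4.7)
The plan is to check that $j$ and the proposed inverse $\psi\colon \mathfrak p\mapsto K(\mathfrak p R)$ are each well-defined set maps between the two spectra, verify the mutual inverse relations, and then check continuity in the Zariski topology. For well-definedness of $j$: if $P$ is a prime trace ideal, then $R/P$ is a prime $n$--CH algebra, so by Corollary \ref{CHC2}(2) its trace algebra $T/(P\cap T)$ is a domain, showing $j(P)=P\cap T\in \mathrm{Spec}(T)$. Conversely, given $\mathfrak p\in\mathrm{Spec}(T)$, since $tr(1)=n$ is invertible in the $\mathbb Q$-algebra $T$, Proposition \ref{tradi}(1) gives that $\mathfrak p R$ is a trace ideal with $\mathfrak p R\cap T=\mathfrak p$, so $R/\mathfrak p R$ has trace algebra $T/\mathfrak p$, a domain. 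Passing to the further quotient $R/K(\mathfrak p R)=(R/\mathfrak p R)/K_{R/\mathfrak p R}$ kills nothing in $T/\mathfrak p$ (any $t\in T/\mathfrak p$ lying in the trace-kernel satisfies $nt=tr(t\cdot 1)=0$, hence $t=0$), so the trace algebra remains $T/\mathfrak p$ while the trace-kernel becomes zero by construction. By Corollary \ref{CHC2}(2), $K(\mathfrak p R)$ is a prime trace ideal.

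Next I verify the inverse identities. For $j\circ\psi=\mathrm{id}$: the inclusion $\mathfrak p\subseteq K(\mathfrak p R)\cap T$ is immediate since $\mathfrak p\subseteq \mathfrak p R\subseteq K(\mathfrak p R)$; the reverse inclusion comes from the computation above, using invertibility of $n$ to deduce $t\in\mathfrak p$ from $nt\in\mathfrak p$. For $\psi\circ j=\mathrm{id}$, I must show $K((P\cap T)R)=P$ for any prime trace ideal $P$. The crucial observation is that under the surjection $R/(P\cap T)R\twoheadrightarrow R/P$, the image of $P$ is exactly the trace-kernel of $R/(P\cap T)R$. Indeed, $x\in P$ forces $tr(xy)\in P\cap T$ for all $y\in R$, yielding $P/(P\cap T)R\subseteq K_{R/(P\cap T)R}$; conversely, if the class of $x$ lies in $K_{R/(P\cap T)R}$, then its image in $R/P$ has trace zero against every element, and since $R/P$ is prime its trace-kernel is zero by Corollary \ref{CHC2}(2), forcing $x\in P$.

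For continuity, take closed sets $V(I)=\{\mathfrak p\supseteq I\}$ in $\mathrm{Spec}(T)$ and $V(J)=\{P\supseteq J\}$ in $\mathrm{Spec}_t(R)$, indexed respectively by ideals of $T$ and trace ideals of $R$. Then $j^{-1}(V(I))=\{P\mid P\cap T\supseteq I\}=\{P\mid P\supseteq IR\}=V(IR)$, so $j$ is continuous. For $\psi$, the condition $K(\mathfrak p R)\supseteq J$ unfolds to: the image of $J$ in $R/\mathfrak p R$ lies in $K_{R/\mathfrak p R}$, i.e.\ $tr(jy)\in \mathfrak p$ for all $j\in J$, $y\in R$. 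Letting $I$ be the $T$-ideal generated by $tr(JR)\subseteq T$, this is just $\mathfrak p\supseteq I$, so $\psi^{-1}(V(J))=V(I)$ is closed.

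The main obstacle is the identity $K((P\cap T)R)=P$: the ``hard'' inclusion requires extracting from the weak hypothesis ``$tr(xy)\in P\cap T$ for every $y$'' the strong conclusion $x\in P$, and this is possible only because primeness of $R/P$ forces its trace-kernel to vanish. Everything else is a bookkeeping exercise built around Proposition \ref{tradi}(1) and Corollary \ref{CHC2}(2).
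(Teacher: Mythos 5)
Your proof is correct and follows essentially the same route as the paper: both hinge on Proposition \ref{tradi}(1) (to identify $\mathfrak p R\cap T=\mathfrak p$) and Corollary \ref{CHC2}(2) (primeness of an $n$--CH algebra $\Leftrightarrow$ trace algebra is a domain and $K=0$), with the crux being that primeness of $R/P$ forces its trace kernel to vanish. The only difference is organizational: the paper phrases the hard inclusion $K((P\cap T)R)\subseteq P$ as the lemma ``if $P\supset Q$ are prime trace ideals with $P\cap T=Q\cap T$ then $P=Q$'', while you compute directly, and you also spell out the continuity check that the paper leaves implicit.
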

\begin{proof} First, by  Proposition \ref{tradi} 1.   and any trace ideal $I\subset R$ we have that $I\cap T=t(I)=K(I)\cap T$. In fact if $a\in I\cap T$, then  we have $a=t(at(1)^{-1}) $ and, if  $a\in K(I)$,  we have $t(a)\in I$.  
                              
We first show that, for  an $n$--CH algebra $R$, the ideal $K(\mathfrak pR),$  is prime. 

This follows from  the characterization of prime algebras, Corollary \ref{CHC2} 2); since, by definition,  $t( K(\mathfrak pR )=t(\mathfrak pR )=\mathfrak p$. So,  $t(R/K(\mathfrak pR))=T/\mathfrak p$  is a domain and  the kernel of $R/K(\mathfrak pR)$ is $\{0\}$.
                
Hence the composition in one direction is the identity $\mathfrak p=K(\mathfrak pR)\cap T$.   

If $P$ is a prime ideal  we need to show that $P=K((P\cap T)R)$. We certainly have $P\supset K((P\cap T)R)$ so it is enough to show that, if $P\supset Q$ are two prime ideals and $P\cap T=Q\cap T$, then $P=Q$. In fact in $R/Q$ we have $t(P/Q)=0$ which implies $P/Q=0$.\end{proof}
\begin{corollary}\label{coff}
For morphisms of $n$--CH  algebras the spectrum is also a contravariant functor to topological spaces, setting $$f:A\to B,\quad  f^*:P\mapsto K(f^{-1}(P)).$$

\end{corollary}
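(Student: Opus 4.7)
The plan is to reduce everything to the commutative case via Theorem \ref{iso}. Any morphism $f\colon A\to B$ of $n$--CH algebras is trace preserving by definition, hence restricts to a ring homomorphism $T_A\to T_B$ between their trace algebras. The ordinary contravariant spectrum functor for commutative rings then gives a continuous map $\mathrm{Spec}(T_B)\to \mathrm{Spec}(T_A)$, $\mathfrak q\mapsto f^{-1}(\mathfrak q)$. Conjugating this by the homeomorphisms of Theorem \ref{iso} produces a continuous map $f^*\colon \mathrm{Spec}_t(B)\to \mathrm{Spec}_t(A)$, and functoriality in $f$ as well as $\id^*=\id$ is inherited automatically from the commutative case.

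It remains to identify the abstract map $f^*$ with the explicit formula $P\mapsto K(f^{-1}(P))$. First I would check that $f^{-1}(P)$ is itself a trace ideal of $A$, which is immediate: if $a\in f^{-1}(P)$ then $f(t(a))=t(f(a))\in P$, hence $t(a)\in f^{-1}(P)$. Setting $\mathfrak p:=f^{-1}(P)\cap T_A$, this coincides with the preimage of the prime $P\cap T_B$ under $T_A\to T_B$ and is therefore a prime ideal of $T_A$; by the inverse of the homeomorphism in Theorem \ref{iso}, the corresponding prime trace ideal of $A$ is $K(\mathfrak p A)$.

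To conclude $K(\mathfrak p A)=K(f^{-1}(P))$, I would invoke the identity $I\cap T=K(I)\cap T$ established inside the proof of Theorem \ref{iso}, applied to the trace ideal $I=f^{-1}(P)$; this yields $K(f^{-1}(P))\cap T_A=\mathfrak p$. Two prime trace ideals with the same intersection in $T_A$ must coincide by the injectivity of the homeomorphism of Theorem \ref{iso}, so the equality will hold once we know $K(f^{-1}(P))$ is prime—and that follows directly from the characterization in Theorem \ref{iso}, since $\mathfrak p$ is prime in $T_A$. The only real subtlety lies here: the raw preimage $f^{-1}(P)$ of a prime trace ideal is merely a trace ideal with no reason to be prime, and it is the canonical passage to the radical kernel $K(f^{-1}(P))$ that restores primality and makes the pullback functorial.
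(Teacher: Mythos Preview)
Your proof is correct and is exactly the argument the paper leaves implicit (the corollary is stated without proof, as an immediate consequence of Theorem~\ref{iso}). One minor simplification: since $K(I)=\{x\mid t(xy)\in I\cap T\text{ for all }y\}$ depends only on $I\cap T$, the equality $K(f^{-1}(P))=K(\mathfrak p A)$ follows directly from $f^{-1}(P)\cap T_A=\mathfrak p=\mathfrak pA\cap T_A$, so you need not separately argue that $K(f^{-1}(P))$ is prime before invoking injectivity.
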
 
\subsection{Localization} \begin{proposition}\label{lallo} An  $n$--CH algebra $R$ is {\em local} (i.e. has a unique maximal trace ideal) if and only  of its trace algebra is local.

\end{proposition}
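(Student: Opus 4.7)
The plan is to deduce Proposition~\ref{lallo} directly from Theorem~\ref{iso}. That theorem provides a homeomorphism $j:\mathrm{Spec}_t(R)\simeq \mathrm{Spec}(T)$ sending $P\mapsto P\cap T$, with inverse $\mathfrak{p}\mapsto K(\mathfrak{p}R)$. It suffices to show that this bijection restricts to one between maximal trace ideals of $R$ and maximal ideals of $T$, for then the biconditional ``unique maximal trace ideal of $R$ $\Leftrightarrow$ unique maximal ideal of $T$'' is immediate.

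First I would check that every maximal trace ideal of $R$ is prime, and hence lives in $\mathrm{Spec}_t(R)$. If $M$ is a maximal trace ideal, then $R/M$ is simple as a trace algebra, so its only trace ideals are $0$ and itself; since $1\notin M$, the product $(R/M)(R/M)$ contains the unit and is nonzero, so $R/M$ contains no pair of nonzero trace ideals multiplying to zero. Dually, maximal ideals of the commutative ring $T$ are prime, so the question lives entirely within the bijection of Theorem~\ref{iso}.

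The key remaining step is that both $j$ and its inverse are inclusion-preserving, so that they carry maximal elements to maximal elements. The map $j$ is trivially monotone. For the inverse, if $\mathfrak{p}\subset\mathfrak{q}$ in $T$ then $\mathfrak{p}R\subset\mathfrak{q}R$; and from the description of the radical kernel, $x\in K(I)$ iff $t(xy)\in I$ for every $y\in R$, which makes $K(\cdot)$ manifestly monotone in $I$, giving $K(\mathfrak{p}R)\subset K(\mathfrak{q}R)$. This is the step requiring a little care, but it is essentially immediate from the definition, and it is the main technical ingredient of the argument.

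Putting it together: if $M$ is a maximal trace ideal of $R$ and $\mathfrak{m}\supsetneq M\cap T$ is a maximal ideal of $T$, then $K(\mathfrak{m}R)\supsetneq K((M\cap T)R)=M$ is a (prime) trace ideal strictly containing $M$, contradicting the maximality of $M$. The symmetric argument shows that $K(\mathfrak{m}R)$ is a maximal trace ideal whenever $\mathfrak{m}$ is maximal in $T$. The bijection $j$ therefore restricts to the asserted correspondence between maximal trace ideals and maximal ideals, and the proposition follows at once.
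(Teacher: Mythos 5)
Your argument is correct and follows exactly the route the paper takes: the paper's proof is the one-line ``This is immediate from Theorem~\ref{iso}'', and your write-up simply spells out the order-theoretic content of that homeomorphism, including the worthwhile check that a maximal trace ideal is prime (so that it actually lies in $Spec_t(R)$) and that $K(\cdot)$ is monotone. Nothing is missing; you have merely made explicit what the paper treats as immediate.
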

\begin{proof}
This is immediate from Theorem \ref{iso}.
\end{proof} Let $\mathfrak p$  be a prime ideal of $T$  and consider the local algebra $T_{ \mathfrak p}$ and $$R_{ \mathfrak p}:=R\otimes_TT_{ \mathfrak p}$$ we have then: 
\begin{corollary}\label{locanc}
$R_{ \mathfrak p}$ is a local ring with maximal ideal $K(R_{ \mathfrak p}\mathfrak p)$. 

Theorem \ref{trach} describes the possible residue and trace simple algebras $S_{ \mathfrak p}:=R_{ \mathfrak p}/K(R_{ \mathfrak p}\mathfrak p)$. 

\end{corollary}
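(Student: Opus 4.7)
The plan is to combine three earlier results in sequence. Since $T_{\mathfrak p}$ is a commutative $T$-algebra, Proposition \ref{fonda}(1) identifies $R_{\mathfrak p}=R\otimes_TT_{\mathfrak p}$ as a trace algebra with trace algebra $T_{\mathfrak p}$, and by Remark \ref{ests} it remains an $n$-CH algebra. The local ring $T_{\mathfrak p}$ has unique maximal ideal $\mathfrak p T_{\mathfrak p}$, so Proposition \ref{lallo} yields that $R_{\mathfrak p}$ is local as a trace algebra. Theorem \ref{iso} then identifies its unique maximal trace ideal as $K((\mathfrak p T_{\mathfrak p})R_{\mathfrak p})=K(R_{\mathfrak p}\mathfrak p)$, which settles the first assertion.

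For the second assertion, I would verify that $S_{\mathfrak p}:=R_{\mathfrak p}/K(R_{\mathfrak p}\mathfrak p)$ meets the hypotheses of Theorem \ref{ssre1}(2). It remains $n$-CH by Remark \ref{ests}, and by the defining property of $K(\cdot)$ in Definition \ref{ker} its kernel vanishes. It then remains to identify the trace algebra of $S_{\mathfrak p}$ with the residue field $T_{\mathfrak p}/\mathfrak p T_{\mathfrak p}$. Since $t(1)=n$ is invertible in characteristic zero, Proposition \ref{tradi}(1) gives trace algebra $T_{\mathfrak p}/\mathfrak p T_{\mathfrak p}$ for the intermediate quotient $R_{\mathfrak p}/R_{\mathfrak p}\mathfrak p$; the further passage to $S_{\mathfrak p}$ does not enlarge the ideal quotiented from $T_{\mathfrak p}$, because any element $x$ of the trace kernel satisfies $t(x)=t(x\cdot 1)=0$ already in $R_{\mathfrak p}/R_{\mathfrak p}\mathfrak p$, so the trace algebra survives unchanged.

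With these hypotheses in place, Theorem \ref{ssre1}(2) finishes the proof: $S_{\mathfrak p}$ is finite dimensional, simple, and isomorphic to one of the algebras $F(\underline m;\underline a)$ described in Theorem \ref{trach}, applied to the residue field $F=T_{\mathfrak p}/\mathfrak p T_{\mathfrak p}$.

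There is no genuine obstacle; the argument is a transparent composition of previously established machinery. The only item requiring a moment of care is the two-step passage through the quotients (first by $R_{\mathfrak p}\mathfrak p$, then by its trace kernel), which is handled cleanly by Definition \ref{ker} together with Proposition \ref{tradi}(1) once one notes that $n$ is invertible.
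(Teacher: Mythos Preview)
Your proof is correct and matches the paper's intended approach. The paper gives no explicit proof for this corollary, positioning it as an immediate consequence of Proposition~\ref{lallo} (itself a one-line consequence of Theorem~\ref{iso}); your argument fills in precisely this chain, and your verification of the hypotheses of Theorem~\ref{ssre1}(2) for the residue algebra is exactly the content already established inside the proof of Theorem~\ref{iso} (where it is shown that $t(R/K(\mathfrak pR))=T/\mathfrak p$ and the kernel vanishes). One minor notational slip: the algebras produced by Theorem~\ref{trach} over a general residue field are of the form $\oplus_i M_{k_i}(D_i)$ with division rings $D_i$, not literally the $F(\underline m;\underline a)$ of Formula~\eqref{Fma}, which presupposes $F$ algebraically closed.
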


One  also has, Remark \ref{pspe},  that there are only a finite number of prime (not trace invariant) ideals $P_i$  of $R$  with $P_i\cap T=\mathfrak p$. Moreover  in the next paragraph, Proposition \ref{azchh}, it will be shown  that $S_{ \mathfrak p}$ is the direct sum of the simple rings of fractions of $R/P_i.$

 This is a strong form of {\em going up} and {\em lying over} of commutative algebra in this general setting.\medskip

A special case is when $S_{ \mathfrak p}$ is simple as algebra and of rank $n^2$ over its center. In this case one can apply Artin's Theorem \ref{ArT} and deduce that
\begin{proposition}\label{locanc1}
  If $S_{ \mathfrak p}$ is simple as algebra and of rank $n^2$ over its center, then $R_{ \mathfrak p}$ is a rank $n^2$ Azumaya algebra over its center $T_{ \mathfrak p}$.

\end{proposition}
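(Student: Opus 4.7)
The plan is to verify the two hypotheses of the Artin--Procesi Theorem \ref{ArT}: that $R_{\mathfrak p}$ satisfies every polynomial identity of $M_n$, and that no proper quotient of it satisfies the identities of $M_{n-1}$. The first is immediate because localization preserves the $n$--CH property (Remark \ref{ests}) and Corollary \ref{PIn} then applies. To prepare for the second I first identify what the unique simple quotient will be. The quotient $R':=R_{\mathfrak p}/\mathfrak pR_{\mathfrak p}$ is an $n$--CH algebra with trace algebra $k(\mathfrak p)=T_{\mathfrak p}/\mathfrak pT_{\mathfrak p}$ by Proposition \ref{tradi}(1), and by Definition \ref{ker} its quotient by $K_{R'}$ is exactly $S_{\mathfrak p}$. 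Since $S_{\mathfrak p}$ is assumed simple of rank $n^2$ over its center and is $n$--CH with trace values in $k(\mathfrak p)$, the classification in Theorem \ref{trach} forces that center to coincide with $k(\mathfrak p)$, and then Theorem \ref{ssre1}(3) gives $K_{R'}=0$; that is, $R_{\mathfrak p}/\mathfrak pR_{\mathfrak p}=S_{\mathfrak p}$.

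The central technical step is to show $\mathfrak pR_{\mathfrak p}\subseteq J(R_{\mathfrak p})$, the Jacobson radical. The usual Nakayama argument is unavailable because $R_{\mathfrak p}$ need not be a finite $T_{\mathfrak p}$--module, so one works one element at a time. Given $b\in\mathfrak pR_{\mathfrak p}$, the fact that $\mathfrak pR_{\mathfrak p}$ is a trace ideal (Proposition \ref{tradi}(1)) forces $t(b^j)\in\mathfrak pT_{\mathfrak p}$ for every $j\ge 1$, so by Formula \eqref{reldaN1} also $\sigma_i(b)\in\mathfrak pT_{\mathfrak p}$ for $i\ge 1$. The commutative subring $C:=T_{\mathfrak p}[b]\subseteq R_{\mathfrak p}$ is a finite $T_{\mathfrak p}$--module by Cayley--Hamilton \eqref{CH}, and the same identity shows $b^n\in\mathfrak pT_{\mathfrak p}\,C$, so $\bar b$ is nilpotent and $1-\bar b$ is a unit in $C/\mathfrak pT_{\mathfrak p}C$. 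Since $C$ is integral over the local ring $T_{\mathfrak p}$, each maximal ideal of $C$ contracts to $\mathfrak pT_{\mathfrak p}$ and hence $\mathfrak pT_{\mathfrak p}C\subseteq J(C)$; commutative Nakayama then lifts this to show $1-b$ is a unit in $C\subseteq R_{\mathfrak p}$. Because $\mathfrak pR_{\mathfrak p}$ is two--sided, the same reasoning applied to $rbs\in\mathfrak pR_{\mathfrak p}$ shows $1-rbs$ is a unit for all $r,s\in R_{\mathfrak p}$, giving $\mathfrak pR_{\mathfrak p}\subseteq J(R_{\mathfrak p})$.

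Any proper two--sided ideal $I$ of $R_{\mathfrak p}$ is now forced inside $\mathfrak pR_{\mathfrak p}$: otherwise the image of $I+\mathfrak pR_{\mathfrak p}$ in the simple algebra $S_{\mathfrak p}$ would be a nonzero ideal, hence $I+\mathfrak pR_{\mathfrak p}=R_{\mathfrak p}$, giving $1=i+b$ with $i\in I$ and $b\in\mathfrak pR_{\mathfrak p}\subseteq J(R_{\mathfrak p})$; then $i=1-b$ is a unit, contradicting $I$ proper. Hence every nonzero quotient $R_{\mathfrak p}/I$ surjects onto $S_{\mathfrak p}$, which has PI--degree $n$ and therefore does not satisfy the identities of $M_{n-1}$. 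The Artin--Procesi Theorem \ref{ArT} now concludes that $R_{\mathfrak p}$ is a rank $n^2$ Azumaya algebra over its center $Z\supseteq T_{\mathfrak p}$. For the identification $T_{\mathfrak p}=Z$, I invoke Proposition \ref{inttr}: the unique Cayley--Hamilton trace on a rank $n^2$ Azumaya algebra is the reduced trace, whose image is the full center, so $T_{\mathfrak p}=t(R_{\mathfrak p})=Z$. The main obstacle is the second paragraph; everything hinges on reducing the Jacobson--radical containment to the commutative, finitely generated subring $T_{\mathfrak p}[b]$ where Cayley--Hamilton makes the relevant coefficients controllable.
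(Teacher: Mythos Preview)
Your argument is correct. The paper gives essentially no proof of Proposition \ref{locanc1} beyond ``one can apply Artin's Theorem \ref{ArT}''; the details are supplied only later in Theorem \ref{azch1}, and your route differs from that one. The paper (via Corollary \ref{mama}) argues that any maximal algebra ideal $M'$ of $R_{\mathfrak p}$ has $M'\cap T_{\mathfrak p}$ maximal by integrality and going up, hence $M'\cap T_{\mathfrak p}=\mathfrak p T_{\mathfrak p}$, so $M'\supseteq \mathfrak p R_{\mathfrak p}$; since $R_{\mathfrak p}/\mathfrak p R_{\mathfrak p}=S_{\mathfrak p}$ is simple this forces $M'=\mathfrak p R_{\mathfrak p}$, and Artin--Procesi applies. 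You instead prove $\mathfrak p R_{\mathfrak p}\subseteq J(R_{\mathfrak p})$ directly, one element at a time, by reading off from Cayley--Hamilton that each $b\in\mathfrak p R_{\mathfrak p}$ has $\sigma_i(b)\in\mathfrak p T_{\mathfrak p}$, so $b$ lies in the Jacobson radical of the finite commutative extension $T_{\mathfrak p}[b]$. Both reach the same conclusion; the paper's path is shorter once Corollary \ref{mama} is in hand, while yours is self-contained at this point in the exposition and makes the role of the Cayley--Hamilton identity completely explicit.

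One small caution on your last sentence: Proposition \ref{inttr} as stated assumes the trace is $Z$--linear, which is precisely what you are trying to conclude, so invoking it to get $T_{\mathfrak p}=Z$ is slightly circular. A cleaner way to finish is to note that since $R_{\mathfrak p}$ is Azumaya over $Z$ with $Z$ local (unique maximal ideal $\mathfrak n=\mathfrak p R_{\mathfrak p}\cap Z$) and $Z/\mathfrak n=k(\mathfrak p)=T_{\mathfrak p}/\mathfrak p T_{\mathfrak p}$, one has $Z=T_{\mathfrak p}+\mathfrak n$ and $\mathfrak n R_{\mathfrak p}=\mathfrak p R_{\mathfrak p}$; faithful flatness of $R_{\mathfrak p}$ over $Z$ then gives $\mathfrak n=\mathfrak p Z$, and Nakayama (applied inside the finite $T_{\mathfrak p}$--module $T_{\mathfrak p}[z]$ for each $z\in Z$) yields $Z=T_{\mathfrak p}$. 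The paper itself does not spell this out either.
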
 \subsection{General prime  ideals}Let us analyze general prime  ideals, not necessarily closed under the trace. 

\begin{proposition}\label{azchh}
Let $S$ be  an $n$--CH algebra with trace algebra $A$, $P$ an algebra ideal of $S$  which is prime and $\mathfrak p=P\cap A$. Let $F$ be the field of fractions  of  $A/\mathfrak p$.

 Then $S/P\otimes_AF=S/P\otimes_{A/\mathfrak p}F$ is the ring of fractions of $S/P$, a  simple algebra, and $P$ is one of the minimal primes of the prime trace algebra $S/K(\mathfrak pS)$. \end{proposition}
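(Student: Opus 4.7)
The plan is to descend everything to the prime Cayley--Hamilton algebra $R := S/K(\mathfrak{p}S)$ and then recognize $P$ as one of the minimal primes produced by Theorem \ref{pos} and Remark \ref{pspe}.

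First some preliminaries. Since $A\subset Z(S)$ (Proposition \ref{tradi}), primality of $P$ forces $\mathfrak{p}=P\cap A$ to be a prime ideal of the commutative ring $A$: if $a,b\in A$ and $ab\in P$, then $aSb=Sab\subset P$. Because $t(1)=n$ is invertible, Proposition \ref{tradi}(1) gives that $\mathfrak{p}S$ is a trace ideal with $\mathfrak{p}S\cap A=\mathfrak{p}$, and by Remark \ref{ests} the quotient $S/\mathfrak{p}S$ is an $n$--CH algebra whose trace algebra $A/\mathfrak{p}$ is a domain. Proposition \ref{nonde} then gives $K_{S/\mathfrak{p}S}^{\,n^{2}}=0$, i.e.\ $K(\mathfrak{p}S)^{n^{2}}\subset\mathfrak{p}S\subset P$, and primeness of $P$ forces $K(\mathfrak{p}S)\subset P$. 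Sandwiching $\mathfrak{p}=\mathfrak{p}S\cap A\subset K(\mathfrak{p}S)\cap A\subset P\cap A=\mathfrak{p}$ yields $K(\mathfrak{p}S)\cap A=\mathfrak{p}$.

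Now set $R:=S/K(\mathfrak{p}S)$. By Theorem \ref{iso} this is a prime $n$--CH algebra with trace algebra $A/\mathfrak{p}$ and field of fractions $F$. Theorem \ref{pos} gives that $R\otimes_{A/\mathfrak{p}}F$ is a simple $n$--CH algebra with trace algebra $F$, and by Remark \ref{pspe} it decomposes as a finite direct sum $\bigoplus_{i=1}^{k}S_{i}$ of simple finite--dimensional $F$--algebras whose pullbacks $P_{i}=R\cap\bigoplus_{j\neq i}S_{j}$ are the finitely many minimal primes of $R$. The image $\bar P:=P/K(\mathfrak{p}S)$ is a prime algebra ideal of $R$ with $\bar P\cap(A/\mathfrak{p})=(P\cap A)/(K(\mathfrak{p}S)\cap A)=\mathfrak{p}/\mathfrak{p}=0$, so $\bar P$ is disjoint from the central multiplicative set $(A/\mathfrak{p})\setminus\{0\}$. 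The standard correspondence for central localizations extends $\bar P$ to a prime ideal of $R\otimes_{A/\mathfrak{p}}F=\bigoplus_{i}S_{i}$; since the primes of a finite direct product of simple rings are exactly the $\bigoplus_{j\neq i}S_{j}$, we conclude $\bar P=P_{i}$ for some $i$, and hence $P$ is one of the minimal primes of $S/K(\mathfrak{p}S)$.

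It remains to identify $S/P\otimes_{A/\mathfrak{p}}F$ as the ring of fractions of $S/P$. From $\bar P=P_{i}$ we obtain $S/P\cong R/P_{i}\hookrightarrow S_{i}$, and extending scalars gives $S/P\otimes_{A/\mathfrak{p}}F=S_{i}$, which is simple and finite--dimensional over $F$. Proposition \ref{mm}(1) ensures that the elements of $(A/\mathfrak{p})\setminus\{0\}$ are central non--zerodivisors in the semiprime (in fact prime) algebra $S/P$, so this extension of scalars is precisely the central localization of $S/P$ at that multiplicative set; since the resulting ring is simple Artinian, it coincides with the full ring of fractions of the prime PI algebra $S/P$. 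The main obstacle is the central--localization step that pins $\bar P$ down to one of the $P_{i}$; once that identification is made, simplicity and the ring--of--fractions claim follow immediately from the structure theorems already established.
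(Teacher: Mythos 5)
Your proof is correct and follows essentially the same route as the paper: establish $K(\mathfrak pS)\subset P$ from nilpotence modulo $\mathfrak pS$, pass to the prime $n$--CH algebra $R=S/K(\mathfrak pS)$, invoke Theorem~\ref{pos} and Remark~\ref{pspe} for the decomposition $R\otimes_{A/\mathfrak p}F=\oplus_iS_i$, and identify $S/P\otimes F$ with one summand via central localization (the paper phrases this dually, as a prime quotient of $\oplus_iS_i$). One small caveat: you cite Proposition~\ref{mm}(1), which is stated for $n$--CH algebras with trace, but $S/P$ carries no trace since $P$ is only an algebra ideal; the fact you actually need — that nonzero elements of the central domain $A/\mathfrak p$ are non-zerodivisors in the prime ring $S/P$ — is elementary for prime rings and is what the paper uses via Remark~\ref{pspe1}, so the argument stands.
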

\begin{proof}
 We have $P\supset K(\mathfrak pS) $ since $K(\mathfrak pS)$   is nilpotent modulo $\mathfrak pS$.  Thus we have a surjective map  $S/K(\mathfrak pS)\to S/P$  which induces a  surjective map  $S/K(\mathfrak pS)\otimes_AF\to S/P\otimes_AF$  with $F$ the field of fractions  of  $A/\mathfrak p$.   
 
 We have that $S/P$ is a prime algebra containing $A/\mathfrak p$ so it is torsion free over $A/\mathfrak p$ and $S/P\subset S/P\otimes_AF$ is contained in the ring of fractions  of $S/P$, Remark \ref{pspe1}.
 
 Since $S/P\otimes_AF=S/P\otimes_{A/\mathfrak p}F$ is a prime algebra and the simple trace algebra $S/K(\mathfrak pS)\otimes_AF=\oplus_iS_i$  is a direct sum of simple algebras, Remark \ref{pspe}, then   we must have   $S/P \otimes_AF=S_i$  for some index $i$ and the claim follows.
\end{proof}
\begin{corollary}\label{mama}
Let $S$ be  an $n$--CH algebra with trace algebra $A$, $M$ an algebra ideal of $S$  which is maximal and $\mathfrak m=M\cap A$. Then $\mathfrak m$ is a maximal ideal of $A$.
\end{corollary}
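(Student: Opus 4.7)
The plan is to show that the quotient $A/\mathfrak m$ is a field, which amounts to $\mathfrak m$ being maximal (and $\mathfrak m$ is automatically proper since $1\notin M$). The subtle point is that $M$ is only an algebra ideal, not necessarily a trace ideal, so Theorem \ref{iso} cannot be invoked directly on $M$; one must work intrinsically in $S/M$.

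First I would observe that since $M$ is maximal, $S/M$ is simple, hence prime, and in particular its center $Z(S/M)$ is a field (for any nonzero $z\in Z(S/M)$, the two-sided ideal $z\cdot(S/M)$ is nonzero and hence all of $S/M$, producing a central inverse for $z$). By axiom (2) of the trace, $A\subseteq Z(S)$, so the composite $A\hookrightarrow S\twoheadrightarrow S/M$ factors as $A/\mathfrak m\hookrightarrow Z(S/M)$; in particular $\mathfrak m$ is prime in $A$ (if $ab\in M$ with $a,b\in A$, then $\bar a\bar b=0$ with $\bar a,\bar b$ central in the prime ring $S/M$, so one of them vanishes). Thus $A/\mathfrak m$ is a domain sitting inside the field $Z(S/M)$.

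Next I would exploit the Cayley--Hamilton identity: every $x\in S$ satisfies $CH_n(x)=0$, which is monic of degree $n$ in $x$ with coefficients in $A$. Reducing modulo $M$, every element of $S/M$ is integral over $A/\mathfrak m$, of degree at most $n$. Applying this in particular to elements of $Z(S/M)$, the field $Z(S/M)$ is integral over the domain $A/\mathfrak m$.

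Finally I would invoke the elementary fact that a domain $D$ contained in a field $K$ which is integral over $D$ is itself a field: for any nonzero $d\in D$ we have $d^{-1}\in K$, and multiplying the integral equation $d^{-n}+c_{n-1}d^{-(n-1)}+\cdots+c_0=0$ (with $c_i\in D$) by $d^{n-1}$ exhibits $d^{-1}$ as an element of $D$. Applied to $D=A/\mathfrak m$ inside $K=Z(S/M)$, this forces $A/\mathfrak m$ to be a field, so $\mathfrak m$ is maximal in $A$. There is no really hard step; the one thing to be careful about is precisely that $M$ need not be closed under trace, so the conclusion is extracted from the integrality coming from $CH_n$ rather than from the spectrum correspondence of Theorem \ref{iso}.
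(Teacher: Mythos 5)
Your proof is correct and takes essentially the same route as the paper: both observe that $S/M$ is a simple algebra integral over $A/\mathfrak m$ via the Cayley--Hamilton identity, hence its center is a field integral over $A/\mathfrak m$, and then conclude that $A/\mathfrak m$ is a field. The only difference is that the paper cites the going-up theorem of commutative algebra for this last step, while you spell out the elementary argument (multiplying the integral equation for $d^{-1}$ by $d^{n-1}$).
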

\begin{proof}
We have that $S/M$ is a simple algebra  integral over $A/\mathfrak m$ hence its center, a field, is  integral over $A/\mathfrak m$. The statement that $A/\mathfrak m$ is a field follows from the going up theorem of commutative algebra.\end{proof}
\begin{lemma}\label{azch}
Let $S$ be  an $n$--CH algebra with trace algebra $A$, $M$ an algebra ideal of $S$ so that $S/M$ is simple of dimension $n^2$ over its center  and $\mathfrak m=M\cap A$. Then $M=\mathfrak mS$.
\end{lemma}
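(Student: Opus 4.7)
The plan is to sandwich $M$ between $\mathfrak{m}S$ and the intermediate trace ideal $K(\mathfrak{m}S)$, and then show both containments are equalities. First, since $S/M$ is simple, $M$ is a maximal algebra ideal, so by Corollary~\ref{mama} the ideal $\mathfrak{m}:=M\cap A$ is maximal in $A$ and $F:=A/\mathfrak{m}$ is a field. By Proposition~\ref{tradi}~(1), applied in the $\mathbb{Q}$-algebra setting where $\mathrm{tr}(1)=n$ is invertible, $\mathfrak{m}S$ is a trace ideal satisfying $\mathfrak{m}S\cap A=\mathfrak{m}$, so $S/\mathfrak{m}S$ has trace algebra $F$. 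Set $K:=K(\mathfrak{m}S)$ and $\bar S:=S/K$; by Proposition~\ref{azchh} applied to the prime algebra ideal $M$, we have $\mathfrak{m}S\subset K\subset M$, and $\bar S$ is a prime trace algebra with trace algebra $F$ in which $M/K$ is one of the minimal primes.

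For the equality $K=M$ I would use a dimension count. Since $F$ is a field and $\bar S$ has zero kernel by construction, $\bar S$ is in fact simple as trace algebra, so Theorem~\ref{ssre1}~(2) presents $\bar S$ as an algebra of the shape of Theorem~\ref{trach}, and Remark~\ref{dimR} gives $\dim_F\bar S\leq n^2$. Since $S/M$ is a quotient of $\bar S$, this forces $\dim_F(S/M)\leq n^2$. But the trace algebra $F$ lies in the center of $S$ by Axiom~2 for traces, so its image in $S/M$ lies in the center $Z$ of $S/M$; by hypothesis $\dim_Z(S/M)=n^2$, so $\dim_F(S/M)=[Z:F]\cdot n^2$. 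Reconciling these relations forces $[Z:F]=1$ and $\dim_F\bar S=\dim_F(S/M)=n^2$, hence $\bar S=S/M$, i.e.\ $K=M$.

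For the remaining equality $\mathfrak{m}S=K$, I would view $R:=S/\mathfrak{m}S$ as an $n$-CH algebra with trace algebra $F$; by construction $K_R=K/\mathfrak{m}S$ and $R/K_R=\bar S=S/M$, which the previous step has just identified as a central simple $F$-algebra of rank $n^2$ over the field $F$. Theorem~\ref{ssre1}~(3) then applies directly to $R$ and yields $K_R=0$, i.e.\ $K(\mathfrak{m}S)=\mathfrak{m}S$; combined with $M=K$ this gives $M=\mathfrak{m}S$. The main obstacle is the dimension comparison in the middle step: one must check carefully that the image of $F$ genuinely lands in the center $Z$ of $S/M$ (so that $\dim_F(S/M)$ factors as $[Z:F]\cdot n^2$), and that the bound of Remark~\ref{dimR} is applicable to $\bar S$. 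Once these two points are in place the proof is a direct assembly of Corollary~\ref{mama}, Propositions~\ref{tradi} and~\ref{azchh}, and the two parts~(2) and~(3) of Theorem~\ref{ssre1}.
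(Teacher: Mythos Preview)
Your proof is correct and follows the same route as the paper's two-line argument, which simply passes to $\overline S = S/\mathfrak m S$ (an $n$--CH algebra with trace algebra the field $F=A/\mathfrak m$) and invokes Theorem~\ref{ssre1}(3). Your dimension count via Theorem~\ref{ssre1}(2) and Remark~\ref{dimR}, establishing that $\overline S/K_{\overline S}=S/M$ with center exactly $F$, is precisely the verification of the hypothesis of Theorem~\ref{ssre1}(3) that the paper leaves implicit.
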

\begin{proof}  Let    $\overline S:=S/\mathfrak m S$. $\overline S$  is an $n$--CH algebra with trace algebra $F=A/\mathfrak m  A $ which is  a field  by the previous Corollary.  The statement then follows  by part 3) of    Theorem  \ref{ssre1}.\end{proof}
\begin{theorem}\label{azch1}
Let $S$ be  an $n$--CH algebra with trace algebra $A$ a local ring with maximal ideal $\mathfrak m$. Let $M$ be an algebra ideal of $S$ so that $S/M$ is simple of dimension $n^2$ over its center, then $M= \mathfrak m S$ and $S$ is a rank $n^2$ Azumaya algebra over $A$.
\end{theorem}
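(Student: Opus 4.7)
The first equality $M=\mathfrak m S$ is immediate from Lemma \ref{azch}, so the real content is the Azumaya claim. My plan is to apply the Artin--Procesi characterization (Theorem \ref{ArT}): since $S$ is $n$--CH, Corollary \ref{PIn} already guarantees that $S$ satisfies every polynomial identity of $M_n(\mathbb Q)$, so it suffices to verify that no proper quotient of $S$ satisfies the identities of $(n-1)\times(n-1)$ matrices, and then to identify the resulting center with $A$.

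The key step is to observe that $M$ is the unique maximal two-sided algebra ideal of $S$. Indeed, by Corollary \ref{mama}, any maximal algebra ideal $N$ of $S$ meets $A$ in a maximal ideal of $A$, which must be $\mathfrak m$ since $A$ is local; hence $N\supseteq \mathfrak m S=M$, and the simplicity of $S/M$ forces $N=M$. Consequently every proper algebra ideal $I$ of $S$ lies in $M$, so $S/I$ surjects onto the central simple algebra $S/M$ of degree $n$. Since $S/M$ does not satisfy the identities of $(n-1)\times(n-1)$ matrices, neither does $S/I$. Artin--Procesi then yields that $S$ is Azumaya of rank $n^2$ over its center $Z$.

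It remains to show $Z=A$. The inclusion $A\subseteq Z$ is immediate from axiom (2), since trace values are central. For the reverse inclusion, the plan is to use the universal embedding of Theorem \ref{immu}, which realizes $S\simeq M_n(T_n(S))^{PGL(n,F)}$. The center of this invariant subalgebra is $T_n(S)^{PGL(n,F)}$, and by the first fundamental theorem (Theorem \ref{FFT}) that ring is generated by traces of products of generic matrices, i.e.\ by values of the formal trace on $S$; these values lie in $A$, so $Z=A$. An alternative route is to compare our trace $t$ with the reduced trace $tr_{red}:S\to Z$ attached to the Azumaya structure: on the Zariski-dense subset of elements $s\in S$ whose reduced characteristic polynomial equals their minimal polynomial, the $n$-th Cayley--Hamilton polynomials computed from $t$ and from $tr_{red}$ must coincide, forcing $t=tr_{red}$ by polarization (using the multilinear form of Proposition \ref{CH1}) and hence $A=t(S)=tr_{red}(S)=Z$.

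The main obstacle I anticipate is precisely this identification $Z=A$: the uniqueness of the maximal ideal reduces Artin--Procesi to a one-line check, but showing that the center of $S$ produced by Artin--Procesi coincides with the prescribed trace ring $A$ requires either an explicit invariant-theoretic computation in $M_n(T_n(S))^{PGL(n,F)}$ or the density/polarization argument above to match our $A$-linear trace $t$ against the canonical $Z$-linear reduced trace, for which Proposition \ref{inttr} handles only the $Z$-linear case and must be extended.
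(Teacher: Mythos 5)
Your proof follows the same basic route as the paper's (reduce to Artin--Procesi, Theorem \ref{ArT}), but is substantially more careful: the paper merely asserts that ``no quotient satisfies the polynomial identities of $(n-1)\times(n-1)$ matrices'' and says nothing at all about identifying the Azumaya center with the trace ring $A$, whereas you prove the first claim by showing, via Corollary \ref{mama} and locality of $A$, that $M$ is the unique maximal two-sided ideal of $S$, and you correctly single out the identification $Z=A$ as the real remaining step. Your first route to establish $Z=A$ is the right one and does close the gap once one small point is made explicit. After Artin--Procesi gives $S$ Azumaya of rank $n^2$ over $Z$, the paper's remark in the subsection on Azumaya algebras says $T_n(S)$ is faithfully flat over $Z$ with $S\otimes_Z T_n(S)\simeq M_n(T_n(S))$; consequently $j_S(S)$ together with the scalars $T_n(S)$ generates $M_n(T_n(S))$, so an element of $S=M_n(T_n(S))^{PGL(n,F)}$ that is central in $S$ centralizes all of $M_n(T_n(S))$ and therefore lies among the invariant scalars $T_n(S)^{PGL(n,F)}$. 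Each such scalar $w$ equals $t(n^{-1}w\cdot 1_S)\in t(S)=A$, giving $Z\subseteq A$; no appeal to Theorem \ref{FFT} is needed, which is anyway stated only for the generic-matrix ring and does not directly apply to an arbitrary $T_n(S)$. Your second route via a ``Zariski-dense'' set of elements is shakier: $S$ carries no given scheme structure, and the existence of elements of $S$ whose reduced minimal polynomial over $Z$ has degree exactly $n$ is not automatic over an arbitrary local ring; I would drop it in favor of the universal-embedding argument. On balance your write-up is more complete than the one in the paper.
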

\begin{proof}
The fact that $M= \mathfrak m S$ follows from the previous Corollary \ref{mama} since, by assumption,        $\mathfrak m$ is the unique maximal ideal of $A$.                   . 

The fact that $S$ is a rank $n^2$ Azumaya algebra over $A$ then follows from Artin's Theorem  \ref{ArT}, since, as any $n$--CH algebra, $S$ satisfies all polynomial identities of $n\times n$ matrices and no quotient satisfies the   polynomial identities of $n-1\times n-1$ matrices.
\end{proof}\vfill\eject

\part{The geometric theory}\section{The prime $T$--ideals}
 \subsection{Algebras of generic elements}
Given   a finite dimensional $n$--CH algebra $R$ over a field $F$ and with trace in $F$,  its relatively free algebra $F_R(\ell)$  in $\ell$  variables can be described by the {\em method of generic elements}  by fixing a basis  $u_1,\ldots,u_m$ of $R$  over $F$  and introducing $\ell m$ variables $\xi_{i,j}$  and generic elements 
$$\xi_i:=\sum_{j=1}^m\xi_{i,j}u_j,\ i=1,\ldots,\ell.$$ Then $F_R(\ell)$ is the subalgebra of $R\otimes_FF[\xi_{i,j}]$ generated by the generic elements $\xi_i$ and then closing it under trace.  The trace algebra $T_R(\ell)$  of $F_R(\ell)$  is contained in the polynomial ring $F[\xi_{i,j}]$, it is finitely generated over $F$  and the algebra $F_R(\ell)$  is a finitely generated torsion free module over $T_R(\ell)$ (Proposition 8.2.4  and Theorem 8.2.5 of \cite{agpr}).

Thus, if $G_R(\ell)$ is the field of fractions of  $T_R(\ell)$,  we may  construct the algebra  
 \begin{equation}\label{laccar}
H_R(\ell):=F_R(\ell)\otimes_{T_R(\ell)}G_R(\ell)\subset R\otimes_FF(\xi_{i,j})
\end{equation}  with trace in $G_R(\ell)$, hence finite dimensional over $G_R(\ell)$ by Proposition \ref{finl}.
 
We want to study  the relatively free algebra  in some $\ell$ variables for a simple algebra with trace as in Theorem \ref{trach}.  

Up to PI equivalence we can assume that the algebra $R$, with trace $t$, is   one of the algebras $F(\underline m ;\underline a ) $ of Formula \eqref{Fma}.  We describe such an algebra $R$ ordering the multiplicities $a_i$ as follows.
We have     $s$ integers $\ell_1<\ell_2<\cdots<\ell_s$ and $s$  partitions  $\underline k_i:=1^{h_{1,i}}2^{h_{2,i}}\cdots t_i^{h_{t_i,i}}$, so that $R$ is a   direct sum of the algebras  
 \begin{equation}\label{alss}
R=\oplus_{i=1}^sA_{\underline k_i},\ A_{\underline k_i}:= \oplus_j M_j(F)^{\oplus h_{j,i}},\ t(a)=\ell_i\  tr(a ),\ a\in M_j(F)^{\oplus h_{j,i}}.
\end{equation}
In this case a first general fact is
\begin{lemma}\label{dueg} Each algebra $R$ of Formula \ref{Fma} can be generated by 2 elements.

\end{lemma}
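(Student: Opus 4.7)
The plan is to exhibit two elements $a,b \in R$ that generate $R$ as an $F$-algebra, by combining a single diagonal matrix whose eigenvalues separate every block with a cyclic permutation inside each block. Since $F \supset \mathbb Q$ is infinite, I choose pairwise distinct scalars $\lambda_{i,j} \in F$ for $1 \le i \le k$, $1 \le j \le m_i$, and set $a = (a_1,\dots,a_k) \in R$ where $a_i \in M_{m_i}(F)$ is the diagonal matrix with entries $\lambda_{i,1},\dots,\lambda_{i,m_i}$. For $b = (b_1,\dots,b_k)$ I take each $b_i \in M_{m_i}(F)$ to be the cyclic permutation sending the $j$-th standard basis vector to the $(j+1 \bmod m_i)$-th. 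This is the natural analogue of the pair used on page \pageref{prSl} for a single matrix algebra.

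The first step is to recover the block idempotents from $a$ alone. Because the $\lambda_{i,j}$ are pairwise distinct, Lagrange interpolation produces polynomials $p_i \in F[t]$ with $p_i(\lambda_{i,j}) = 1$ for all $j$ and $p_i(\lambda_{i',j'}) = 0$ whenever $i' \neq i$. Evaluating at $a$ gives $p_i(a) = \varepsilon_i$, the block idempotent projecting onto the $i$-th summand, so each $\varepsilon_i$ lies in the subalgebra $A \subset R$ generated by $a$ and $b$.

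The second step is to show that each summand $M_{m_i}(F) = \varepsilon_i R \varepsilon_i$ is contained in $A$. The elements $\varepsilon_i a \varepsilon_i = a_i$ and $\varepsilon_i b \varepsilon_i = b_i$ already lie in $A$, and inside $M_{m_i}(F)$ the $m_i^2$ products $a_i^s b_i^t$, $0 \le s,t \le m_i - 1$, form a basis: $b_i^t$ is supported on a single cyclically shifted diagonal, left multiplication by $a_i^s$ scales the $j$-th entry of that diagonal by $\lambda_{i,j}^s$, and nonsingularity of the Vandermonde matrix in the distinct $\lambda_{i,j}$ forces linear independence. Hence $\varepsilon_i R \varepsilon_i \subset A$ for every $i$, and summing over $i$ gives $A = R$.

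There is no real obstacle; the argument is essentially combinatorial. The only prerequisite is that $F$ contain at least $n = \sum_i m_i$ distinct scalars, which is automatic in characteristic $0$. The one point that must not be overlooked is choosing the $\lambda_{i,j}$ globally distinct across all blocks, not merely within each block, so that the Lagrange interpolants genuinely separate the summands.
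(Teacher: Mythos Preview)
Your proof is correct and follows essentially the same approach as the paper: the paper also takes $D=\oplus_j D_j$ a diagonal matrix with all entries distinct across blocks and $X=\oplus_j C_{h_j}$ the direct sum of cyclic permutations, merely asserting that these ``clearly generate $R$'' where you supply the Lagrange--Vandermonde details. One small notational slip: in the paper's conventions $n=\sum_i m_i a_i$, so the number of distinct scalars you need is $\sum_i m_i$, not $n$.
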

\begin{proof}
For a matrix algebra $M_n(F)$ a choice of two generators is a diagonal matrix $D$ with distinct entries  and the matrix $T_n$ of the cyclic permutation $(1,2,\ldots,n)$.   Just with $D$ we generate all diagonal matrices.  

So, if $R= \oplus_j M_{h_{j }}(F) $, let $D=\oplus_jD_j$  with all the entries distinct and $X=\oplus_jC_{h_{j }}$. $D$ and $X$ clearly   generate $R$.\end{proof}\begin{proposition}\label{nondeg} If $2\leq\ell\leq \infty$,  then  ($H_R(\ell)\stackrel{\eqref{laccar}}=F_R(\ell)\otimes_{T_R(\ell)}G_R(\ell)$):
\begin{align}
\dim_{G_R(\ell)}H_R(\ell)&=\dim_FR,\\ H_R(\ell)\otimes_{G_R(\ell)}F (\xi_{i,j})&=R\otimes_FF (\xi_{i,j}). \label{mmm}
\end{align}
The algebra $F_R(\ell)$ is prime and its ring of fractions $H_R(\ell)$ is a simple algebra with trace algebra $G_R(\ell)$.
\end{proposition}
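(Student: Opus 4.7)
The plan is to lift an $F$-basis of $R$ to elements of $F_R(\ell)$ through a generic specialization, and then to use the non-degeneracy of the trace Gram matrix of these lifts to show they already form a $G_R(\ell)$-basis of $H_R(\ell)$. This single linear-algebraic observation will simultaneously deliver the dimension equality, the isomorphism~\eqref{mmm}, primeness of $F_R(\ell)$, and simplicity of $H_R(\ell)$.

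First, by Lemma~\ref{dueg} pick generators $a, b \in R$ of $R$ as an $F$-algebra, and let $\sigma \colon F_R(\ell) \to R$ be the trace-preserving specialization sending $\xi_1 \mapsto a$, $\xi_2 \mapsto b$, $\xi_i \mapsto 0$ for $i \geq 3$. Since the image of $\sigma$ is trace closed and contains $a, b$, it is all of $R$. Fix an $F$-basis $u_1, \ldots, u_m$ of $R$ and preimages $v_j \in F_R(\ell)$ with $\sigma(v_j) = u_j$. Writing $v_j = \sum_k P_{j,k} \otimes u_k$ in $R \otimes F[\xi_{i,j}]$, the matrix $(P_{j,k})$ specializes to the identity, so $\det(P_{j,k})$ is a nonzero element of $F[\xi_{i,j}]$, hence of $F(\xi_{i,j})$. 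Thus $v_1, \ldots, v_m$ are $F(\xi_{i,j})$-linearly independent in $R \otimes_F F(\xi_{i,j})$, and \emph{a fortiori} $G_R(\ell)$-linearly independent in $H_R(\ell)$, giving $\dim_{G_R(\ell)} H_R(\ell) \geq m$.

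Next I consider the Gram matrix $T := (t(v_j v_k))_{j,k}$. Its entries lie in $T_R(\ell) \subset G_R(\ell)$, because traces of products of $F_R(\ell)$-elements belong to the trace algebra. Since $R \otimes_F F(\xi_{i,j})$ is a direct sum of matrix algebras with strictly positive weights $a_i$, its trace form has zero kernel, so $\det T \neq 0$ in $F(\xi_{i,j})$; but $\det T \in G_R(\ell)$, so it is invertible there and $T^{-1} \in M_m(G_R(\ell))$. Now given any $h \in H_R(\ell) \subset R \otimes_F F(\xi_{i,j})$, expand $h = \sum_j f_j v_j$ uniquely with $f_j \in F(\xi_{i,j})$; the column vector $\bigl(t(h v_k)\bigr)_k = T \cdot (f_j)_j$ has entries in $G_R(\ell)$, so $(f_j) = T^{-1} \bigl(t(h v_k)\bigr) \in G_R(\ell)^m$. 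Hence $v_1, \ldots, v_m$ also span $H_R(\ell)$ over $G_R(\ell)$, so they form a basis and $\dim_{G_R(\ell)} H_R(\ell) = m$.

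The remaining claims follow at once. Equation~\eqref{mmm} holds because the algebra map $H_R(\ell) \otimes_{G_R(\ell)} F(\xi_{i,j}) \to R \otimes_F F(\xi_{i,j})$ carries $v_j \otimes 1$ to the $F(\xi_{i,j})$-basis $v_j$, hence is an isomorphism. The same Gram-matrix device shows $K_{H_R(\ell)} = 0$ (any $x$ with $t(x v_k) = 0$ for all $k$ must vanish), and since the trace algebra $G_R(\ell)$ is a field, Theorem~\ref{ssre1}(2) makes $H_R(\ell)$ simple and finite-dimensional. Finally $F_R(\ell) \hookrightarrow H_R(\ell)$ is injective by torsion-freeness over $T_R(\ell)$, so $K_{F_R(\ell)} \subset K_{H_R(\ell)} = 0$, while $T_R(\ell) \subset F[\xi_{i,j}]$ is a domain; Corollary~\ref{CHC2}(2) then yields that $F_R(\ell)$ is prime. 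The main obstacle is the surjectivity of $\sigma$, which forces the hypothesis $\ell \geq 2$ through Lemma~\ref{dueg}; once this is secured, the entire proposition follows from the single observation that $T$ is defined and invertible over $G_R(\ell)$.
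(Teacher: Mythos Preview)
Your proof is correct and follows essentially the same strategy as the paper's: both arguments use Lemma~\ref{dueg} to produce a specialization $F_R(\ell)\to R$, lift an $F$-basis of $R$ back to $F_R(\ell)$ (the paper takes monomials in the generic elements, you take arbitrary preimages), and then exploit the non-vanishing of the trace Gram determinant to conclude that these lifts form a $G_R(\ell)$-basis of $H_R(\ell)$. Your write-up is somewhat more explicit than the paper's in spelling out the coordinate computation via $T^{-1}$ and in deriving primeness and simplicity through $K_{H_R(\ell)}=0$, Theorem~\ref{ssre1}(2), and Corollary~\ref{CHC2}(2), whereas the paper compresses these into the single remark that they follow from~\eqref{mmm}.
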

\begin{proof}
A semisimple trace algebra $R$  is characterized by the fact that the trace form $tr(ab)$  is non degenerate.  If $m=\dim_FR$  we have that $m$ elements $u_1,\ldots,u_m\in R$ form a basis of $R$ if and only if the determinant of the $m\times m$ matrix  $tr(u_iu_j)$ is different from 0.

 Moreover, by the previous Lemma, $R$ can be generated by 2 elements. Therefore there are $m$  monomials $M_i,\ i=1,\ldots,m$ in  $\ell\geq 2 $ generic elements  such that the determinant $\Delta$ of the  matrix $tr(M_iM_j)$  is non zero.  If we invert $\Delta$  we then have that the $m$  monomials $M_i,\ i=1,\ldots,m$  form a basis of $F_R(\ell)[\Delta^{-1}]=F_R(\ell)\otimes_{T_R(\ell)}T_R(\ell)[\Delta^{-1}]$ over $ T_R(\ell)[\Delta^{-1}]$.  Hence they also form a basis of 
  $F_R(\ell)\otimes_{T_R(\ell)}F (\xi_{i,j})\subset R\otimes_FF (\xi_{i,j}).  $ These two spaces have the same dimension over $F (\xi_{i,j})$ so they coincide.  
  
  The final statement follows from  Formula \eqref{mmm}.
\end{proof} For $\ell=1$ the algebra $F_R(1)$ is commutative generated by a single generic element which is semisimple. Hence  $F_R(1)$ is also prime  and will be described later.

If $R$ is not semisimple, then  the dimension of $F_R(\ell)\otimes_{T_R(\ell)}G_R(\ell)$  may grow to infinity with $\ell$ as the simplest example shows. 

\begin{example}\label{dunu}
Consider  the algebra of dual numbers $F[\epsilon],\ \epsilon^2=0$ with trace of the multiplication, $tr(a+b\epsilon)=2a$. It is a 2 CH algebra.  

Denote the generic elements  $\xi_i=x_i+ y_i\epsilon$. The trace algebra is $F[x_1,\ldots,x_\ell], $ and $ G_R(m)=F(x_1,\ldots,x_\ell),$ we have
$$f(\xi_1,\ldots,\xi_\ell)= f(x_1,\ldots,x_\ell  )+(\sum_{j=1}^\ell\pd{f}{x_i}y_i)\epsilon.$$
$ H_R(\ell)=G_R(\ell) \oplus_{j=1}^\ell G_R(\ell)y_i \epsilon,\quad \dim_{G_R(\ell)}H_R(\ell)=\ell+1.$ 
\end{example}\medskip

This phenomenon is strictly  tied to the fact that $R$ is not semisimple.

\subsection{Classification of prime $T$--ideals}
We have now the following classification of prime $T$--ideals.
\begin{theorem}\label{prst0}  A $T$--ideal $I$  of $\mathcal S_n(\infty)$ is prime, if and only if, it is the ideal of trace identities of one of the algebras  $F(m_1,\ldots,m_k;a_1,\ldots,a_k) $.
\end{theorem}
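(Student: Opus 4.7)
The plan is to derive both directions directly from Proposition \ref{nondeg} and Corollary \ref{PIe}, interpreting ``prime $T$--ideal'' as meaning that the quotient $R := \mathcal S_n(\infty)/I$ is a prime $n$--CH trace algebra. This interpretation is natural given the paper's categorical stance on Cayley--Hamilton algebras, and I will explain at the end why it agrees with the alternative lattice-theoretic notion once the variable set is infinite.

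For the ``if'' direction, let $I$ be the $T$--ideal of trace identities of $R_0 := F(\underline m;\underline a)$. Then $\mathcal S_n(\infty)/I$ coincides with the relatively free algebra $F_{R_0}(\infty)$. Since $R_0$ is semisimple, Proposition \ref{nondeg} applies (with $\ell = \infty \geq 2$) and shows that $F_{R_0}(\infty)$ is a prime trace algebra, with simple ring of fractions $H_{R_0}(\infty)$; hence $I$ is prime. Conversely, if $I$ is prime so that $R := \mathcal S_n(\infty)/I$ is a prime $n$--CH algebra, Corollary \ref{PIe} immediately gives that $R$ is trace PI--equivalent to some $F(\underline m;\underline a)$. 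Because $I$ is precisely the $T$--ideal of trace identities of the relatively free algebra $R$ (a standard consequence of $\mathcal S_n(\infty)$ being the free $n$--CH algebra on an infinite set of variables), this PI--equivalence forces $I$ to equal the $T$--ideal of trace identities of that $F(\underline m;\underline a)$.

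The point that requires care, and which I expect to be the main obstacle, is reconciling the definition above with the lattice-theoretic notion: a $T$--ideal $I$ for which $J_1 \cdot J_2 \subseteq I$ implies $J_i \subseteq I$ for $T$--ideals $J_1, J_2$. One implication is immediate, since non-zero $T$--ideals with product in $I$ descend to non-zero trace ideals of $R$ with product zero. For the converse, given non-zero trace ideals $\bar J_1, \bar J_2$ of $R$ with $\bar J_1 \cdot \bar J_2 = 0$, one chooses $f_i \in \bar J_i \setminus \{0\}$ and uses the infinity of the variable set $X$ to rename variables so that lifts of $f_1$ and $f_2$ involve disjoint sets of variables; the $T$--ideals of $\mathcal S_n(\infty)$ generated by these lifts then multiply into $I$, yielding the required contradiction. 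With this reconciliation in place, the classification is complete.
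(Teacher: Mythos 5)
Your main argument is correct and is exactly the paper's proof: the ``if'' direction is the primality of $F_{R_0}(\infty)$ established in Proposition~\ref{nondeg}, and the ``only if'' direction is Corollary~\ref{PIe}. Your interpretation of ``prime $T$--ideal'' as ``$\mathcal S_n(\infty)/I$ is a prime trace algebra'' is precisely the one the paper uses -- its proof explicitly refers to ``the prime algebra $\mathcal S_n(\infty)/I$'' -- so the theorem as stated needs no reconciliation with the lattice-theoretic notion.

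The reconciliation paragraph you append does, however, contain a gap in the direction you call ``the main obstacle.'' To pass from verbal (lattice-theoretic) primality of $I$ to primality of $R=\mathcal S_n(\infty)/I$, you take nonzero trace ideals $\bar J_1,\bar J_2\subset R$ with $\bar J_1\bar J_2=0$, pick $f_i\in\bar J_i\setminus\{0\}$, lift to $g_i\in\mathcal S_n(\infty)$ in disjoint variables, and assert $T(g_1)\cdot T(g_2)\subseteq I$. That inclusion fails: the image of $T(g_i)$ in $R$ is the smallest trace ideal of $R$ containing $f_i$ that is also stable under every trace-preserving endomorphism of $R$, so it contains $\bar\phi(f_i)$ for all substitutions $\bar\phi$, and is in general strictly larger than $\bar J_i$, which is not substitution-stable. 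Products $\bar\phi_1(f_1)\,r\,\bar\phi_2(f_2)$ therefore need not vanish even though $f_1\,r\,f_2=0$ for all $r\in R$, and choosing disjoint variable sets for the lifts buys nothing since substitutions may immediately re-identify them. The equivalence you want is in fact true, but by a different route: by Theorem~\ref{pieqf}, $I=T(A)$ for some finite-dimensional $n$--CH algebra $A$; if $K_A\neq 0$, then the $T$--ideal $J:=T(A/K_A)$ strictly contains $I$ while $J^{n^2}\subseteq I$ (every evaluation of $J$ in $A$ lands in $K_A$, nilpotent by Proposition~\ref{nonde}), contradicting verbal primality; hence $K_A=0$, $A$ is semisimple, and $R$ is prime by Proposition~\ref{nondeg}.
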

\begin{proof} We have just seen that the $T$--ideal of one of these algebras is prime. Conversely if $I$ is prime it is the $T$--ideal of identities of the   prime algebra $\mathcal S_n(\infty)/I$.  By Corollary \ref{PIe}  a prime algebra is PI equivalent to one of the algebras   $F(m_1,\ldots,m_k;a_1,\ldots,a_k) $ hence  $I=I(m_1,\ldots,m_k;a_1,\ldots,a_k)  $.         \end{proof}
Let  us then set:
\begin{equation}\label{idueid} 
 J(m_1,\ldots,m_k;a_1,\ldots,a_k) :=I(m_1,\ldots,m_k;a_1,\ldots,a_k) \cap \mathcal C_n(\ell).
\end{equation}
By  Theorem \ref{iso}  we have:
\begin{equation}\label{IJ0}
I(\underline m ;\underline a ) =\{x\in \mathcal S_n(\ell)\mid tr(x\mathcal S_n(\ell))\subset J(\underline m ;\underline a ).
\end{equation}
The notion of $T$--ideal  can be defined also for  ideals in the trace algebra  $\mathcal C_n(\ell)$.   It is an ideal of  $\mathcal C_n(\ell)$ stable under all the trace compatible endomorphism of $\mathcal S_n(\ell)$ which, in turn, are obtained by substitutions of  the variables.

For the next result we need a straightforward generalization of 
Proposition 2.8. of \cite{agpr}.
This states that: 
\begin{proposition}\label{autinv}
An ideal of the free algebra, over an infinite field, in infinitely many variables is a $T$--ideal if and only if it is stable under all automorphisms of the free algebra.

\end{proposition}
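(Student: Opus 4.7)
The ``only if'' direction is immediate: every automorphism is in particular an endomorphism obtained by substituting variables, so $T$--ideals are stable under automorphisms by definition. The content is in the converse. I would show that if $I$ is stable under all automorphisms, then for every $f(x_1,\ldots,x_n)\in I$ and every choice of elements $g_1,\ldots,g_n$ in the free algebra, the substituted element $f(g_1,\ldots,g_n)$ again lies in $I$. The proof manufactures the substitution out of a sequence of automorphisms, exploiting (a) the infinite supply of variables to move things out of each other's way and (b) the infinitude of $F$ to extract homogeneous components.

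The plan has four steps. First, using that $X$ is infinite, pick variables $y_1,\ldots,y_n\in X$ that appear neither in $f$ nor in any of the $g_i$. The transposition automorphism swapping $x_i\leftrightarrow y_i$ (identity on the remaining variables) is clearly an automorphism of the free (trace) algebra, so $f(y_1,\ldots,y_n)\in I$. Second, because no $g_i$ involves $y_i$, the assignment $\tau\colon y_i\mapsto y_i+g_i$ (fixing every other variable) extends to a well-defined endomorphism with two-sided inverse $y_i\mapsto y_i-g_i$, hence is an automorphism; in the trace setting there is nothing extra to check since $\tau$ is defined on free generators and extends uniquely as a trace-compatible map. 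Applying $\tau$ yields $f(y_1+g_1,\ldots,y_n+g_n)\in I$.

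Third, expand
\[
h:=f(y_1+g_1,\ldots,y_n+g_n)=\sum_{\vec d\in\mathbb N^n} h_{\vec d},
\]
where $h_{\vec d}$ is the multi-homogeneous component of multi-degree $\vec d=(d_1,\ldots,d_n)$ in the variables $(y_1,\ldots,y_n)$. For any nonzero scalars $\lambda_1,\ldots,\lambda_n\in F^\ast$ the rescaling $y_i\mapsto \lambda_i y_i$ is an automorphism of the free (trace) algebra, and it sends $h$ to $\sum_{\vec d}\lambda^{\vec d}h_{\vec d}\in I$. Because $F$ is infinite, a standard Vandermonde argument (choose enough scalar tuples and take an $F$--linear combination) isolates each $h_{\vec d}\in I$ individually. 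In particular the $\vec d=(0,\ldots,0)$ component, which is precisely $f(g_1,\ldots,g_n)$, lies in $I$. This is exactly the statement that $I$ is closed under the arbitrary substitution $x_i\mapsto g_i$, i.e.\ that $I$ is a $T$--ideal.

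The only genuine point to verify carefully — and the place where ``straightforward generalization'' from the purely associative setting of \cite{agpr} Proposition 2.8 is invoked — is that each of the three families of maps (the swap, the shift $\tau$, and the rescaling) really is an automorphism in the enlarged category at hand (free trace algebra, $\mathcal S_n(\infty)$, or $\mathcal C_n(\infty)$, according to which reading of ``free algebra'' is intended). In each case the map is defined on a set of free generators and has an obvious two-sided inverse on generators, so it extends uniquely and is automatically trace-compatible; no new obstacle arises beyond the ordinary free-algebra case. The rest of the argument is the infinite-field / infinite-variable trick recalled above.
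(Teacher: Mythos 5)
Your argument is correct; note that the paper gives no proof of this proposition, deferring instead to Proposition 2.8 of the cited book \cite{agpr}, so there is no internal proof to compare against. The chain you use — swap into fresh variables, shift by $y_i\mapsto y_i+g_i$, rescale and isolate the degree-zero component in $(y_1,\dots,y_n)$ by a Vandermonde argument over the infinite field — is the standard route, and your closing remark that the swap, shift, and rescaling remain automorphisms when the ordinary free algebra is replaced by the free trace algebra (or by $\mathcal S_n(\infty)$ or $\mathcal C_n(\infty)$) is precisely the ``straightforward generalization'' the paper invokes. One small point of precision: to check that $y_i\mapsto y_i+g_i$ and $y_i\mapsto y_i-g_i$ are mutually inverse you need that no $g_i$ involves \emph{any} $y_j$, not merely that $g_i$ avoids $y_i$ — but your first step already secured this stronger condition, so the argument stands.
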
  \begin{lemma}\label{subid}
If  $J\subset  \mathcal C_n(\infty)$ is a $T$--ideal, then the set  \begin{equation}\label{IJ}
I(J):=\{x\in \mathcal S_n(\infty)\mid tr(x\cdot \mathcal S_n(\infty))\subset J\}
\end{equation} is    a $T$--ideal of $\mathcal S_n(\infty)$.

\end{lemma}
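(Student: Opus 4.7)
The plan is to verify that $I(J)$ satisfies the three defining properties of a $T$--ideal of $\mathcal S_n(\infty)$: that it is a two--sided ideal, that it is closed under the trace, and that it is stable under the trace--compatible endomorphisms of $\mathcal S_n(\infty)$ arising from variable substitutions.

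The first two properties will follow at once from the axioms of \ref{Traces1}. Linearity of the trace makes $I(J)$ an $F$--submodule, and for $x\in I(J)$ and $r,y\in\mathcal S_n(\infty)$ the cyclic identities $tr((rx)y)=tr(x(yr))$ and $tr((xr)y)=tr(x(ry))$ both land in $J$, giving $rx,xr\in I(J)$. For closure under trace, take $y=1\in\mathcal S_n(\infty)$ in the defining condition to obtain $tr(x)\in J$ for every $x\in I(J)$; axiom~(4) then yields $tr(tr(x)\,y)=tr(x)\,tr(y)\in J$, since $J$ is an ideal of $\mathcal C_n(\infty)$ containing $tr(x)$.

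The substantive step, and the main obstacle, is to establish the stability of $I(J)$ under a variable substitution $\phi$: a priori one cannot present $tr(\phi(x)\,y)$ as the image of $tr(xy)$ under a single substitution, because the variables appearing in $\phi(x)$ and in $y$ may overlap. The plan is to resolve this by a fresh--variable renaming, in the spirit of Proposition~\ref{autinv}, which exploits essentially that $\mathcal S_n(\infty)$ is built on infinitely many variables. Given $x\in I(J)$ and an arbitrary $y\in\mathcal S_n(\infty)$, I will pick variables $\eta_1',\dots,\eta_m'$ disjoint from those appearing in $x$ and form $y'\in\mathcal S_n(\infty)$ by renaming the variables of $y$ to the $\eta_j'$. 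Since $x\in I(J)$, one has $tr(x\cdot y')\in J$. Next I will define a single trace--compatible endomorphism $\Psi$ of $\mathcal S_n(\infty)$ by stipulating that $\Psi$ agrees with $\phi$ on the variables of $x$, that $\Psi(\eta_j')=\eta_j$, and that $\Psi$ fixes all remaining variables; this is well--defined precisely because the two prescriptions apply to disjoint sets of variables. Because $\Psi$ is a homomorphism one will have $\Psi(x)=\phi(x)$ and $\Psi(y')=y$, and hence $\Psi(tr(xy'))=tr(\phi(x)\,y)$. The stability of $J$ under $\Psi$, which holds since $J$ is a $T$--ideal of $\mathcal C_n(\infty)$, then gives $tr(\phi(x)\,y)\in J$, as required.
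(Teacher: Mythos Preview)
Your proof is correct. The paper's own argument is shorter because it invokes Proposition~\ref{autinv}: to show $I(J)$ is a $T$--ideal it suffices to check stability under trace--compatible \emph{automorphisms} $\phi$, and for an automorphism surjectivity gives $\phi(\mathcal S_n(\infty))=\mathcal S_n(\infty)$, whence
\[
tr\bigl(\phi(x)\cdot\mathcal S_n(\infty)\bigr)=tr\bigl(\phi(x)\cdot\phi(\mathcal S_n(\infty))\bigr)=\phi\bigl(tr(x\cdot\mathcal S_n(\infty))\bigr)\subset\phi(J)\subset J
\]
in one line. Your fresh--variable construction of $\Psi$ is essentially the mechanism underlying Proposition~\ref{autinv}, carried out by hand: rather than passing to automorphisms, you build an endomorphism tailored to the pair $(x,y)$ so that $\Psi(tr(xy'))=tr(\phi(x)y)$. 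What the paper's route buys is brevity and a clean use of surjectivity; what yours buys is a self--contained argument that makes explicit exactly where the infinitude of variables enters, and it also spells out the (easy but unstated in the paper) verifications that $I(J)$ is a two--sided trace ideal.
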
 
\begin{proof}
Let $\phi:\mathcal S_n(\infty)\to \mathcal S_n(\infty)$ be a trace compatible automorphism and $x\in I(J)$. We have   that \[tr(\phi(x)\cdot \mathcal S_n(\infty))=   tr(\phi(x)\cdot \phi(\mathcal S_n(\infty))) =\phi( tr( x \cdot  \mathcal S_n(\infty)))  \subset J .\]
\end{proof} 
\begin{corollary}\label{ptt}
A $T$--ideal $J$  of $\mathcal C_n(\infty)$ is prime, if and only if, it is the ideal of pure trace identities of one of the algebras  $F(m_1,\ldots,m_k;a_1,\ldots,a_k) $.
\end{corollary}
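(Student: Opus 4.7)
The plan is to deduce this from Theorem \ref{prst0} by pushing it through the spectrum correspondence of Theorem \ref{iso}, applied to $R = \mathcal{S}_n(\infty)$ with trace algebra $T = \mathcal{C}_n(\infty)$. Concretely, Theorem \ref{iso} already gives a bijection $P \mapsto P \cap \mathcal{C}_n(\infty)$ between prime trace ideals of $\mathcal{S}_n(\infty)$ and prime ideals of $\mathcal{C}_n(\infty)$, with inverse $\mathfrak{p} \mapsto K(\mathfrak{p}\,\mathcal{S}_n(\infty))$. The corollary then amounts to checking that this bijection restricts to one between prime $T$--ideals on each side.

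For the easy half of that check: if $I \subset \mathcal{S}_n(\infty)$ is a $T$--ideal, every trace-compatible endomorphism of $\mathcal{S}_n(\infty)$ (i.e.\ every variable substitution) restricts to an endomorphism of $\mathcal{C}_n(\infty)$, and stability of $I$ under such endomorphisms forces stability of $I \cap \mathcal{C}_n(\infty)$. Thus $J(\underline m;\underline a) = I(\underline m;\underline a)\cap\mathcal{C}_n(\infty)$ is a $T$--ideal of $\mathcal{C}_n(\infty)$, and it is prime by Theorem \ref{iso} since $I(\underline m;\underline a)$ is prime by Theorem \ref{prst0}.

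For the other direction, given a prime $T$--ideal $J \subset \mathcal{C}_n(\infty)$, the first step is to identify $K(J\,\mathcal{S}_n(\infty))$ with the set $I(J)$ of Lemma \ref{subid}. Unwinding definitions, $x \in K(J\,\mathcal{S}_n(\infty))$ means $tr(xy) \in J\,\mathcal{S}_n(\infty)$ for every $y \in \mathcal{S}_n(\infty)$, and since $tr(xy) \in \mathcal{C}_n(\infty)$ we may use Proposition \ref{tradi}(1) to intersect with $\mathcal{C}_n(\infty)$ and conclude $tr(xy) \in J$; this is precisely the defining condition of $I(J)$. Hence $K(J\,\mathcal{S}_n(\infty)) = I(J)$ is simultaneously prime (as a trace ideal, by Theorem \ref{iso}) and a $T$--ideal (by Lemma \ref{subid}). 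Invoking the classification in Theorem \ref{prst0}, we get $I(J) = I(\underline m;\underline a)$ for some data $(\underline m;\underline a)$, and intersecting with $\mathcal{C}_n(\infty)$ recovers $J = J(\underline m;\underline a)$, which is by definition the ideal of pure trace identities of $F(\underline m;\underline a)$.

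The whole argument is essentially bookkeeping once Theorem \ref{iso}, Theorem \ref{prst0}, and Lemma \ref{subid} are in place; the only step that requires a moment of care is the identification $K(J\,\mathcal{S}_n(\infty)) = I(J)$, because this is what guarantees that the prime ideal produced by the spectrum bijection is automatically the one produced by Lemma \ref{subid}, and hence is a $T$--ideal on the nose. I expect no genuine obstacle beyond this verification.
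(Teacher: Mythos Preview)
Your proof is correct and reaches the same conclusion as the paper, but the route to primeness of $I(J)$ differs. The paper argues directly: if $A,B\supset I(J)$ are trace ideals with $AB\subset I(J)$, then $tr(A)tr(B)\subset J$, and primeness of $J$ forces (say) $tr(A)\subset J$, whence $A\subset I(J)$. You instead identify $I(J)$ with $K(J\,\mathcal S_n(\infty))$ via Proposition~\ref{tradi}(1) and then read off primeness from the spectrum bijection of Theorem~\ref{iso}. Both arguments then finish identically by invoking Lemma~\ref{subid} and Theorem~\ref{prst0}.

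Your approach is more structural and makes explicit that the correspondence $J\leftrightarrow I(J)$ is exactly the restriction of the spectrum homeomorphism to $T$--ideals; indeed the paper records this identification separately as Formula~\eqref{IJ0}. The paper's two-line primeness check is more self-contained and avoids unpacking the definition of the radical kernel $K(\cdot)$. Neither approach has any real advantage over the other here; they are essentially two ways of saying the same thing.
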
 
\begin{proof}
Let $I(J)$ be defined by Formula \eqref{IJ}. Since $I(J)$ is a $T$--ideal it is enough to show that $I(J)$ is also prime, by Theorem \ref{prst0}.

Let $A,B\supset I(J)$  be two trace ideals  so that $AB\subset I(J)$.

We thus  have $tr(A)tr(B)\subset  J$ and then, since $J$ is prime, we may assume that $tr(A)\subset  J$ so that $A\subset  I(J)$, which is thus prime.
\end{proof} 
{\bf Question} What can we say for  $\mathcal C_n(\ell),\ \ell<\infty$? If $\ell\geq n^2$ the same statement is true by Capelli's Theory, see page   \pageref{Capelli}.

\section{The relatively free algebra of a simple algebra}

 \subsection{Relatively free prime  algebras} For $R=  F(\underline m ;\underline a )  $ we have denoted by  $G_R(\ell)$ the field of fractions of   $T_R(\ell)$, the   trace algebra of its associated relatively free algebra $F_R(\ell)$. 
 
 The notations  are, cf. \eqref{laccar}:
\begin{equation}\label{notaf}
\begin{aligned}
F_R(\ell) =\mathcal S_n(\ell)/ I(\underline m ;\underline a ),  \quad   T_R(\ell)=\mathcal C_n(\ell)/J(\underline m ;\underline a ),&\\ \mathcal S_n(\ell)/ I(\underline m ;\underline a )\otimes_{\mathcal C_n(\ell)/J(\underline m ;\underline a )}G_R(\ell)=F_R(\ell)\otimes_{T_R(\ell)}G_R(\ell) = &H_R(\ell).
\end{aligned}
\end{equation}
The geometric interpretation, of these ideals of the ring of invariants of matrices, will be developed in \S \ref{geoti}, Theorem \ref{tit} and Corollary \ref{strf}.                   

The next Theorem is a generalization of Theorem \ref{teoAmi} of Amitsur.

\begin{theorem}\label{prst} Let $R$ be as in Formula \ref{alss} and $\ell\geq 2$ then
\begin{equation}\label{iD0}
\mathcal S_n(\ell)/ I(\underline m ;\underline a )\otimes_{\mathcal C_n(\ell)/J(\underline m ;\underline a )}G_R(\ell)=H_R(\ell)\simeq  \oplus_{i=1}^s  \oplus_j D_ {  h_{j,i}}.
\end{equation}   $D_ {  h_{j,i}}$ a division algebra of dimension $j^2$ over its center $F_{j,i}$ which contains  $G_R(\ell)$.  It has degree $[F_{j,i}:G_R(\ell)]=h_{j,i}$. The trace is given by a Formula as in \ref{trconp}.
\end{theorem}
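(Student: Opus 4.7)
The plan is to combine the general structure theorem for $n$--CH algebras whose trace algebra is a field (Theorem~\ref{ssre1}, part (2), applied via Theorem~\ref{trach}) with the scalar extension identity of Proposition~\ref{nondeg}, and then to invoke Amitsur's Theorem~\ref{teoAmi} to control the dimension of each division algebra. By Proposition~\ref{nondeg}, $F_R(\ell)$ is prime, so by Theorem~\ref{pos} its ring of fractions $H_R(\ell)$ is a simple trace algebra with trace algebra the field $G_R(\ell)$ and zero kernel. Theorem~\ref{ssre1}(2) and Theorem~\ref{trach} then yield
\[
H_R(\ell)\;\cong\;\bigoplus_\alpha M_{k_\alpha}(D_\alpha),
\]
where each $D_\alpha$ is a division algebra of dimension $h_\alpha^2$ over its center $F_\alpha\supset G_R(\ell)$, $[F_\alpha:G_R(\ell)]=e_\alpha$, and the trace is given by positive integer multipliers $b_\alpha$ as in Formula~\eqref{trconp}.

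I would then identify the parameters via scalar extension to $F(\xi_{i,j})$. Formula~\eqref{mmm} gives
\[
H_R(\ell)\otimes_{G_R(\ell)}F(\xi_{i,j})\;=\;R\otimes_F F(\xi_{i,j})\;=\;\bigoplus_{i,j}M_j(F(\xi_{i,j}))^{\oplus h_{j,i}}.
\]
Since $F_\alpha/G_R(\ell)$ is separable in characteristic $0$, the tensor product $F_\alpha\otimes_{G_R(\ell)}F(\xi_{i,j})$ is a product of $e_\alpha$ finite field extensions of $F(\xi_{i,j})$; as the right-hand side is already split, each such factor must split $D_\alpha$, giving
\[
M_{k_\alpha}(D_\alpha)\otimes_{G_R(\ell)}F(\xi_{i,j})\;\cong\;\prod_{\beta=1}^{e_\alpha}M_{k_\alpha h_\alpha}(F(\xi_{i,j})).
\]
Matching this decomposition block by block with the right-hand side of the first display produces a bijection between the summands $\alpha$ and pairs $(i,j)$ with $h_{j,i}>0$, together with $e_\alpha=h_{j,i}$ and $k_\alpha h_\alpha=j$. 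Comparing the trace of the identity idempotent of the summand $S_{(i,j)}:=M_{k_\alpha}(D_\alpha)$, equal to $b_\alpha\cdot j\cdot h_{j,i}$ by Theorem~\ref{trach} and to $\ell_i\cdot j\cdot h_{j,i}$ as the weighted trace in $R$ of the projector onto the $(i,j)$-block, then identifies $b_\alpha=\ell_i$.

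The remaining step, and the main obstacle, is to show $k_\alpha=1$, equivalent to proving that $D_\alpha$ has the full dimension $j^2$ over $F_\alpha$. For this I would compose the summand projection $H_R(\ell)\twoheadrightarrow S_{(i,j)}$ with the embedding $S_{(i,j)}\hookrightarrow S_{(i,j)}\otimes_{G_R(\ell)}F(\xi_{i,j})$ and the projection onto a single factor $M_j(F(\xi_{i,j}))$; the images of the generic elements $\xi_k\in F_R(\ell)$ are then $\ell$ independent generic $j\times j$ matrices, so they generate a copy of the generic-matrix algebra $F_j(\ell)$ whose ring of fractions is Amitsur's division algebra $D_j(\ell)$ of dimension $j^2$ over its center (Theorem~\ref{teoAmi}). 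The delicate point, where Amitsur's nontriviality is essential, is to pull this division subalgebra back into the $F_\alpha$-subalgebra of $S_{(i,j)}$ generated by the $\xi_k$: its ring of fractions must be a division algebra of dimension at least $j^2$ over its center, yet it sits inside $D_\alpha\otimes_{F_\alpha}F(\xi_{i,j})\cong M_{h_\alpha}(F(\xi_{i,j}))$, which forces $h_\alpha\geq j$; combined with $k_\alpha h_\alpha=j$ this yields $h_\alpha=j$ and $k_\alpha=1$, completing the identification $D_\alpha\cong D_{h_{j,i}}$.
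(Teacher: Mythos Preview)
Your overall strategy---pin down $H_R(\ell)$ via Theorem~\ref{ssre1}(2) and Theorem~\ref{trach}, then identify the invariants by matching against Formula~\eqref{mmm}---is reasonable and different from the paper's route, but the argument for $k_\alpha=1$ does not go through as written.

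The paper bypasses this difficulty entirely by a change of model. It first observes that for each pair $(j,h)$ one can choose a division ring $L_{j,h}$ of dimension $j^{2}$ over its center $G_{j,h}$, with $G_{j,h}$ of degree $h$ over a subfield $F_{j,h}\supset F$; the algebra $L_{j,h}$ with trace $tr_{L_{j,h}/F_{j,h}}$ is then PI--equivalent to $M_j(F)^{\oplus h}$. Replacing $R$ by $S:=\bigoplus_{i,j} L_{j,h_{j,i}}$ gives $F_R(\ell)\cong F_S(\ell)$, and now Proposition~\ref{nondeg} yields
\[
H_S(\ell)\;\subset\;S\otimes_F F(\xi_{i,j})\;=\;\bigoplus_{i,j} L_{j,h_{j,i}}\otimes_F F(\xi_{i,j}),
\]
a direct sum of \emph{division} algebras (a division algebra over $F$ stays a division algebra after a purely transcendental extension). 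Each simple summand of $H_S(\ell)$ therefore embeds, via some projection, into a division algebra and hence is one; the center degree and dimension then drop out of \eqref{mmm}. Nothing like Amitsur's Theorem~\ref{teoAmi} is needed.

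In your argument the ``delicate point'' is genuinely problematic. What you produce is a copy $C\cong F_j(\ell)$ inside $S_\alpha=M_{k_\alpha}(D_\alpha)$, not inside $D_\alpha$; the subsequent assertion that ``it sits inside $D_\alpha\otimes_{F_\alpha}F(\xi_{i,j})\cong M_{h_\alpha}(F(\xi_{i,j}))$'' is unjustified, and the inference ``$h_\alpha\ge j$'' does not follow. The obstruction is real: a simple algebra $M_k(D)$ with $k>1$ can perfectly well contain an order in a division algebra of dimension $(kh)^2$ over its center---think of $\mathbb{H}\subset M_2(\mathbb{C})$ as $\mathbb{R}$--algebras. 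So the presence of $F_j(\ell)$ inside $S_\alpha$ does not by itself force $k_\alpha=1$. (A smaller gap: your identification $e_\alpha=h_{j,i}$ only shows that each $\alpha$ lands in a single block size $j$ and multiplier $\ell_i$; it does not yet exclude several $\alpha$'s sharing the same $(i,j)$.) The paper's device of passing to a PI--equivalent sum of division algebras is precisely what makes this step immediate.
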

\begin{proof}
Given any pair $h,k$ of integers  there is a division ring $L_{h,k}$ with center $G_{h,k}$  and $\dim_{G_{h,k}}L_{h,k}=h^2$  furthermore one can choose $G_{h,k}$ as to contain a subfield $F_{h,k}$ with $[G_{h,k}:F_{h,k}]=k$  and all these contain $F$. 

 Then $L_{h,k}$ equipped with the trace  $tr_{L_{h,k}/F_{h,k}}= tr_{G_{h,k}/F_{h,k}}\circ tr_{L_{h,k}/G_{h,k}}$ is PI equivalent to $M_h(F)^{\oplus k}$. Thus,  to compute the relatively free algebra we may replace $R$ by $S:=\oplus_{i=1}^s  \oplus_j L_ { j, h_{j,i}}$  with the appropriate trace and $F_R(\ell)\simeq F_S(\ell)$.  The same argument as in Proposition \ref{nondeg}  gives, using a basis of $S$ for the generic elements
$$F_S(\ell)\otimes_{T_R(\ell)}F (\xi_{i,j})\simeq  S\otimes_FF (\xi_{i,j})=\oplus_{i=1}^s  \oplus_j L_ { j, h_{j,i}}\otimes_FF (\xi_{i,j}).  $$ Each $L_ { j, h_{j,i}}\otimes_FF (\xi_{i,j})$ is still a division algebra and the claim follows.  \end{proof}
\subsection{An explicit example}
Let us develop the simplest example for the    algebra $R=F^{\oplus s}$. Take as trace $tr(x_1,\ldots,x_s)=\sum_{i=1}^sa_ix_i$  for $s$ integers $a_1\leq a_2\leq\ldots\leq  a_s$  with $\sum_i a_i=n$. Then $R$ can be thought of as the subalgebra of diagonal $n\times n$ matrices in which each entry $x_i$  appears with multiplicity $a_i$. It is an $n$--CH algebra.  

The  corresponding free  algebra,  for $\ell=1$ is described as follows. Consider $s$ variables $x_1,\ldots,x_s$  over $\mathbb Q$  (or $\mathbb C$) and let $G:=\mathbb Q(x_1,\ldots,x_s)$ be the field they generate. Let $X$ be a diagonal $n\times n$ matrix with entries $x_i$  a number $ a_i$ of times.  The relatively free algebra of $R$ in one variable is   the trace algebra generated over $\mathbb Q$ by $X$.\smallskip

\begin{example}\label{sica}
First the simplest case $s=2, a_1=1, a_2=2,n=3,\ x_1=u,x_2=x_3=v$.
\end{example}
 The characteristic polynomial of $X$ is: 
$$(t-u)(t-v)^2=t^3-(u+2v)t^2+( 2uv+v^2)t-uv^2.$$
Set $a=(u+2v);\ b=( 2uv+v^2);\ c=uv^2$.  These 3 elements  generate the algebra $A$ of traces.

We have then:
\begin{align}
a^2-3b =(u-v)^2;\ ab-9c=2v(u-v)^2;\ &9c+a^3-4ab = u(u-v)^2,\\\implies  \ 2v=\frac{ab-9c}{a^2-3b}
;\ u&=\frac{9c+a^3-4ab}{a^2-3b}\label{cled}\end{align} 
Moreover
$$3v^2-2av+b=0;\ u^2-4au+a^2-4b=0,\ $$ and $a,b,c$ are the restrictions of the elementary symmetric functions to one of the 3 planes where two coordinates are equal, therefore they satisfy as equation the {\em discriminant} a polynomial of degree 4 and weight 6:
$$3a^2b-162abc+243c^2-12a^2b^2+18a^3c+36b^3 .$$
The element $X$, besides satisfying its characteristic polynomial of degree 3, satisfies its minimal polynomial $(t-u)(t-v)$ which can be made into a polynomial with coefficients in $A$  by multiplying it by $(a^2-3b)^2$, Formula \eqref{cled}. 
 \begin{equation}\label{lal}
((a^2-3b)X^2-(9c+a^3-4ab))((a^2-3b)X-\frac12(ab-9c)) ,
\end{equation}The relative free algebra with trace of  $R$, in 1  variable,  is the algebra generated by $X,a,b,c$  modulo these two relations.

The relation \eqref{lal} decomposes as a product of two factors hence the ring of fractions of $R$ is the direct sum of  two fields isomorphic to $\mathbb Q(u,v)$ and under this isomorphism $X\mapsto (u,v)$ the trace on $\mathbb Q(u,v)\oplus  \mathbb Q(u,v)$ is  $(\alpha,\beta)\mapsto\alpha+2\beta$.
\medskip
\subsection{The general one variable case}
We return to the general case of $X$   a diagonal $n\times n$ matrix, with entries  $p$ variables $x_i$, each repeated a number $ a_i>0$ of times. \smallskip

Let $\mathbb Q[X]_T:=\mathbb Q[X, tr(X^i)],\ i=1,\ldots n$ the algebra with trace generated by $X$ and $T(X)= \mathbb Q[  tr(X^i)]$ its trace algebra,  $X$ is integral over $T(X)$ since it satisfies its characteristic polynomial with coeffients in $T(X)$.

Let    $F\subset G=\mathbb Q(x_1,\ldots,x_p)$ be the field  of fractions of $T(X)$. We have that $X$ is algebraic over $F$  and $F[X]\subset G^p$ is semisimple. \smallskip

The set of the $p$  variables $x_i$ is  partitioned, for some $s$,  into the $s$ equivalence classes  of the equivalence relation $x_i\cong x_j,\ \iff  a_i= a_j$. Each class  $A_i$   with $k_i$ elements.
 
In other words,    we have  strictly positive integers  $k_1,k_2,\ldots,k_s$ so that  $\sum_ik_i=p$, and also $0< a_1< a_2<\ldots < a_s$ so that $n=\sum k_i a_i$, and  $A_i$ has $k_i$ elements.  The algebra  $T(X)$ is also generated by the coefficients $\alpha_j$ of the characteristic polynomial of $X$:
\begin{equation}\label{nJ}
h(t; x_1,\ldots,x_p )=\prod_{i=1}^s \prod_{x_j\in A_i} (t-x_j) ^{ a_i}.
\end{equation}  

In the special case in which all $ a_i=1$ we have that $T(X)$ is the algebra of symmetric  polynomials  $\mathbb Q[x_1,\ldots,x_n]^{S_n}=\mathbb Q[e_1,\ldots, e_n]$ with $e_i$ the elementary symmetric functions 
$$h(t; x_1,\ldots,x_n ):\prod_{i=1}^n(t-x_i) =t^n+\sum_{j=1}^{n}(-1)^je_j   t^{n-j}$$ and $T(X)[X]=\mathbb Q[e_1,\ldots, e_n][t]/(t^n+\sum_{j=1}^{n}(-1)^je_j   t^{n-j})\simeq \mathbb Q[e_1,\ldots, e_n][x_1].$\smallskip

  As for $F$,   the field  of fractions of $T(X)$  is also  generated by the coefficients $\alpha_j$ of the characteristic polynomial of $X$,. We claim that $F$  contains $ \mathbb Q(x_1,\ldots,x_p)^{S_p}$ so, 
   by Galois Theory  we have $F=\mathbb Q(x_1,\ldots,x_p)^H$ where $H$ is the subgroup of $S_p$  fixing the polynomial $h(t; x_1,\ldots,x_p )$.   The group $H$ is clearly the Young subgroup $\prod_{i=1}^sS_{A_j}$  product of the symmetric groups on the disjoint sets $A_j$.\smallskip

    In order to show that $F$ contains $ \mathbb Q(x_1,\ldots,x_p)^{S_p}$ we need a simple Lemma:
\begin{lemma}\label{irrf} Lat $L$  be a field of characteristic 0 and $f(t)\in L[t]$  a polynomial whose distinct roots are $y_1,\ldots,y_k$  (they may appear with multiplicities).

Then the polynomial  $\bar f(t):=\prod_{i=1}^k(t-y_i)$  has coefficients in $L$.
\end{lemma}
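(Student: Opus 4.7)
\medskip

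\noindent\textbf{Proof proposal.} The natural approach is to express $\bar f$ intrinsically in terms of $f$ via an operation that stays inside $L[t]$. The key identity is
\[
\bar f(t)=\frac{f(t)}{\gcd(f(t),f'(t))},
\]
so the plan is to verify this identity by factoring $f$ over a splitting field, and then to observe that both $f$ and $\gcd(f,f')$ lie in $L[t]$, whence $\bar f\in L[t]$.

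First I would pass to an algebraic closure $\overline L$ so as to factor $f(t)=c\prod_{i=1}^k(t-y_i)^{m_i}$ with the $y_i$ distinct and the $m_i\geq 1$. Since the characteristic is $0$, each $m_i$ is a nonzero element of $L$. Computing $f'$ by the Leibniz rule, a direct check shows that $(t-y_i)^{m_i-1}$ divides both $f$ and $f'$, while $(t-y_i)^{m_i}$ does not divide $f'$ (because the contribution $m_i(t-y_i)^{m_i-1}\prod_{j\neq i}(t-y_j)^{m_j}$ is nonzero at $y_i$). Consequently
\[
\gcd(f,f')=\prod_{i=1}^k(t-y_i)^{m_i-1}\qquad\text{in }\overline L[t],
\]
and therefore $f/\gcd(f,f')=c\prod_i(t-y_i)$, which agrees with $\bar f$ up to the unit $c\in L^{\times}$.

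Second, I would invoke the fact that the gcd of two polynomials in $L[t]$, computed in $L[t]$ by the Euclidean algorithm, agrees with their gcd computed in $\overline L[t]$ (up to a unit of $L$), since $L[t]$ is a principal ideal domain and the Euclidean algorithm is stable under field extension. Thus $\gcd(f,f')\in L[t]$, and dividing gives $\bar f\in L[t]$ (after normalizing by the leading coefficient, if desired).

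I expect no real obstacle here; the only subtlety is making sure to use characteristic $0$ precisely at the point where one needs $m_i\neq 0$ in $L$, without which $(t-y_i)^{m_i}$ could divide $f'$ and the gcd formula would fail (as happens in positive characteristic for inseparable polynomials). An equivalent but slightly heavier alternative would be to take a Galois closure $L\subset M$ of the splitting field of $f$ and observe that $\mathrm{Gal}(M/L)$ permutes the distinct roots $y_1,\dots,y_k$, so that $\bar f$ is fixed by the Galois group and hence lies in $L[t]$; the gcd argument above is preferable because it is computational and avoids Galois theory.
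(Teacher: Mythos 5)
Your proof is correct, and it takes a genuinely different route from the paper's. The paper argues directly from unique factorization in $L[t]$: write $f=\prod_i g_i(t)^{h_i}$ with the $g_i$ irreducible and distinct in $L[t]$; since $\operatorname{char} L=0$ each irreducible $g_i$ is separable, so the distinct roots of $f$ are exactly the roots of the $g_i$, each occurring once in $\prod_i g_i(t)$, and hence $\bar f=\prod_i g_i\in L[t]$. You instead characterize $\bar f$ as $f/\gcd(f,f')$, verify this identity over $\overline L$ using the Leibniz rule, and appeal to the stability of the Euclidean algorithm under field extension to conclude the gcd (and hence the quotient) lies in $L[t]$. Both arguments invoke characteristic $0$ at the same essential point — the paper to ensure irreducibles are separable, you to ensure $m_i\neq 0$ so that $(t-y_i)^{m_i}\nmid f'$ — and both are elementary. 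The paper's version is shorter and avoids any computation with derivatives; yours is more explicit and effective (it literally tells you how to compute $\bar f$), and as you note it also makes transparent exactly where the hypothesis on the characteristic is used, which the paper's phrasing leaves implicit in the word ``clearly.''
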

\begin{proof}
Decompose $f(t)=\prod_{i=1}^jg_i(t)^{h_i}$  with the $g_i(t)\in L[t]$ irreducible and distinct. Then, since $L$ has characteristic 0, all the $g_i(t)$  have distinct roots and clearly  $\bar f(t)=\prod_{i=1}^jg_i(t).$
\end{proof}

 Applying this Lemma to the polynomial $h$ of Formula \eqref{nJ} we have that 
$$\mathbb Q(x_1,\ldots,x_p)^{S_p}\subset  F\subset \mathbb Q(x_1,\ldots,x_p).$$

\begin{proposition}\label{lF}
\begin{align}\label{lF1}
F[X]&=\oplus_{i=1}^sF[t]/\prod_{x_j\in A_i} (t-x_j)=\oplus_{i=1}^sG_i,\quad G_i\  \text{a field}.
\end{align}
The element $X$ corresponds to $(z_1,\ldots,z_s)$ with $z_i$ the class of $t$  in the $i^{th}$ summand $G_i$, a field of degree $k_i$ over $F$.

The trace is
$$tr(r_1,\ldots,r_s)=  \sum_{i=1}^s a_i\, tr_{G_i\backslash F}(r_i).$$
\end{proposition}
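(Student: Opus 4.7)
The plan is to read off $F[X]$ as the quotient of $F[t]$ by the minimal polynomial of $X$, and then apply the Chinese Remainder Theorem once we know the factorization into irreducibles over $F$.

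First I would observe that, since $X$ is the diagonal matrix whose distinct eigenvalues are $x_1,\dots,x_p$, its minimal polynomial as a matrix over $\mathbb Q(x_1,\dots,x_p)$ is
\[
m(t)\;=\;\prod_{i=1}^s\bar h_i(t),\qquad \bar h_i(t):=\prod_{x_j\in A_i}(t-x_j),
\]
obtained from the characteristic polynomial $h(t)$ of Formula \eqref{nJ} by Lemma \ref{irrf}. Hence $F[X]\simeq F[t]/m(t)$, once we show $m(t)\in F[t]$.

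Next I would prove that each $\bar h_i(t)$ lies in $F[t]$ and is $F$-irreducible. By the Galois-theoretic description given just before the proposition, $F=\mathbb Q(x_1,\dots,x_p)^{H}$ with $H=\prod_{i=1}^s S_{A_i}$ the Young subgroup that permutes the $x_j$ within each class $A_i$. Each $\bar h_i(t)$ is visibly $H$-invariant, so $\bar h_i(t)\in F[t]$. Moreover $H$ acts transitively on the roots $\{x_j:x_j\in A_i\}$ of $\bar h_i$, so these roots form a single $\mathrm{Gal}(\mathbb Q(x_1,\dots,x_p)/F)$-orbit; therefore $\bar h_i$ is the minimal polynomial over $F$ of any $x_j\in A_i$, and in particular is irreducible. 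Since the $A_i$ are disjoint, the $\bar h_i$ are pairwise coprime in $F[t]$.

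Applying the Chinese Remainder Theorem now gives
\[
F[X]\;\simeq\;F[t]/m(t)\;=\;\bigoplus_{i=1}^s F[t]/\bar h_i(t)\;=\;\bigoplus_{i=1}^s G_i,
\]
each $G_i$ a field extension of $F$ of degree $k_i=|A_i|$; the generator $X$ corresponds to the tuple $(z_1,\dots,z_s)$ with $z_i$ the class of $t$ in $G_i$, as claimed.

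It remains to compute the trace. Fixing $i$ and $r_i\in G_i$, the embeddings $G_i\hookrightarrow\mathbb Q(x_1,\dots,x_p)$ over $F$ are exactly the maps $\sigma_j:z_i\mapsto x_j$ for $x_j\in A_i$; since $G_i/F$ is separable (characteristic $0$), $tr_{G_i/F}(r_i)=\sum_{x_j\in A_i}\sigma_j(r_i)$. Going back to the matrix picture, the element $(0,\dots,r_i,\dots,0)\in F[X]$ acts as the diagonal matrix whose $x_j$-eigenspace (of dimension $a_i$) is scaled by $\sigma_j(r_i)$, so its matrix trace equals $a_i\sum_{x_j\in A_i}\sigma_j(r_i)=a_i\,tr_{G_i/F}(r_i)$. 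Summing over $i$ yields the desired formula. The main obstacle I foresee is the irreducibility step: one must identify $F$ precisely as the fixed field of $H$ and invoke the transitivity of $H$ on each $A_i$ to conclude that $\bar h_i$ has no further $F$-rational factorization.
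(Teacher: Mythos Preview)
Your proof is correct and follows essentially the same approach as the paper: the key steps---that each $\bar h_i(t)=\prod_{x_j\in A_i}(t-x_j)$ lies in $F[t]$ by $H$-invariance and is irreducible over $F$ because $H$ acts transitively on $A_i$---are exactly what the paper records, and you simply spell out the Chinese Remainder step and the trace computation that the paper leaves to the reader.
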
\begin{proof} The polynomial $g_i(x):=\prod_{x_j\in A_i} (t-x_j)$   has coefficients in $F$. since it is $H$ invariant,  and 
    it is  irreducible over $F$, since the elements $x_j,\ x_j\in A_i$  form a unique orbit under the group $H$.

\end{proof}
Remark that $G_i\simeq F[x_i],\ x_i\in A_i.$

As for the trace algebra   $T(X)= \mathbb Q[ tr(X^i)]$, this is also  generated by the coefficients of the characteristic polynomial of $X$, Formula \eqref{nJ}. 

The matrix $X$ is the direct sum $X=\oplus_{i=1}^sX_i^{\oplus a_i}$ of the diagonal matrices $X_i  $ of size $k_i$ having as entries the variables  $ x_j,\ x_j\in A_i$. We have that the characteristic polynomial of $X$ is $\prod_{i=1}^s g_i(x)^{a_i}$, with $g_i(x)$ its irreducible factors, over $F$. 

 If we let $T_i(X)= T(X_i)$ we have that $T(X_i)$ is the polynomial ring in the $k_i$ elementary symmetric functions in these variables and finally:
\begin{theorem}\label{inte}  1)\quad The integral closure $\overline T(X)$ of $T(X)$ in its field of fractions $F$ is
\begin{equation} 
\overline T(X) =T(X_1)\otimes_{\mathbb Q}T(X_2)\otimes_{\mathbb Q}\ldots\otimes_{\mathbb Q} T(X_{s-1}) \otimes_{\mathbb Q} T(X_{s})=\mathbb Q[x_1,\ldots,x_p]^H. 
\end{equation}
2)\quad The integral closure of $T(X)[X]$ in the semisimple algebra $F[X]$ is
\begin{equation} 
\oplus_{i=1}^s\overline T(X) [t]/\prod_{x_j\in A_i} (t-x_j).
\end{equation}
    
\end{theorem}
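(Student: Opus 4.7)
The plan is to deduce both statements from the classical structure of rings of symmetric functions together with the identification $F = \mathbb{Q}(x_1,\ldots,x_p)^H$ established just before the theorem.

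For part 1), I would first observe that since the sets $A_i$ are disjoint and $H = \prod_i S_{A_i}$ acts block-wise, the fundamental theorem of symmetric functions gives
\[
\mathbb{Q}[x_1,\ldots,x_p]^H = \bigotimes_{i=1}^{s} \mathbb{Q}[x_j : x_j \in A_i]^{S_{A_i}} = T(X_1) \otimes_{\mathbb{Q}} \cdots \otimes_{\mathbb{Q}} T(X_s),
\]
which is a polynomial ring and in particular integrally closed. Thus it suffices to prove $\overline{T(X)} = \mathbb{Q}[x_1,\ldots,x_p]^H$ inside $F$. For the inclusion $\mathbb{Q}[x_1,\ldots,x_p]^H \subseteq \overline{T(X)}$, each $x_j$ is a root of the characteristic polynomial $h(t) \in T(X)[t]$ of Formula \eqref{nJ}, so every element of $\mathbb{Q}[x_1,\ldots,x_p]$, and in particular every $H$--invariant one, is integral over $T(X)$. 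For the reverse, any element of $\overline{T(X)}$ is integral over $T(X)$ and lies in $F \subset \mathbb{Q}(x_1,\ldots,x_p)$; since $\mathbb{Q}[x_1,\ldots,x_p]$ is integrally closed in $\mathbb{Q}(x_1,\ldots,x_p)$, such an element sits in $\mathbb{Q}[x_1,\ldots,x_p] \cap F = \mathbb{Q}[x_1,\ldots,x_p]^H$.

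For part 2), the algebra $T(X)[X]$ is a torsion--free finite $T(X)$--module, being a subalgebra of $M_n(\mathbb{Q}[x_1,\ldots,x_p])$, so its total ring of fractions is $F[X] = \bigoplus_{i=1}^s G_i$ by Proposition \ref{lF}. Since $F[X]$ is reduced, the integral closure of $T(X)[X]$ in $F[X]$ decomposes as a direct sum $\bigoplus_i C_i$, where $C_i$ is the integral closure of the image of $T(X)[X]$ in the field $G_i$. Fixing a representative $x_{j_i} \in A_i$, one identifies $G_i$ with $\mathbb{Q}(x_1,\ldots,x_p)^{H_i}$, where $H_i \subset H$ is the stabilizer of $x_{j_i}$; the argument of part 1) applied with $H_i$ in place of $H$ then yields $C_i = \mathbb{Q}[x_1,\ldots,x_p]^{H_i}$.

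It remains to identify $\mathbb{Q}[x_1,\ldots,x_p]^{H_i}$ with $\overline{T(X)}[t]/g_i(t)$. Using the tensor decomposition from part 1) and the fact that $H_i$ differs from $H$ only in the $i$--th block, this reduces to the one--block statement that for variables $y_1,\ldots,y_k$
\[
\mathbb{Q}[y_1,\ldots,y_k]^{S_{k-1}} = \mathbb{Q}[y_1,\ldots,y_k]^{S_k}[y_k],
\]
where $S_{k-1}$ permutes $y_1,\ldots,y_{k-1}$; this is the classical fact that $\mathbb{Q}[y_1,\ldots,y_k]$ is free over the symmetric functions with basis $1, y_k, \ldots, y_k^{k-1}$, and it yields the required isomorphism $\overline{T(X)}[t]/g_i(t) \simeq \overline{T(X)}[x_{j_i}]$ since $g_i$ is monic of degree $k_i$ with $x_{j_i}$ as a root. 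The main obstacle I anticipate is not algebraic difficulty but bookkeeping: verifying that integral closure genuinely commutes with the direct--sum decomposition of $F[X]$ (using that $F[X]$ is étale over $F$, so idempotents lift) and tracking the indexing between $H_i$, $g_i$ and the block structure of $\overline{T(X)}$.
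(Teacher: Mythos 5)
Your proof is correct and takes essentially the same route as the paper: part 1) comes down to showing $\mathbb{Q}[x_1,\ldots,x_p]^H$ is a polynomial ring (hence integrally closed), is integral over $T(X)$, and has fraction field $F$; part 2) decomposes the total ring of fractions into the fields $G_i$ and identifies the integral closure in each factor. You supply more of the bookkeeping for part 2), in particular the reduction to $\mathbb{Q}[y_1,\ldots,y_k]^{S_{k-1}}=\mathbb{Q}[y_1,\ldots,y_k]^{S_k}[y_k]$ and the Galois-theoretic identification $G_i\simeq\mathbb{Q}(x)^{H_i}$, which the paper passes over with "clearly."
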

\begin{proof}1)\quad $\overline T(X) $ is a polynomial algebra so it is  integrally closed.  The elements $x_i$ are integral  over  $T(X)$ so that    $\overline T(X) $    is integral  over  $T(X)$. Finally, from Lemma \ref{irrf},  $\overline T(X) $    and  $T(X)$ have the same field of fractions $F$.

2)\quad The integral closure of $\overline T(X) $ in  $\oplus_{i=1}^s G_i$ is the direct sum of the integral closures of the projections of $\overline T(X) $ in the factors $G_i=F[t]/g_i(t)$, which are clearly  $\overline T(X)\otimes _{T(X_i)}T(X_i)[t]/g_i(t)$.

\end{proof}
\medskip

\section{The general case $\ell\geq 2$\label{geoti}}  
Contrary to the algebra $\mathcal S_n(\ell)$ the algebras $\mathcal S_n(\ell)/ I(\underline m ;\underline a )$ do not have a clear description by invariant theory, in particular we will make a conjecture on its universal embedding into $n\times n$ matrices, see page \pageref{conj1}.

We want to present a form of {\em integral closure, $S_\ell(\underline m ;\underline a )$,} of  $\mathcal S_n(\ell)/ I(\underline m ;\underline a )$ which has a clear description by invariant theory, \ref{seee}.

\subsection{The Luna stratification I}

We assume $F$ algebraically closed.  Let us first recall some general facts, of Geometric Invariant Theory, for which we may refer to Springer \cite{spr}. 

Let $G$ be a reductive group and $V$ a linear representation. One has that the ring of invariants $S[V^*]^G$   is finitely generated, and  so it is the coordinate ring of some irreducible variety denoted by $V//G$. 

The inclusion of algebras    $S[V^*]^G\subset  S[V^*] $ corresponds to  a {\em quotient map } $\pi:V\to V//G$  which is surjective and such that each fiber contains a unique closed orbit. The stabilizer of a point of a closed orbit is also reductive, one says that two stabilizers have the same {\em type} if they are conjugate. One has only finitely many types of stabilizers.    
 By Luna's theory, \cite{Luna},  the quotient variety is stratified into the smooth strata  $\Sigma_H$ formed by all points in $ V//G$ corresponding to the stabilizer types $H$. That is the closed orbit in $\pi^{-1}(p)$ is $G$--isomorphic to $G/H$ or, equivalently, there is a point $q\in   \pi^{-1}(p)$ with stabilizer $H$.  Finally the closure of a stratum  $\Sigma_H$ is the union of the strata  $\Sigma_K$ where $H$ is conjugate to a subgroup of $K$.\smallskip
 
 Our case is  the space $M_n(F)^{\ell}$,   acted by the group $PGL(n,F)$  of automorphisms of $M_n(F) $. The action is  induced by conjugation by the linear  group $GL(n,F)$. 
 
By the Theorem of M. Artin \ref{ArT}, \cite{ArM} in this case,  the non $\{0\}$ closed orbits  correspond to  those $\ell$--tuples of matrices which generate a semisimple subalgebra $C$ of $M_n(F) $.  Now, any such subalgebra $C$ is  conjugate, under the automorphism group $PGL(n,F)$ of $M_n(F) $,   to one of the algebras   of Formula \eqref{Fma} $F(\underline m ;\underline a )=F(m_1,\ldots,m_k;a_1,\ldots,a_k),\ \sum m_ia_i=n$. We assume from now on that $\ell\geq 2$ so, from Lemma \ref{dueg},  all of these algebras appear.

\begin{proposition}\label{dct}

The stabilizer  of a   $\ell$--tuple  of matrices, generating the algebra  $F(\underline m ;\underline a)$, is the group $G(\underline a ;\underline m)$ of invertible elements in the centralizer algebra,  conjugate to  $F(\underline a ;\underline m)$, and of dimension $\sum_ia_i^2$.

\end{proposition}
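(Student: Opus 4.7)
\medskip

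\noindent\textbf{Proof plan.} Let $C \subset M_n(F)$ denote the subalgebra generated by the given $\ell$-tuple $\underline x = (x_1,\ldots,x_\ell)$, so by hypothesis $C$ is conjugate (in $GL(n,F)$) to $F(\underline m;\underline a)$. The plan is to identify the stabilizer of $\underline x$ with the group of units of the centralizer of $C$ in $M_n(F)$, and then compute this centralizer explicitly by the double centralizer theorem.

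First I would reduce the problem to computing a centralizer. By definition of the conjugation action, the stabilizer of $\underline x$ consists of those invertible $g$ with $g x_i g^{-1} = x_i$ for all $i$. Since $C = F\langle x_1,\ldots,x_\ell\rangle$ (algebraically), this condition is equivalent to $g c = c g$ for all $c \in C$. Hence the stabilizer is precisely the group of invertible elements in the centralizer algebra $C' := Z_{M_n(F)}(C)$.

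Next I would identify $C'$ with $F(\underline a;\underline m)$. After conjugating, we may assume $C = F(\underline m;\underline a) = \oplus_i M_{m_i}(F) \otimes 1_{a_i}$ acting on
\[
F^n \;=\; \bigoplus_{i=1}^k V_i \otimes W_i, \qquad V_i = F^{m_i}, \quad W_i = F^{a_i},
\]
where $V_i$ is the unique (up to isomorphism) simple $M_{m_i}(F)$-module and $W_i$ is the multiplicity space. Schur's lemma applied block by block (or equivalently the double centralizer theorem for semisimple subalgebras of $M_n(F)$) gives
\[
C' \;=\; \bigoplus_{i=1}^k \mathrm{End}_{M_{m_i}(F)}(V_i \otimes W_i) \;=\; \bigoplus_{i=1}^k 1_{V_i} \otimes \mathrm{End}(W_i) \;\simeq\; \bigoplus_{i=1}^k M_{a_i}(F) = F(\underline a;\underline m).
\]

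Finally, the group $G(\underline a;\underline m)$ of invertible elements of $C'$ is a Zariski-open subvariety of the affine space $C'$ (cut out by the non-vanishing of the reduced norm on each block), hence it is an algebraic group of dimension equal to $\dim_F C' = \sum_i a_i^2$; as a group it is $\prod_i GL(a_i,F)$, the unit group of $F(\underline a;\underline m)$.

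The only substantive step is the centralizer computation, and in this setting it is a direct application of Schur's lemma to the decomposition of $F^n$ as a semisimple $C$-module; there is no genuine obstacle, the task being mainly to make the identification of the multiplicity spaces with $F^{a_i}$ explicit.
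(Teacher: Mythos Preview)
Your proof is correct and follows the same approach as the paper: both identify the stabilizer with the group of units in the centralizer of the generated algebra and then invoke the double centralizer theorem. The paper's proof is a one-line appeal to that theorem, whereas you spell out the module decomposition $F^n=\bigoplus_i V_i\otimes W_i$ and apply Schur's lemma directly; this is just a more detailed version of the same argument.
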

\begin{proof}
In fact an element $g\in GL(n,F)$  fixes the $\ell$ tuple if and only if it lies in the centralizer of the algebra  $F( \underline m ;\underline a )$ which is an algebra of type $F(\underline a ;\underline m )$ by the double centralizer Theorem.
\end{proof}
 \begin{definition}\label{ost}
A closed orbit $\mathcal O\subset M_n(F)^{\ell}$, under the group $PGL(n,F)$, with stabilizer type $G(\underline a ;\underline m)$ will be called of {\em type} $ (\underline a ;\underline m)$.
\end{definition}
Note that an, orbit of type  $ (a_1,\ldots,a_k;m_1,\ldots,m_k)$
 has dimension equal to  $n^2-\sum_{i=1}^ka_i^2$.
 
\begin{corollary}\label{sttyp}
In the case of $M_n(F)^\ell$ we have  that the stabilizer types are of the form $G(\underline a ;\underline m)$.   

 We denote by $\Sigma^\sharp (\underline a ;\underline m)=\Sigma^\sharp (a_1,\ldots,a_k;m_1,\ldots,m_k)$    the  union of the closed orbits of type $ (\underline a ;\underline m)$.  By $\Sigma  (\underline a ;\underline m)=\Sigma(a_1,\ldots,a_k;m_1,\ldots,m_k)$ the corresponding stratum in the quotient variety.
\end{corollary}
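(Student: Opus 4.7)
The plan is to deduce the corollary by combining the three ingredients already assembled: the orbit/semisimplicity dictionary, the classification of semisimple subalgebras of $M_n(F)$, and the stabilizer computation of Proposition \ref{dct}. Concretely, I would proceed as follows.

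First, I would fix a closed orbit $\mathcal O \subset M_n(F)^\ell$ under $PGL(n,F)$ and pick a representative $(a_1,\ldots,a_\ell) \in \mathcal O$. By Theorem \ref{sesisc} (M. Artin's result), closedness of the orbit is equivalent to the subalgebra $C \subset M_n(F)$ generated by $a_1,\ldots,a_\ell$ being semisimple. Since $F$ is algebraically closed, Proposition \ref{ssn} lets me conjugate $C$ by an element of $PGL(n,F)$ to one of the standard block algebras $F(\underline m;\underline a)$ with $\sum_i m_i a_i = n$; this is an operation within $PGL(n,F)$, hence it does not change the orbit and only changes the representative. Thus every closed orbit has a representative whose generated algebra is a standard $F(\underline m;\underline a)$.

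Next, I would apply Proposition \ref{dct} to this normalized representative: the stabilizer in $PGL(n,F)$ of an $\ell$-tuple generating $F(\underline m;\underline a)$ is, by the double centralizer theorem, the image in $PGL(n,F)$ of the unit group of the centralizer $F(\underline a;\underline m)$, namely $G(\underline a;\underline m)$. Hence the stabilizer type of every closed orbit is of the form $G(\underline a;\underline m)$; this is the substantive content of the first assertion. Conversely, the hypothesis $\ell \geq 2$ together with Lemma \ref{dueg} ensures that every algebra $F(\underline m;\underline a)$ actually is generated by some $\ell$-tuple of matrices, so every such stabilizer type is realized. Thus the index set of stabilizer types is exactly $\{(\underline a;\underline m) : \sum_i m_i a_i = n\}$.

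Finally, with the stabilizer types classified, Luna's theory \cite{Luna} gives a finite stratification of the quotient $M_n(F)^\ell //GL(n,F)$ by stabilizer type. I would then simply name the corresponding pieces: let $\Sigma^\sharp(\underline a;\underline m) \subset M_n(F)^\ell$ be the union of closed orbits whose stabilizer is $PGL(n,F)$-conjugate to $G(\underline a;\underline m)$, and let $\Sigma(\underline a;\underline m) \subset V_n(\ell)$ be its image under the quotient map $\pi$, which is a locally closed smooth stratum by Luna. There is essentially no obstacle here; the only mild point to verify is the well-definedness of the labelling, i.e.\ that distinct tuples $(\underline a;\underline m)$ (with $m_i$ listed in a fixed convention and $a_i$ positive) yield non-conjugate stabilizer groups — this is immediate since $G(\underline a;\underline m)$ has the multiset of block sizes $(a_i)$ with multiplicities determined by $(m_i)$, which is an invariant of the conjugacy class.
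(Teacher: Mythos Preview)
Your proposal is correct and follows essentially the same route as the paper: the corollary has no separate proof in the text because it is an immediate consequence of the paragraph preceding Proposition~\ref{dct} (closed orbits $\leftrightarrow$ semisimple subalgebras via Artin, each conjugate to some $F(\underline m;\underline a)$ by Proposition~\ref{ssn}, all types realized for $\ell\geq 2$ by Lemma~\ref{dueg}) together with Proposition~\ref{dct} itself. You have simply spelled out explicitly what the paper leaves implicit, and your added remark on well-definedness of the labelling is a reasonable clarification.
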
 

The  variety $\Sigma^\sharp  (\underline a ;\underline m)$ can be described as follows.    
\begin{lemma}\label{int} A closed orbit $\mathcal O\subset M_n(F)^{\ell}$, under the group $PGL(n,F)$, with stabilizer type $G(\underline a ;\underline m)$, intersects the space        $F(\underline m ;\underline a )^\ell$ in a single orbit under the group $Aut(\underline m ;\underline a )$ of automorphisms  of $F(\underline m ;\underline a )$.

\end{lemma}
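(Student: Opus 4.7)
The plan is to take two $\ell$-tuples $p=(p_1,\ldots,p_\ell)$ and $q=(q_1,\ldots,q_\ell)$ in $\mathcal O\cap F(\underline m;\underline a)^\ell$ and construct a trace-preserving automorphism $\alpha\in Aut(\underline m;\underline a)$ with $\alpha(p_i)=q_i$ for every $i$. Non-emptiness of the intersection will be immediate from the hypothesis on the stabilizer type together with Proposition~\ref{dct}: after a $PGL(n,F)$-conjugation one can arrange a given point of $\mathcal O$ to have stabilizer equal to the standard copy of $F(\underline a;\underline m)^\times$ in $GL(n,F)$, and such a point lies automatically in the centralizer of this subgroup, which by the double centralizer theorem is $F(\underline m;\underline a)$.

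The crucial step is to argue that each of $p$ and $q$ generates the full algebra $F(\underline m;\underline a)$, and not merely a proper semisimple subalgebra. Let $C_p\subseteq F(\underline m;\underline a)$ denote the $F$-subalgebra of $M_n(F)$ generated by $p_1,\ldots,p_\ell$. Because $\mathcal O$ is closed, Theorem~\ref{sesisc} ensures $C_p$ is semisimple, and the stabilizer of $p$ in $GL(n,F)$ is the unit group of the centralizer of $C_p$ in $M_n(F)$; the hypothesis on the stabilizer type therefore forces this centralizer to have dimension $\sum_i a_i^2$. Since $C_p\subseteq F(\underline m;\underline a)$, its centralizer contains $F(\underline a;\underline m)$ (the centralizer of $F(\underline m;\underline a)$ in $M_n(F)$), which also has dimension $\sum_i a_i^2$, and the two therefore coincide. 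A second application of the double centralizer theorem, now to the semisimple algebra $F(\underline a;\underline m)$, yields $C_p=F(\underline m;\underline a)$, and the same holds for $C_q$.

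With this established, I choose $g\in GL(n,F)$ realizing $q_i=g\,p_i\,g^{-1}$ componentwise (using that $p$ and $q$ are in a common $PGL(n,F)$-orbit). Conjugation by $g$ then carries $C_p=F(\underline m;\underline a)$ to $C_q=F(\underline m;\underline a)$, so $g$ normalizes $F(\underline m;\underline a)$ inside $M_n(F)$, and the restriction $\alpha:a\mapsto g\,a\,g^{-1}$ is an $F$-algebra automorphism of $F(\underline m;\underline a)$. It preserves the trace automatically: by construction (Formula~\eqref{Fma}) the trace on $F(\underline m;\underline a)$ is the restriction of the ordinary trace on $M_n(F)$ under the block-diagonal embedding, and ordinary trace is conjugation-invariant. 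Hence $\alpha\in Aut(\underline m;\underline a)$ and $\alpha(p)=q$, proving that the intersection is a single $Aut(\underline m;\underline a)$-orbit.

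The main obstacle is precisely the generation step above: a priori the $\ell$-tuple $p$ lying in $F(\underline m;\underline a)^\ell$ could generate a proper semisimple subalgebra $C_p\subsetneq F(\underline m;\underline a)$, in which case $\mathcal O$ would in fact have stabilizer type strictly larger than $G(\underline a;\underline m)$. Ruling this out requires combining the exact-type hypothesis with the dimension count for the centralizers and invoking double centralization in both directions; everything else in the argument—extending conjugation to an algebra automorphism, and checking trace compatibility—is essentially formal.
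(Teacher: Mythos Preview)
Your argument follows the same line as the paper's: non-emptiness via the stabilizer hypothesis and the double centralizer theorem, then the generation step (which you argue more carefully than the paper, via the dimension count on centralizers), then restricting the conjugation by $g$ to obtain $\alpha\in Aut(\underline m;\underline a)$.

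There is, however, one direction you have not addressed. You prove that any two points of $\mathcal O\cap F(\underline m;\underline a)^\ell$ lie in a common $Aut(\underline m;\underline a)$-orbit, so the intersection is \emph{contained in} a single orbit. For the intersection to \emph{equal} that orbit you must also check that $Aut(\underline m;\underline a)$ preserves it, i.e., that for every $\alpha\in Aut(\underline m;\underline a)$ and every $p$ in the intersection one has $\alpha(p)\in\mathcal O$. This amounts to showing that every trace-preserving automorphism of $F(\underline m;\underline a)$ is the restriction of an inner automorphism of $M_n(F)$. The paper isolates this as a separate step (Lemma~\ref{eaai}): writing $Aut(\underline m;\underline a)=G_0\ltimes H$ with $G_0=\prod_i PGL(m_i,F)$ and $H$ the product of symmetric groups permuting blocks with equal $(m_i,a_i)$, both factors visibly lift to $GL(n,F)$ (the first by block-diagonal embedding, the second by permutation matrices). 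This converse is also exactly what is needed later for the map $\bar i$ in diagram~\eqref{foq} to be well defined on $Aut$-orbits. The fix is immediate, but as written your proof establishes only one containment.
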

\begin{proof} By definition there is a point $p:=(p_1,\ldots,p_\ell)$ in the orbit with stabilizer exactly  $G(\underline a ;\underline m)$. Thus the elements $\{p_1,\ldots,p_\ell 
\}$ lie in $F(\underline m ;\underline a )$ the centralizer of $G(\underline a ;\underline m)$. Moreover these elements must generate $F(\underline m ;\underline a )$, again by the double centralizer Theorem,  since they generate a semisimple algebra with centralizer  $F(\underline a ;\underline m)$.
Assume that $p,q\in  F(\underline m ;\underline a )^\ell$ are in the same orbit $q=g\cdot p$. Since $p$ and $q$  generate $F(\underline m ;\underline a )$ we must have that $g$ fixes $F(\underline m ;\underline a )$  and of course it is an automorphism of  $F(\underline m ;\underline a )$.\smallskip

Conversely it is enough to show that:
\begin{lemma}\label{eaai}
Each automorphism of $F(\underline m ;\underline a )$ is the restriction of an  automorphism of matrices $M_n(F)$.
\end{lemma} 
\begin{proof}
The automorphism group
 \begin{equation}\label{datu}
Aut(\underline m ;\underline a )=G_0\ltimes H,\ \dim Aut(\underline m ;\underline a )=\sum_im_i^2-k
\end{equation}  is the semidirect product of its conneced component $G_0=\prod_{i=1}^kPGL(m_i,F)$ of inner automorphisms and the finite group $H,$ product of the symmetric groups permuting the blocks relative to indices $(m_i,a_i)$  which are equal between each other. These are also given by conjugation by permutation matrices.
\end{proof}
 \end{proof} 
 
\begin{proposition}\label{dims} With $ (\underline a ;\underline m)= (a_1,\ldots,a_k;m_1,\ldots,m_k).$
\begin{equation}\label{dims0}
\dim \Sigma^\sharp   (\underline a ;\underline m)=n^2+ (\ell-1) ( \sum_{i=1}^km_i^2)- \sum_{i=1}^ka_i^2 +k.
\end{equation}

\end{proposition}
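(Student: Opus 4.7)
The plan is to compute $\dim\Sigma^\sharp(\underline a;\underline m)$ in two stages, writing it as (common dimension of the closed orbit fibers) $+$ (dimension of the stratum $\Sigma(\underline a;\underline m)$ in the quotient variety). Since every point of $\Sigma^\sharp(\underline a;\underline m)$ lies in a closed $PGL(n,F)$-orbit of type $(\underline a;\underline m)$ and these orbits are precisely the fibers of the quotient map over $\Sigma(\underline a;\underline m)$, all fibers have the same dimension and the two contributions add.

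The fiber/orbit contribution is immediate from Proposition \ref{dct}: the $GL(n,F)$-stabilizer of a point of type $(\underline a;\underline m)$ is $G(\underline a;\underline m)$, of dimension $\sum_i a_i^2$. Factoring out the scalar center $F^*$, the $PGL(n,F)$-stabilizer has dimension $\sum_i a_i^2 - 1$, so each closed orbit has dimension $(n^2-1)-(\sum_i a_i^2-1)=n^2-\sum_i a_i^2$.

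For the stratum, I would use Lemma \ref{int} to identify $\Sigma(\underline a;\underline m)$ with $Y/\mathrm{Aut}(\underline m;\underline a)$, where $Y\subset F(\underline m;\underline a)^\ell$ is the (Zariski dense open) subset of $\ell$-tuples that generate the whole algebra $F(\underline m;\underline a)$; nonemptiness of $Y$ is guaranteed by Lemma \ref{dueg}. Then $\dim Y = \ell\sum_i m_i^2$, and formula \eqref{datu} gives $\dim\mathrm{Aut}(\underline m;\underline a) = \sum_i m_i^2 - k$. The main point requiring verification is that $\mathrm{Aut}(\underline m;\underline a)$ acts generically freely on $Y$, so that the dimensions subtract cleanly. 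For the identity component $\prod_i PGL(m_i,F)$ of inner automorphisms this is automatic: any inner automorphism fixing a generating tuple must centralize all of $F(\underline m;\underline a)$, hence be scalar in each block, hence trivial in $\prod_i PGL(m_i,F)$. For the finite quotient $H$ of block permutations (which only permutes blocks with equal $(m_i,a_i)$), a Zariski-generic tuple in $Y$ has distinct projections onto any two such equal blocks and so has trivial $H$-stabilizer. Combining these,
\[
\dim\Sigma(\underline a;\underline m) = \ell\sum_i m_i^2 - \left(\sum_i m_i^2 - k\right) = (\ell-1)\sum_i m_i^2 + k,
\]
and adding the orbit dimension yields
\[
\dim\Sigma^\sharp(\underline a;\underline m) = n^2 + (\ell-1)\sum_i m_i^2 - \sum_i a_i^2 + k,
\]
as claimed. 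The only real subtlety is the generic freeness check above; everything else is a straightforward dimension count built on Proposition \ref{dct}, Lemma \ref{int}, and formula \eqref{datu}.
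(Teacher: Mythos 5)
Your proof is correct, and the final arithmetic matches, but you reach it by a genuinely different fibration than the paper. The paper fibers $\Sigma^\sharp(\underline a;\underline m)$ over the $GL(n,F)$--orbit, in the Grassmannian of subspaces of $M_n(F)$, of the subalgebra $F(\underline m;\underline a)$: the map $p\mapsto F(p)$ has fiber the open set of generating $\ell$--tuples, of dimension $\ell\sum m_i^2$, while the base has dimension $n^2-\dim\tilde G(\underline m;\underline a)=n^2-\sum a_i^2-\sum m_i^2+k$, computed from the exact sequence $1\to G(\underline a;\underline m)\to\tilde G(\underline m;\underline a)\to Aut(\underline m;\underline a)\to 1$. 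This bypasses the quotient variety entirely, and in fact Proposition \ref{dims1} (your $\dim\Sigma(\underline a;\underline m)$) is deduced in the paper as a \emph{consequence} of Proposition \ref{dims}, not the other way around. You instead fiber $\Sigma^\sharp$ over the Luna stratum $\Sigma$ and compute $\dim\Sigma$ first via Lemma \ref{int} as the quotient $Y/Aut(\underline m;\underline a)$. That route is sound, but it forces you to verify that $Aut(\underline m;\underline a)$ acts with generic finite stabilizer on $Y$ --- a step the paper's fibration renders unnecessary. For what it is worth, your treatment of the finite part $H$ is more than is needed: once you know the identity component $\prod_iPGL(m_i,F)$ acts freely, the generic stabilizer in $Aut(\underline m;\underline a)$ is already finite, which is all the dimension count requires (and, incidentally, "distinct projections" should really be "non-conjugate projections onto equal blocks" if you do want trivial $H$-stabilizer). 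Both routes lean on Lemma \ref{int} and formula \eqref{datu}; the paper's is slightly shorter because it trades the quotient-variety argument for a normalizer/centralizer computation in $GL(n,F)$.
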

\begin{proof} For each $p\in \Sigma^\sharp   (\underline a ;\underline m)$ denote by $F(p)$ the semisimple subalgebra it generates, by hypothesis conjugate to $F(\underline m ;\underline a )$.  

The set of subalgebras of $M_n(F)$ conjugate to $F(\underline m ;\underline a )$ is an orbit,  in the Grassmann variety  of subspaces of  $M_n(F)$, under $GL(n,F)$. The subgroup   $$\tilde G(\underline m ;\underline a ):=\{g\in GL(n,F)\mid g(F(\underline m ;\underline a ))=F(\underline m ;\underline a )\}   $$  fits into an exact sequence (Lemma \ref{eaai})
$$ \begin{CD}
0@>>> G(\underline a ;\underline m )@>>>\tilde G(\underline m ;\underline a )@>>> Aut(\underline m ;\underline a )@>>>0
\end{CD}.$$  Thus this orbit of $F(\underline m ;\underline a )$  has  dimension  $$\dim GL(n,F)-\dim \tilde G(\underline a ;\underline m )\stackrel{\eqref{datu}}=n^2-     \sum_ia_i^2-\sum_im_i^2+k .   $$ The variety   $ \Sigma^\sharp    (\underline a ;\underline m)$ fibers, in a  $GL(n,F)$ equivariant way over this orbit.  
           
The  fiber of $ F(\underline m ;\underline a )$ is the set of points $\mathcal O(\underline m ;\underline a ,\ell):=\{p\in \Sigma^\sharp    (\underline a ;\underline m)\}$ with $F(p) =F(\underline m ;\underline a )$. 

Thus  
  $  \mathcal O(\underline m ;\underline a ,\ell) $  is the open set of $F(\underline m ;\underline a )^\ell$ of $\ell$--tuples which generate $F(\underline m ;\underline a )$.  We have  $\dim\mathcal O(\underline m ;\underline a ,\ell)=\ell \sum_im_i^2, $ hence the claim.
  \end{proof}
\subsection{The Luna stratification II}

We now pass to the Luna strata $ \Sigma   (\underline a ;\underline m)$ in the quotient variety.

\begin{proposition}\label{dims1} With $ (\underline a ;\underline m)= (a_1,\ldots,a_k;m_1,\ldots,m_k).$
\begin{equation}\label{dims2}
\dim \Sigma   (\underline a ;\underline m)= (\ell-1)( \sum_{i=1}^km_i^2)  + k.
\end{equation}

\end{proposition}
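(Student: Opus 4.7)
The plan is to deduce this statement directly from Proposition \ref{dims} by a fiber dimension computation for the quotient map $\pi: M_n(F)^\ell \to M_n(F)^\ell // GL(n,F)$ restricted to the locally closed subset $\Sigma^\sharp(\underline a;\underline m)$.

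First I would recall that, by definition of the Luna stratum and by Artin's description of closed orbits, every point of $\Sigma^\sharp(\underline a;\underline m)$ lies in a closed $PGL(n,F)$--orbit with stabilizer type $G(\underline a;\underline m)$. The action of $GL(n,F)$ by conjugation factors through $PGL(n,F)=GL(n,F)/F^\ast$, and since the scalar matrices lie in $G(\underline a;\underline m)$, the stabilizer in $PGL(n,F)$ of such a point has dimension $\dim G(\underline a;\underline m)-1=\sum_i a_i^2-1$. Hence each closed orbit $\mathcal O\subset \Sigma^\sharp(\underline a;\underline m)$ has dimension
\begin{equation*}
\dim \mathcal O = \dim PGL(n,F) - (\sum_i a_i^2-1) = (n^2-1)-(\sum_i a_i^2-1)=n^2-\sum_i a_i^2,
\end{equation*}
in agreement with the remark after Corollary \ref{sttyp}.

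Next I would observe that $\pi$ restricted to $\Sigma^\sharp(\underline a;\underline m)$ is surjective onto $\Sigma(\underline a;\underline m)$ and, because each fiber is exactly one closed orbit of stabilizer type $G(\underline a;\underline m)$, the restriction $\pi|_{\Sigma^\sharp(\underline a;\underline m)}:\Sigma^\sharp(\underline a;\underline m)\to\Sigma(\underline a;\underline m)$ has all fibers of the same dimension $n^2-\sum_i a_i^2$. By the fiber dimension theorem applied to this surjective morphism of irreducible varieties, we conclude
\begin{equation*}
\dim \Sigma(\underline a;\underline m)=\dim \Sigma^\sharp(\underline a;\underline m)-(n^2-\sum_i a_i^2).
\end{equation*}

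Substituting the formula from Proposition \ref{dims} gives
\begin{equation*}
\dim \Sigma(\underline a;\underline m)=\bigl[n^2+(\ell-1)(\textstyle\sum_i m_i^2)-\sum_i a_i^2+k\bigr]-\bigl(n^2-\sum_i a_i^2\bigr)=(\ell-1)(\textstyle\sum_i m_i^2)+k,
\end{equation*}
which is the asserted formula. The only subtle point, and the one I would want to document carefully, is the constancy of fiber dimension: this is where one uses that on $\Sigma^\sharp(\underline a;\underline m)$ all orbits are closed with conjugate stabilizers, so $\pi$ really is an orbit map with equidimensional fibers there; irreducibility of $\Sigma^\sharp$ (inherited from the fibration over the $GL(n,F)$--orbit of $F(\underline m;\underline a)$ in the Grassmannian described in the proof of Proposition \ref{dims}) ensures that the fiber dimension theorem applies without ambiguity.
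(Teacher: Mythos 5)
Your argument is correct and takes the same route as the paper: Proposition \ref{dims} gives $\dim \Sigma^\sharp(\underline a;\underline m)$, each fiber of $\pi|_{\Sigma^\sharp}$ is a single closed orbit of dimension $n^2-\sum_i a_i^2$, and one subtracts. The paper's proof is the same subtraction stated in one line; you have merely spelled out the equidimensionality-of-fibers and irreducibility hypotheses that license the fiber-dimension theorem, which is a reasonable elaboration but not a different method.
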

\begin{proof} This follows from the previous Proposition and the fact that the closed   orbits of type $(\underline a ;\underline m)$ have dimension $n^2-\sum_ia_i^2.$
 \end{proof} \begin{remark}\label{destra}Recall that, by Luna's Theorem, a stratum  $ \Sigma   (\underline a' ;\underline m')$  is contained in the closure of another stratum  $ \Sigma   (\underline a ;\underline m)$ if and only if  the stabilizer $G(\underline a' ;\underline m')$ contains  a conjugate of the  stabilizer $G(\underline a ;\underline m)$.  Equivalently this means that the algebra $F(\underline m' ;\underline a' )$ is contained in   a conjugate of the   of the algebra $F(\underline m ;\underline a )$.

\end{remark}\smallskip

 Consider   a maximal  semisimple proper subalgebra  $S\simeq F(\underline m' ;\underline a' )$ of a  semisimple  algebra $ R\simeq F(\underline m  ;\underline a  )$ over $F$ algebraically closed. 
 \begin{lemma}\label{maxs}
We have one of the following possibilities:

1)\quad If $R$  is simple  there is a decomposition of  $m =p+q$, so that the pair $S\subset R$ is isomorphic to $M_{p}(F)\oplus M_{q}(F)\subset M_{m }(F)$.

2)\quad There is a decomposition  $R=R_1\oplus R_2,\ S=S_1\oplus R_2$ with $R_1$ simple  and $S_1$ is maximal in $R_1$.

3)\quad There is a decomposition  $R=R_1\oplus R_2\oplus R_3,\ S=S_1\oplus R_3$ and $S_1\subset R_1\oplus R_2$ with an isomorphism $S_1\simeq R_1\simeq  R_2\simeq  M_m(F)$ and the inclusion  $S_1\subset R_1\oplus R_2$ is the diagonal of $M_m(F)\oplus M_m(F)$.
\end{lemma}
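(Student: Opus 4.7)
My plan is to dichotomize via the subalgebra $S' := S[\epsilon_1, \ldots, \epsilon_k]$ obtained by adjoining to $S$ the primitive central idempotents of $R = \bigoplus_{i=1}^k R_i$. Since each $\epsilon_i$ is central in $R$, it commutes with $S$, so $S' = \bigoplus_i \pi_i(S)$ is a semisimple subalgebra satisfying $S \subseteq S' \subseteq R$, where $\pi_i \colon R \to R_i$ is the projection. By maximality of $S$ either $S' = S$ or $S' = R$, and I treat the two cases separately.

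If $S' = S$, then every $\epsilon_i$ lies in $S$ and $S = \bigoplus_i \hat S_i$ with $\hat S_i := \epsilon_i S \subseteq R_i$ semisimple. If two distinct indices $i_1, i_2$ had $\hat S_{i_j} \subsetneq R_{i_j}$, then replacing $\hat S_{i_1}$ by $R_{i_1}$ would produce a semisimple algebra strictly between $S$ and $R$, contradicting maximality; hence exactly one $\hat S_{i_0}$ is proper, and it must be maximal proper semisimple in the simple algebra $R_{i_0}$. If $k \geq 2$ this is case 2, grouping $R_{i_0}$ as $R_1$ and $\bigoplus_{i \neq i_0} R_i$ as $R_2$. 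If $k = 1$ (so $R$ itself is simple) I resort to the classical module-theoretic description: a semisimple $S \subseteq M_m(F)$ is determined by the $S$-isotypic decomposition $F^m = \bigoplus_\beta W_\beta^{\oplus d_\beta}$, the centralizer is $C_R(S) = \bigoplus_\beta M_{d_\beta}(F)$, and by the double centralizer theorem the semisimple subalgebras of $R$ containing $S$ correspond antitonically to the semisimple subalgebras of $C_R(S)$ containing $F \cdot 1$. Maximality makes this lattice trivial, forcing $C_R(S) \simeq F \oplus F$, which unwinds to $S = M_p(F) \oplus M_q(F)$ block-diagonally embedded with each summand of multiplicity one; this is case 1.

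If $S' = R$, then $\pi_i(S) = R_i$ for every $i$, and I carry out a Goursat-type analysis using the simple decomposition $S = \bigoplus_\alpha S_\alpha$ with primitive central idempotents $\delta_\alpha$. For each fixed $i$ the images $\pi_i(\delta_\alpha)$ are pairwise orthogonal idempotents in $R_i$ summing to $\epsilon_i$, with $\pi_i(S_\alpha) \subseteq \pi_i(\delta_\alpha) R_i \pi_i(\delta_\alpha)$; the requirement $R_i = \bigoplus_\alpha \pi_i(S_\alpha)$ combined with the simplicity of $R_i$ forces, by a block-rank count, that for each $i$ exactly one $\alpha$ satisfies $\pi_i(\delta_\alpha) = \epsilon_i$ and all others vanish there. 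Consequently the supports $I_\alpha := \{ i : \pi_i(\delta_\alpha) \neq 0 \}$ partition $\{1, \ldots, k\}$, every $R_i$ with $i \in I_\alpha$ is isomorphic to a common $M_{n_\alpha}(F)$, and each restriction $\pi_i |_{S_\alpha}$ is an isomorphism, so up to inner automorphism of $R$ the simple summand $S_\alpha$ is the diagonal copy of $M_{n_\alpha}(F)$ inside $\bigoplus_{i \in I_\alpha} R_i$. A last appeal to maximality pins down the sizes: if $|I_\alpha| \geq 3$ one can split off one coordinate of the diagonal; if two distinct $\alpha, \beta$ both have $|I_\alpha|, |I_\beta| \geq 2$ one can split one of them; and if every $|I_\alpha| = 1$ then $S = R$. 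So exactly one $\alpha_0$ has $|I_{\alpha_0}| = 2$, and relabelling the two corresponding simple factors of $R$ as $R_1, R_2$ and collecting the rest as $R_3$ produces case 3.

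The main obstacle is the block-rank reduction in the case $S' = R$, namely pinning down that each $\pi_i(\delta_\alpha)$ is either zero or all of $\epsilon_i$. The subtlety is that $\pi_i(S_\alpha)$ need not fill the full corner $\pi_i(\delta_\alpha) R_i \pi_i(\delta_\alpha)$, so one must argue simultaneously with all the $S_\alpha$ and invoke that the block-diagonal subalgebra $\bigoplus_\alpha \pi_i(\delta_\alpha) R_i \pi_i(\delta_\alpha)$ of the simple algebra $R_i$ fills $R_i$ only when the partition of $\epsilon_i$ into the $\pi_i(\delta_\alpha)$'s is trivial. Once this Goursat-style reduction is in hand, all the remaining steps are short enumerations against maximality.
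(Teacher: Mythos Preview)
Your argument is correct and complete, but it takes a genuinely different route from the paper's.

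The paper argues by induction on the number $k$ of simple summands of $R$: it peels off one simple block $R_1$, writes $R=R_1\oplus R_2$, and looks at the two projections $\pi_1,\pi_2$ of $S$. If one projection fails to surject, maximality forces $S=P\oplus R_{3-i}$ and one recurses into the smaller piece; if both surject, a short kernel analysis produces the diagonal picture of case~3. Case~1 is dispatched by a bare citation of the classification of semisimple subalgebras of $M_m(F)$.

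Your proof is non-inductive: the dichotomy on $S'=S[\epsilon_1,\dots,\epsilon_k]$ separates in one stroke the situation where $S$ respects the block decomposition of $R$ (giving cases~1 and~2) from the situation where every projection $\pi_i\!\restriction_S$ is surjective, which you then resolve by a Goursat-type analysis on the central idempotents of $S$. Your treatment of case~1 via the antitone double-centralizer correspondence $T\mapsto C_R(T)$ is more informative than the paper's one-line citation: it actually explains \emph{why} maximality forces $C_R(S)\simeq F\oplus F$ and hence the two-block shape.

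One small remark on your ``main obstacle'' paragraph: in the $S'=R$ case you in fact have $\pi_i(S_\alpha)=\pi_i(\delta_\alpha)\,\pi_i(S)\,\pi_i(\delta_\alpha)=e_\alpha R_i e_\alpha$ with \emph{equality}, precisely because $\pi_i(S)=R_i$; so each corner is automatically full and the worry you flag does not arise. The block-rank count then reduces to the observation that $R_i=\bigoplus_\alpha e_\alpha R_i e_\alpha$ inside the simple algebra $R_i$ forces all off-diagonal Peirce components $e_\alpha R_i e_\beta$ to vanish, hence at most one $e_\alpha$ is nonzero. Your conclusion is unaffected; the argument is just a bit simpler than you feared.
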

\begin{proof}
1) follows from the classification of semisimple subalgebras of    $M_m(F)$, Proposition \ref{ssn}.

2) and 3) are by induction on the number $k$ of simple summands of $R$. If $k=1$ we are in the previous case. So assume $R=R_1\oplus R_2$  with $R_1$ simple. let $\pi_i:S\to R_i, i=1,2$ be the two projections> If one of the two projections is not  surjective, say $\pi_i(S)=P\subsetneq R_i$, then $S=\pi_i^{-1} R_i$ and we apply induction.

Assume now $\pi_1,\pi_2$ surjective and let $K:=ker(\pi_1)$  then $S\simeq j(R_1)\oplus K,\ K\subset R_2$ with $j$ some embedding of $R_1$ in $R$. We claim that $\pi_2$ is an isomorphism, otherwise its kernel must be  $ j(R_1)$ and then $K=R_2$ and $S=R$. Thus $R=R_1\oplus j(R_1)\oplus K$ and $S$ is the diagonal of $R_1\oplus j(R_1)$ direct sum $K$.

\end{proof}
  The dimension of the corresponding strata $\Sigma   (\underline a' ;\underline m')$  is in case 2)
 $$\dim \Sigma   (\underline a ;\underline m)-(\ell-1)(p+q)^2+(\ell-1)(p^2+q^2)+1=\dim \Sigma   (\underline a ;\underline m)-(\ell-1) 2pq+1. $$
We have $(\ell-1) 2pq-1=1$ if and only if  $\ell=2,\ p=q=1$. 

In case 3) it is
$$ (\ell-1)(2m^2+ \sum_{i=3}^km_i^2)  + k-( (\ell-1)(m^2 +\sum_{i=3}^km_i^2)  + k-1)= (\ell-1) m^2+1
$$
which, for $\ell \geq 2$ is never equal to 1.

So we have:
\begin{theorem}\label{cod}   The complement of the stratum $\Sigma   (\underline a ;\underline m)$ in its closure $\overline\Sigma   (\underline a ;\underline m)$ has codimension $\geq 2$; except in the case $\ell=2$ and one of the blocks is $2\times 2$  matrices.

In the case  case of codimension $\geq 2$ the regular functions on $\Sigma   (\underline a ;\underline m)$  give the integral closure of  the functions on $\overline\Sigma   (\underline a ;\underline m)$.

\end{theorem}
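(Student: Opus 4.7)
The complement $\overline{\Sigma(\underline a;\underline m)}\setminus\Sigma(\underline a;\underline m)$ decomposes, by Remark~\ref{destra}, as a union of Luna strata $\Sigma(\underline a';\underline m')$ corresponding to semisimple subalgebras $F(\underline m';\underline a')$ properly contained (up to conjugacy) in $F(\underline m;\underline a)$. The codimension-one irreducible components of this complement must come from \emph{maximal} proper semisimple subalgebras, because any proper inclusion factors through a maximal one, producing a larger intermediate stratum in the closure. So the first assertion reduces to enumerating maximal subalgebras and computing their stratum codimensions.

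I would appeal to Lemma~\ref{maxs} to list these maximal inclusions and to the dimension formula of Proposition~\ref{dims1} to compute each codimension. Cases 1 and 2 of Lemma~\ref{maxs} both describe splitting a single simple summand $M_m(F)$ into $M_p(F)\oplus M_q(F)$ with $p+q=m$: this changes the contribution $m^2$ to $p^2+q^2$ and raises the block count $k$ by one, yielding codimension $(\ell-1)\cdot 2pq-1$, which equals $1$ precisely when $\ell=2$ and $p=q=1$, the advertised exception of a $2\times 2$ block. Case 3 collapses two isomorphic summands $M_m(F)\oplus M_m(F)$ into a diagonal copy, replacing $2m^2$ by $m^2$ and decreasing $k$ by one, so the codimension is $(\ell-1)m^2+1\geq 2$ for $\ell\geq 2$. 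Thus, outside the exceptional case, every irreducible component of $\overline\Sigma\setminus\Sigma$ has codimension $\geq 2$.

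For the integral closure statement I would argue via normalization. The stratum $\Sigma$ is smooth: the open set $\mathcal O(\underline m;\underline a,\ell)$ of $\ell$-tuples generating $F(\underline m;\underline a)$ is itself smooth, the group $PGL(n,F)$ acts on its $GL(n,F)$-saturation with constant stabilizer type $G(\underline a;\underline m)$, and Luna's étale slice theorem produces a smooth geometric quotient identifiable with $\Sigma$. Being smooth, $\Sigma$ is normal. The closure $\overline\Sigma$ is closed in the affine quotient $M_n(F)^\ell /\!/ GL(n,F)$, hence affine, with a normalization $\pi:\widetilde{\overline\Sigma}\to\overline\Sigma$ that is finite and birational. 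Since $\Sigma$ is already normal and open in $\overline\Sigma$, the map $\pi$ is an isomorphism over $\Sigma$, and finiteness preserves codimension, so $\widetilde{\overline\Sigma}\setminus\pi^{-1}(\Sigma)$ also has codimension $\geq 2$. Normality gives Serre's condition $S_2$, so Hartogs' extension forces every regular function on $\Sigma\cong\pi^{-1}(\Sigma)$ to extend uniquely to $\widetilde{\overline\Sigma}$, yielding $\mathcal O(\Sigma)=\mathcal O(\widetilde{\overline\Sigma})$, which by definition is the integral closure of $\mathcal O(\overline\Sigma)$. The main delicate input is the smoothness of $\Sigma$ via Luna's slice theorem; the rest is codimension bookkeeping encoded in Lemma~\ref{maxs} together with standard commutative algebra.
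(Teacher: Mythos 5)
Your argument for the first assertion reproduces the paper's proof exactly: the codimension-one boundary components come from maximal proper semisimple subalgebras, which Lemma~\ref{maxs} classifies, and the dimension formula of Proposition~\ref{dims1} gives codimension $(\ell-1)2pq-1$ for a block split and $(\ell-1)m^2+1$ for the diagonal collapse, isolating $\ell=2$, $p=q=1$ as the sole exception. For the integral-closure assertion the paper states the result without an explicit argument, and your normalization reasoning (smoothness of the Luna stratum, isomorphism of the normalization over $\Sigma$, preservation of codimension under finite morphisms, and Serre's $S_2$ extension) correctly supplies the standard proof that the author implicitly relies on here and makes explicit later in Theorem~\ref{norm}.
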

For two $2\times 2$ matrices $X,Y$ the complement of the open stratum is the hypersurface of equation $\det(XY-YX).$\medskip

Recall that a $T$--ideal in the free algebra  with trace $F \langle x_1,\ldots,x_\ell \rangle[t(M)]$ is an ideal stable under the semigroup $\mathcal S$ of {\em variable substitutions} or  {endomorphisms}.

 {Such a semigroup induces for all $n$    }
\begin{enumerate}

\item a semigroup $\mathcal S_n$ of   variable substitutions or  endomorphisms in each of the quotient algebras $F_{T,n}\langle x_1,\ldots,x_\ell \rangle$

\item an opposite semigroup $\mathcal S_n^{op}$ of regular maps on the variety $ M_n(F)^{\ell}$ commuting with the $PGL(n,F)$ action

\item a  semigroup of regular maps in the quotient variety $ M_n(F)^{\ell}//PGL(n,F)$ or  on its coordinate ring $\mathcal C_n(\ell)$ the trace algebra of  
 $F_{T,n}\langle x_1,\ldots,x_\ell \rangle$.

\end{enumerate}

     Let  $\mathcal G$  be the (infinite dimensional) group of automorphisms of the free algebra with trace $R$ in the variables $X=\{x_1,\ldots,x_\ell\}$.  Since the  algebra $\mathcal S_n(\ell)$  is relatively free,Theorem \ref{SFT},  then  $\mathcal G$  induces, for each $n$,  a (still infinite dimensional)  group of automorphisms of $\mathcal S_n(\ell)$.  We denote this group by  $\mathcal G_n(\ell)$.\smallskip

     Then, by the universal construction for $ \mathcal S_n(\ell)$, Proposition \ref{genma} and Definition \ref{genmadt}, the group $\mathcal G_n(\ell)$ induces a group of automorphisms on the commutative algebra $ F[\xi_{h,k}^{(i)}]$ which, by Proposition \ref{inva},  commutes with  $PGL(n,F)$.  
     
 \begin{proposition}\label{geaut}
The group $\mathcal G_n(\ell)$ induces a group of regular automorphisms on the variety  $M_n(F)^\ell $ commuting with  $PGL(n,F)$.

Therefore it induces a group of regular automorphisms on the quotient variety  $M_n(F)^\ell //GL(n,F)$ which preserves the  Luna stratification.
\end{proposition}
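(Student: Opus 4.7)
The plan is to reduce everything to formal consequences of the fact, established just before the proposition, that $\mathcal{G}_n(\ell)$ acts on the commutative polynomial ring $F[\xi_{h,k}^{(i)}]$ by algebra automorphisms that commute with the $PGL(n,F)$-action. The proof is essentially a translation of an algebraic statement into geometric language, followed by two standard checks.

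First I would pass from algebra to geometry: $F[\xi_{h,k}^{(i)}]$ is the coordinate ring of the affine variety $M_n(F)^\ell$, so via $\mathrm{Spec}$ each $\phi\in\mathcal{G}_n(\ell)$ corresponds to a regular automorphism $\Phi$ of $M_n(F)^\ell$. The algebraic commutation with $PGL(n,F)$ on the ring side translates, under this correspondence, to the geometric commutation $\Phi(g\cdot p)=g\cdot\Phi(p)$ for all $g\in PGL(n,F)$ and all $p\in M_n(F)^\ell$; the direction of this equality is precisely what is arranged by the map $g\mapsto g\otimes\hat g^{-1}$ of Proposition \ref{inva}, which ensures that the action factors as a homomorphism rather than an anti-homomorphism. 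This establishes the first sentence of the proposition.

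Next I would push the action down to the quotient. Since each $\Phi$ commutes with the $PGL(n,F)$-action, $\phi$ preserves the invariant subring $\mathcal{C}_n(\ell)=F[\xi_{h,k}^{(i)}]^{PGL(n,F)}$, which is the coordinate ring of $M_n(F)^\ell//GL(n,F)$. Hence $\Phi$ descends to a regular automorphism of the quotient variety; equivalently, $\Phi$ permutes $PGL(n,F)$-orbits, and being a regular isomorphism it sends closed orbits to closed orbits, so a unique regular automorphism is induced through the quotient map $\pi$.

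Finally I would check preservation of the Luna stratification. For $p\in M_n(F)^\ell$ with stabilizer $G_p\subset PGL(n,F)$, the commutation relation yields
\[
G_{\Phi(p)} \;=\; \{g:g\cdot\Phi(p)=\Phi(p)\} \;=\; \{g:\Phi(g\cdot p)=\Phi(p)\} \;=\; \{g:g\cdot p=p\} \;=\; G_p,
\]
so $\Phi$ in fact preserves stabilizers identically, not merely up to conjugacy. By the description of the strata $\Sigma(\underline a;\underline m)$ in Corollary \ref{sttyp} as the locus of closed orbits with stabilizer of type $G(\underline a;\underline m)$, it follows that $\Phi$ sends each closed orbit of type $(\underline a;\underline m)$ to another closed orbit of the same type, and hence the induced automorphism of $M_n(F)^\ell//GL(n,F)$ maps every stratum $\Sigma(\underline a;\underline m)$ into itself. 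The argument is entirely formal, and the only point that requires care is the direction (covariant versus contravariant) of the induced geometric action, which is exactly the subtlety resolved by Proposition \ref{inva}; so there is no substantive obstacle.
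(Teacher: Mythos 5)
Your proof is correct and takes essentially the same route as the paper: the paper's own proof declares the first part immediate and focuses only on preservation of the Luna stratification, establishing it by exactly your two observations --- that a $PGL(n,F)$-equivariant regular automorphism sends closed orbits to closed orbits, and that the stabilizers of $p$ and its image coincide. Your version merely makes explicit the $\mathrm{Spec}$ translation and the stabilizer computation that the paper leaves implicit.
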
 
\begin{proof}
The only point to be proved is that it preserves the  Luna stratification.
This is clear  since an  automorphism  $g$ of  $M_n(F)^\ell $, commuting with  $PGL(n,F)$, maps closed orbits into closed orbits and 
for any  $p\in  M_n(F)^\ell $ the stabilizers, in $PGL(n,F)$ of $p$ and $g\cdot p$ coincide.\end{proof}   
{\bf Conjecture 1}  The Luna strata  of  $M_n(F)^\ell //GL(n,F)$  are the orbits of  the group $\mathcal G_n(\ell)$.  Observe that, by  Proposition \ref{autinv}, this is true in a {\em stable} form.\smallskip
          
    \subsection{$\mathcal G_n(\ell)$ stable ideals}A  $\mathcal G_n(\ell)$ stable ideal $I$ of $M_n(A_{\ell,n})^{PGL(n,F)}$    defines  a $PGL(n,F)$ stable subscheme of $M_n(F)^\ell $ stable also under $\mathcal G_n(\ell)$.
    
    Before proving the Theorem of this section we need a Lemma.
    \begin{lemma}\label{unle}
There exists an element $a\in   T_R(\ell)$ such that:
 \begin{equation}\label{iD2}
\mathcal S_n(\ell)/ I(\underline m ;\underline a )[a^{-1}]=\mathcal S_n(\ell)/ I(\underline m ;\underline a )\otimes_{T_R(\ell) }T_R(\ell)[a^{-1}] \simeq  \oplus_{i=1}^s  \oplus_j A_ {  h_{j,i}}
\end{equation}
with the following properties: 

1)\quad   $A_ {  h_{j,i}}$ is a  domain and an Azumaya algebra free  of dimension $j^2$ over its center $Z_j$. 

2)\quad $Z_{  h_{j,i}}$ is free over  $T_R(\ell)[a^{-1}]$, has degree $[Z_{  h_{j,i}}:T_R(\ell)[a^{-1}] ]=h_{j,i}$ and is unramified. 
\end{lemma}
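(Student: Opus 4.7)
The plan is to bootstrap from Theorem \ref{prst}, which already provides the decomposition after passing to the field of fractions $G_R(\ell)$: namely $H_R(\ell)=F_R(\ell)\otimes_{T_R(\ell)}G_R(\ell)\simeq\oplus_{i,j}D_{h_{j,i}}$, each $D_{h_{j,i}}$ a division algebra of dimension $j^2$ over its center $F_{j,i}$ with $[F_{j,i}:G_R(\ell)]=h_{j,i}$. Everything to follow consists in spreading this structure out over an open subset of $\mathrm{Spec}\,T_R(\ell)$ by inverting a single suitable element $a$. Since there are finitely many indices $(i,j)$, it suffices to carry out each construction for one pair and then take the product of the finitely many denominators that appear.

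First I would split off the blocks. The primitive central idempotents of $H_R(\ell)$ separating the summands $D_{h_{j,i}}$ lie in the center of $H_R(\ell)$; by clearing denominators there is $a_1\in T_R(\ell)$ so that these idempotents lie in the center of $F_R(\ell)[a_1^{-1}]$ and are genuine orthogonal idempotents with sum $1$ there. This yields a ring decomposition $F_R(\ell)[a_1^{-1}]=\oplus_{i,j}\widetilde A_{h_{j,i}}$ compatible with trace, where the generic fiber of $\widetilde A_{h_{j,i}}$ is $D_{h_{j,i}}$. Each $\widetilde A_{h_{j,i}}$ is torsion--free over $T_R(\ell)[a_1^{-1}]$ (it embeds in $D_{h_{j,i}}$ over the fraction field), hence is a domain.

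Next I would handle the centers. Let $Z'_{h_{j,i}}$ be the integral closure of $T_R(\ell)[a_1^{-1}]$ in $F_{j,i}$. Since we are in characteristic $0$, $F_{j,i}/G_R(\ell)$ is separable, so by generic \'etaleness there is $a_2\in T_R(\ell)$ such that $Z'_{h_{j,i}}[a_2^{-1}]$ is \'etale (in particular unramified) over $T_R(\ell)[(a_1a_2)^{-1}]$. Generically free finitely generated modules over a domain become free after a further localization, so enlarging $a_2$ we may assume $Z'_{h_{j,i}}[a_2^{-1}]$ is free of rank $h_{j,i}$. Now the actual center $Z_{h_{j,i}}$ of $\widetilde A_{h_{j,i}}[a_2^{-1}]$, a finitely generated subalgebra of $F_{j,i}$ containing $T_R(\ell)[(a_1a_2)^{-1}]$, coincides with $Z'_{h_{j,i}}[a_2^{-1}]$ after inverting one more element $a_3$ (force integrality on generators of $Z_{h_{j,i}}$, then identify with the integral closure).

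The final step is the Azumaya property. At the generic point, $\widetilde A_{h_{j,i}}\otimes_{Z_{h_{j,i}}}F_{j,i}=D_{h_{j,i}}$ satisfies all identities of $j\times j$ matrices and no identity of $(j-1)\times(j-1)$ matrices; these conditions are open in $\mathrm{Spec}\,T_R(\ell)[(a_1a_2a_3)^{-1}]$ (Artin--Procesi, Theorem \ref{ArT}), so inverting one more element $a_4$ makes $\widetilde A_{h_{j,i}}[a_4^{-1}]$ satisfy both uniformly; Theorem \ref{ArT} then gives the Azumaya structure of rank $j^2$ over its center. Explicitly one can also imitate the construction of $E_p$ on page \pageref{prSl}: exhibit a pair of elements whose discriminants $D(\cdot)$ and $\Delta$ are non-zero generically, invert them, and read off a free rank-$j^2$ module structure with full matrix endomorphism algebra; this gives freeness of $A_{h_{j,i}}$ over $Z_{h_{j,i}}$ as well. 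Taking $a$ to be the product of the finitely many $a_k$ produced for each $(i,j)$ yields the lemma.

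The main obstacle is the simultaneous control of both the centers and the Azumaya property. The decomposition into blocks and the existence of an Azumaya locus (Step 4) are routine applications of Theorem \ref{ArT}; what requires care is to ensure that the subalgebras $Z_{h_{j,i}}$ appearing inside $F_{j,i}$ really become \emph{free and unramified} of the correct degree, rather than merely finite or torsion--free, after a single localization. The clean resolution is to replace them by the integral closures (which are automatically unramified on an open set by generic \'etaleness in characteristic $0$) and then verify that the original centers coincide with these integral closures after a further localization.
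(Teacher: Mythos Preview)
Your argument is correct, but the route differs from the paper's. You treat the three desiderata (block decomposition, unramified free centers, Azumaya property) separately, invoking in turn the central idempotents of $H_R(\ell)$, generic \'etaleness of finite extensions in characteristic~$0$, and openness of the Azumaya locus (via Artin--Procesi). The paper instead produces a \emph{single} explicit \'etale cover that does all the work at once: it chooses a finite splitting field $K_R(\ell)\supset G_R(\ell)$ over which $H_R(\ell)$ becomes a direct sum of full matrix algebras, writes $K_R(\ell)=G_R(\ell)[b]$, and inverts the discriminant of $b$ (together with a few denominators) so that $T_R(\ell)[a^{-1}][b]$ is \'etale over $T_R(\ell)[a^{-1}]$ and
\[
\mathcal S_n(\ell)/I(\underline m;\underline a)[a^{-1}][b]\;\simeq\;\bigoplus_{i,j} M_j\bigl(T_R(\ell)[a^{-1}][b]\bigr)^{h_{j,i}}.
\]
From this one splitting both conclusions drop out: the centers $Z_{h_{j,i}}$ become $T_R(\ell)[a^{-1}][b]^{\,h_{j,i}}$ after the \'etale base change, hence are unramified over $T_R(\ell)[a^{-1}]$; and each $A_{h_{j,i}}$ is Azumaya by Artin's Theorem, since any proper quotient remains proper after the faithfully flat extension by $b$ and there visibly fails the identities of $(j-1)\times(j-1)$ matrices. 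Your modular argument is cleaner conceptually and avoids constructing a splitting field, while the paper's construction is more explicit and yields freeness of $A_{h_{j,i}}$ over $Z_{h_{j,i}}$ and of $Z_{h_{j,i}}$ over $T_R(\ell)[a^{-1}]$ directly from the visible matrix presentation, without the extra localizations you need to pass from ``projective'' or ``integral closure'' to ``free'' and ``equal to the center''.
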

\begin{proof}  First, by inverting some element $c\in T_R(\ell)$, clearing a finite number of denominators,  we may decompose
\begin{equation}\label{iD4}
\mathcal S_n(\ell)/ I(\underline m ;\underline a )[a^{-1}]=\mathcal S_n(\ell)/ I(\underline m ;\underline a )\otimes_{T_R(\ell) }T_R(\ell)[a^{-1}] \simeq  \oplus_{i=1}^s  \oplus_j B_ {  h_{j,i}}
\end{equation} with $D_ {  h_{j,i}}=B_ {  h_{j,i}}\otimes_{T_R(\ell)[c^{-1} }G_R(\ell)=D_ {  h_{j,i}}$. We  may further assume that  $B_ {  h_{j,i}}$ is free of rank $n^2$  over its center $\bar Z_ {  h_{j,i}}$ with $F _ {  h_{j,i}}$ as field of fractions.

Choose a  field extension $K_R(\ell)$ of  $G_R(\ell)$ of some finite degree $N$ and 
 \begin{equation}\label{iDx}
\mathcal S_n(\ell)/ I(\underline m ;\underline a )\otimes_{T_R(\ell) }K_R(\ell)=H_R(\ell)\simeq  \oplus_{i=1}^s  \oplus_j M_j(K_R(\ell))^{ h_{j,i}}
\end{equation} with  $D_ {  h_{j,i}}\otimes_{G_R(\ell)} K_R(\ell)=M_j(K_R(\ell))^{ h_{j,i}} $ for each $i,j$. Then we may choose    an element  $b\in K_R(\ell)$ so that $  K_R(\ell)=G_R(\ell)[b]$ and  the discriminant $d$ of  the basis  $1,b,\dots, b^{N-1}$  of  $K_R(\ell)$ over   $G_R(\ell)$ is in $T_R(\ell)$. Furthermore    there is some non zero element    $f\in T_R(\ell)$   so that, setting $a:=c\cdot d\cdot f$ we have
\begin{equation}\label{iD3}
\mathcal S_n(\ell)/ I(\underline m ;\underline a )[a^{-1}][b]  \simeq  \oplus_{i=1}^s  \oplus_j M_j(T_R(\ell)[a^{-1}][b])^{ h_{j,i}} 
\end{equation}
That is  all the  matrix units $e_{i,j}$ of all summands are in   $\mathcal S_n(\ell)/ I(\underline m ;\underline a )[a^{-1}][b] $ and 
$$ B_ {  h_{j,i}}\otimes_{T_R(\ell)[c^{-1}}T_R(\ell)[a^{-1}][b]= M_j(T_R(\ell)[a^{-1}][b])^{ h_{j,i}}.$$  
Set $A_ {  h_{j,i}}:=   B_ {  h_{j,i}}\otimes_{T_R(\ell)[c^{-1}]}T_R(\ell)[a^{-1}]$  we claim that $A_{i,j}$ satisfies  the properties of the Lemma.    
$$A_ {  h_{j,i}}\otimes_{T_R(\ell)[a^{-1}]}T_R(\ell)[a^{-1}][b]= M_j(T_R(\ell)[a^{-1}][b])^{ h_{j,i}}.$$
We have for the two centers
$$Z_ {  h_{j,i}}\otimes_{T_R(\ell)[a^{-1}]}T_R(\ell)[a^{-1}][b]=  T_R(\ell)[a^{-1}][b] ^{ h_{j,i}}.$$
Since, by construction,   $T_R(\ell)[a^{-1}][b]$ is \'etale over $T_R(\ell)[a^{-1}]$ the extension  $Z_ {  h_{j,i}}\supset {T_R(\ell)[a^{-1}]}$  is unramified.  

We claim that  $A_ {  h_{j,i}}$ is Azumaya of rank $j^2$ over its center. Let us use  Artin's Theorem, clearly $A_ {  h_{j,i}}$ satisfies all PI's of $j\times j$ matrices, furthermore if $I$ is a proper  ideal of  $A_ {  h_{j,i}}$ we have that $I[b]$ is a proper ideal of $M_j(T_R(\ell)[a^{-1}][b])^{ h_{j,i}}$. Now the quotient of $M_j(T_R(\ell)[a^{-1}][b])^{ h_{j,i}}$ by any ideal is a direct sum of $j\times j$ matrix algebras over commutative rings so  such a quotient does not   satisfy all PI's of $j-1\times j-1$ matrices.
\end{proof}

\begin{theorem}\label{tit}
 
The ideal  
 $J(m_1,\ldots,m_k;a_1,\ldots,a_k)   $, given by Formula \eqref{idueid}, is the ideal of $\mathcal C_n(\ell)$ vanishing on the subvariety $\overline\Sigma (\underline a ;\underline m)$.

\end{theorem}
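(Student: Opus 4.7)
The plan is to reformulate $J(\underline{m};\underline{a})$ as the vanishing ideal of a specific $PGL(n,F)$-stable subvariety of $M_n(F)^\ell$ and then to pass to the quotient $V_n(\ell)$. The key initial observation is that $F(\underline{m};\underline{a})$ embeds in $M_n(F)$ as the block diagonal subalgebra in which $M_{m_i}(F)$ appears $a_i$ times, and under this embedding its trace $t(r_1,\ldots,r_k)=\sum a_i\,tr(r_i)$ is exactly the restriction of the ordinary matrix trace of $M_n(F)$; in particular $F(\underline{m};\underline{a})$ is an $n$-CH trace subalgebra of $M_n(F)$. A pure trace polynomial $f\in\mathcal{C}_n(\ell)\subseteq A_{\ell,n}$, viewed as a $PGL(n,F)$-invariant polynomial function on $M_n(F)^\ell$, therefore lies in $J(\underline{m};\underline{a})=I(\underline{m};\underline{a})\cap \mathcal{C}_n(\ell)$ precisely when $f$ vanishes on the linear subspace $F(\underline{m};\underline{a})^\ell\subseteq M_n(F)^\ell$, since a pure trace evaluation $X\to F(\underline{m};\underline{a})$ computes exactly the value of $f$ at the corresponding point.

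By $PGL(n,F)$-invariance, this is in turn equivalent to $f$ vanishing on $PGL(n,F)\cdot F(\underline{m};\underline{a})^\ell$, and hence on its Zariski closure. By Lemma~\ref{dueg} the open set $\mathcal{O}(\underline{m};\underline{a},\ell)\subseteq F(\underline{m};\underline{a})^\ell$ of tuples generating $F(\underline{m};\underline{a})$ is nonempty and therefore dense, so
\[
\overline{PGL(n,F)\cdot F(\underline{m};\underline{a})^\ell}=\overline{PGL(n,F)\cdot \mathcal{O}(\underline{m};\underline{a},\ell)}=\overline{\Sigma^\sharp(\underline{a};\underline{m})},
\]
by the very definition of the stratum $\Sigma^\sharp(\underline{a};\underline{m})$.

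It remains to push this vanishing condition down through the GIT quotient $\pi:M_n(F)^\ell\to V_n(\ell)$. Continuity of $\pi$ gives $\pi(\overline{\Sigma^\sharp(\underline{a};\underline{m})})\subseteq \overline{\Sigma(\underline{a};\underline{m})}$, and the standard closedness property of the quotient by a reductive group (the image of a closed invariant subset is closed) gives the reverse inclusion, since $\pi(\overline{\Sigma^\sharp(\underline{a};\underline{m})})$ is then closed and contains $\Sigma(\underline{a};\underline{m})$. Hence $\pi(\overline{\Sigma^\sharp(\underline{a};\underline{m})})=\overline{\Sigma(\underline{a};\underline{m})}$. Since $f\in \mathcal{C}_n(\ell)$ descends to a regular function $\bar f$ on $V_n(\ell)$ with $f=\bar f\circ \pi$, the condition that $f$ vanish on $\overline{\Sigma^\sharp(\underline{a};\underline{m})}$ is the same as $\bar f$ vanishing on $\overline{\Sigma(\underline{a};\underline{m})}$. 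Chaining the equivalences yields $f\in J(\underline{m};\underline{a})$ if and only if $f$ vanishes on $\overline{\Sigma(\underline{a};\underline{m})}$, which is the claim.

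The only conceptually subtle step is the very first one: matching pure trace identities of $F(\underline{m};\underline{a})$ with the vanishing of $f$ on the linear subspace $F(\underline{m};\underline{a})^\ell\subseteq M_n(F)^\ell$, which hinges on the trace of $F(\underline{m};\underline{a})$ being the restriction of the matrix trace. Once that translation is in place, everything downstream is the familiar GIT dictionary between $PGL(n,F)$-invariant polynomials on $M_n(F)^\ell$ and regular functions on $V_n(\ell)$.
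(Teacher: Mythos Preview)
Your argument is correct and takes a genuinely different route from the paper's own proof. The paper establishes one inclusion (that $J(\underline m;\underline a)$ vanishes on $\overline\Sigma(\underline a;\underline m)$) essentially as you do, but for the reverse containment it argues indirectly: using that $J(\underline m;\underline a)$ is already known to be prime, it suffices to show that a nonempty open subset of the variety $V(J(\underline m;\underline a))$ lies inside the stratum $\Sigma(\underline a;\underline m)$. This is done via Lemma~\ref{unle}, which produces an explicit localization $T_R(\ell)[a^{-1}]$ over which $\mathcal S_n(\ell)/I(\underline m;\underline a)$ splits as a direct sum of Azumaya algebras with unramified centers; specializing at any $F$-point of this open set then forces the fiber algebra to be $F(\underline m;\underline a)$, placing the point in $\Sigma(\underline a;\underline m)$.

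Your approach bypasses all of this structural input. You use only the elementary identification of pure trace identities of $F(\underline m;\underline a)$ with invariant polynomials vanishing on the linear subspace $F(\underline m;\underline a)^\ell\subset M_n(F)^\ell$ (valid because the trace on $F(\underline m;\underline a)$ is the restricted matrix trace), together with standard GIT facts about the quotient map. In particular you do not need to invoke the primality of $J(\underline m;\underline a)$, nor Lemma~\ref{unle}, nor Theorem~\ref{prst}. What the paper's heavier argument buys is additional information: the Azumaya decomposition of Lemma~\ref{unle} is used again later (e.g.\ in the discussion around Formula~\eqref{iD0l} and the normalization statements), so from the paper's point of view it is a structural result worth having in hand. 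For the bare statement of Theorem~\ref{tit}, however, your direct argument is both shorter and more self-contained.
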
 
\begin{proof}

Since $\ell\geq 2$,  it is enough to restrict  only to $\ell$--tuples which generate  $R:=F(\underline m ;\underline a )$.

 Thus $J(m_1,\ldots,m_k;a_1,\ldots,a_k)$   vanishes on the set of $\ell$--tuples generating a subalgebra  conjugate to $F(\underline m ;\underline a )$ so it vanishes on $\overline\Sigma (\underline a ;\underline m)$.  

By Geometric Invariant Theory  $J(\underline m ;\underline a )$ parametrizes  the semisimple representations of the quotient algebra $\mathcal S_n(\ell)/ I(\underline m ;\underline a )$.  Since   $J(\underline m ;\underline a )$ is a  prime ideal of 
$\mathcal C_n(\ell)$ it is enough to prove  that an open set  of the variety  associated to $J(\underline m ;\underline a )$ is contained  in $\Sigma (\underline a ;\underline m)$. We claim that this follows by Lemma \ref{unle}, by taking the open set $\mathcal O_a$ where the element $a$ is not zero.

Use the notations of Formula \eqref{iD2}.

Then if $\phi: T_R(\ell)[a^{-1}]\to F$  is a point in $\mathcal O_a$ we have, by hypothesis. $ Z_{  h_{j,i}}\otimes_{T_R(\ell)[a^{-1}] ]}F=F^{h_{j,i}}$ and, since $A_ {  h_{j,i}}$ is a rank $j^2$  Azumaya algebra and $F$ is algebraically closed
 \begin{equation}\label{iD30}
\mathcal S_n(\ell)/ I(\underline m ;\underline a )[a^{-1}]\otimes F \simeq \oplus_{i=1}^s  \oplus_j A_ {  h_{j,i}}\otimes_{Z_j} F^{h_{j,i}}
=\oplus_{i=1}^s  \oplus_j M_ {  j}(F)^{h_{j,i}}\end{equation}The trace is given by a Formula as in \ref{Fma} and $p$ is contained  in $\Sigma (\underline a ;\underline m)$.

  This Theorem is related to Theorem 6.5  of \cite{agpr}. 

   \end{proof}
   \begin{corollary}\label{maxx}Let  $M$ be a maximal trace ideal of  the  trace algebra $\mathcal S_n(\ell)/ I(\underline m ;\underline a )$.  Then $\mathcal  S_n(\ell)/ I(\underline m ;\underline a )/M$ is isomorphic, as trace algebra, to an algebra of type  
$F(\underline m' ;\underline a' ) \subset F(\underline m ;\underline a )$. 
\end{corollary}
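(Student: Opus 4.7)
The plan is to combine the spectrum bijection of Theorem~\ref{iso} with the geometric identification of $J(\underline m;\underline a)$ from Theorem~\ref{tit} and Luna's description of stratum closures (Remark~\ref{destra}). Throughout write $F_R(\ell):=\mathcal S_n(\ell)/I(\underline m;\underline a)$ and $T_R(\ell):=\mathcal C_n(\ell)/J(\underline m;\underline a)$; the base field $F$ is algebraically closed by the standing hypothesis of Section~\ref{geoti}.

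First I would pin down the abstract structure of $S:=F_R(\ell)/M$. By Theorem~\ref{iso}, $\mathfrak m := M\cap T_R(\ell)$ is a maximal ideal of $T_R(\ell)$, and since $T_R(\ell)$ is a finitely generated algebra over the algebraically closed field $F$, the Nullstellensatz gives $T_R(\ell)/\mathfrak m = F$. By maximality of $M$ the trace algebra $S$ is simple; its kernel $K_S$ is a trace ideal not equal to $S$ (since $t(1)=n\neq 0$), hence $K_S=0$. Theorem~\ref{ssre1}(2) then identifies $S$ with one of the algebras of Theorem~\ref{trach}, and since $F$ is algebraically closed the only division algebras that can occur there are $F$ itself, forcing $S\cong F(\underline m'';\underline a'')$ for some pair of multiplicity lists.

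Second I would read off these lists from the geometry. By Theorem~\ref{tit}, $\mathfrak m$ corresponds to a closed point $p$ of the variety $\overline\Sigma(\underline a;\underline m)$, and $p$ belongs to a unique Luna stratum $\Sigma(\underline a';\underline m')$. Picking a representative $q=(q_1,\dots,q_\ell)$ in the closed orbit above $p$, the discussion of Section~\ref{varssr} identifies the trace subalgebra of $M_n(F)$ generated by $q_1,\dots,q_\ell$ with a conjugate of $F(\underline m';\underline a')$, and by Remark~\ref{destra} this sits as a trace subalgebra of $F(\underline m;\underline a)$.

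Third I would match the two descriptions. Evaluation of the generic matrices at $q$ is a trace-preserving homomorphism $\mathcal S_n(\ell)\to M_n(F)$ whose image is the trace subalgebra generated by the $q_i$, namely $F(\underline m';\underline a')$. Since the latter is a trace subalgebra of $F(\underline m;\underline a)$, it satisfies the identities of $I(\underline m;\underline a)$, so the evaluation factors through a trace-preserving surjection $\pi\colon F_R(\ell)\twoheadrightarrow F(\underline m';\underline a')$; its kernel is a maximal trace ideal meeting $T_R(\ell)$ in $\mathfrak m$, so by the bijection of Theorem~\ref{iso} it must equal $M$. This gives $S\cong F(\underline m';\underline a')\subset F(\underline m;\underline a)$, as required.

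The main obstacle is the third step: carefully justifying that the evaluation at $q$ really does factor through $F_R(\ell)$, has image equal to the full semisimple algebra $F(\underline m';\underline a')$ rather than a proper trace subalgebra, and has kernel automatically maximal as a trace ideal. All three points are essentially packaged in the identification of $\mathcal S_n(\ell)$ with equivariant polynomial maps $M_n(F)^\ell\to M_n(F)$ and in Luna's theory, but one must track the trace structures carefully across these identifications.
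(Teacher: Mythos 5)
Your proposal is correct and follows essentially the same approach as the paper, which disposes of the corollary in one sentence by citing Theorem~\ref{tit} and Remark~\ref{destra}; you have simply filled in the intermediate steps (the spectrum bijection of Theorem~\ref{iso}, the GIT identification of the residue algebra at the closed orbit, and the factorization of the evaluation map) that the paper leaves implicit. The extra care in your third step — verifying that evaluation at a representative of the closed orbit kills $I(\underline m;\underline a)$ because $F(\underline m';\underline a')$ is a trace subalgebra of a conjugate of $F(\underline m;\underline a)$ — is exactly the point that the paper's terse proof is relying on without stating.
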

\begin{proof}
This follows from the previous Theorem and Remark \ref{destra}.
\end{proof}
\section{The  $T$--ideals}

\subsection{The  normalization Theorem}

We next analyze in some detail the  ideals $J(\underline m ;\underline a ), I(\underline m ;\underline a )$.  

Consider  the space $F(\underline m ;\underline a )^\ell$ and inside it the open set  $\mathcal O (\underline a ;\underline m):=\Sigma^\sharp (\underline a ;\underline m)\cap F(\underline m ;\underline a )^\ell$ of generating $\ell$--tuples.

We have that, Lemma \ref{eaai}, two points in $\mathcal O (\underline a ;\underline m)$ are in the same orbit under $Aut(\underline m ;\underline a )$ if and only if they are in the same $PGL(n,F)$ orbit. So we have a factorization of quotient maps,
\begin{equation}\label{foq}
\xymatrix{
F(\underline m ;\underline a )^\ell\ar[r]^i \ar[d]^f
& M_n(F)^\ell\ar[d]^{\bar f}\\   
F(\underline m ;\underline a )^\ell //Aut(\underline m ;\underline a )\ar[r]^{\bar i} & M_n(F)^\ell//PGL(n,F)\ }
\end{equation} such that, by Lemma \ref{int}, the map 
$$\bar i:\mathcal O (\underline m ;\underline a )^\ell //Aut(\underline m ;\underline a )\to \Sigma (\underline a ;\underline m) $$
is bijective and, since $\Sigma (\underline a ;\underline m) $ is smooth, it must be an isomorphism.

As a consequence, setting $A_{(\underline m );\ell}=\bigotimes_{i=1}^k A_{m_i,\ell}$ to be the algebra of polynomial functions on $F(\underline m ;\underline a )^\ell =\oplus_i M_{m_i}(F)$ we have:
\begin{theorem}\label{norm} The map  
$\pi: F(\underline m ;\underline a )^\ell //Aut(\underline m ;\underline a )\to \overline\Sigma (\underline a ;\underline m) $  is the normalization.  The invariant algebra $A_{(\underline m  );\ell}^{Aut(\underline m ;\underline a )}$  is the normalization of the algebra  $\mathcal C_n(\ell)/J(\underline m ;\underline a )$.
\end{theorem}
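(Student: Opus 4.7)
The strategy is to verify that $\pi$ satisfies the three defining properties of a normalization: the source is normal, the map is birational, and the map is finite. Properties (i) and (ii) follow directly from the structure of the diagram \eqref{foq} and the preceding lemmas; property (iii) is where the real work lies.

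Normality is immediate: $F(\underline m;\underline a)^\ell \simeq \bigoplus_i M_{m_i}(F)^\ell$ is an affine space, hence smooth and normal, and $Aut(\underline m;\underline a) = G_0 \rtimes H$ is reductive by Formula \eqref{datu} (connected component a product of projective linear groups, finite part $H$). The GIT quotient of a normal affine variety by a reductive group is normal, so $F(\underline m;\underline a)^\ell//Aut(\underline m;\underline a)$ is normal. Birationality follows from the setup: the open subset $\mathcal O(\underline m;\underline a) \subset F(\underline m;\underline a)^\ell$ of $\ell$-tuples generating $F(\underline m;\underline a)$ is non-empty for $\ell \geq 2$ by Lemma \ref{dueg}, and by Lemma \ref{int} the restriction of $\bar i$ from \eqref{foq} is a bijection $\mathcal O(\underline m;\underline a)//Aut(\underline m;\underline a) \to \Sigma(\underline a;\underline m)$. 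Since $\Sigma(\underline a;\underline m)$ is smooth, this bijection is an isomorphism. As both sides sit inside dense opens of the source and the target $\overline\Sigma(\underline a;\underline m)$ of $\pi$ (the latter by Theorem \ref{tit}, using Lemma \ref{maxs} and Remark \ref{destra} to see that every $\ell$-tuple in $F(\underline m;\underline a)^\ell$ generates a subalgebra of type appearing in the closure), the map $\pi$ is birational.

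For finiteness, I would invoke Theorem \ref{cod}: outside the exceptional case, the complement of $\Sigma(\underline a;\underline m)$ in $\overline\Sigma(\underline a;\underline m)$ has codimension $\geq 2$, so the integral closure of $\mathcal C_n(\ell)/J(\underline m;\underline a)$ in its field of fractions is exactly the ring of regular functions on the smooth stratum $\Sigma(\underline a;\underline m)$. Via the isomorphism of the previous paragraph, this coincides with the $Aut(\underline m;\underline a)$-invariant regular functions on $\mathcal O(\underline m;\underline a)$. To identify these with $A_{(\underline m);\ell}^{Aut(\underline m;\underline a)}$ I would transfer the codimension bound through $\pi$ and then apply algebraic Hartogs to the normal variety $F(\underline m;\underline a)^\ell//Aut(\underline m;\underline a)$: every invariant regular function on $\mathcal O(\underline m;\underline a)$ extends uniquely across a codimension $\geq 2$ complement to a global invariant regular function.

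The main obstacle is precisely this codimension transfer: one must verify that $\pi$ is quasi-finite on boundary strata, so that the complement of $\mathcal O(\underline m;\underline a)//Aut(\underline m;\underline a)$ in $F(\underline m;\underline a)^\ell//Aut(\underline m;\underline a)$ has the same codimension as the corresponding complement in $\overline\Sigma(\underline a;\underline m)$. Quasi-finiteness along the boundary should follow by applying Lemma \ref{maxs} inductively to enumerate, up to $Aut(\underline m;\underline a)$-conjugation, the finitely many semisimple subalgebra embeddings $F(\underline m';\underline a') \hookrightarrow F(\underline m;\underline a)$ indexed by strata $\Sigma(\underline a';\underline m') \subset \overline\Sigma(\underline a;\underline m)$. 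The exceptional case of Theorem \ref{cod} ($\ell=2$ with a $2\times 2$ block) must be handled by a separate direct analysis, presumably using the explicit hypersurface description via $\det(XY-YX)$ mentioned there.
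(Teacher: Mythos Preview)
Your treatment of normality and birationality matches the paper's argument essentially verbatim. The divergence is in the finiteness step, and there the paper's route is both simpler and more robust than yours.

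The paper does not invoke Theorem~\ref{cod} or any codimension/Hartogs argument at all. Instead it observes that it suffices to check finiteness of the composite
\[
F(\underline m;\underline a)^\ell // \textstyle\prod_i PGL(m_i,F)\ \longrightarrow\ \overline\Sigma(\underline a;\underline m),
\]
and this follows from the \emph{set-theoretic} description of both sides as parameter spaces of semisimple representations: a point downstairs is an isomorphism class of an $n$-dimensional semisimple module, and its fiber upstairs consists of the ways to present that module as an ordered direct sum with prescribed block sizes $m_i$. A semisimple module has only finitely many such presentations, so the fibers are finite; combined with properness/closedness of the GIT morphism (or the standard argument that a quasi-finite morphism of affine varieties with finitely generated source over the target is finite), this gives finiteness outright.

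By contrast, your plan routes through Theorem~\ref{cod}, which forces you to treat the exceptional case $\ell=2$ with a $2\times 2$ block separately---a case split the paper's argument never needs. More seriously, the step you flag as the ``main obstacle'' (transferring the codimension bound to the source by first checking quasi-finiteness of $\pi$ on boundary strata) is essentially the paper's finiteness argument in disguise: the enumeration of embeddings $F(\underline m';\underline a')\hookrightarrow F(\underline m;\underline a)$ up to conjugacy that you propose is exactly the statement that a semisimple module decomposes in only finitely many ways. So your detour through Hartogs buys nothing and costs the exceptional-case analysis; once you have quasi-finiteness on the boundary you already have finiteness of $\pi$ directly, and the codimension machinery becomes superfluous.
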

\begin{proof} By Formula \eqref{datu}
\begin{align}\label{levq}
F(\underline m ;\underline a )^\ell //Aut(\underline m ;\underline a )&=\left(F(\underline m ;\underline a )^\ell //G_0\right)//H \\
 (F(\underline m ;\underline a )^\ell //G_0)&=\prod_{i=1}^k M_{m_i}^\ell//PGL(m_i,F) .
\end{align} This last variety is the variety of $k $--tuples   of semisimple representations each of dimensions $m_i$. As $F(\underline m ;\underline a )^\ell //Aut(\underline m ;\underline a )$ is normal and by the previous remark the map $\pi$  is birational, it only needs to be verified that the map $\pi$ is finite.  For this it is enough to see that the map $$F(\underline m ;\underline a )^\ell // \prod_{i=1}^kPGL(m_i,F)\to F(\underline m ;\underline a )^\ell //Aut(\underline m ;\underline a )\to \overline\Sigma (\underline a ;\underline m) $$ is finite.  This follows from the set theoretic description of the two varieties as semisimple representations since a semisimple representation can be presented as a direct sum only in finitely many ways.
 \end{proof}
 \subsection{Changing the trace}

 We need now a general fact. Take an algebra with trace $R$;  change the trace from $tr$  to $Tr$  by multiplication by $ a\in F,\ a\neq 0,\ Tr(r):=a\cdot tr(r)$.  Set $R_ a$ to be the algebra $R$ with this new trace.  

One has that   trace identities  of $R_ a$  correspond bijectively to    trace identities  of $R  $ by the  isomorphism, of the free algebra with trace,  $\phi_ a$ mapping $\phi_ a:tr(M)\to  a^{-1} \cdot Tr(M)$. In particular if  $R$ is a $ k$ Cayley--Hamilton algebra and $a\in\mathbb N$, then one has that $R_ a$ is an $n= a\cdot k$ Cayley--Hamilton algebra. 

There is still an  isomorphism between the two spaces of trace identities.   A special case is the algebra $F(k;a)$  which is just $M_k(F)$ with trace $t(r)=a\cdot tr(r)$.
\begin{proposition}\label{mula}
The relatively free algebra in $\ell$ variables of  $F(k,a)$ is $S_k(\ell)_a$, i.e. $S_k(\ell)$   with the new trace  $a\cdot tr(r)$.
\end{proposition}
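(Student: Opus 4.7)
The plan is to exploit the bijection between the trace identities of $R$ and those of $R_a$ noted in the paragraph preceding the proposition, applied to the special case $R = M_k(F) = F(k,1)$. By Theorem~\ref{SFT} the relatively free algebra of $M_k(F)$ in $\ell$ variables is $S_k(\ell)$, so this bijection should give an $F$-algebra isomorphism between the relatively free algebra of $F(k,a)$ and $S_k(\ell)$; it will then only remain to verify that under this isomorphism the induced trace is rescaled by the factor $a$.

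Concretely, I would introduce the $F$-algebra automorphism $\phi_a$ of the free trace algebra $\mathcal{F}_T\langle X\rangle = F\langle x_i \rangle[tr(M)]$ defined by $\phi_a(x_i) = x_i$ and $\phi_a(tr(M)) = a \cdot tr(M)$ on every formal trace generator; it is an automorphism since $a \neq 0$, with inverse $\phi_{a^{-1}}$. Writing any element $f \in \mathcal{F}_T\langle X\rangle$ as $\sum_i M_i\, p_i$ with $M_i$ a pure monomial in the $x_j$ and $p_i$ in the trace subalgebra, axiom (4) for a formal trace gives $tr(f) = \sum_i tr(M_i)\, p_i$; since $\phi_a$ scales every trace generator by $a$, one then reads off the conjugation identity $\phi_a \circ tr = a \cdot tr \circ \phi_a$ on $\mathcal{F}_T\langle X\rangle$. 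Comparing next the evaluation maps at a tuple $(r_1,\ldots,r_\ell) \in M_k(F)^\ell$: the substitution sends $tr(M)$ to $a \cdot tr_k(M(r_i))$ when interpreted in $F(k,a)$ and to $tr_k(M(r_i))$ when interpreted in $M_k(F)$, so the two evaluations are related by $\mathrm{ev}_{F(k,a)}(f) = \mathrm{ev}_{M_k(F)}(\phi_a(f))$, whence $\mathcal{I}(F(k,a)) = \phi_a^{-1}(\mathcal{I}(M_k(F)))$.

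Consequently $\phi_a$ descends to an $F$-algebra isomorphism
\[
\bar\phi_a : \mathcal{F}_T\langle X\rangle / \mathcal{I}(F(k,a)) \simto \mathcal{F}_T\langle X\rangle / \mathcal{I}(M_k(F)) = S_k(\ell),
\]
and the conjugation identity above transports the induced trace on the source to $a$ times the induced trace on the target. This is precisely the assertion that the relatively free algebra of $F(k,a)$ is $S_k(\ell)_a$. There is no real obstacle here: the whole argument is a bookkeeping exercise in the axioms for a formal trace. The one point requiring care is that $\phi_a$ is an $F$-algebra automorphism but \emph{not} a morphism of algebras with trace, so one must keep track of the extra factor of $a$ picked up each time the formal trace is applied, which is exactly the source of the rescaling in the resulting trace on $S_k(\ell)_a$.
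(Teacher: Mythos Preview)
Your argument is correct and is exactly the approach the paper intends: the paper states the proposition without proof, leaving it as an immediate consequence of the preceding paragraph on the bijection $\phi_a$ between trace identities of $R$ and of $R_a$, and you have simply spelled out those details carefully for the special case $R=M_k(F)$.
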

\smallskip

\subsection{The  algebra   of    equivariant maps  }

Let $(\underline m ;\underline a )=(m_1,\ldots,m_k;a_1,\ldots,a_k),\ \sum_ia_im_i=n$. Now  we introduce:
\begin{definition}\label{seee}
Denote by $S_\ell(\underline m ;\underline a )$ the  algebra  of  $Aut(\underline m ;\underline a )$ equivariant maps  from $F(\underline m ;\underline a )^\ell $  to $F(\underline m ;\underline a ) $.  

By $T_\ell(\underline m ;\underline a )$ its trace algebra and $Z_\ell(\underline m ;\underline a )\supset T_\ell(\underline m ;\underline a )$ its center.
\end{definition} Let  $F(\underline m ;\underline a )=\oplus _{i=1}^kM_{m_i}(F) $ recall the notation $A_{\ell,n}$, \pageref{aelln}, for the polynomial functions on $M_n(F)^\ell$.  If we denote by $A_{(\underline m );\ell}$  the algebra of polynomial functions on $F(\underline m ;\underline a )^\ell$, then  $A_{(\underline m );\ell}=\bigotimes_{i=1}^k A_{\ell,m_i} $. We have:
\begin{equation}\label{snml}
S_\ell(\underline m ;\underline a )=(A_{(\underline m );\ell} \otimes F(\underline m ;\underline a ))^{Aut(\underline m ;\underline a )}.
\end{equation}
By Lemma \ref{dueg}, as soon as $\ell\geq 2$ the center $T_\ell(\underline m ;\underline a )  $ of    $S_\ell(\underline m ;\underline a )  $ is
\begin{equation}\label{snml1}
T_\ell(\underline m ;\underline a )=(A_{(\underline m );\ell} \otimes F^k )^{Aut(\underline m ;\underline a )} 
\end{equation} with $F^k$ the center of  $F(\underline m ;\underline a ) $.\smallskip

Since the group   group $Aut(\underline m ;\underline a )$ is the semidirect product $G_0\ltimes H$ with its   conneced component $G_0=\prod_{i=1}^kPGL(m_i,F)$ and $H$ a finte group, first we consider the larger algebra  $S_\ell^0(\underline m ;\underline a )$ of  $G_0$ (Formula  \eqref{datu}) equivariant maps  from $F(\underline m ;\underline a )^\ell $  to $F(\underline m ;\underline a ) $:
$$S_\ell^0(\underline m ;\underline a )=(F(\underline m ;\underline a )\otimes_FA_{(\underline m );\ell})^{G_0  }=\oplus_{j=1}^k (M_{m_i}(F)\otimes \bigotimes_{i=1}^k A_{\ell,m_i})^{\prod_{i=1}^kPGL(m_i,F)  }$$
$$=  \oplus_{j=1}^k (M_{m_j}(F)\otimes A_{\ell,m_j})^{  PGL(m_j,F)  }\otimes \bigotimes_{i=1,i\neq j}^k A_{\ell,m_i} ^{ PGL(m_i,F)  }$$
\begin{equation}\label{pltcc}=\oplus_{i=1}^k (M_{m_i}(F)\otimes A_{  m _i;\ell})^{PGL(m_i) }\otimes_{T(m_i,\ell)}  A_{(\underline m );\ell} ^{G_0  }=\oplus_{i=1}^k S_{  m _i} (\ell) \otimes_{T_{m_i} (\ell) }A_{(\underline m );\ell} ^{G_0  } .\end{equation}

This is the direct sum  of the algebras of $PGL(m_i,F)$ equivariant maps from $M_{m_i}(F)^\ell$ to $M_{m_i}(F)$, that is the usual trace algebras of generic matrices extended to $A_{(\underline m );\ell} ^{G_0  }=T_{m_1} (\ell) \otimes T_{m_2} (\ell) \otimes\ldots\otimes T_{m_k} (\ell) $.   Finally
\begin{equation}\label{dopp}
S_\ell (\underline m ;\underline a )=S_\ell^0(\underline m ;\underline a )^H.
\end{equation}

\subsection{The structure of   equivariant maps  }  So next we want to  analyze the algebra  $S_\ell(\underline m ;\underline a )$.

\begin{lemma}\label{res} The kernel of the restriction    of  the functions    $\mathcal S_n(\ell)$ (resp  $\mathcal C_n(\ell)$)  to  $F(\underline m ;\underline a )^\ell $ is  $I(\underline m ;\underline a )$ (resp  $J(\underline m ;\underline a )$).

The algebra  $\mathcal S_n(\ell)$  (resp  $\mathcal C_n(\ell)$) maps, under the restriction   of  the functions    to  $F(\underline m ;\underline a )^\ell $,  to  the  algebra $S_\ell(\underline m ;\underline a )$   (resp  $T_\ell(\underline m ;\underline a )$).

\end{lemma}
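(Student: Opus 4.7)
The plan is to handle both statements together by playing the two complementary descriptions of $\mathcal{S}_n(\ell)$ and $\mathcal{C}_n(\ell)$ against each other: as relatively free trace algebras (so an element is a trace polynomial, ``restriction to $F(\underline m;\underline a)^\ell$'' means evaluation on all $\ell$-tuples, and the kernel is by definition the ideal of trace identities), and, via Theorem \ref{SFT}, as $(M_n(F)\otimes A_{\ell,n})^{PGL(n,F)}$ and $A_{\ell,n}^{PGL(n,F)}$ (so restriction is pullback along the inclusion $i:F(\underline m;\underline a)^\ell\hookrightarrow M_n(F)^\ell$). The geometric side identifies the image and, by Theorem \ref{tit}, the kernel; the algebraic side matches the resulting ideal with $I(\underline m;\underline a)$, $J(\underline m;\underline a)$.

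First I would do the trace algebra. Pullback along $i$ gives $A_{\ell,n}\to A_{(\underline m);\ell}$; since $Aut(\underline m;\underline a)\subset PGL(n,F)$ by Lemma \ref{eaai} and $i$ is equivariant for this subgroup, this restriction sends $\mathcal C_n(\ell)=A_{\ell,n}^{PGL(n,F)}$ into $A_{(\underline m);\ell}^{Aut(\underline m;\underline a)}=T_\ell(\underline m;\underline a)$. The kernel consists of those $PGL(n,F)$-invariants vanishing on the saturation $PGL(n,F)\cdot F(\underline m;\underline a)^\ell$, which by Proposition \ref{dims} and Lemma \ref{int} has closure $\overline{\Sigma^\sharp}(\underline a;\underline m)$ and projects onto $\overline\Sigma(\underline a;\underline m)$ in the quotient variety. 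Hence the kernel is the vanishing ideal of $\overline\Sigma(\underline a;\underline m)$ in $\mathcal C_n(\ell)$, which by Theorem \ref{tit} is exactly $J(\underline m;\underline a)$.

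Next I would do $\mathcal S_n(\ell)$; the only nonformal step is to show that the restriction of a $PGL(n,F)$-equivariant polynomial map $f:M_n(F)^\ell\to M_n(F)$ to $F(\underline m;\underline a)^\ell$ actually takes values in $F(\underline m;\underline a)$. At a point $p\in F(\underline m;\underline a)^\ell$ whose entries generate $F(\underline m;\underline a)$, the $PGL(n,F)$-stabilizer of $p$ is $G(\underline a;\underline m)$ by Proposition \ref{dct}, so equivariance forces $f(p)$ to commute with $F(\underline a;\underline m)$, and by the double centralizer theorem $f(p)\in F(\underline m;\underline a)$. Since the generating tuples form a nonempty Zariski-open subset of the irreducible affine space $F(\underline m;\underline a)^\ell$ and $F(\underline m;\underline a)$ is Zariski-closed in $M_n(F)$, the polynomial map $f|_{F(\underline m;\underline a)^\ell}$ takes values in $F(\underline m;\underline a)$ everywhere; equivariance under the subgroup $Aut(\underline m;\underline a)$ then places it in $(A_{(\underline m);\ell}\otimes F(\underline m;\underline a))^{Aut(\underline m;\underline a)}=S_\ell(\underline m;\underline a)$.

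For the kernel on $\mathcal S_n(\ell)$ I switch back to the trace-polynomial viewpoint: $f$ restricts to $0$ on $F(\underline m;\underline a)^\ell$ iff $f$ is a trace identity of $F(\underline m;\underline a)$ iff $f\in I(\underline m;\underline a)$ by definition. Equivalently, via Formula \eqref{IJ0}, if $f$ restricts to zero then so does $tr(fg)$ for every $g\in\mathcal S_n(\ell)$, so $tr(f\mathcal S_n(\ell))\subset J(\underline m;\underline a)$ by the trace-algebra case, and hence $f\in I(\underline m;\underline a)$; the converse is immediate. The main obstacle is the centralizer-plus-density argument establishing that equivariant matrix-valued maps restrict to $F(\underline m;\underline a)$-valued maps; everything else is a formal matching of invariants with equivariants and of functional vanishing with trace identities.
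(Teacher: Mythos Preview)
Your proof is correct and follows essentially the same line as the paper: both use Theorem~\ref{tit} together with the density of $PGL(n,F)\cdot F(\underline m;\underline a)^\ell$ for the kernel, and a centralizer/equivariance argument for the image. One small simplification the paper makes: to see that an equivariant $f$ sends $F(\underline m;\underline a)^\ell$ into $F(\underline m;\underline a)$, you restricted first to generating $\ell$-tuples (where the stabilizer is exactly $G(\underline a;\underline m)$) and then invoked Zariski density, but this is unnecessary. Every point $p\in F(\underline m;\underline a)^\ell$ is fixed by $G(\underline a;\underline m)$ (each coordinate commutes with the centralizer $F(\underline a;\underline m)$), so equivariance immediately gives $f(p)\in M_n(F)^{G(\underline a;\underline m)}=F(\underline m;\underline a)$ by the double centralizer theorem, with no open-dense step needed. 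Also, a minor point of care: you identify $T_\ell(\underline m;\underline a)$ with $A_{(\underline m);\ell}^{Aut(\underline m;\underline a)}$, but in Definition~\ref{seee} $T_\ell(\underline m;\underline a)$ is the trace algebra of $S_\ell(\underline m;\underline a)$; what you really need (and what follows once you know the image of $\mathcal S_n(\ell)$ lies in $S_\ell(\underline m;\underline a)$) is that the trace-compatible restriction sends $\mathcal C_n(\ell)=tr(\mathcal S_n(\ell))$ into $tr(S_\ell(\underline m;\underline a))=T_\ell(\underline m;\underline a)$.
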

\begin{proof}
By Theorem \ref{tit}
 the ideal $I(m_1,\ldots,m_k;a_1,\ldots,a_k)$ is the ideal of $\mathcal S_n(\ell)$ vanishing on the subvariety $\overline\Sigma (\underline a ;\underline m)$.

Since $PGL(n,F)(F(\underline m ;\underline a )^\ell )$ is dense in $\overline\Sigma (\underline a ;\underline m)$ and the elements of $\mathcal S_n(\ell)$ are 
$PGL(n,F)$ equivariant, then the first statement follows.

As for the second recall that  the elements    $p\in F(\underline m ;\underline a )^\ell $ are the fixed  points of  the invertible elements of the centralizer. Thus,   under a   $PGL(n,F)$ equivariant map,   such a point $p$ is sent to $F(\underline m ;\underline a )$.  Moreover since  $Aut(\underline m ;\underline a )$ is induced by a subgroup of  $PGL(n,F)$, then the second statement also holds. \end{proof}The $k$ indices appearing in  $\underline m ;\underline a$ decompose into $t$ subsets $I_j$ each of some cardinality $u_j$  classes of the equivalence   $(m_i,a_i)=(m_j,a_j)$  and $H=\prod_{j=1}^t S_{u_j}$, of Formula \eqref{datu}, where   $S_{u_j}$ permutes $u_j$ factors  of some type  $M_{m(j),a(j)}(F)^\ell$.

\begin{theorem}\label{ultt} The algebra  $S_\ell(\underline m ;\underline a )$ is isomorphic to
\begin{equation}\label{pltcc1}
S_\ell (\underline m ;\underline a )\simeq\oplus_{j=1}^t \left(\oplus_{i=1}^{u_j}S_{  m (j)} (\ell) \otimes_{T_{ m (j)  } (\ell) }A_{(\underline m );\ell} ^{G_0  }\right)^{S_{u_j}}.
\end{equation}
It contains the algebra, with $R= F(\underline m ;\underline a )$: \begin{equation}\label{pltcc2x}  \mathcal S_n(\ell)/I(\underline m ;\underline a )\otimes _{\mathcal C_n(\ell)/J(\underline m ;\underline a )}A_{(\underline m );\ell}^{Aut(\underline m ;\underline a )}\stackrel{\eqref{notaf}}=F_R(\ell)\otimes_{T_R(\ell)}A_{(\underline m );\ell}^{Aut(\underline m ;\underline a )}.\end{equation}

\end{theorem}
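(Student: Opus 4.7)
The plan is to combine the two pieces already assembled in the text: Formulas \eqref{pltcc} and \eqref{dopp} give the promised decomposition of $S_\ell(\underline m;\underline a)$, while Lemma \ref{res} together with Theorem \ref{norm} supply the inclusion.

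For the isomorphism \eqref{pltcc1}: Formula \eqref{dopp} reduces us to computing $S_\ell^0(\underline m;\underline a)^H$, and Formula \eqref{pltcc} expresses $S_\ell^0(\underline m;\underline a)$ as the direct sum $\oplus_{i=1}^k S_{m_i}(\ell)\otimes_{T_{m_i}(\ell)}A_{(\underline m);\ell}^{G_0}$. The group $H=\prod_{j=1}^t S_{u_j}$ acts so that $S_{u_j}$ permutes exactly the summands indexed by $i\in I_j$ (while simultaneously permuting the corresponding tensor factors inside $A_{(\underline m);\ell}^{G_0}$), and fixes summands coming from $I_{j'}$ for $j'\neq j$. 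Because the factors $S_{u_j}$ act on disjoint blocks of indices, taking $H$-invariants preserves the direct sum structure over $j$ and produces the asserted formula. Consistency of the action depends on the fact that within a class $I_j$ all the $m_i$ coincide with the common value $m(j)$, so the various $S_{m(j)}(\ell)\otimes_{T_{m(j)}(\ell)}A_{(\underline m);\ell}^{G_0}$ are canonically identified before one takes $S_{u_j}$-invariants.

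For the inclusion \eqref{pltcc2x}: by Lemma \ref{res} the restriction $\mathcal S_n(\ell)\to S_\ell(\underline m;\underline a)$ has kernel $I(\underline m;\underline a)$, giving an injection $F_R(\ell)\hookrightarrow S_\ell(\underline m;\underline a)$ whose restriction to the trace algebra factors as $T_R(\ell)\hookrightarrow T_\ell(\underline m;\underline a)\subset S_\ell(\underline m;\underline a)$. Theorem \ref{norm} shows that $A_{(\underline m);\ell}^{Aut(\underline m;\underline a)}$ is the integral closure of $T_R(\ell)$ in its field of fractions, and the diagonal embedding $F\hookrightarrow F^k$ into the center of $F(\underline m;\underline a)$ identifies $A_{(\underline m);\ell}^{Aut(\underline m;\underline a)}$ with a subring of $T_\ell(\underline m;\underline a)$ via Formula \eqref{snml1}. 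The universal property of the tensor product then produces a natural homomorphism
\[
F_R(\ell)\otimes_{T_R(\ell)}A_{(\underline m);\ell}^{Aut(\underline m;\underline a)}\longrightarrow S_\ell(\underline m;\underline a).
\]
To upgrade this to an inclusion, we observe that $F_R(\ell)$ is prime (Proposition \ref{nondeg}), hence torsion-free over $T_R(\ell)$ by Proposition \ref{mm}, while $A_{(\underline m);\ell}^{Aut(\underline m;\underline a)}$ sits in $G_R(\ell)$. Therefore the tensor product embeds into $F_R(\ell)\otimes_{T_R(\ell)}G_R(\ell)=H_R(\ell)$, through which the map to $S_\ell(\underline m;\underline a)$ factors, yielding the desired injectivity.

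The main obstacle is the final injectivity verification: one must be careful that extending scalars from $T_R(\ell)$ to its normalization $A_{(\underline m);\ell}^{Aut(\underline m;\underline a)}$ does not introduce torsion, which uses both the primeness of $F_R(\ell)$ and the fact that the normalization lies inside the total ring of fractions $G_R(\ell)$. A secondary subtlety is keeping track of how $S_{u_j}$ acts simultaneously on the summands and on the tensor factors in $A_{(\underline m);\ell}^{G_0}$, but this poses no real difficulty once the bookkeeping is fixed.
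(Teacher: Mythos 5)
Your treatment of the isomorphism \eqref{pltcc1} is essentially the paper's: decompose $S_\ell^0(\underline m;\underline a)$ via \eqref{pltcc} into the blocks indexed by the classes $I_j$, observe (as the paper does, citing Proposition \ref{mula}) that within a class $I_j$ all the factors $S_{m(j)}(\ell)\otimes_{T_{m(j)}(\ell)}A_{(\underline m);\ell}^{G_0}$ are canonically identified, and take $H$-invariants blockwise since $H=\prod_j S_{u_j}$ acts on disjoint index sets. That matches the paper and is correct.

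The inclusion \eqref{pltcc2x} is where your argument has a gap. You derive a natural map
$\mu:F_R(\ell)\otimes_{T_R(\ell)}A_{(\underline m);\ell}^{Aut(\underline m;\underline a)}\to S_\ell(\underline m;\underline a)$
from Lemma \ref{res} and the identification of $A_{(\underline m);\ell}^{Aut(\underline m;\underline a)}$ as a subring of $T_\ell(\underline m;\underline a)$ via the diagonal of $F^k$ --- this is fine, and it is actually more than the paper's proof spells out (the paper only checks that $F_R(\ell)\hookrightarrow S_\ell(\underline m;\underline a)$ and that $I(\underline m;\underline a)\cap T_n(\ell)=J(\underline m;\underline a)$). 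The problem is your injectivity step. You claim that because $F_R(\ell)$ is torsion-free over $T_R(\ell)$ and $A_{(\underline m);\ell}^{Aut(\underline m;\underline a)}\subset G_R(\ell)$, the tensor product $F_R(\ell)\otimes_{T_R(\ell)}A_{(\underline m);\ell}^{Aut(\underline m;\underline a)}$ embeds into $F_R(\ell)\otimes_{T_R(\ell)}G_R(\ell)=H_R(\ell)$. This does not follow from torsion-freeness alone. For a domain $T$ with fraction field $K$, a torsion-free $T$-module $M$, and an intermediate ring $T\subset B\subset K$, the kernel of $M\otimes_T B\to M\otimes_T K$ is precisely the $T$-torsion of $M\otimes_T B$, and this can be nonzero unless $M$ (or $B$) is \emph{flat} over $T$. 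A concrete instance: with $T=k[x^2,x^3]$, $B=k[x]$ its normalization, and $M=B$, the element $x\otimes x - x^2\otimes 1\in B\otimes_T B$ is nonzero but is killed by $x^2$. In your situation $A_{(\underline m);\ell}^{Aut(\underline m;\underline a)}$ is exactly a normalization (Theorem \ref{norm}), hence generally not flat over $T_R(\ell)$; and there is no reason for $F_R(\ell)$ to be flat either. So torsion-freeness of $F_R(\ell)$ does not by itself give the embedding into $H_R(\ell)$, and the factorization you then invoke does not yield injectivity. To make the argument rigorous one would have to either establish a flatness statement, or prove torsion-freeness of the tensor product directly, or reinterpret the containment as referring to the image of $\mu$ (which is what the paper's own terse argument actually furnishes). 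Also, the phrasing "through which the map to $S_\ell(\underline m;\underline a)$ factors" is loose: there is no map $H_R(\ell)\to S_\ell(\underline m;\underline a)$; what is true (Formula \eqref{iD0l}) is that $S_\ell(\underline m;\underline a)$ sits inside $H_R(\ell)$, so the composite $\mu$ followed by $S_\ell\hookrightarrow H_R(\ell)$ equals the comparison map to $H_R(\ell)$, and the kernel of $\mu$ is thus the same $T_R(\ell)$-torsion whose vanishing you have not established.
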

\begin{proof}
We write Formula \eqref{pltcc} as:
\begin{equation}\label{pltcc2}
S_\ell^0(\underline m ;\underline a )= \oplus_{j=1}^t (\oplus_{i=1}^{u_j}S_{  m (j)} (\ell) \otimes_{T_{ m(j )} (\ell) }A_{(\underline m );\ell} ^{G_0  }).\end{equation}

  If the summand   relative to $i\in I_j$ corresponds to  the pair  $(m(j),a(j))$, then the trace in  $S_{  m (j)} (\ell) \otimes_{T_{m(j)} (\ell) }A_{(\underline m );\ell} ^{G_0  }
$ is the ordinary trace multiplied by $a(j)$, by proposition \ref{mula}.  Formula \eqref{pltcc1} follows from  Formula \eqref{pltcc}.  

The group $H=\prod_{j=1}^tS_{u_j}$  acts  on the algebra of Formula  \eqref{pltcc2} by permuting the summands of  each  term $ \oplus_{i=1}^{u_j}S_{  m (j)} (\ell) \otimes_{T_{ m_j } (\ell) }A_{(\underline m );\ell} ^{G_0  }$ through  $S_{u_j}$ hence the claim of   Formula  \eqref{pltcc1}.  The map   $ \mathcal S_n(\ell)\mapsto   \oplus_{i=1}^k S_{  m _i} (\ell) $ given by restricting to $F(\underline m ;\underline a )^\ell $ induces a map to $S_\ell (\underline m ;\underline a )$ with kernel   $I(\underline m ;\underline a )$ by Lemma \ref{res}  and $I(\underline m ;\underline a ) \cap T_{n} (\ell)=J(\underline m ;\underline a )$.
 
\end{proof}
As for
$\left(\oplus_{i=1}^{u_j}S_{  m (j)} (\ell) \otimes_{T_{ m (j)  } (\ell) }A_{(\underline m );\ell} ^{G_0  }\right)^{S_{u_j}}$ we use a  general fact. 

Let $R=A^{\oplus h}=A_1\oplus A_2\oplus\ldots\oplus A_h$  be a direct sum of algebras over $\mathbb Q$ and $G$ a finite group acting on $R$ by automorphisms and permuting the summands transitively.  Let $H$  be the subgroup of $G$ fixing the first summand (and permuting the others). Thus choosing $g_i\in G$  with  $g_i\cdot A_1=A_i,\ g_1=1$ we have that $G=\bigcup_{i=1}^hg_i H$  is the coset decomposition. 
\begin{proposition}
The projection  $\pi_1:R=A^{\oplus h}\to A$ on the first summand induces an isomorphism  between $R^G$ and $A^H$.
\end{proposition}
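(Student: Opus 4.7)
The plan is to exhibit an explicit two-sided inverse to the ring homomorphism $\pi_1|_{R^G} : R^G \to A^H$, making essential use of the coset decomposition $G = \bigsqcup_{i=1}^{h} g_i H$ given in the statement. First I would check that $\pi_1$ maps $R^G$ into $A^H$: any $h \in H$ fixes $A_1$ setwise and acts on it by an algebra automorphism, so if $x = (x_1,\ldots,x_h) \in R^G$ then $h \cdot x_1 = \pi_1(h \cdot x) = \pi_1(x) = x_1$, giving $x_1 \in A^H$.

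For injectivity, I would use that the action of each $g_i$ sends $A_1$ isomorphically onto $A_i$, so the identity $g_i \cdot x = x$ forces $x_i = g_i \cdot x_1$ (interpreted via this isomorphism). Thus every invariant $x$ is determined by $\pi_1(x) = x_1$. For surjectivity, given $a \in A^H$ I would construct its preimage as
\[
\tilde a \;:=\; \sum_{i=1}^{h} g_i \cdot a \;\in\; R, \qquad \text{with } g_i \cdot a \in A_i.
\]
The step requiring the most care is verifying that $\tilde a$ is well-defined and genuinely $G$-invariant. Well-definedness amounts to independence of the choice of coset representatives: replacing $g_i$ by $g_i h$ with $h \in H$ yields $(g_i h) \cdot a = g_i(h \cdot a) = g_i \cdot a$ since $a \in A^H$. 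For invariance, given any $g \in G$, the elements $\{gg_i\}_{i=1}^h$ are again a system of coset representatives for $G/H$, so write $gg_i = g_{\tau(i)} h_i$ with $\tau \in S_h$ and $h_i \in H$; then
\[
g \cdot \tilde a \;=\; \sum_i (gg_i)\cdot a \;=\; \sum_i g_{\tau(i)}(h_i \cdot a) \;=\; \sum_i g_{\tau(i)} \cdot a \;=\; \tilde a,
\]
using $H$-invariance of $a$ and reindexing by $\tau$. Since $g_1 = 1$ we get $\pi_1(\tilde a) = a$, so $\pi_1|_{R^G}$ is surjective, and the map $a \mapsto \tilde a$ is the inverse.

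Finally, $\pi_1$ is a ring homomorphism as a projection onto a direct summand, so its restriction $R^G \to A^H$ is a ring isomorphism. The main subtlety, as noted above, is bookkeeping: keeping track of the distinction between $g_i$ acting as a permutation of the summands and $g_i$ acting as an element-level automorphism $A_1 \to A_i$, and verifying that the coset-sum construction is canonical. There is no deep obstacle; the argument is a standard averaging/transitivity-of-action calculation, and the hypothesis of characteristic zero (or even just that $|G|$ is invertible) is not needed here since we do not average by $\tfrac{1}{|G|}$ — the construction works verbatim over any base.
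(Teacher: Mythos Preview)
Your argument is correct and follows essentially the same three-step outline as the paper: check that $\pi_1(R^G)\subset A^H$, deduce injectivity from $x_i = g_i\cdot x_1$, and construct an explicit preimage for surjectivity. The one small difference is in the surjectivity step: the paper writes the preimage as the Reynolds-type average $\frac{1}{|H|}\sum_{g\in G} g\cdot a$, which is manifestly $G$-invariant but requires $|H|$ to be invertible (hence the standing hypothesis that the base contain $\mathbb{Q}$); you instead write the same element as the coset sum $\sum_{i=1}^h g_i\cdot a$ and verify $G$-invariance by the coset-permutation bookkeeping. Your version is mildly more general in that it works over any base ring, exactly as you observe, while the paper's version trades that generality for a one-line proof of invariance.
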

\begin{proof}
Let $(a_1,a_2,\ldots,a_h)\in R^G$. If $h\in H$, then we have 
$$h\cdot (a_1,a_2,\ldots,a_h)= (h\cdot a_1,a_2,\ldots,a_h)= (a_1,a_2,\ldots,a_h)\implies a_1\in A^H.$$
Next, since $G$   permutes the summands transitively, if $a_1=0$,  then $ a_i=g_i a_ 1 =0,\ \forall i$  so $\pi_1$ is injective.

Finally $\pi_1$ is surjective since, if $a\in A^H$,  then we have $$\frac1{|H|}\sum_{g\in G} g a=\frac1{|H|}\sum_{i=1}^h\sum_{h\in H }g_ih a=(a, g_2 a,g_3a,\ldots,g_ha)\in R^G.$$ 
\end{proof}
Assume that  the $u_j$  indices   which correspond to the pair $m(j),a(j)$  are  $ v+1,\ldots,v+u_j$, then write $$T_{m_1} (\ell) \otimes T_{m_2} (\ell) \otimes \ldots\otimes T_{m_k} (\ell)= B\otimes T_{v+1} (\ell) \otimes T_{v+2} (\ell) \otimes \ldots\otimes T_{v+u_j} (\ell)\otimes C $$ and $S_{u_j} $ permutes the factors $T_{v+1} (\ell) \otimes T_{v+2} (\ell) \otimes \ldots\otimes T_{v+u_j} (\ell)$ while $S_{u_j-1} $ permutes the factors $  T_{v+2} (\ell) \otimes \ldots\otimes T_{v+u_j} (\ell)$.
\begin{corollary}\label{strf}
$$\left(\oplus_{i=1}^{u_j}S_{  m (j)} (\ell) \otimes_{T_{ m (j)  } (\ell) }A_{(\underline m );\ell} ^{G_0  }\right)^{S_{u_j}}$$\begin{equation}\label{assu}
\simeq  S_{  m (j)} (\ell) \otimes_{F }  B\otimes (T_{v+2} (\ell) \otimes \ldots\otimes T_{v+u_j} (\ell)) ^{S_{u_j-1}}\otimes C.
\end{equation}
\end{corollary}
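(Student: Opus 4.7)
The plan is to apply the general proposition stated immediately before the corollary (the $R^G\simeq A^H$ isomorphism for a finite group $G$ transitively permuting the summands of $R=A^{\oplus h}$) to the specific direct sum appearing in Theorem \ref{ultt}. So first I would set $A$ equal to the first summand $S_{m(j)}(\ell)\otimes_{T_{m(j)}(\ell)}A_{(\underline m);\ell}^{G_0}$, $h=u_j$, $G=S_{u_j}$, and identify $H=S_{u_j-1}$ as the stabilizer of this first summand (the subgroup permuting the remaining $u_j-1$ indices $v+2,\ldots,v+u_j$). The proposition then gives immediately that $\big(\oplus_{i=1}^{u_j}S_{m(j)}(\ell)\otimes_{T_{m(j)}(\ell)}A_{(\underline m);\ell}^{G_0}\big)^{S_{u_j}}$ is isomorphic to the $S_{u_j-1}$--invariants of the first summand, via the projection on that summand.

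Next I would simplify the first summand. Using the factorization $A_{(\underline m);\ell}^{G_0}=B\otimes T_{v+1}(\ell)\otimes T_{v+2}(\ell)\otimes\cdots\otimes T_{v+u_j}(\ell)\otimes C$, the tensor product $S_{m(j)}(\ell)\otimes_{T_{m(j)}(\ell)}A_{(\underline m);\ell}^{G_0}$ pairs $T_{m(j)}(\ell)\subset S_{m(j)}(\ell)$ with the distinguished factor $T_{v+1}(\ell)\simeq T_{m(j)}(\ell)$ corresponding to the first index of the orbit. Since $T_{m(j)}(\ell)$ is already a subring of $S_{m(j)}(\ell)$, this tensor product over $T_{m(j)}(\ell)$ absorbs the $T_{v+1}(\ell)$ factor and yields
\begin{equation*}
S_{m(j)}(\ell)\otimes_{T_{m(j)}(\ell)}A_{(\underline m);\ell}^{G_0}\simeq S_{m(j)}(\ell)\otimes_F B\otimes T_{v+2}(\ell)\otimes\cdots\otimes T_{v+u_j}(\ell)\otimes C.
\end{equation*}

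Finally I would compute the $S_{u_j-1}$--invariants of the right-hand side. By construction, $S_{u_j-1}$ fixes the index $v+1$ and permutes the indices $v+2,\ldots,v+u_j$, so it acts trivially on $S_{m(j)}(\ell)$, on $B$ (which collects the factors for indices outside $\{v+1,\ldots,v+u_j\}$ with smaller index), and on $C$ (the factors with larger index not in the orbit), while permuting $T_{v+2}(\ell)\otimes\cdots\otimes T_{v+u_j}(\ell)$ by the natural action. Since all tensor products over $F$ are of $F$--flat algebras and the action decomposes factor-by-factor, taking invariants commutes with the passive tensor factors and yields
\begin{equation*}
S_{m(j)}(\ell)\otimes_F B\otimes\bigl(T_{v+2}(\ell)\otimes\cdots\otimes T_{v+u_j}(\ell)\bigr)^{S_{u_j-1}}\otimes C,
\end{equation*}
which is the desired formula \eqref{assu}.

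The step most likely to need care is the absorption identity $S_{m(j)}(\ell)\otimes_{T_{m(j)}(\ell)}T_{v+1}(\ell)=S_{m(j)}(\ell)$ inside the big tensor product, since one must be precise about which copy of $T_{m(j)}(\ell)$ is being identified with which factor of $A_{(\underline m);\ell}^{G_0}$ in the $i$-th summand; once that indexing convention is pinned down the rest is a routine application of the preceding proposition.
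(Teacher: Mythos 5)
Your proof is correct and is precisely the argument the paper intends but leaves implicit: the corollary is a direct application of the preceding proposition (on $R^G\simeq A^H$) to the direct sum in Theorem \ref{ultt}, after choosing the distinguished first summand indexed by $v+1$, identifying $H=S_{u_j-1}$, absorbing the $T_{v+1}(\ell)$ tensor factor via $S_{m(j)}(\ell)\otimes_{T_{m(j)}(\ell)}T_{v+1}(\ell)\simeq S_{m(j)}(\ell)$, and pulling the $S_{u_j-1}$--invariants through the passive factors $S_{m(j)}(\ell)$, $B$, $C$. You also correctly flag the one place that requires care --- keeping track of which copy of $T_{m(j)}(\ell)$ is matched with which factor of $A_{(\underline m);\ell}^{G_0}$ in the $i$-th summand --- which is exactly the bookkeeping the paper's ``Assume that the $u_j$ indices $\ldots$ are $v+1,\ldots,v+u_j$'' paragraph sets up.
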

In particular this last algebra is a domain and we have a better understanding of Theorem \ref{prst}. We leave to the reader to verify
\begin{equation}\label{iD0l}
 \mathcal S_n(\ell)/ I(\underline m ;\underline a )\subset S_\ell (\underline m ;\underline a ) \subset F_R(\ell)\otimes_{T_R(\ell)}G_R(\ell)\simeq  \oplus_{i=1}^s  \oplus_j D_ {  h_{j,i}}
\end{equation} \medskip

The algebra of Formula \eqref{pltcc1} may be viewed as a form of {\em integral closure} of the algebra of Formula \eqref{pltcc2}. Compare with Theorem \ref{inte}.

We finish with a conjecture.

{\bf Conjecture 2\label{conj1}}  In the universal map of  $\mathrm j: \mathcal S_n(\ell)\to M_n(A)$ into $n\times n$  matrices, the algebra $A$ is the coordinate ring of the closure of the stratum  $ \Sigma^\sharp (\underline a ;\underline m)$.

This will be discussed elsewhere.
   \bigskip
 
 \centerline{REFERENCES}\bigskip
 
\bibliographystyle{amsalpha}

\end{document}